\documentclass[12pt,reqno]{amsart}
\usepackage{amsfonts}

\usepackage{amssymb,amsmath,graphicx,amsfonts,euscript}
\usepackage{color}

\setlength{\textheight}{9in} \setlength{\textwidth}{6.2in}
\setlength{\oddsidemargin}{0.2in} \setlength{\evensidemargin}{0.2in}
\setlength{\parindent}{0.2in} \setlength{\topmargin}{-0.1in}
\setcounter{section}{0} \setcounter{figure}{0}
\setcounter{equation}{0}

\newtheorem{thm}{Theorem}[section]

\newtheorem{Pros}{Proposition}[section]
\newtheorem{lemma}{Lemma}[section]

\theoremstyle{definition}

\theoremstyle{remark}
\newtheorem{rem}{Remark}[section]

\allowdisplaybreaks \voffset=-0.2in \numberwithin{equation}{section}

\begin{document}
\bigskip

\centerline{\Large\bf  Global regularity and time decay for the SQG equation }

\smallskip
\centerline{\Large\bf with anisotropic fractional dissipation}

\smallskip

\centerline{\Large\bf   }

\medskip

\centerline{Zhuan Ye }

\medskip

\centerline{Department of Mathematics and Statistics, Jiangsu Normal University,
}
\medskip

\centerline{101 Shanghai Road, Xuzhou 221116, Jiangsu, PR China}

\medskip

\centerline{E-mail: \texttt{yezhuan815@126.com
}}

\bigskip

{\bf Abstract:}~~%
In this paper, we focus on the two-dimensional surface quasi-geostrophic equation with fractional horizontal dissipation and fractional vertical thermal diffusion. On the one hand, when the dissipation powers are restricted to a suitable range, the global regularity of the surface quasi-geostrophic equation is obtained by some anisotropic embedding and interpolation inequalities involving fractional derivatives. One the other hand, we obtain the optimal large time decay estimates for global weak solutions by an anisotropic interpolation inequality. Moreover, based on the argument adopted in establishing the global $\dot{H}^1$-norm of the solution, we obtain the optimal large time decay estimates for the above obtained global smooth solutions. Finally, the decay estimates for the difference between the full solution and the solution to the corresponding linear part are also derived.

{\vskip 1mm
 {\bf AMS Subject Classification 2010:}\quad 35B40; 35Q35; 35Q86; 76D03; 35R11.

 {\bf Keywords:}
SQG equation; Global regularity; Decay estimates.}

\vskip .3in
\section{Introduction}
This paper is devoted to investigating the following two-dimensional (2D) surface quasi-geostrophic (abbr. SQG) equation with fractional horizontal dissipation and fractional vertical thermal diffusion
\begin{equation}\label{SQG}
\left\{\aligned
&\partial_{t}\theta+(u \cdot \nabla)\theta+\Lambda_{x_{1}}^{2\alpha}\theta+\Lambda_{x_{2}}^{2\beta}\theta=0, \\
&\theta(x, 0)=\theta_{0}(x),
\endaligned\right.
\end{equation}
where $x=(x_{1},x_{2})\in \mathbb{R}^2, \, t>0$ and $\alpha\in (0,\,1],\beta\in (0,\,1]$ are real constants. The velocity $u$ is determined by the scalar $\theta$ via the formula
$$ {u}=(u_{1},\,u_{2})=\left(-\frac{\partial_{x_{2}}}{ \Lambda}\theta,\,\,\frac{\partial_{x_{1}}}{\Lambda}\,\theta\right)
=(-\mathcal
{R}_{2}\theta,\,\,\mathcal {R}_{1}\theta)\triangleq\mathcal {R}^{\perp}\theta,$$
where $\mathcal{R}_{1}, \mathcal
{R}_{2}$ are the standard 2D Riesz transforms.
The fractional operators $\Lambda_{x_{1}}\triangleq \sqrt{-\partial_{x_{1}}^{2}}$, $\Lambda_{x_{2}}\triangleq \sqrt{-\partial_{x_{2}}^{2}}$ and $\Lambda^{\eta}\triangleq(-\Delta)^{\frac{\eta}{2}}$ are defined through the Fourier transform, namely
$$
\widehat{\Lambda_{x_{1}}^{2\alpha}
f}(\xi)=|\xi_{1}|^{2\alpha}\hat{f}(\xi),\qquad \widehat{\Lambda_{x_{2}}^{2\beta}
f}(\xi)=|\xi_{2}|^{2\beta}\hat{f}(\xi),\qquad \widehat{\Lambda^{\eta}f}(\xi)=|\xi|^{\eta} \widehat{f}(\xi),
$$
where
$$\hat{f}(\xi)=\frac{1}{(2\pi)^{2}}\int_{\mathbb{{R}}^{2}}{e^{-ix\cdot\xi}f(x)\,dx}.$$
We make the convention that by $\alpha=0$ we mean
that there is no horizontal dissipation in \eqref{SQG}, and similarly $\beta=0$ means that there is no
vertical thermal diffusion in \eqref{SQG}.

\vskip .1in
In the case when $\alpha=\beta$, the dissipative term $\Lambda_{x_{1}}^{2\alpha}\theta+\Lambda_{x_{2}}^{2\beta}\theta$ can be regarded as the standard fractional Laplacian term $\Lambda^{2\alpha}\theta$ in form due to
$|\xi_{1}|^{2\alpha}+|\xi_{2}|^{2\alpha}\thickapprox |\xi|^{2\alpha}$. In this paper, without special emphasis, by ${\Lambda_{x_{1}}^{2\alpha}}\theta
+ {\Lambda_{x_{2}}^{2\alpha}}\theta$ we mean that $ {\Lambda^{2\alpha}}\theta$. As a result, when $\alpha=\beta$, \eqref{SQG} becomes the following classical fractional dissipative SQG equation
\begin{equation}\label{classSQG}
\left\{\aligned
&\partial_{t}\theta+(u \cdot \nabla)\theta+\Lambda^{2\alpha}\theta=0, \\
&{u}=\mathcal {R}^{\perp}\theta,\\
&\theta(x, 0)=\theta_{0}(x).
\endaligned\right.
\end{equation}
The SQG equation is an important model in geophysical fluid dynamics, which describes the evolution of a surface temperature field in a rotating and stratified fluid (see e.g.\cite{PG,CMT}). Besides the physical interpretation of the SQG equation, it also serves as a simplified model for the 3D Navier--Stokes equations (see \cite{CMT} for details). After the first mathematical
investigation initiated by Constantin-Majda-Tabak in \cite{CMT}, the SQG equation \eqref{classSQG} has been extensively studied over the past three decades, especially, its global regularity and large time behavior. Before recalling the results on the large time behavior, let us first mention some global regularity results about \eqref{classSQG}. In view of the underlying scaling invariance associated with \eqref{SQG} and the available maximum principle, it is customary to refer to the case $\alpha>\frac{1}{2}$ as subcritical, the case $\alpha=\frac{1}{2}$ as critical and the case $\alpha<\frac{1}{2}$ as supercritical. Actually, in the subcritical case the global wellposedness result for initial data in certain Sobolev spaces is well-known (e.g., \cite{CW3111,CADcmp,Resnick,JUNING}). In the critical case, Constantin, C$\rm\acute{o}$rdoba and Wu \cite{CCWiumj} gave a construction of the unique global regular solutions under a smallness assumption of $L^{\infty}$-norm of the initial data. However, the global regularity in the critical case for the general initial data turned out to be a more delicate problem, which was independently settled slightly more than a decade ago by Kiselev-Nazarov-Volberg \cite{KNV} via developing an original method called the "nonlocal maximum principle" and Caffarelli-Vasseur \cite{CV1} via exploiting the De Giorgi's iteration method and a novel extension. Two other elegant proofs resolving the critical problem have been obtained independently by Kiselev--Nazarov \cite{KN2} using the duality method, and Constantin--Vicol \cite{CV} using the "nonlinear maximum principle" method. However, in contrast with the subcritical and the critical case, the global regularity problem in the supercritical case remains an outstanding open problem (see \cite{Chaolicmp,CMZcmp,Dongl,JUNING,Miurah,Wuj001,Wuj002,WangZ} for small data global existence results and see \cite{Dabgga,CZelativ,Kis2011,Sil} for the eventual regularity of the global weak solutions). Very recently, it is worthwhile to point out that Buckmaster, Shkoller and Vicol \cite{Buccpam} have been able to make an important step toward this supercritical problem by showing the nonuniqueness for a class of solutions having negative Sobolev regularity. For some further works on the regularity results of the SQG equation \eqref{classSQG}, please refer to \cite{AbidiH,CCCGW,CCwind,CIWind,ConstantiW08,Lazarxue,MX1,SilVicolaz}, just name a few. As for the large time behavior, the SQG equation \eqref{classSQG} also has been studied in many works. The subcritical case was resolved by several works (see for example \cite{CW3111,YZhou,Schonbek2siam,DLarma}). For the critical case and supercritical case, the time decay estimates of weak solutions are available in \cite{CCWiumj,Resnick,Dongd,Nichesc11,Carrillo,Nichesc,Ferreirann}. We refer to the paper \cite{CTVCMP} concerning the critical case on the existence of a compact global attractor with a time-independent force, and see also references therein. We also refer to the recent papers \cite{Iwabuchi} on the analyticity and large time behavior for the critical Burgers equation and the
quasi-geostrophic equation.
However, among the existing literature regarding \eqref{SQG} with $\alpha\neq\beta$, very little has addressed the case $\alpha\neq\beta$.
Recently, the author with collaborators in \cite{wxyjmfm} proved the global regularity result of \eqref{SQG} with $\alpha=1,\,\beta=0$ or $\alpha=0,\,\beta=1$. Subsequently, the author \cite{Yenonli} established the global regularity when the dissipation powers are restricted to a suitable range, namely \eqref{sdf2334} below.
This paper is the attempt to deal with the global regularity and makes a detailed study on the large-time behavior of both the weak solution and the smooth solution to (\ref{SQG}).

\vskip .1in
The first goal of this paper is to establish the global regularity and the large time behavior of the SQG equation \eqref{SQG}. More precisely, our main result can be stated as follows.
\begin{thm}\label{OKTh1}
Let $\theta_{0}\in H^{\rho}(\mathbb{R}^{2})$ with $\rho\geq2$. If $\alpha,\,\beta\in (0,\,1)$ satisfy
\begin{equation}\label{sdf2334}
\beta>\left\{\aligned
&\frac{1}{2\alpha+1},\qquad 0<\alpha\leq \frac{1}{2},\\
&\frac{1-\alpha}{2\alpha},\ \qquad \frac{1}{2}<\alpha<1,
\endaligned\right.
\end{equation}
then (\ref{SQG}) has a unique global solution $\theta$ such that for any given $T>0$,
$$\theta\in C([0, T]; H^{\rho}(\mathbb{R}^{2})),\quad \Lambda_{x_{1}}^{\alpha}\theta,\ \Lambda_{x_{2}}^{\beta}\theta \in L^{2}([0, T]; H^{\rho}(\mathbb{R}^{2})).
$$
Additionally, if $\theta_{0}\in  L^{p}(\mathbb{R}^{2})$ with $p\in [1,\,2)$, then the solution as stated above admits the decay estimate for any $s\in [0,\rho]$
\begin{align}\label{fvbnm023}
 \|\Lambda^{s} \theta(t)\|_{L^2}\leq C_{0}(1+t)^{-\frac{(\alpha+\beta)(2-p)+2\min\{\alpha,\,\beta\}sp}{4\alpha\beta p}},
\end{align}
where the positive constant $C_{0}$ depends only on $\alpha,\,\beta,\,s,\,p$ and $\theta_{0}$.
\end{thm}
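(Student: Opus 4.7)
The plan is three-fold: (i) the global existence and uniqueness in $C([0,T];H^{\rho})$ under \eqref{sdf2334} is already known from the author's earlier paper \cite{Yenonli}, where a closed $\dot H^{1}$ estimate is obtained via anisotropic embedding/interpolation inequalities, and in particular one has the uniform bound $\|\theta(t)\|_{L^{q}}\leq\|\theta_{0}\|_{L^{q}}$ for every $q\in[1,\infty]$ by the maximum principle; (ii) derive the $L^{2}$ decay using an anisotropic Schonbek-type Fourier splitting; (iii) lift the $L^{2}$ decay to $\dot H^{s}$ decay for $s\in(0,\rho]$ by running the same splitting against the weight $|\xi|^{2s}$ and exploiting the anisotropic geometry of the sub-level sets.

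For (ii), multiplying \eqref{SQG} by $\theta$ and applying Plancherel (the nonlinear term vanishes since $\mathrm{div}\,u=0$) yields
\[
\frac{d}{dt}\|\theta\|_{L^{2}}^{2}+2\int_{\mathbb{R}^{2}}\bigl(|\xi_{1}|^{2\alpha}+|\xi_{2}|^{2\beta}\bigr)|\widehat{\theta}(\xi,t)|^{2}\,d\xi=0.
\]
I would then introduce the time-dependent anisotropic set $S(t)=\{\xi:|\xi_{1}|^{2\alpha}+|\xi_{2}|^{2\beta}\leq g(t)\}$ with $g(t)=K/(1+t)$. On $S(t)^{c}$ the dissipation dominates $g(t)\|\theta\|_{L^{2}}^{2}$ up to an error supported on $S(t)$. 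A direct change of variables gives $|S(t)|\lesssim g(t)^{(\alpha+\beta)/(2\alpha\beta)}$. Combining the Hausdorff--Young inequality $\|\widehat{\theta}\|_{L^{p'}}\leq C\|\theta\|_{L^{p}}$, the uniform $L^{p}$ bound, and Hölder gives
\[
\int_{S(t)}|\widehat{\theta}|^{2}\,d\xi\leq C\,g(t)^{(\alpha+\beta)(2-p)/(2\alpha\beta p)}\|\theta_{0}\|_{L^{p}}^{2}.
\]
Plugging this into the energy identity produces a differential inequality of the form $\tfrac{d}{dt}y+g(t)y\leq C\,g(t)^{1+(\alpha+\beta)(2-p)/(2\alpha\beta p)}$, whose integration yields exactly the $s=0$ case of \eqref{fvbnm023}.

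For (iii), I would run the same splitting on $\|\Lambda^{s}\theta\|_{L^{2}}^{2}$. The sharp anisotropic observation is that on $S(t)$ one has $|\xi_{1}|\leq g(t)^{1/(2\alpha)}$ and $|\xi_{2}|\leq g(t)^{1/(2\beta)}$, so $|\xi|^{2s}\leq C\,g(t)^{s/\max\{\alpha,\beta\}}=C\,g(t)^{s\min\{\alpha,\beta\}/(\alpha\beta)}$. This gain contributes precisely the factor $(1+t)^{-s\min\{\alpha,\beta\}/(\alpha\beta)}$, which accounts for the $2sp\min\{\alpha,\beta\}$ term in the exponent of \eqref{fvbnm023}.

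The main obstacle is that the nonlinear term no longer vanishes in the equation for $\Lambda^{s}\theta$: one must control $-\int\Lambda^{s}(u\cdot\nabla\theta)\,\Lambda^{s}\theta\,dx=-\int[\Lambda^{s},u\cdot\nabla]\theta\cdot\Lambda^{s}\theta\,dx$ (the remaining piece integrates to zero by $\mathrm{div}\,u=0$). My plan is to invoke a Kato--Ponce commutator estimate
\[
\|[\Lambda^{s},u\cdot\nabla]\theta\|_{L^{2}}\leq C\bigl(\|\nabla u\|_{L^{\infty}}\|\Lambda^{s}\theta\|_{L^{2}}+\|\Lambda^{s}u\|_{L^{2}}\|\nabla\theta\|_{L^{\infty}}\bigr),
\]
together with $u=\mathcal{R}^{\perp}\theta$, and to use the uniform $H^{\rho}$ bound from step (i) to keep the coefficients time-bounded. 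The delicate point is to arrange the induction on $s$ so that the commutator contribution appears as a lower-order perturbation of the Fourier-splitting ODE and does not degrade any power of $(1+t)$; this, rather than the Fourier geometry itself, is where the bulk of the technical work lies.
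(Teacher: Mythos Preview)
Your steps (i) and (ii) are sound. For (ii) the paper actually takes a different route---an anisotropic interpolation inequality
\[
\|\theta\|_{L^{2}}\leq C\|\theta\|_{L^{p}}^{1-\mu}\|\Lambda_{x_{1}}^{\alpha}\theta\|_{L^{2}}^{a}\|\Lambda_{x_{2}}^{\beta}\theta\|_{L^{2}}^{b}
\]
fed into an ODE $y'+Cy^{1+\gamma}\leq0$---but your Hausdorff--Young/Fourier-splitting argument produces the same $s=0$ rate and is a legitimate alternative.

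The genuine gap is in (iii). The $H^{\rho}$ bound you cite from step (i) is \emph{not} uniform in time: it comes from Gronwall and the paper is explicit that the constant $C(t,\theta_{0})$ may grow as $t\to\infty$. Hence you cannot invoke it to keep $\|\nabla u\|_{L^{\infty}}+\|\nabla\theta\|_{L^{\infty}}$ bounded, and even if this coefficient were merely bounded, the resulting term $C\|\Lambda^{s}\theta\|_{L^{2}}^{2}$ would swamp the $g(t)\sim(1+t)^{-1}$ dissipation in the splitting ODE. To make the splitting close you need the commutator coefficient to decay strictly faster than $(1+t)^{-1}$, and nothing in your outline produces that.

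The paper resolves this via a bootstrap tied specifically to the levels $s=1$ and $s=2$. The anisotropic $\dot H^{1}$ energy inequality is first reworked so that its right-hand side is $F(t)\|\theta\|_{L^{2}}^{2}$ with $F(t)$ a polynomial in $\|\theta\|_{L^{q}}$; the maximum principle makes $F$ uniformly bounded, yielding a preliminary $\dot H^{1}$ rate equal to the $L^{2}$ rate. A second pass, now using that $\|\theta\|_{L^{q}}$ itself decays (interpolating $L^{2}$ and $\dot H^{1}$), upgrades the nonlinear contribution to $C(1+t)^{-\kappa}\|\nabla\theta\|_{L^{2}}^{2}$ with $\kappa>1$, so Fourier splitting gives the sharp $\dot H^{1}$ rate together with $\int_{0}^{\infty}(\|\Lambda_{x_{1}}^{\alpha}\nabla\theta\|_{L^{2}}^{2}+\|\Lambda_{x_{2}}^{\beta}\nabla\theta\|_{L^{2}}^{2})\,dt<\infty$. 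Only at this point does Gronwall produce a \emph{uniform-in-time} $\dot H^{2}$ bound. A further pass sharpens the $\dot H^{2}$ decay, and then for general $s$ a Kato--Ponce estimate with $\nabla\theta$ placed in $L^{q}$ (not $L^{\infty}$), bounded via $\|\nabla\theta\|_{L^{q}}\leq C\|\theta\|_{L^{2}}^{1/q}\|\Delta\theta\|_{L^{2}}^{(q-1)/q}$, finally makes the commutator coefficient $(1+t)^{-\Omega}$ with $\Omega>1$, after which your splitting argument closes.
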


\begin{rem}
Actually, for the case $s=0$, \eqref{fvbnm023} is valid for all    $\alpha,\,\beta\in (0,\,1]$, without restriction \eqref{sdf2334} (see Proposition \ref{Prosty666} below for details). In the case when $\alpha=\beta$, our sharp decay rate \eqref{fvbnm023} coincides with the previous results in literature (see for example \cite{CW3111,Nichesc11,Nichesc}).
\end{rem}

\begin{rem}
Although the special critical case $\alpha=\beta=\frac{1}{2}$ (that is \eqref{classSQG} with $\alpha=\frac{1}{2}$) considered by
\cite{CV1,KNV,KN2,CV} is not covered by Theorem \ref{OKTh1}, yet it should be pointed out that
\begin{equation*}
 \left\{\aligned
&\frac{1}{2\alpha+1}+\alpha<1,\qquad 0<\alpha< \frac{1}{2},\\
&\frac{1-\alpha}{2\alpha}+\alpha<1,\ \qquad \frac{1}{2}<\alpha<1.
\endaligned\right.
\end{equation*}
This indicates that one of the indices is allowed to go below $\frac{1}{2}$, that
comes at the price of the other one being above it, but in fact the sum of the two
exponents is always smaller than $1$. This result may be of independent interest, because at this stage we note that to date the global regularity for the supercritical SQG equation (namely, \eqref{classSQG} with $\alpha<\frac{1}{2}$) with arbitrarily large initial data remains open.
\end{rem}

\begin{rem}
It seems that the strategies developed in \cite{CV1,KNV,KN2,CV} do not work for \eqref{SQG} with $\Lambda_{x_{1}} \theta
+ \Lambda_{x_{2}}\theta$. As stated above, at present we are not able to show the global regularity for \eqref{SQG} with $\Lambda_{x_{1}} \theta
+ \Lambda_{x_{2}}\theta$. However, if one assumes that the $L^{\infty}$-norm of the initial data $\theta_{0}$ are suitable small, then the corresponding \eqref{SQG} indeed admits a unique global classical solution. Moreover, the decay
 estimate also holds true. More details are provided in Appendix \ref{appdexA}.
\end{rem}

\begin{rem}
We remark that the global regularity part of Theorem \ref{OKTh1} was derived by \cite{Yenonli} via establishing a new refined logarithmic-type Gronwall inequality (see \cite[Lemma 2.1]{Yenonli} for details), which helps us to bypass one of the main difficulties that Riesz transform does not map continuously from $L^{\infty}$ space to $L^{\infty}$ space. More precisely, it plays an important role in deriving the global $\dot{H}^1$-bound of the solution. Here, we provide an alternative proof of the global $\dot{H}^1$-bound by some anisotropic embedding and interpolation inequalities involving fractional derivatives, without the use of the above mentioned logarithmic-type Gronwall inequality. We also point out that this new approach is both more direct and more explicitly quantitative than \cite{Yenonli}, which allows us to further investigate the decay estimate of the solutions. Actually, the global smooth solutions in the aforementioned reference \cite{Yenonli} may grow in time due to the application of Gronwall type inequalities.
\end{rem}

\begin{rem}
Let us outline the main ideas and difficulties in the proof of Theorem \ref{OKTh1}. By some sharp anisotropic embedding and interpolation inequalities involving fractional derivatives, we first derive the global ${\dot{H}}^1$-bound of the solutions, which allows us to show the global ${\dot{H}}^2$-bound, and thus show the global ${\dot{H}}^s$-bound. This thus finishes the proof of the global regularity part of Theorem \ref{OKTh1}. To prove the time decay part of Theorem \ref{OKTh1}, we first derive the basic $L^2$-decay rate via the use of the direct interpolation inequality (see Proposition \ref{Prosty666} for details). Next, by the appropriately modified Fourier Splitting method and the argument adopted in establishing the global $\dot{H}^1$-norm of the solution, we are able to show the sharp decay estimate of the solution in $\dot{H}^{1}$-norm (see Proposition \ref{fggyvby7} for details), which enables us to derive the uniform-in-time of the $\dot{H}^2$-bound of $\theta$. However, the estimates obtained at present are not sufficient to show show the decay estimate of the solution in $\dot{H}^{s}$-norm for $s>1$ because we consider all the case $p\in[1,2)$ other than $p\in[1,p_{0})$ for some $p_{0}\in[1,2)$. In order to achieve this goal, we turn to prove the sharp decay estimate of the solution in $\dot{H}^{2}$-norm by using the preliminary decay rate of the $\dot{H}^2$-norm and the sharp decay estimate of the solution in $\dot{H}^{1}$-norm (see Proposition \ref{fxcdfn9} for details).
Finally, with the help of the sharp decay estimate of the solution in $\dot{H}^{2}$-norm, the sharp decay estimate of the solution in $\dot{H}^{s}$-norm for any $s>2$ follows by the modified Fourier Splitting method (see Proposition \ref{dfdsfp37} for details). Therefore, this completes the proof of the time decay part of Theorem \ref{OKTh1}.
\end{rem}

\begin{rem}
 The corresponding linear part of \eqref{SQG}, namely,
\begin{equation}\label{QGssd}
\left\{\aligned
&\partial_{t}\widetilde{\theta}+\Lambda_{x_{1}}^{2\alpha}\widetilde{\theta}
+\Lambda_{x_{2}}^{2\beta}\widetilde{\theta}=0, \\
&\widetilde{\theta}(x, 0)=\theta_{0}(x)
\endaligned\right.
\end{equation}
 admits the decay estimate
\begin{align}\label{linaddtq1}
\|\Lambda^{s}\widetilde{\theta}(t)\|_{L^2}\leq C_{0}(1+t)^{-\frac{(\alpha+\beta)(2-p)+2\min\{\alpha,\,\beta\}sp}{4\alpha\beta p}}
\end{align}
for $\alpha\in (0,1]$, $\beta\in (0,1]$ and $\theta_{0}\in \dot{H}^{s}(\mathbb{R}^{2})\cap L^{p}(\mathbb{R}^{2})$ with $s\geq0$ and $p\in [1,\,2)$. Moreover, if $\theta_{0}$ satisfies additionally that for some positive constant $c$
\begin{align}\label{linaddtq2}
\lim_{\rho\rightarrow0^{+}}\rho^{-\frac{(\alpha+\beta)(2-p)+2\min\{\alpha,\,\beta\}sp}{2\alpha\beta p}}\int_{E(\rho)} |\xi|^{2s}|\widehat{\theta_{0}}(\xi)|^{2}\;d\xi=c,
\end{align}
where $E(\rho)$ is given by
$$E(\rho)=\{\xi\in \mathbb{R}^2: \  |\xi_{1}|^{2\alpha}+|\xi_{2}|^{2\beta}\leq \rho\},$$
then the solution $\widetilde{\theta}$ admits the same order for both lower bound and upper bound, namely,
\begin{align}\label{linaddtq3}
C_{1}(1+t)^{-\frac{(\alpha+\beta)(2-p)+2\min\{\alpha,\,\beta\}sp}{4\alpha\beta p}}\leq\|\Lambda^{s}\widetilde{\theta}(t)\|_{L^2}\leq C_{2}(1+t)^{-\frac{(\alpha+\beta)(2-p)+2\min\{\alpha,\,\beta\}sp}{4\alpha\beta p}}.
\end{align}
For the sake of completeness, the proof are provided in Appendix \ref{appdexB}.
Consequently, the decay rate obtained in Theorem \ref{OKTh1} is in line with this
for the solution of the linear equation \eqref{QGssd}. This implies that the decay rate obtained in Theorem \ref{OKTh1} is optimal.
\end{rem}

If the condition $\theta_{0}\in L^{p}(\mathbb{R}^{2})$ with $p\in [1,\,2)$ is dropped from Theorem \ref{OKTh1}, then we have the following theorem.
\begin{thm}\label{Lderq1}
Let $\theta_{0}\in H^{\rho}(\mathbb{R}^{2})$ with $\rho\geq2$ and $\alpha,\,\beta\in (0,\,1)$ satisfy \eqref{sdf2334}, then the global solution derived in Theorem \ref{OKTh1} admits the decay estimate for any $s\in [0,\rho]$
\begin{align}\label{adssseq11}
 \|\Lambda^{s} \theta(t)\|_{L^2}\leq C_{0}(1+t)^{-\frac{ \min\{\alpha,\,\beta\}s }{2\alpha\beta }},
\end{align}
where the positive constant $C_{0}$ depends only on $\alpha,\,\beta,\,s$ and $\theta_{0}$.
\end{thm}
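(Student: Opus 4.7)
The plan is to mirror the four-tier decay hierarchy used to establish \eqref{fvbnm023}, namely $L^2 \to \dot H^1 \to \dot H^2 \to \dot H^s$ for $s>2$, but to replace the $L^p$-based low-frequency control (which requires $p<2$) by the uniform-in-time $L^2$-boundedness of the solution. This substitution is natural because the exponent in \eqref{adssseq11} is exactly the formal $p\to 2^{-}$ limit of the exponent in \eqref{fvbnm023}, so Theorem \ref{Lderq1} can be viewed as the boundary case of Theorem \ref{OKTh1}. Throughout I assume without loss of generality that $\alpha\le\beta$, so that $\min\{\alpha,\beta\}=\alpha$ and the target rate is $\|\Lambda^s\theta(t)\|_{L^2}\le C(1+t)^{-s/(2\beta)}$.

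The case $s=0$ is immediate from the $L^2$ energy identity, which gives $\|\theta(t)\|_{L^2}\le\|\theta_0\|_{L^2}$. For $s=1$, I reproduce the $\dot H^1$ energy inequality used in the global regularity part of Theorem \ref{OKTh1} and apply the Fourier splitting method with the anisotropic ball
\[
B(t)=\bigl\{\xi\in\mathbb{R}^2:\ |\xi_1|^{2\alpha}+|\xi_2|^{2\beta}\le A(1+t)^{-1}\bigr\},\qquad A\gg 1.
\]
On $B(t)$ one has $|\xi_1|\le (A/(1+t))^{1/(2\alpha)}$ and $|\xi_2|\le (A/(1+t))^{1/(2\beta)}$; since $\alpha\le\beta$, for $1+t\ge A$ the dominant term yields $|\xi|^2\le C(A/(1+t))^{1/\beta}$. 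Combined with the $L^2$-bound on $\theta$ this produces
\[
\int_{B(t)}|\xi|^2|\hat\theta(t,\xi)|^2\,d\xi\le C(1+t)^{-1/\beta}.
\]
Inserting this into the Fourier splitting differential inequality, multiplying by the integrating factor $(1+t)^{2A}$, and absorbing the nonlinear term via the anisotropic interpolation estimates already used to bound $\dot H^1$ globally, delivers $\|\Lambda\theta(t)\|_{L^2}^2\le C(1+t)^{-1/\beta}$. The same mechanism at the $\dot H^2$ level, using now the $\dot H^1$-decay just established to close the nonlinearity, yields $\|\Lambda^2\theta(t)\|_{L^2}^2\le C(1+t)^{-2/\beta}$ in the style of Proposition \ref{fxcdfn9}. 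Decay at each $s\in(2,\rho]$ is then obtained by a further Fourier splitting argument modelled on Proposition \ref{dfdsfp37}, and non-integer $s\in[0,2]$ follow by standard interpolation between the two nearest integer rates, which reproduces exactly $(1+t)^{-s/(2\beta)}$.

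The main obstacle is closing the nonlinear term at every step of the hierarchy \emph{without} any decay of the $L^2$-norm. In the proofs of Propositions \ref{fggyvby7}--\ref{dfdsfp37} the decay of lower-order norms was a crucial input for controlling the nonlinearity; here it is unavailable at the $s=0$ end, so the nonlinear contributions must instead be absorbed either by the dissipative part on the high-frequency side of the Fourier splitting or by the polynomial slack provided by the integrating factor $(1+t)^{2A}$. Since $A$ can be chosen arbitrarily large, any nonlinear remainder that grows at most polynomially is eventually absorbed, and the decay rate is then dictated solely by the low-frequency source term $(1+t)^{-1-s/\beta}$, matching \eqref{adssseq11} exactly. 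The delicate point is therefore to verify, via careful bookkeeping in the anisotropic interpolation inequalities, that the nonlinear contribution is indeed only polynomially bounded — essentially the same computation as in the global regularity part of Theorem \ref{OKTh1}, now re-interpreted as an energy estimate with at most polynomial growth.
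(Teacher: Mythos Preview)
Your overall four-tier scheme is right, but the step you flag as ``delicate'' is not closed, and the paper's proof supplies precisely the two ingredients you are missing.

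At the $\dot H^1$ level, the nonlinear term in the available energy inequalities takes one of two forms: either a bounded source $F(t)\|\theta\|_{L^2}^2\le C$ (as in \eqref{t3326t009}, \eqref{ghtyde239}) or a multiple of the unknown $C\|\theta\|_{L^q}^{\gamma}\|\nabla\theta\|_{L^2}^2$ (as in \eqref{pre1y320}, \eqref{pre1ffgy89k}). With only $\|\theta(t)\|_{L^2}\le\|\theta_0\|_{L^2}$, the first form yields the differential inequality $\frac{d}{dt}((1+t)^{2A}\|\nabla\theta\|_{L^2}^2)\le C(1+t)^{2A}$ after Fourier splitting, hence no decay; the second form gives a term $C\|\nabla\theta\|_{L^2}^2$ with a \emph{constant} coefficient, which cannot be absorbed by $f(t)\|\nabla\theta\|_{L^2}^2$ since $f(t)=A/(1+t)\to 0$. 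Your claim that ``any nonlinear remainder that grows at most polynomially is eventually absorbed'' by taking $A$ large applies only to genuine source terms, not to multiples of the quantity being estimated; taking $A$ large multiplies both sides by the same factor.

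The paper resolves this in two steps. First (Proposition \ref{Thrcas11}), it re-derives the $\dot H^1$ estimate with the borderline parameter choices $\epsilon=0$, $\widetilde\epsilon=0$ in Propositions \ref{Lpp302}--\ref{Lpp30we3}, which produces the sharper inequality
\[
\tfrac12\tfrac{d}{dt}\|\nabla\theta\|_{L^2}^2+\|\Lambda_{x_1}^\alpha\nabla\theta\|_{L^2}^2+\|\Lambda_{x_2}^\beta\nabla\theta\|_{L^2}^2
\le C\Gamma(t)\bigl(\|\Lambda_{x_1}^\alpha\nabla\theta\|_{L^2}^2+\|\Lambda_{x_2}^\beta\nabla\theta\|_{L^2}^2\bigr),
\]
i.e.\ the nonlinearity is bounded by a multiple of the \emph{dissipation}, not of $\|\nabla\theta\|_{L^2}^2$. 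Second, it invokes the non-uniform decay $\lim_{t\to\infty}\|\theta(t)\|_{L^2}=0$ from Proposition \ref{mkjqq1} (proved by a logarithmic Fourier splitting using only $\theta_0\in L^2$), together with $\|\theta\|_{L^\infty}\le\|\theta_0\|_{L^\infty}$, to conclude $\Gamma(t)\le C\|\theta(t)\|_{L^2}^\gamma\|\theta_0\|_{L^\infty}^{1-\gamma}\to 0$. Hence $C\Gamma(t)\le\tfrac12$ for $t\ge t_0$, the right side is absorbed, and \eqref{Thcwq015} gives a clean dissipative inequality to which Fourier splitting applies. You did not invoke either the sharpened estimate or Proposition \ref{mkjqq1}; without them your $\dot H^1$ step does not close, and the rest of the hierarchy collapses.
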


The third result concerns the remainder case: $$\digamma\triangleq\{(\alpha,\beta):\,(0,1]\times (0,1]\setminus \Upsilon\},$$
where $\Upsilon\triangleq\{(\alpha,\beta):\,\alpha\in (0,1], \beta\in (0,1] \,\,\mbox{satisfy}\,\, \eqref{sdf2334}\}$. In this case, we prove the small data
global existence and large time behavior. The precise statement is provided in the following theorem.
\begin{thm}\label{okspcrca11}
Let $(\alpha,\,\beta)\in \digamma$ and $\theta_{0}\in {H}^{s}(\mathbb{R}^{2}) \cap \dot{H}^{2-\frac{4\alpha\beta}{\alpha+\beta}}(\mathbb{R}^{2})$ with $s\geq0$. If there exists a small constant $\epsilon>0$ such that
\begin{align}\label{okssfgt01}
\|\Lambda^{2-\frac{4\alpha\beta}{\alpha+\beta}}\theta_{0}
\|_{L^{2}}\leq\epsilon,
\end{align}
then (\ref{SQG}) has a unique global solution $\theta$ such that
$$\theta\in C([0, \infty); H^{s}(\mathbb{R}^{2}))\cap C([0, \infty); \dot{H}^{2-\frac{4\alpha\beta}{\alpha+\beta}}(\mathbb{R}^{2})),$$ $$\Lambda_{x_{1}}^{\alpha}\theta,\ \Lambda_{x_{2}}^{\beta}\theta \in L^{2}([0, \infty); H^{s}(\mathbb{R}^{2}))\cap L^{2}([0, \infty); \dot{H}^{2-\frac{4\alpha\beta}{\alpha+\beta}}(\mathbb{R}^{2})).
$$
Additionally, if $\theta_{0}\in L^{p}(\mathbb{R}^{2})$ with $p\in [1,\,2]$, then the solution as stated above admits the decay estimate
\begin{align}\label{oksffpp11}
  \|\Lambda^{s} \theta(t)\|_{L^2}\leq C_{0}(1+t)^{-\frac{(\alpha+\beta)(2-p)+2\min\{\alpha,\,\beta\}sp}{4\alpha\beta p}},\quad \forall s\geq0,
\end{align}
where the positive constant $C_{0}$ depends only on $\alpha,\,\beta,\,s,\,p$ and $\theta_{0}$.
\end{thm}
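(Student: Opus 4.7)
Set $\gamma:=2-\frac{4\alpha\beta}{\alpha+\beta}$, so that $\dot H^\gamma(\mathbb R^2)$ is the critical Sobolev space for \eqref{SQG} under the natural anisotropic scaling (the harmonic mean $\frac{2\alpha\beta}{\alpha+\beta}$ plays the role of an effective dissipation exponent). My first move is to show that, under the smallness hypothesis \eqref{okssfgt01}, $\|\Lambda^\gamma\theta(t)\|_{L^2}$ stays uniformly small. Applying $\Lambda^\gamma$ to \eqref{SQG}, pairing with $\Lambda^\gamma\theta$, and using $\nabla\cdot u=0$ reduces matters to
\begin{equation*}
\frac12\frac{d}{dt}\|\Lambda^\gamma\theta\|_{L^2}^2+\|\Lambda_{x_1}^\alpha\Lambda^\gamma\theta\|_{L^2}^2+\|\Lambda_{x_2}^\beta\Lambda^\gamma\theta\|_{L^2}^2=-\int_{\mathbb R^2}[\Lambda^\gamma,u]\cdot\nabla\theta\ \Lambda^\gamma\theta\,dx,
\end{equation*}
so the heart of the matter is the critical anisotropic trilinear bound
\begin{equation*}
\Big|\int_{\mathbb R^2}[\Lambda^\gamma,u]\cdot\nabla\theta\ \Lambda^\gamma\theta\,dx\Big|\le C\|\Lambda^\gamma\theta\|_{L^2}\Big(\|\Lambda_{x_1}^\alpha\Lambda^\gamma\theta\|_{L^2}^2+\|\Lambda_{x_2}^\beta\Lambda^\gamma\theta\|_{L^2}^2\Big).
\end{equation*}
I would obtain it by splitting the commutator into its $u_1\partial_{x_1}\theta$ and $u_2\partial_{x_2}\theta$ pieces, applying the anisotropic Gagliardo--Nirenberg and embedding inequalities already employed earlier in the paper (cf.\ the proof of Proposition~\ref{fggyvby7}), and interpolating the fractional derivatives between the $x_1$- and $x_2$-directions in exactly the proportion dictated by $\gamma$; the identity $u=\mathcal R^\perp\theta$ lets $u$ inherit the regularity of $\theta$. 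A standard continuation (bootstrap) argument then yields the uniform-in-time bound of $\|\Lambda^\gamma\theta\|_{L^2}$ together with the $L^2_t$-integrability of both dissipative quantities.

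\textbf{Propagation of $H^s$ regularity.} Given the critical bound above and the $L^2$-mass conservation $\|\theta(t)\|_{L^2}\le\|\theta_0\|_{L^2}$, I perform the standard $\dot H^s$ energy estimate and control the commutator $[\Lambda^s,u]\cdot\nabla\theta$ by an anisotropic Kato--Ponce type inequality whose leading critical factor is precisely $\|\Lambda^\gamma\theta\|_{L^2}$. Smallness allows the nonlinear term to be absorbed into the dissipation (modulo a time-integrable remainder), and Gr\"onwall's inequality then delivers the global $H^s$ and $L^2_tH^s$ bounds stated in the theorem.

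\textbf{Decay estimates.} The $s=0$ rate follows from the anisotropic Fourier splitting method applied to $\frac{d}{dt}\|\theta\|_{L^2}^2=-2\int(|\xi_1|^{2\alpha}+|\xi_2|^{2\beta})|\widehat\theta|^2\,d\xi$. Writing $E(\rho)=\{|\xi_1|^{2\alpha}+|\xi_2|^{2\beta}\le\rho\}$ and using the maximum principle $\|\theta(t)\|_{L^p}\le\|\theta_0\|_{L^p}$, Hausdorff--Young on $E(\rho)$, and the measure estimate $|E(\rho)|\sim\rho^{(\alpha+\beta)/(2\alpha\beta)}$ reproduces the exponent in \eqref{oksffpp11} for $s=0$, exactly as in Proposition~\ref{Prosty666}. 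For $s>0$ I apply the same splitting to the $\dot H^s$ energy identity: the observation $\max_{\xi\in E(\rho)}|\xi|^{2s}\lesssim\rho^{s/\max(\alpha,\beta)}=\rho^{\min(\alpha,\beta)s/(\alpha\beta)}$ contributes precisely the extra decay factor needed, and the nonlinear contribution is absorbed using the critical smallness already in hand. An induction over $s$ extends the decay to every $s\ge 0$.

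\textbf{Main obstacle.} The critical trilinear estimate in the first step is the crux. Because the dissipation $\Lambda_{x_1}^{2\alpha}+\Lambda_{x_2}^{2\beta}$ is anisotropic while the nonlinearity $\mathcal R^\perp\theta\cdot\nabla\theta$ is isotropic, the $\gamma$ derivatives in the commutator must be apportioned between the $x_1$- and $x_2$-directions in the unique proportion that reproduces the dissipative quantity and no more --- this is what forces the harmonic-mean exponent $\gamma=2-\frac{4\alpha\beta}{\alpha+\beta}$, and any derivative loss or weighting mismatch will block the absorption step underlying the small-data argument.
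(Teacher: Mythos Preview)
Your overall strategy matches the paper's: establish a uniform-in-time bound on the critical norm $\|\Lambda^\gamma\theta\|_{L^2}$ via a bootstrap driven by the trilinear estimate you state, then propagate $H^s$ regularity and run Fourier splitting for the decay. The key trilinear inequality you target is exactly the one the paper proves.

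The difference lies in how that trilinear bound is obtained. You propose to split the commutator into its $u_1\partial_{x_1}\theta$ and $u_2\partial_{x_2}\theta$ pieces and mimic the directional estimates of Proposition~\ref{fggyvby7}. The paper instead proceeds in one stroke: applying the Kato--Ponce commutator estimate with H\"older exponents $\tfrac{2(\alpha+\beta)}{\alpha+\beta+2\alpha\beta}$ and $\tfrac{2(\alpha+\beta)}{\alpha+\beta-2\alpha\beta}$, then invoking the new anisotropic Gagliardo--Nirenberg inequality
\[
\|f\|_{L^{\frac{2(\alpha+\beta)}{\alpha+\beta-2\alpha\beta}}}\le C\|\Lambda_{x_1}^\alpha f\|_{L^2}^{\frac{\beta}{\alpha+\beta}}\|\Lambda_{x_2}^\beta f\|_{L^2}^{\frac{\alpha}{\alpha+\beta}}
\]
(valid because $\alpha+\beta>2\alpha\beta$ on $\digamma$), together with the Sobolev embedding $\|\nabla\theta\|_{L^{(\alpha+\beta)/(2\alpha\beta)}}\le C\|\Lambda^\gamma\theta\|_{L^2}$. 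This yields the critical bound for \emph{every} $s\ge0$ simultaneously, not just $s=\gamma$, so no separate $H^s$ propagation argument or induction is needed: one obtains $\frac{d}{dt}\|\Lambda^s\theta\|_{L^2}^2+\|\Lambda_{x_1}^\alpha\Lambda^s\theta\|_{L^2}^2+\|\Lambda_{x_2}^\beta\Lambda^s\theta\|_{L^2}^2\le0$ directly under smallness, and the decay then follows from the Fourier splitting of Proposition~\ref{dfdsfp37} without iteration. Your directional-splitting route could in principle be made to work, but be cautious: the estimates in Proposition~\ref{fggyvby7} are tailored to the region \eqref{sdf2334}, whereas here $(\alpha,\beta)\in\digamma$ lies in its complement, so the specific interpolation weights there do not transfer; the paper's single anisotropic inequality above is what makes the argument uniform across $\digamma$.
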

\begin{rem}
The small data global well-posedness of Theorem \ref{okspcrca11} for the case $\alpha=\beta<\frac{1}{2}$ corresponds to the
result of Ju \cite{JUNING} as well as Miura \cite{Miurah} and improves the
result of Chae and Lee \cite{Chaolicmp} in the critical Besov space $\dot{B}_{2,1}^{2-2\alpha}$. We point out that the smallness assumption \eqref{okssfgt01} is made only on the homogeneous norm of the initial data.
\end{rem}

\begin{rem}
We point out that for the case $s=0$, \eqref{oksffpp11} holds true without the restriction \eqref{okssfgt01} (see Proposition \ref{Prosty666} below for details).
\end{rem}

\vskip .1in
The following result concerns the decay of the difference $\theta(t)-\widetilde{\theta}(t)$, where $\theta(t)$ is the solution to (\ref{SQG}) and $\widetilde{\theta}(t)$ is the solution to (\ref{QGssd}).
More precisely, we have the following difference decay result.
\begin{thm}\label{Th3} Let $\alpha\in (0,1]$, $\beta\in (0,1]$ and $\theta_{0}\in L^{2}(\mathbb{R}^{2})\cap L^{p}(\mathbb{R}^{2})$ with $p\in [1,\,2)$, then the difference between \eqref{SQG} and \eqref{QGssd} satisfies
\begin{align*}
 &\|\theta(t)-\widetilde{\theta}(t)\|_{L^2}
\nonumber \\ &\leq
  \left\{
  \begin{aligned}
   &C_{0}(1+t)^{-\frac{\min\{\alpha+(p+1)\beta,\, (p+1)\alpha+\beta\}}{4\alpha\beta p}},  \qquad  \qquad \qquad \quad \quad \ \  \mbox{\rm if}\ \ p=\frac{2(\alpha+\beta)}{2\alpha\beta+\alpha+\beta}, \\
   &C_{0}(1+t)^{-\frac{\min\{\min\{\alpha+3\beta,\, 3\alpha+\beta\}p,\  \mathcal{A}\}}{4\alpha\beta p}},\qquad \qquad   \qquad \ \ \, \quad \ \mbox{\rm if} \   \ p<\frac{2(\alpha+\beta)}{2\alpha\beta+\alpha+\beta},\\
   &C_{0}(1+t)^{-\frac{\min\{\min\{\alpha+3\beta,\, 3\alpha+\beta\}p+2(\alpha+\beta)(2-p)-4\alpha\beta p,\ \mathcal{A}\}}{4\alpha\beta p}}, \ \qquad \textrm{\rm if}\   \ p>\frac{2(\alpha+\beta)}{2\alpha\beta+\alpha+\beta},
  \end{aligned}
  \right.
\end{align*}
where $\mathcal{A}>0$ is given by
\begin{equation}\label{sdfg68}\mathcal{A}=\min\{\alpha+(p+1)\beta,\, (p+1)\alpha+\beta\}+2(\alpha+\beta)-(\alpha+\beta+2\alpha\beta)p,\end{equation}
and the positive constant $C_{0}$ depends only on $\alpha,\,\beta,\,p$ and $\theta_{0}$.
\end{thm}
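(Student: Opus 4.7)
The plan is to analyze the equation satisfied by $w=\theta-\widetilde\theta$ via Duhamel's formula and an anisotropic Fourier Splitting argument. Subtracting \eqref{QGssd} from \eqref{SQG} and using $\nabla\cdot u=0$, the difference solves
$$\partial_t w+\Lambda_{x_1}^{2\alpha}w+\Lambda_{x_2}^{2\beta}w=-\nabla\cdot(u\theta),\qquad w(x,0)=0,$$
and hence, writing $A(\xi)\triangleq|\xi_1|^{2\alpha}+|\xi_2|^{2\beta}$ and $M\triangleq\min\{\alpha+3\beta,3\alpha+\beta\}$,
$$\hat w(t,\xi)=-i\xi\cdot\int_0^t e^{-(t-\tau)A(\xi)}\widehat{u\theta}(\tau,\xi)\,d\tau.$$
The essential input is the pointwise bound $|\widehat{u\theta}(\tau,\xi)|\le\|u\theta\|_{L^1}\le\|u\|_{L^2}\|\theta\|_{L^2}\le C\|\theta(\tau)\|_{L^2}^2\le C(1+\tau)^{-2\mu}$ with $\mu=\frac{(\alpha+\beta)(2-p)}{4\alpha\beta p}$, which follows from Proposition~\ref{Prosty666} together with the $L^2$-continuity of the Riesz transforms.

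I would then apply Fourier Splitting with a time-dependent threshold $R(t)$, decomposing $\|w(t)\|_{L^2}^2$ as the sum of integrals over $\{A(\xi)\le R(t)\}$ and $\{A(\xi)>R(t)\}$. On the low-frequency piece, the crude estimate $|\hat w(t,\xi)|\le|\xi|\,G(t)$ with $G(t)=\int_0^t(1+\tau)^{-2\mu}d\tau$, combined with the anisotropic volume bound
$$\int_{A(\xi)\le R}|\xi|^2\,d\xi\le C R^{M/(2\alpha\beta)}$$
(obtained via the change of variables $\eta_1=R^{-1/(2\alpha)}\xi_1$, $\eta_2=R^{-1/(2\beta)}\xi_2$), produces $CG(t)^2 R^{M/(2\alpha\beta)}$. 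On the high-frequency piece I split the Duhamel integral at $\tau=t/2$: the part over $[0,t/2]$ inherits $e^{-tA/2}$ and contributes at most $CG(t)^2\int|\xi|^2e^{-tA}\,d\xi\le CG(t)^2 t^{-M/(2\alpha\beta)}$, while the part over $[t/2,t]$ is bounded pointwise by $C|\xi|(1+t)^{-2\mu}/A(\xi)$, yielding a contribution of the form $C(1+t)^{-4\mu}\int_{A>R}|\xi|^2/A^2\,d\xi$. Balancing $R(t)$ across these three competing terms produces the claimed decay rate. The three cases in the theorem correspond precisely to the three regimes of the threshold $p^{\ast}=\frac{2(\alpha+\beta)}{2\alpha\beta+\alpha+\beta}$, which is exactly the value of $p$ at which $2\mu=1$: $G(t)$ stays bounded for $p<p^{\ast}$, behaves like $\log(1+t)$ at $p=p^{\ast}$, and grows like $t^{1-2\mu}$ for $p>p^{\ast}$. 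Rewriting $\mathcal{A}=\min\{\alpha+(p+1)\beta,(p+1)\alpha+\beta\}+(\alpha+\beta+2\alpha\beta)(p^{\ast}-p)$ makes transparent that $\mathcal{A}$ arises from balancing the low-frequency bound $G(t)^2 R^{M/(2\alpha\beta)}$ against $(1+t)^{-4\mu}R^{(M-4\alpha\beta)/(2\alpha\beta)}$, while the rate $M/(4\alpha\beta)$ is driven solely by the semigroup smoothing $\int|\xi|^2 e^{-tA}\,d\xi$.

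The main obstacle is that the integral $\int_{A>R}|\xi|^2/A(\xi)^2\,d\xi$ fails to converge at infinity unless $\alpha,\beta$ are sufficiently close to~$1$ (for instance, it already diverges at $\alpha=\beta=1/2$). To deal with this one must further subdivide the very high frequencies and supplement the pointwise $L^\infty_\xi$ control of $\widehat{u\theta}$ with the Plancherel-type estimate $\int|\widehat{u\theta}|^2\,d\xi=\|u\theta\|_{L^2}^2\le C\|\theta\|_{L^4}^2\le C\|\theta\|_{L^2}\|\theta\|_{L^\infty}\le C(1+\tau)^{-\mu}$, which uses the $L^\infty$ maximum principle for $\theta$. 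Carrying out the resulting case-by-case optimization of $R(t)$ over $p<p^{\ast}$, $p=p^{\ast}$, $p>p^{\ast}$ is what produces the piecewise exponents and, in particular, the extra additive correction $2(\alpha+\beta)(2-p)-4\alpha\beta p$ appearing for $p>p^{\ast}$, which precisely encodes the polynomial growth of $G(t)^2$ in that regime.
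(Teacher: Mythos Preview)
Your Duhamel-only approach is genuinely different from the paper's, and the obstacle you identify is precisely what the paper's argument is designed to avoid. The paper does \emph{not} estimate $\|w\|_{L^2}^2$ directly from the Duhamel representation. Instead it runs the $L^2$ energy estimate on $w$ and exploits the cancellation
\[
-\int (u\cdot\nabla)\theta\,w\,dx=-\int (u\cdot\nabla)\theta\,(\theta-\widetilde\theta)\,dx=\int(u\cdot\nabla)\theta\,\widetilde\theta\,dx=\int(u\cdot\nabla)\widetilde\theta\,\theta\,dx,
\]
so that the forcing is bounded by $\|\nabla\widetilde\theta\|_{L^\infty}\|\theta\|_{L^2}^2$. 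Since $\widetilde\theta$ is the explicit linear solution, $\|\nabla\widetilde\theta\|_{L^\infty}$ is computed directly via Hausdorff--Young and decays like $(1+t)^{-\min\{\alpha+(p+1)\beta,(p+1)\alpha+\beta\}/(2\alpha\beta p)}$; combined with the $L^2$ decay of $\theta$ this is exactly what produces the $\mathcal{A}$ exponent. Fourier splitting is then applied to the \emph{energy inequality}: the dissipation $\|\Lambda_{x_1}^\alpha w\|_{L^2}^2+\|\Lambda_{x_2}^\beta w\|_{L^2}^2$ automatically handles the high frequencies, and only the low-frequency piece $\int_{A\le f(t)}|\hat w|^2$ needs your Duhamel bound $|\hat w|\le|\xi|G(t)$ together with $\int_{A\le f(t)}|\xi|^2\,d\xi\le C f(t)^{M/(2\alpha\beta)}$.

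In your scheme the $\mathcal{A}$ term does not arise naturally, and the divergent integral $\int_{A>R}|\xi|^2/A^2\,d\xi$ is a genuine headache. Your proposed $L^4$/Plancherel patch is plausible in spirit, but you have not carried it out, and it is not clear that the resulting optimization over $R(t)$ recovers exactly the three-branch exponents with the specific constant $\mathcal{A}$; at best this would require substantial additional bookkeeping. The paper's cancellation trick transfers the derivative onto the \emph{linear} part $\widetilde\theta$, which sidesteps the high-frequency issue entirely and makes the origin of $\mathcal{A}$ transparent.
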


\begin{rem}
In the case when $\alpha=\beta$ and $p=1$, our rate $(1+t)^{-\frac{\min\{4,\,5-4\alpha\}}{4\alpha}}$ decays faster than $(1+t)^{-\frac{2-\alpha}{2\alpha}}$ derived by \cite[Theorem 4.3]{CW3111} and \cite[Theorem 3.2(2)]{Nichesc}.
Moreover, via the tedious computations, we can check by comparing the rates in Theorem \ref{Th3} and Theorem \ref{OKTh1} with $s=0$ that $\theta-\widetilde{\theta}$ decays faster than $\theta$ does.
\end{rem}

\begin{rem}
By direct calculation, one may check that
$$\frac{(\alpha+\beta)(2-p)}{4\alpha\beta p}<\frac{\min\{\alpha+(p+1)\beta,\, (p+1)\alpha+\beta\}}{4\alpha\beta p},$$
$$\frac{(\alpha+\beta)(2-p)}{4\alpha\beta p}<\frac{\min\{\min\{\alpha+3\beta,\, 3\alpha+\beta\}p,\  \mathcal{A}\}}{4\alpha\beta p},$$
$$\frac{(\alpha+\beta)(2-p)}{4\alpha\beta p}<\frac{\min\{\min\{\alpha+3\beta,\, 3\alpha+\beta\}p+2(\alpha+\beta)(2-p)-4\alpha\beta p,\ \mathcal{A}\}}{4\alpha\beta p}.$$
Consequently, according to the reverse triangle inequality
$$\|\theta(t)\|_{L^2}\geq \|\widetilde{\theta}(t)\|_{L^2}-\|\theta(t)-\widetilde{\theta}(t)\|_{L^2}$$
and the left hand side of the estimate \eqref{linaddtq3} with $s=0$ as well as Theorem \ref{Th3}, it leads to a lower bound
$$\|\theta(t)\|_{L^2}\geq C_{3}(1+t)^{-\frac{(\alpha+\beta)(2-p)}{4\alpha\beta p}}.$$
This along with \eqref{fvbnm023} with $s=0$ implies that the solution $\theta$ of (\ref{SQG}) admits the same order for both lower bound and upper bound, namely,
\begin{align}
C_{3}(1+t)^{-\frac{(\alpha+\beta)(2-p)}{4\alpha\beta p}}\leq\|\theta(t)\|_{L^2}\leq C_{4}(1+t)^{-\frac{(\alpha+\beta)(2-p)}{4\alpha\beta p}}.\nonumber
\end{align}
\end{rem}

\vskip .1in
Finally, we state the following results concerning the decay of the difference $\theta(t)-\widetilde{\theta}(t)$ with only $\theta_{0}\in L^{2}(\mathbb{R}^{2})$.
\begin{thm}\label{sdfthe61}
Let $\min\{\alpha+3\beta,\,3\alpha+\beta\}>4\alpha\beta$ with $\alpha>0,\,\beta>0$ and $\theta_{0}\in L^{2}(\mathbb{R}^{2})$, then the difference between \eqref{SQG} and \eqref{QGssd} satisfies
\begin{align}\label{dfcnmza1}
 \|\theta(t)-\widetilde{\theta}(t)\|_{L^2}\leq C_{0}(1+t)^{-\frac{ \min\{\alpha+3\beta,\, 3\alpha+\beta\}-4\alpha\beta}{8\alpha\beta}},
\end{align}
where the positive constant $C_{0}$ depends only on $\alpha,\,\beta$ and $\theta_{0}$.
\end{thm}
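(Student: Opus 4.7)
Set $w=\theta-\widetilde\theta$, which satisfies
\[\partial_t w + \Lambda_{x_1}^{2\alpha}w + \Lambda_{x_2}^{2\beta}w = -u\cdot\nabla\theta,\qquad w|_{t=0}=0.\]
The strategy is Fourier splitting on $\|w\|_{L^2}^2$, with a Duhamel estimate handling the low frequencies and the nonlinear forcing controlled by transferring time decay onto the explicit semigroup that defines $\widetilde\theta$. Testing against $w$ and using $\int (u\cdot\nabla\theta)\theta\,dx=0$, the energy identity reads
\[\frac{d}{dt}\|w\|_{L^2}^2 + 2\bigl(\|\Lambda_{x_1}^\alpha w\|_{L^2}^2+\|\Lambda_{x_2}^\beta w\|_{L^2}^2\bigr) = 2\int_{\mathbb{R}^2}(u\cdot\nabla\theta)\widetilde\theta\,dx =: G(t).\]

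The decisive ingredient is a time-decay bound on $G(t)$ using only $\theta_0\in L^2$. Integrating by parts yields $G(t)=-2\int u\theta\cdot\nabla\widetilde\theta\,dx$, and since $\widehat{\widetilde\theta}(t,\xi)=e^{-t(|\xi_1|^{2\alpha}+|\xi_2|^{2\beta})}\widehat{\theta_0}(\xi)$, Plancherel together with the pointwise bounds $|\widehat{u\theta}(\xi)|\le \|u\theta\|_{L^1}\le \|u\|_{L^2}\|\theta\|_{L^2}\le C\|\theta_0\|_{L^2}^2$ (Riesz continuity in $L^2$ and the basic energy inequality) and Cauchy--Schwarz against $|\widehat{\theta_0}|$ give
\[|G(t)|\ \le\ C\|\theta_0\|_{L^2}^{3}\Bigl(\int_{\mathbb{R}^2}|\xi|^{2}e^{-2t(|\xi_1|^{2\alpha}+|\xi_2|^{2\beta})}\,d\xi\Bigr)^{1/2}.\]
An anisotropic scaling $\xi_1\mapsto t^{-1/(2\alpha)}\eta_1$, $\xi_2\mapsto t^{-1/(2\beta)}\eta_2$ evaluates the inner integral as $C t^{-(\alpha+3\beta)/(2\alpha\beta)}+C t^{-(3\alpha+\beta)/(2\alpha\beta)}$, so for $t\ge 1$ the slower power dominates and $|G(t)|\le C\|\theta_0\|_{L^2}^{3}\,t^{-\gamma/2}$ with $\gamma:=\min\{\alpha+3\beta,\,3\alpha+\beta\}/(2\alpha\beta)$. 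The standing assumption is precisely $\gamma>2$, which makes $G$ integrable on $[1,\infty)$.

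For the dissipation side, introduce the time-dependent ball $B(t)=\{\xi:|\xi_1|^{2\alpha}+|\xi_2|^{2\beta}\le c/(1+t)\}$, so that
\[\|\Lambda_{x_1}^\alpha w\|_{L^2}^2+\|\Lambda_{x_2}^\beta w\|_{L^2}^2\ \ge\ \frac{c}{1+t}\Bigl(\|w\|_{L^2}^2-\int_{B(t)}|\widehat w(\xi)|^2\,d\xi\Bigr).\]
The Duhamel formula $\widehat w(t,\xi)=-\int_0^t e^{-(t-\tau)(|\xi_1|^{2\alpha}+|\xi_2|^{2\beta})}\,i\xi\cdot\widehat{u\theta}(\tau,\xi)\,d\tau$, combined with $t(|\xi_1|^{2\alpha}+|\xi_2|^{2\beta})\le c$ on $B(t)$, yields $|\widehat w(t,\xi)|\le C\|\theta_0\|_{L^2}^{2}|\xi|\,t$; the same anisotropic scaling gives $\int_{B(t)}|\xi|^2\,d\xi\le C(1+t)^{-\gamma}$, hence $\int_{B(t)}|\widehat w|^2\,d\xi\le C(1+t)^{2-\gamma}$. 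Inserting both bounds into the energy identity produces, for $t\ge 1$,
\[\frac{d}{dt}\|w\|_{L^2}^2+\frac{2c}{1+t}\|w\|_{L^2}^2\ \le\ C(1+t)^{1-\gamma}+C(1+t)^{-\gamma/2}.\]
Multiplying by the integrating factor $(1+t)^{2c}$ with $c$ large enough, integrating from $1$ to $t$, and starting from the trivial bound $\|w(1)\|_{L^2}\le 2\|\theta_0\|_{L^2}$ give $\|w(t)\|_{L^2}^2\le C(1+t)^{1-\gamma/2}$ (the second forcing term is the slower one, because $\gamma>2$). Taking square roots yields exactly $\|w(t)\|_{L^2}\le C(1+t)^{-(\gamma-2)/4}$, which is \eqref{dfcnmza1}.

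The main obstacle is the estimate on $G(t)$: since no decay of $\|\theta(t)\|_{L^2}$ is available under the sole hypothesis $\theta_0\in L^2$, the only usable source of $t$-decay in $G$ is the semigroup factor $e^{-t(|\xi_1|^{2\alpha}+|\xi_2|^{2\beta})}$ carried by $\widehat{\widetilde\theta}$. This produces an unavoidable short-time singularity as $t\to 0^+$ (consistent with $\nabla\widetilde\theta(0)=\nabla\theta_0$ being ill-defined for $\theta_0\in L^2\setminus H^1$), forcing the ODE argument to be initialized at $t=1$ and the interval $[0,1]$ to be absorbed into the trivial $L^2$ bound. The threshold condition $\min\{\alpha+3\beta,3\alpha+\beta\}>4\alpha\beta$ plays a dual role: it is simultaneously the integrability criterion for the forcing on $[1,\infty)$ and the rate governing the low-frequency volume $\int_{B(t)}|\xi|^{2}d\xi$, and a short computation shows that the two contributions combine to produce precisely the stated rate $-(\gamma-2)/4$.
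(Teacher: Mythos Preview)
Your proof is correct and follows essentially the same route as the paper: energy identity for $w$ with the nonlinear term rewritten as $\int(u\cdot\nabla)\widetilde\theta\,\theta\,dx$, decay of the forcing extracted from the explicit semigroup kernel via the $L^2$ bound on $\widehat{\theta_0}$, Fourier splitting on the shrinking set $\{|\xi_1|^{2\alpha}+|\xi_2|^{2\beta}\le c/(1+t)\}$, the Duhamel pointwise bound $|\widehat w(t,\xi)|\le C|\xi|\,t\,\|\theta_0\|_{L^2}^2$, and the integrating-factor ODE. The only cosmetic difference is that you estimate $G(t)$ by Plancherel plus Cauchy--Schwarz against $|\widehat{\theta_0}|$, whereas the paper passes through $\|\nabla\widetilde\theta\|_{L^\infty}$ in physical space and then invokes Hausdorff--Young with $p=2$; both produce the identical bound $|G(t)|\le C\|\theta_0\|_{L^2}^3\bigl(\int|\xi|^2 e^{-2t(|\xi_1|^{2\alpha}+|\xi_2|^{2\beta})}d\xi\bigr)^{1/2}$, and the rest of the argument coincides line by line.
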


\begin{rem}
Obviously, comparing the result in Proposition \ref{mkjqq1} below and Theorem \ref{sdfthe61}, we see that $\theta-\widetilde{\theta}$ decays faster than $\theta$ does.
\end{rem}

\begin{rem}
We mention that the above theorems \ref{OKTh1}, \ref{Lderq1}, \ref{okspcrca11}, \ref{Th3} and \ref{sdfthe61} also hold true for a more general case: $u=\mathbf{T}[\theta]$, where $\mathbf{T}$ is a divergence
free zero order operator. For example, Theorems \ref{OKTh1}, \ref{Lderq1}, \ref{okspcrca11} and  \ref{Th3} are true for the following 2D incompressible porous
medium equation
\begin{equation*}
\left\{\aligned
&\partial_{t}\theta+( {u}\cdot\nabla)\theta+
\Lambda_{x_{1}}^{2\alpha}\theta+ \Lambda_{x_{2}}^{2\beta}\theta=0,\\
& {u}=-\nabla p-\theta e_{2},\\
&\nabla\cdot {u}=0,\\
&\theta(x, 0)=\theta_{0}(x).
\endaligned\right.
\end{equation*}
In fact, ${u}=-\nabla p-\theta e_{2}$ and $\nabla\cdot {u}=0$ imply
$$ {u}=(-\mathcal{R}_{1}\mathcal{R}_{2}\theta,\,\,\mathcal
{R}_{1}\mathcal {R}_{1}\theta).$$
With the above relation  in hand, following the same lines in proving the above theorems, one may obtain the same results.
\end{rem}

 \vskip .2in
The rest of this paper is organized as follows. Section \ref{sec2} is devoted to the proof of the global regularity part of Theorem \ref{OKTh1}. Section \ref{sec3} proves the time decay part of Theorem \ref{OKTh1}. Next, we give the proof of Theorem \ref{Lderq1} in Section \ref{dfcse11}.
The proof of Theorem \ref{okspcrca11} and Theorem \ref{Th3} is presented in Section \ref{secadd1} and Section \ref{sec4}, respectively.
We sketch the proof of Theorem \ref{sdfthe61} in Section \ref{yetee1}.
In Appendix \ref{appdexA}, we show the small data global existence and the decay estimate for the case \eqref{SQG} with $\alpha=\beta=\frac{1}{2}$. The decay estimates of linear part are also provided in Appendix \ref{appdexB}. Throughout this paper, we use the following convention regarding constants: $C$ shall denote a positive,
sufficiently large constant, whose value may change from line to line; to emphasize the dependence of a constant on some certain quantities $\gamma_{1},\gamma_{2},\cdot\cdot\cdot,\gamma_{n}$, we write $C(\gamma_{1},\gamma_{2},\cdot\cdot\cdot,\gamma_{n})$.

\vskip .3in
\section{The proof of Theorem \ref{OKTh1}: Global regularity }\setcounter{equation}{0}
\label{sec2}

The main contribution of this section is to give a transparent and new proof of global regularity for the SQG equation \eqref{SQG} under the assumption \eqref{sdf2334}, which relies solely on some anisotropic embedding and interpolation inequalities involving fractional derivatives. The argument of the proof of the global regularity plays a crucial role in deriving the decay estimates.\\

To start, we present several inequalities to be used in the subsequent sections. The first one is the classical commutator estimates (see, e.g., \cite{KPV2}).
\begin{lemma} If $s>0$, $p, p_1, p_3\in (1, \infty)$ and $p_2, p_4\in [1,\infty]$ satisfy
$$
\frac1p =\frac1{p_1} + \frac1{p_2} = \frac1{p_3} + \frac1{p_4},
$$
then there exists some constants $C$ such that
\begin{eqnarray}\label{yzz1}
  \|[\Lambda^s, f]g\|_{L^p}\leq C \left(\|\Lambda^s f\|_{L^{p_1}}\, \|g\|_{L^{p_2}} + \|\Lambda^{s-1} g\|_{L^{p_3}}\,\|\nabla f\|_{L^{p_4}}\right).
\end{eqnarray}
If further $s\in(0,1)$, then it holds true
\begin{eqnarray}\label{tvghgf}
\|\Lambda^s (f\,g) - f\, \Lambda^s g- g\,\Lambda^s f\|_{L^p}
\leq C \|\Lambda^s f\|_{L^{p_{1}}} \|g\|_{L^{p_{2}}}.
\end{eqnarray}
In particular, \eqref{tvghgf} implies
\begin{eqnarray}\label{yzz3}
\|[\Lambda^s, f]g\|_{L^p}
\leq C\|\Lambda^s f\|_{L^{p_{1}}} \|g\|_{L^{p_{2}}},
\end{eqnarray}
where the brackets $[,]$ represent the commutator.
\end{lemma}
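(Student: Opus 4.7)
The plan is to prove both commutator estimates by Littlewood--Paley/paraproduct decomposition, which is the standard approach due to Kato--Ponce and Kenig--Ponce--Vega. Writing $f$ and $g$ as sums of Littlewood--Paley pieces and using Bony's decomposition,
\begin{equation*}
fg = T_f g + T_g f + R(f,g) = \sum_{j} S_{j-2} f \, \Delta_j g \;+\; \sum_{j} S_{j-2} g \, \Delta_j f \;+\; \sum_{|j-k|\le 2} \Delta_j f \, \Delta_k g,
\end{equation*}
reduces the problem to a sum of frequency-localized commutator estimates that can be recombined by almost-orthogonality and the Littlewood--Paley characterization of $L^p$ and Sobolev norms.

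For inequality (\ref{yzz1}), I would write $[\Lambda^s,f]g = \Lambda^s(fg) - f\,\Lambda^s g$ and apply Bony's decomposition to both products. The key piece is the low--high commutator $[\Lambda^s, S_{j-2}f]\Delta_j g$: since $S_{j-2}f$ is spectrally supported at frequencies $\ll 2^j$, representing the commutator as a convolution-type operator and Taylor expanding $S_{j-2}f$ produces one derivative on $f$ at the cost of one derivative of $\Lambda^s$ on the $g$-factor, yielding a dyadic bound of order $2^{j(s-1)}\|\nabla f\|_{L^{p_4}}\|\Delta_j g\|_{L^{p_3}}$; summing in $j$ produces the second term on the right of (\ref{yzz1}). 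For the high--low paraproduct $T_g f$ and the diagonal remainder $R$, the derivative $\Lambda^s$ is absorbed by the high-frequency $f$-factor via Bernstein's inequality, producing the first term $\|\Lambda^s f\|_{L^{p_1}}\|g\|_{L^{p_2}}$.

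For inequality (\ref{tvghgf}), a cleaner route uses the singular-integral representation valid for $s\in(0,2)$,
\begin{equation*}
\Lambda^s(fg)(x) - f(x)\Lambda^s g(x) - g(x)\Lambda^s f(x) = -c_{n,s}\,\mathrm{p.v.}\!\int_{\mathbb{R}^n}\frac{[f(x)-f(y)]\,[g(x)-g(y)]}{|x-y|^{n+s}}\,dy,
\end{equation*}
which exposes the double-difference kernel responsible for the extra cancellation relative to $[\Lambda^s,f]g$. Splitting $\mathbb{R}^n$ into a local region $|x-y|\le 1$ and a far region, one bounds the inner integral pointwise by a product of Hardy--Littlewood maximal functions of $\Lambda^s f$ and $g$; taking $L^p$-norms and invoking maximal theorems (with Fefferman--Stein vector-valued extensions for the full exponent range) delivers (\ref{tvghgf}). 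Inequality (\ref{yzz3}) is then an immediate corollary obtained by writing $[\Lambda^s,f]g = \bigl(\Lambda^s(fg)-f\,\Lambda^s g-g\,\Lambda^s f\bigr)+g\,\Lambda^s f$ and noting that $\|g\,\Lambda^s f\|_{L^p}\le \|\Lambda^s f\|_{L^{p_1}}\|g\|_{L^{p_2}}$ by H\"older.

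The main obstacle will be making the Taylor-expansion gain in the low--high commutator rigorous uniformly across the admissible exponents, especially when $p_2$ or $p_4$ equals $\infty$, where maximal-function and Bernstein-type bounds at the endpoint must be replaced by direct convolution-kernel estimates. Concretely, one checks that after rescaling the kernel of the multiplier $|\xi|^{s-1}\chi_j(\xi)$ is $L^1$-bounded uniformly in $j$, so Young's convolution inequality substitutes for the maximal function at the $L^\infty$ endpoint, and the dyadic sum in $j$ converges by duality against $\ell^2$ Littlewood--Paley square functions.
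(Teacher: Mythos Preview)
The paper does not actually prove this lemma: it simply cites Kenig--Ponce--Vega \cite{KPV2} and states the result. Your proposal is a correct sketch of the standard proof, and the paraproduct/Bony decomposition route you outline for \eqref{yzz1} is precisely the method of that reference; your derivation of \eqref{yzz3} from \eqref{tvghgf} by adding back $g\Lambda^s f$ and applying H\"older is also the intended one-line argument.
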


\vskip .1in
The following interpolation inequality
will be frequently used later (see \cite[Lemma 2.2]{Yenonli}).
\begin{lemma}
If $0\leq\gamma\leq\varrho$, then it hold true for $i=1,\,2$
\begin{eqnarray}\label{avbnuy8911}
\|\Lambda_{x_{i}}^{\gamma}f\|_{L^{2}}\leq C\|f\|_{L^{2}}^{1-\frac{\gamma}{\varrho}}\|\Lambda_{x_{i}}
^{\varrho}f\|_{L^{2}}^{\frac{\gamma}{\varrho}}.
_{}\end{eqnarray}
\end{lemma}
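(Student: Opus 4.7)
The plan is to prove the directional interpolation inequality \eqref{avbnuy8911} by a straightforward Fourier-analytic argument based on Plancherel's identity and Hölder's inequality in frequency space. The degenerate cases $\gamma=0$ (the inequality reduces to $\|f\|_{L^2}\le\|f\|_{L^2}$) and $\gamma=\varrho$ (it becomes $\|\Lambda_{x_i}^\varrho f\|_{L^2}\le\|\Lambda_{x_i}^\varrho f\|_{L^2}$) are trivial, so I would assume $0<\gamma<\varrho$ from the outset.

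First, by Plancherel's theorem applied to the Fourier symbol of $\Lambda_{x_i}^\gamma$, I would write
\begin{equation*}
\|\Lambda_{x_i}^\gamma f\|_{L^2}^2 = \int_{\mathbb{R}^2}|\xi_i|^{2\gamma}|\widehat{f}(\xi)|^2\,d\xi.
\end{equation*}
The next step is the algebraic factorization
\begin{equation*}
|\xi_i|^{2\gamma}|\widehat{f}(\xi)|^2 = \bigl(|\xi_i|^{2\varrho}|\widehat{f}(\xi)|^2\bigr)^{\gamma/\varrho}\cdot\bigl(|\widehat{f}(\xi)|^2\bigr)^{1-\gamma/\varrho},
\end{equation*}
which is the key observation that turns the problem into a product of powers suitable for Hölder.

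Then I would apply Hölder's inequality on $\mathbb{R}^2$ with conjugate exponents $p=\varrho/\gamma$ and $q=\varrho/(\varrho-\gamma)$ (both greater than $1$ under the standing assumption $0<\gamma<\varrho$) to obtain
\begin{equation*}
\int_{\mathbb{R}^2}|\xi_i|^{2\gamma}|\widehat{f}(\xi)|^2\,d\xi \le \left(\int_{\mathbb{R}^2}|\xi_i|^{2\varrho}|\widehat{f}(\xi)|^2\,d\xi\right)^{\gamma/\varrho}\left(\int_{\mathbb{R}^2}|\widehat{f}(\xi)|^2\,d\xi\right)^{1-\gamma/\varrho}.
\end{equation*}
Applying Plancherel once more to each factor and taking the square root yields precisely \eqref{avbnuy8911} with the sharp constant $C=1$. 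Since every step is an equality or a single use of Hölder, no genuine obstacle arises; the only thing to verify carefully is that the Hölder exponents are admissible, which is guaranteed by the hypothesis $0\le\gamma\le\varrho$ after handling the endpoints separately. The same argument goes through for $i=1$ and $i=2$ without modification because the role of $\xi_i$ is symmetric in the two coordinates.
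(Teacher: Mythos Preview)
Your proof is correct and follows exactly the standard Plancherel--H\"older argument. The paper does not give its own proof of this lemma (it merely cites \cite[Lemma 2.2]{Yenonli}), but the identical technique---writing $|\xi_i|^{2\gamma}|\widehat{f}|^2$ as a product of powers and applying H\"older in frequency space---is precisely what the paper uses to prove the closely related anisotropic interpolation inequality in Lemma~\ref{gfhgj}, so your approach is fully aligned with the paper's methodology.
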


\vskip .1in
In order to obtain the higher regularity, we need to bound the triple product in terms of the Lebesgue norms of the functions and their directional derivatives (see \cite[Lemma 2.4]{Yenonli}).
\begin{lemma}
Let $f\in L_{x_{2}}^{q}L_{x_{1}}^{p}(\mathbb{R}^2)$ for $p,\,q\in[2,\,\infty]$. If $g, h, \Lambda_{x_{1}}^{\gamma_{1}}g,  \Lambda_{x_{2}}^{\gamma_{2}}h \in L^2(\mathbb{R}^2)$ for any $\gamma_{1}\in (\frac{1}{p},\,1]$ and $\gamma_{2}\in (\frac{1}{q},\,1]$, then it holds true
\begin{equation}\label{qtri}
 \int_{\mathbb{R}}\int_{\mathbb{R}}| f \, g\, h|  \;dx_{1}dx_{2}  \le C \, \|f\|_{L_{x_{2}}^{q}L_{x_{1}}^{p}} \, \|g\|_{L^2}^{1-\frac{1}{\gamma_{1}p}}\|\Lambda_{x_{1}}^{\gamma_{1}}g\|_{L^2}^{ \frac{1}{\gamma_{1}p}} \,\|h\|_{L^2}^{1-\frac{1}{\gamma_{2}q}}\|\Lambda_{x_{2}}^{\gamma_{2}}h\|_{L^2}^{ \frac{1}{\gamma_{2}q}},
\end{equation}
where here and in sequel, we use the notation
$$\|h\|_{L_{x_{2}}^{q}L_{x_{1}}^p}\triangleq\Big( \int_{\mathbb{R}}\|h(.,x_{2})\|_{L_{x_{1}}^p}^{q} \,dx_{2}\Big)^{\frac{1}{q}}.$$
In particular, if $f, g,  h\in L^2(\mathbb{R}^2)$ and $\Lambda_{x_{1}}^{\gamma_{1}}g, \Lambda_{x_{2}}^{\gamma_{2}}h \in L^2(\mathbb{R}^2)$ for any $\gamma_{1},\ \gamma_{2}\in (\frac{1}{2},\,1]$, then it holds
\begin{equation}
 \int_{\mathbb{R}}\int_{\mathbb{R}}| f \, g\, h|  \;dx_{1}dx_{2}  \le C \, \|f\|_{L^2} \, \|g\|_{L^2}^{1-\frac{1}{2\gamma_{1}}}\|\Lambda_{x_{1}}^{\gamma_{1}}g\|_{L^2}^{ \frac{1}{2\gamma_{1}}} \,\|h\|_{L^2}^{1-\frac{1}{2\gamma_{2}}}\|\Lambda_{x_{2}}^{\gamma_{2}}h\|_{L^2}^{ \frac{1}{2\gamma_{2}}},\nonumber
\end{equation}
where $C$ is a constant depending on $\gamma_{1}$ and $\gamma_{2}$ only.
\end{lemma}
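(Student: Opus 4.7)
The plan is to reduce the full two-dimensional triple product to successive one-dimensional Gagliardo--Nirenberg inequalities, one in each coordinate direction, separated by two layers of Hölder. I would first fix $x_2$ and regard $f(\cdot,x_2), g(\cdot,x_2), h(\cdot,x_2)$ as functions of $x_1$ alone, and apply the three-factor Hölder inequality in $x_1$ with exponents $(p, r, 2)$, choosing $r$ so that $\frac{1}{p}+\frac{1}{r}+\frac{1}{2}=1$, i.e.\ $\frac{1}{r}=\frac{1}{2}-\frac{1}{p}$. This puts $f$ in its natural $L^p_{x_1}$ space, leaves $h$ in $L^2_{x_1}$, and drops $g$ into a critical Lebesgue space $L^r_{x_1}$ that one can reach by one-dimensional interpolation.

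Next, for each fixed $x_2$, I would apply the one-dimensional Gagliardo--Nirenberg inequality
\[
\|g(\cdot,x_2)\|_{L^r_{x_1}}\le C\,\|g(\cdot,x_2)\|_{L^2_{x_1}}^{1-\frac{1}{\gamma_1 p}}\,\|\Lambda_{x_1}^{\gamma_1}g(\cdot,x_2)\|_{L^2_{x_1}}^{\frac{1}{\gamma_1 p}},
\]
whose exponents are forced by scaling and require precisely $\gamma_1>\tfrac{1}{p}$ so that the power $\tfrac{1}{\gamma_1 p}$ lies in $[0,1)$. Integrating in $x_2$ and applying Hölder once more in $x_2$ with four exponents $\bigl(q,\,\tfrac{2}{1-1/(\gamma_1 p)},\,2\gamma_1 p,\,s\bigr)$ (which are compatible since their reciprocals sum to $1$ when $\tfrac{1}{s}=\tfrac{1}{2}-\tfrac{1}{q}$) collects the $f$-factor into $\|f\|_{L^q_{x_2}L^p_{x_1}}$ and the two $g$-factors into the global norms $\|g\|_{L^2}^{1-1/(\gamma_1 p)}\,\|\Lambda_{x_1}^{\gamma_1}g\|_{L^2}^{1/(\gamma_1 p)}$, and leaves only $\bigl\|\,\|h(\cdot,x_2)\|_{L^2_{x_1}}\bigr\|_{L^s_{x_2}}$ to be dealt with.

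To treat this last term I would use Minkowski's integral inequality, which is applicable because $s\ge 2$ (this is where the assumption $q\ge 2$ enters), to swap the order of integration:
\[
\bigl\|\,\|h(\cdot,x_2)\|_{L^2_{x_1}}\bigr\|_{L^s_{x_2}}\,\le\,\bigl\|\,\|h(x_1,\cdot)\|_{L^s_{x_2}}\bigr\|_{L^2_{x_1}}.
\]
Then for each fixed $x_1$, a one-dimensional Gagliardo--Nirenberg in $x_2$ gives
\[
\|h(x_1,\cdot)\|_{L^s_{x_2}}\le C\,\|h(x_1,\cdot)\|_{L^2_{x_2}}^{1-\frac{1}{\gamma_2 q}}\,\|\Lambda_{x_2}^{\gamma_2}h(x_1,\cdot)\|_{L^2_{x_2}}^{\frac{1}{\gamma_2 q}},
\]
using $\gamma_2>\tfrac{1}{q}$. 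Squaring and integrating over $x_1$, a final Hölder in $x_1$ with exponents $\bigl(\tfrac{1}{1-1/(\gamma_2 q)},\,\gamma_2 q\bigr)$ converts the mixed norm into the global $\|h\|_{L^2}$ and $\|\Lambda_{x_2}^{\gamma_2}h\|_{L^2}$ factors with the asserted powers.

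The main obstacle, in my view, is not any individual inequality but the simultaneous matching of exponents so that the final product has exactly the homogeneities dictated by scaling. A quick scaling check (anisotropic dilations $x_i\mapsto \lambda_i x_i$) confirms that the powers $\tfrac{1}{\gamma_1 p}$ and $\tfrac{1}{\gamma_2 q}$ are the only ones possible, which tells me in advance that the chain of Hölders must be arranged in the order above; care is also needed in the degenerate endpoints $p=\infty$ or $q=\infty$, where the relevant Gagliardo--Nirenberg exponent becomes $0$ and the corresponding interpolation step is trivial, and in the other endpoint $p=2$ or $q=2$, where $r=\infty$ or $s=\infty$ and one instead uses the one-dimensional Sobolev embedding $H^{\gamma}(\mathbb{R})\hookrightarrow L^\infty(\mathbb{R})$ valid precisely because $\gamma>\tfrac{1}{2}$.
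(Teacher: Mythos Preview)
The paper does not give its own proof of this lemma; it simply cites \cite[Lemma 2.4]{Yenonli}. Your argument is correct and is in fact the standard route to such anisotropic triple-product bounds: H\"older in $x_1$ with exponents $(p,r,2)$ where $\tfrac{1}{r}=\tfrac{1}{2}-\tfrac{1}{p}$, a one-dimensional Gagliardo--Nirenberg for $g$ in $x_1$, H\"older in $x_2$ to separate the four resulting factors, Minkowski (using $s\ge 2$) to commute the $L^2_{x_1}$ and $L^s_{x_2}$ norms on $h$, and a second one-dimensional Gagliardo--Nirenberg for $h$ in $x_2$ followed by H\"older in $x_1$. Your exponent bookkeeping is right, and your remarks on the endpoints $p=2$ (where $r=\infty$ and one uses $H^{\gamma_1}(\mathbb{R})\hookrightarrow L^\infty(\mathbb{R})$ for $\gamma_1>\tfrac{1}{2}$) and $p=\infty$ (where the interpolation step for $g$ becomes trivial) are accurate; the same applies symmetrically to $q$.
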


 \vskip .1in
Next, we will show the anisotropic interpolation inequality.
\begin{lemma}\label{gfhgj}
Let $\delta_{i}>0, \varepsilon_{i}\geq0, \gamma_{i}\geq0$ ($i=1,2$) satisfy
$$\mu\triangleq\frac{\delta_{2}\gamma_{1}-\delta_{1}\gamma_{2}}{\varepsilon_{2}\gamma_{1}-
\varepsilon_{1}\gamma_{2}}\geq 0,\ \ \lambda\triangleq\frac{\varepsilon_{2}\delta_{1}-\varepsilon_{1}\delta_{2}}{\varepsilon_{2}\gamma_{1}-
\varepsilon_{1}\gamma_{2}}\geq 0,\ \  \mu+\lambda\leq1.$$
Then the following anisotropic interpolation inequality is true
\begin{eqnarray}\label{t2326001}
 \|\Lambda_{x_{1}}^{\delta_{1}}\Lambda_{x_{2}}^{\delta_{2}}f\|_{L^{2}}\leq
C\|f\|_{L^{2}}^{1-\mu-\lambda}
 \|\Lambda_{x_{1}}^{\varepsilon_{1}}\Lambda_{x_{2}}^{\varepsilon_{2}}f\|_{L^{2}}^{\mu}
 \|\Lambda_{x_{1}}^{\gamma_{1}}\Lambda_{x_{2}}^{\gamma_{2}}f\|_{L^{2}}^{\lambda}.
\end{eqnarray}
A special consequence of \eqref{t2326001} is that if $\varepsilon_{1}=\gamma_{2}=0$ and $\delta_{1}>0, \delta_{2}>0, \varepsilon_{2}>0, \gamma_{1}>0$ satisfy $\frac{\delta_{2} }{\varepsilon_{2} }+\frac{\delta_{1} }{\gamma_{1} }=1$, then it holds true
\begin{eqnarray}\label{xcvyadi1}
 \|\Lambda_{x_{1}}^{\delta_{1}}\Lambda_{x_{2}}^{\delta_{2}}f\|_{L^{2}}\leq
C
 \| \Lambda_{x_{2}}^{\varepsilon_{2}}f\|_{L^{2}}^{\frac{\delta_{2} }{\varepsilon_{2} }}
 \|\Lambda_{x_{1}}^{\gamma_{1}} f\|_{L^{2}}^{\frac{\delta_{1} }{\gamma_{1} }}.
\end{eqnarray}
\end{lemma}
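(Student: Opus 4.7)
The natural route is through the Fourier side. By Plancherel's identity I can rewrite
\[
\|\Lambda_{x_{1}}^{\delta_{1}}\Lambda_{x_{2}}^{\delta_{2}}f\|_{L^{2}}^{2}=\int_{\mathbb{R}^{2}}|\xi_{1}|^{2\delta_{1}}|\xi_{2}|^{2\delta_{2}}|\widehat{f}(\xi)|^{2}\,d\xi,
\]
and similarly for the three norms on the right-hand side. So the plan is to find a pointwise factorization of the weight $|\xi_{1}|^{2\delta_{1}}|\xi_{2}|^{2\delta_{2}}$ in terms of the three weights $1$, $|\xi_{1}|^{2\varepsilon_{1}}|\xi_{2}|^{2\varepsilon_{2}}$, and $|\xi_{1}|^{2\gamma_{1}}|\xi_{2}|^{2\gamma_{2}}$ and then apply H\"older's inequality to $|\widehat{f}|^{2}\,d\xi$.

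The algebraic core is to verify that $\mu$ and $\lambda$, as defined in the statement, are exactly the unique solution of the linear system
\[
\delta_{1}=\mu\,\varepsilon_{1}+\lambda\,\gamma_{1},\qquad \delta_{2}=\mu\,\varepsilon_{2}+\lambda\,\gamma_{2},
\]
obtained by Cramer's rule with determinant $\varepsilon_{2}\gamma_{1}-\varepsilon_{1}\gamma_{2}$; the assumptions $\mu\geq0$, $\lambda\geq0$, and $\mu+\lambda\leq1$ make $(1-\mu-\lambda,\mu,\lambda)$ a legitimate triple of H\"older exponents. Then splitting
\[
|\xi_{1}|^{2\delta_{1}}|\xi_{2}|^{2\delta_{2}}|\widehat{f}|^{2}=\bigl(|\widehat{f}|^{2}\bigr)^{1-\mu-\lambda}\bigl(|\xi_{1}|^{2\varepsilon_{1}}|\xi_{2}|^{2\varepsilon_{2}}|\widehat{f}|^{2}\bigr)^{\mu}\bigl(|\xi_{1}|^{2\gamma_{1}}|\xi_{2}|^{2\gamma_{2}}|\widehat{f}|^{2}\bigr)^{\lambda}
\]
and applying H\"older's inequality with exponents $\frac{1}{1-\mu-\lambda},\frac{1}{\mu},\frac{1}{\lambda}$, followed by a square root and another application of Plancherel, yields \eqref{t2326001} immediately.

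The only issue requiring some care is the possibility of degenerate exponents: if $\mu=0$, $\lambda=0$, or $\mu+\lambda=1$, the three-factor H\"older above should be reduced to the corresponding two-factor or one-factor version (or the trivial identity), and one easily checks that this reduction gives exactly the inequality claimed. I also need to be aware that $\varepsilon_{2}\gamma_{1}-\varepsilon_{1}\gamma_{2}\neq0$ is implicit in the definitions of $\mu,\lambda$; any limiting case is handled by the obvious direct Fourier argument.

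For the special case \eqref{xcvyadi1}, setting $\varepsilon_{1}=\gamma_{2}=0$ gives $\mu=\delta_{2}/\varepsilon_{2}$, $\lambda=\delta_{1}/\gamma_{1}$, and the hypothesis $\frac{\delta_{2}}{\varepsilon_{2}}+\frac{\delta_{1}}{\gamma_{1}}=1$ forces $\mu+\lambda=1$, so the $L^{2}$-factor of $f$ in \eqref{t2326001} disappears and one recovers \eqref{xcvyadi1} directly. I expect no serious obstacle in this argument; the only point where attention is needed is the bookkeeping of H\"older exponents in the degenerate cases.
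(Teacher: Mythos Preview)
Your proposal is correct and follows essentially the same approach as the paper: Plancherel, the pointwise factorization of the weight using the identities $\delta_{i}=\mu\varepsilon_{i}+\lambda\gamma_{i}$, and H\"older's inequality with exponents $\tfrac{1}{1-\mu-\lambda},\tfrac{1}{\mu},\tfrac{1}{\lambda}$. Your explicit remark that $\mu,\lambda$ arise from Cramer's rule and your treatment of the degenerate endpoints are slightly more detailed than the paper's presentation, but the argument is the same.
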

\begin{proof}
By Plancherel's theorem and the H$\rm \ddot{o}$lder inequality, we can check that
\begin{align}
&\|\Lambda_{x_{1}}^{\delta_{1}}\Lambda_{x_{2}}^{\delta_{2}}f(x_{1},x_{2})
\|_{L^{2}}
 \nonumber\\&=\Big(\int_{\mathbb{R}^{2}}{|\xi_{1}|^{2\delta_{1}}|\xi_{2}|^{2\delta_{2}}
|\widehat{f}(\xi)|^{2}\,d\xi}\Big)^{\frac{1}{2}}\nonumber\\
& =\Big(\int_{\mathbb{R}^{2}}{\big(|\xi_{1}|^{2\varepsilon_{1}\mu}
 |\xi_{2}|^{2\varepsilon_{2}\mu}
|\widehat{f}(\xi)|^{2\mu}\big)\big(|\xi_{1}|^{2\gamma_{1}\lambda}|\xi_{2}|^{
2\gamma_{2}\lambda}
|\widehat{f}(\xi)|^{2\lambda}\big)} |\widehat{f}(\xi)|^{2(1-\mu-\lambda)}\,d\xi\Big)^{\frac{1}{2}}
\nonumber\\
 &\leq C\Big(\int_{\mathbb{R}^{2}}{|\xi_{1}|^{2\varepsilon_{1}}
 |\xi_{2}|^{2\varepsilon_{2} }
|\widehat{f}(\xi)|^{2}\,d\xi}\Big)^{\frac{\mu}{2}}
\Big(\int_{\mathbb{R}^{2}}{|\xi_{1}|^{2\gamma_{1} }|\xi_{2}|^{
2\gamma_{2} }
|\widehat{f}(\xi)|^{2}\,d\xi}\Big)^{\frac{\lambda}{2}}\nonumber\\& \quad
\times
\Big(\int_{\mathbb{R}^{2}}{
|\widehat{f}(\xi)|^{2}\,d\xi}\Big)^{\frac{1}{2}(1-\mu-\lambda)}
\nonumber\\
& =C\|f\|_{L^{2}}^{1-\mu-\lambda}
 \|\Lambda_{x_{1}}^{\varepsilon_{1}}\Lambda_{x_{2}}^{\varepsilon_{2}}f\|_{L^{2}}^{\mu}
 \|\Lambda_{x_{1}}^{\gamma_{1}}\Lambda_{x_{2}}^{\gamma_{2}}f\|_{L^{2}}^{\lambda}.\nonumber
\end{align}
This completes the proof of Lemma \ref{gfhgj}.
\end{proof}

\vskip .1in
The following sharp anisotropic
interpolation inequality will be also used later.
\begin{lemma} \label{dfdasfwqew33}
Let $0\leq\sigma<\delta$, $1\leq p<r<q\leq\infty$ satisfy $$\frac{1}{r}=\frac{1}{q}\left(1-\frac{\sigma}{\delta}\right)+\frac{1}{p}
\frac{\sigma}{\delta}.$$
Then the following anisotropic interpolation inequality holds true
\begin{eqnarray}\label{ghpoiu908}
 \|\Lambda_{x_{i}}^{\sigma}f\|_{L^{r}}\leq
C\|f\|_{L^{q}}^{1-\frac{\sigma}{\delta}}
\|\Lambda_{x_{i}}^{\delta}f\|_{L^{p}}^{\frac{\sigma}{\delta}},\quad i=1,2.
\end{eqnarray}
\end{lemma}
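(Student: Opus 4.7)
The plan is to reduce \eqref{ghpoiu908} to a one-dimensional statement by slicing perpendicular to the $x_i$ direction, applying a classical one-dimensional fractional Gagliardo--Nirenberg inequality on each slice, and then recombining via H\"older's inequality in the transverse variable. By symmetry it suffices to treat $i=1$; write $x=(x_1,x_2)$ throughout.

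For fixed $x_2\in\mathbb{R}$, regard $f(\cdot,x_2)$ as a function on the line. The one-dimensional Gagliardo--Nirenberg inequality with fractional derivatives yields
\[
\|\Lambda_{x_1}^{\sigma} f(\cdot,x_2)\|_{L^r_{x_1}}
\leq C\,\|f(\cdot,x_2)\|_{L^q_{x_1}}^{1-\sigma/\delta}\,
\|\Lambda_{x_1}^{\delta} f(\cdot,x_2)\|_{L^p_{x_1}}^{\sigma/\delta},
\]
where the scaling relation $\frac{1}{r}=(1-\frac{\sigma}{\delta})\frac{1}{q}+\frac{\sigma}{\delta}\frac{1}{p}$ is precisely what makes the inequality scale-invariant. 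This slice estimate is a standard consequence of Littlewood--Paley decomposition in $x_1$ combined with Bernstein's inequality, or equivalently of classical Riesz potential bounds in one variable.

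Next, I would raise the slice inequality to the $r$-th power and integrate in $x_2$ to obtain
\[
\|\Lambda_{x_1}^{\sigma} f\|_{L^r}^r \leq C \int_{\mathbb{R}} \|f(\cdot,x_2)\|_{L^q_{x_1}}^{r(1-\sigma/\delta)} \|\Lambda_{x_1}^{\delta} f(\cdot,x_2)\|_{L^p_{x_1}}^{r\sigma/\delta}\,dx_2.
\]
Applying H\"older's inequality in $x_2$ with the conjugate pair $\bigl(\tfrac{q}{r(1-\sigma/\delta)},\tfrac{p}{r\sigma/\delta}\bigr)$, whose conjugacy $\tfrac{r(1-\sigma/\delta)}{q}+\tfrac{r\sigma/\delta}{p}=1$ is exactly the hypothesis on $r$, separates the two factors and lifts each to precisely the correct exponent, giving
\[
\|\Lambda_{x_1}^{\sigma} f\|_{L^r}^r \leq C\,\|f\|_{L^q(\mathbb{R}^2)}^{r(1-\sigma/\delta)}\,\|\Lambda_{x_1}^{\delta} f\|_{L^p(\mathbb{R}^2)}^{r\sigma/\delta}.
\]
Taking $r$-th roots yields \eqref{ghpoiu908}.

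The only substantive step is the one-dimensional fractional Gagliardo--Nirenberg inequality; everything after that is bookkeeping. Boundary cases are handled by natural modifications: $\sigma=0$ collapses the estimate to a direct H\"older bound, while $q=\infty$ is treated by pulling $\|f\|_{L^\infty}$ out of the $x_2$-integral before integrating the remaining $L^p_{x_1}$ factor, which then appears to the power $p$ exactly because the hypothesis forces $r\sigma/\delta=p$ in that regime.
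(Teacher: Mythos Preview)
Your argument is correct and matches the paper's proof essentially line for line: both invoke the one-dimensional fractional interpolation inequality on each $x_2$-slice, raise to the $r$-th power, integrate in $x_2$, and separate the factors by H\"older using the same conjugacy relation. Your explicit treatment of the endpoint $q=\infty$ is a small addition the paper leaves implicit.
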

\begin{proof}
It suffices to prove (\ref{ghpoiu908}) for $i=1$.
We recall the following 1D Sobolev interpolation inequality
\begin{eqnarray}
\|\Lambda_{x_{1}}^{\sigma}f\|_{L_{x_{1}}^{r}(\mathbb{R})}\leq C\|f\|_{L_{x_{1}}^{q}(\mathbb{R})}^{1-\frac{\sigma}{\delta}} \|\Lambda_{x_{1}}^{\delta}f\|_{L_{x_{1}}^{p}(\mathbb{R})}^{ \frac{\sigma}{\delta}},\nonumber
\end{eqnarray}
where we have used the sub-index $x_{1}$ with the Lebesgue spaces to emphasize that the norms are taken in 1D Lebesgue spaces with respect to $x_{1}$.
This along with the Young inequality implies
\begin{align}
\|\Lambda_{x_{1}}^{\sigma}f\|_{L^{r}}^{r}=&\int_{\mathbb{R}}{
\|\Lambda_{x_{1}}^{\sigma}f(x_{1},\,x_{2})\|_{L_{x_{1}}^{r}}^{r}\,
dx_{2}}\nonumber\\ \leq& C\int_{\mathbb{R}}{ \|f(x_{1},\,x_{2})\|_{L_{x_{1}}^{q}}^{r(1-\frac{\sigma}{\delta})} \|\Lambda_{x_{1}}^{\delta}f(x_{1},\,x_{2})\|_{L_{x_{1}}^{p}}^{ \frac{r\sigma}{\delta}}\,
 dx_{2}}
 \nonumber\\ \leq& C\left(\int_{\mathbb{R}}{ \|f(x_{1},\,x_{2})\|_{L_{x_{1}}^{q}}^{q} \,
 dx_{2}}\right)^{\frac{r(1-\frac{\sigma}{\delta})}{q}}
 \left(\int_{\mathbb{R}}{ \|\Lambda_{x_{1}}^{\delta}f(x_{1},\,x_{2})\|_{L_{x_{1}}^{p}}^{p}\,
 dx_{2}}\right)^{\frac{\frac{r\sigma}{\delta}}{p}}
 \nonumber\\ \leq& C\|f(x_{1},\,x_{2})\|_{L_{x_{1}x_{2}}^{q}}^{r(1-\frac{\sigma}{\delta})}
 \|\Lambda_{x_{1}}^{\delta}f(x_{1},\,x_{2})\|_{L_{x_{1}x_{2}}^{p}}
 ^{\frac{r\sigma}{\delta}}
 \nonumber\\ =&
 C\|f\|_{L^{q}}^{r(1-\frac{\sigma}{\delta})}
\|\Lambda_{x_{1}}^{\delta}f\|_{L^{p}}^{\frac{r\sigma}{\delta}}, \nonumber
\end{align}
which yields that
$$\|\Lambda_{x_{1}}^{\sigma}f\|_{L^{r}}\leq
C\|f\|_{L^{q}}^{1-\frac{\sigma}{\delta}}
\|\Lambda_{x_{1}}^{\delta}f\|_{L^{p}}^{\frac{\sigma}{\delta}}.$$
Thus, this ends the proof of Lemma \ref{dfdasfwqew33}.
\end{proof}

\vskip .1in
With the above lemmas in hand, we are ready to show the global regularity part of Theorem \ref{OKTh1}. We point out that the existence and uniqueness of local smooth solutions can be established via a standard procedure (see for example \cite[Proposition 5.1]{Yenonli}). Consequently, it suffices to establish {\it a priori} estimates that hold for any fixed $T>0$. We begin with the following standard energy estimates.
\begin{Pros}\label{Spr1}
Let $\theta$ be the solution of (\ref{SQG}). Then, for any $t>0$,
$$\|\theta(t)\|_{L^{2}}^{2}+2\int_{0}^{t}{(\|\Lambda_{x_{1}}^{\alpha}\theta(
\tau)\|_{L^{2}}^{2}+\|\Lambda_{x_{2}}^{\beta}\theta(
\tau)\|_{L^{2}}^{2})\,d\tau}\leq \|\theta_{0}\|_{L^{2}}^{2},$$
$$
\|\theta(t)\|_{L^p}\leq \|\theta_0\|_{L^p}, \qquad 1\leq p\leq\infty.
$$
\end{Pros}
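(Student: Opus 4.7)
The plan is to derive both bounds by testing the equation against suitable powers of $\theta$ and exploiting the divergence-free structure of $u$ together with the positivity of the anisotropic fractional dissipation.

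For the $L^2$ bound, I would multiply \eqref{SQG} by $\theta$ and integrate over $\mathbb{R}^2$. Since $u=\mathcal{R}^\perp\theta$ satisfies $\nabla\cdot u=0$, the transport term vanishes:
\begin{equation*}
\int_{\mathbb{R}^2}(u\cdot\nabla\theta)\,\theta\,dx=\tfrac{1}{2}\int_{\mathbb{R}^2}u\cdot\nabla(\theta^2)\,dx=-\tfrac{1}{2}\int_{\mathbb{R}^2}(\nabla\cdot u)\theta^2\,dx=0.
\end{equation*}
The dissipative terms can be handled by Plancherel's theorem: writing $\int\Lambda_{x_1}^{2\alpha}\theta\cdot\theta\,dx=\int|\xi_1|^{2\alpha}|\widehat{\theta}(\xi)|^2\,d\xi=\|\Lambda_{x_1}^{\alpha}\theta\|_{L^2}^2$, and similarly for the $x_2$ direction. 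Integrating the resulting differential identity in time yields exactly the stated energy equality (hence inequality).

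For the $L^p$ bound with $p\in[1,\infty)$, I would test the equation against $|\theta|^{p-2}\theta$. The transport term again vanishes by the divergence-free condition, since
\begin{equation*}
\int_{\mathbb{R}^2}(u\cdot\nabla\theta)|\theta|^{p-2}\theta\,dx=\tfrac{1}{p}\int_{\mathbb{R}^2}u\cdot\nabla(|\theta|^{p})\,dx=0.
\end{equation*}
For the dissipative contribution I would invoke the directional C\'ordoba--C\'ordoba-type pointwise inequality, which gives for each $i\in\{1,2\}$ the integrated version
\begin{equation*}
\int_{\mathbb{R}^2}|\theta|^{p-2}\theta\,\Lambda_{x_i}^{2\gamma}\theta\,dx\geq 0,\qquad 0<\gamma\leq 1,
\end{equation*}
obtained by applying the one-dimensional positivity inequality slice-by-slice and integrating in the remaining variable (Tonelli). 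Combining the two contributions gives $\tfrac{1}{p}\tfrac{d}{dt}\|\theta\|_{L^p}^p\leq 0$, which integrates to the stated bound. The case $p=\infty$ then follows by letting $p\to\infty$ in the resulting inequality $\|\theta(t)\|_{L^p}\leq\|\theta_0\|_{L^p}$.

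The only nontrivial point is the slice-by-slice positivity for the directional operators $\Lambda_{x_i}^{2\gamma}$; once that is in place the argument is entirely standard. This step is the main technical obstacle, but it reduces cleanly to the one-dimensional C\'ordoba--C\'ordoba inequality applied in a fixed direction, so no new analytic ingredient is required beyond what is already well known in the isotropic setting.
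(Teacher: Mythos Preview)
Your proposal is correct and follows essentially the same approach as the paper: multiply by $\theta$ (respectively $|\theta|^{p-2}\theta$), use the divergence-free condition to kill the transport term, and invoke the directional C\'ordoba--C\'ordoba positivity applied slice-by-slice to handle the anisotropic dissipation. The paper states the slightly stronger lower bound $\int \Lambda_{x_i}^{2\alpha}\theta\,|\theta|^{p-2}\theta\,dx \geq C\int (\Lambda_{x_i}^{\alpha}|\theta|^{p/2})^2\,dx$ before discarding it as nonnegative, but the mechanism is identical to what you outline.
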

\begin{proof}
The proof of Proposition \ref{Spr1} was derived by \cite[Proposition 3.1]{Yenonli}. Here for the convenience of the reader, we restate it here.
Multiplying $(\ref{SQG})$ by $\theta$ and using the divergence-free condition give rise to
 $$\frac{1}{2}\frac{d}{dt}\|\theta(t)\|_{L^{2}}^{2}+
 \|\Lambda_{x_{1}}^{\alpha}\theta\|_{L^{2}}^{2}+
 \|\Lambda_{x_{2}}^{\beta}\theta\|_{L^{2}}^{2}= 0.$$
Integrating with respect to time yields
\begin{eqnarray}
\|\theta(t)\|_{L^{2}}^{2}+2\int_{0}^{t}{(\|\Lambda_{x_{1}}^{\alpha}\theta(
\tau)\|_{L^{2}}^{2}+\|\Lambda_{x_{2}}^{\beta}\theta(
\tau)\|_{L^{2}}^{2})\,d\tau}\leq \|\theta_{0}\|_{L^{2}}^{2}.\nonumber
\end{eqnarray}
We multiply $(\ref{SQG})$ by $|\theta|^{p-2}\theta$ and use the divergence-free condition to obtain
$$\frac{1}{p}\frac{d}{dt}\|\theta(t)\|_{L^{p}}^{p}+
\int_{\mathbb{R}^{2}}{\Lambda_{x_{1}}^{2\alpha}\theta(|\theta|^{p-2}\theta)\,dx}+
\int_{\mathbb{R}^{2}}{\Lambda_{x_{2}}^{2\beta}\theta(|\theta|^{p-2}\theta)\,dx}=0.$$
Thanks to the lower bounds
\begin{align}
\int_{\mathbb{R}^{2}}{\Lambda_{x_{1}}^{2\alpha}\theta(|\theta|^{p-2}\theta)
\,dx} =&\int_{\mathbb{R}}\int_{\mathbb{R}}{\Lambda_{x_{1}}^{2\alpha}
\theta(x_{1},x_{2})
(|\theta(x_{1},x_{2})|^{p-2}\theta(x_{1},x_{2}))
\,dx_{1}dx_{2}}\nonumber\\ \geq&C\int_{\mathbb{R}}\int_{\mathbb{R}}
{\big(\Lambda_{x_{1}}^{\alpha}
|\theta(x_{1},x_{2})|^{\frac{p}{2}}\big)^{2}
\,dx_{1}dx_{2}}\nonumber\\ \geq&0,\nonumber
\end{align}
\begin{align}
\int_{\mathbb{R}^{2}}{\Lambda_{x_{2}}^{2\beta}\theta(|\theta|^{p-2}\theta)
\,dx} =&\int_{\mathbb{R}}\int_{\mathbb{R}}{\Lambda_{x_{2}}^{2\beta}\theta(x_{1},x_{2})
(|\theta(x_{1},x_{2})|^{p-2}\theta(x_{1},x_{2}))
\,dx_{1}dx_{2}}\nonumber\\ \geq&C\int_{\mathbb{R}}\int_{\mathbb{R}}
{\big(\Lambda_{x_{2}}^{\beta}
|\theta(x_{1},x_{2})|^{\frac{p}{2}}\big)^{2}
\,dx_{1}dx_{2}}\nonumber\\ \geq&0,\nonumber
\end{align}
we infer that
$$
\|\theta(t)\|_{L^p}\leq \|\theta_0\|_{L^p}, \qquad 1\le p\le \infty.
$$
This finishes the proof of the proposition.
\end{proof}

 \vskip .1in
We now establish the global $\dot{H}^1$-bound in the case when $\beta>\frac{1}{2\alpha+1}$ with $0<\alpha\leq\frac{1}{2}$.

\begin{Pros}\label{Lpp302}
Let $\theta$ be the solution of (\ref{SQG}). If $\alpha$ and $\beta$ satisfy
$$\beta> \frac{1}{2\alpha+1}, \ \ 0<\alpha\leq\frac{1}{2},$$
then for any $t>0$
\begin{eqnarray}\label{asddfgdfht302}
\|\nabla\theta(t)\|_{L^{2}}^{2}+ \int_{0}^{t}{(\|\Lambda_{x_{1}}^{\alpha}\nabla\theta(
\tau)\|_{L^{2}}^{2}+\|\Lambda_{x_{2}}^{\beta}\nabla\theta(
\tau)\|_{L^{2}}^{2})\,d\tau}\leq
C(t,\,\theta_{0}),
\end{eqnarray}
where $C(t, \,\theta_{0})$ is a constant depending on $t$ and the initial norm  $\|\theta_0\|_{H^{1}}$. Here and in what follows, we should mention that the constant $C(t, \,\theta_{0})$ could grow when $t$ goes to infinity, but it must
not blow-up in finite time. This can be checked throughout the proof.
\end{Pros}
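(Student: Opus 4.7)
The plan is to derive an ODE for $\|\nabla\theta\|_{L^2}^2$ by applying $\partial_j$ to equation (\ref{SQG}) for $j\in\{1,2\}$, testing with $\partial_j\theta$, summing, and integrating. Using $\nabla\cdot u=0$, the convective term $\int u_\ell\partial_\ell\partial_j\theta\,\partial_j\theta\,dx$ vanishes, and we arrive at the identity
\begin{equation*}
\frac{1}{2}\frac{d}{dt}\|\nabla\theta\|_{L^2}^2+\|\Lambda_{x_1}^{\alpha}\nabla\theta\|_{L^2}^2+\|\Lambda_{x_2}^{\beta}\nabla\theta\|_{L^2}^2 = -\sum_{j,k=1}^{2}\int_{\mathbb{R}^2}\partial_j u_k\,\partial_k\theta\,\partial_j\theta\,dx \triangleq I.
\end{equation*}
Everything hinges on bounding $|I|$ in such a way that the two dissipation terms on the left absorb the $\Lambda_{x_1}^{\alpha}\nabla\theta$ and $\Lambda_{x_2}^{\beta}\nabla\theta$ factors appearing on the right via Young's inequality, leaving only a low-order polynomial in $\|\nabla\theta\|_{L^2}$ times $\|\nabla\theta\|_{L^2}^2$.

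The first step is to apply the anisotropic triple product inequality (\ref{qtri}) to $I$ with $\gamma_1=\alpha$, $\gamma_2=\beta$, and some $p>1/\alpha$, $q>1/\beta$ to be fixed, assigning the mixed norm role to $\nabla u$ and placing $\Lambda_{x_1}^{\alpha}$ and $\Lambda_{x_2}^{\beta}$ on the two copies of $\nabla\theta$. Since $u=\mathcal{R}^\perp\theta$ and Riesz transforms are bounded on the mixed Lebesgue space $L_{x_2}^{q}L_{x_1}^{p}$ for $1<p,q<\infty$, one has $\|\nabla u\|_{L_{x_2}^qL_{x_1}^p}\le C\|\nabla\theta\|_{L_{x_2}^qL_{x_1}^p}$. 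This produces
\begin{equation*}
|I|\le C\|\nabla\theta\|_{L_{x_2}^qL_{x_1}^p}\,\|\nabla\theta\|_{L^2}^{2-\frac{1}{\alpha p}-\frac{1}{\beta q}}\,\|\Lambda_{x_1}^{\alpha}\nabla\theta\|_{L^2}^{\frac{1}{\alpha p}}\,\|\Lambda_{x_2}^{\beta}\nabla\theta\|_{L^2}^{\frac{1}{\beta q}}.
\end{equation*}

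Next, I would estimate $\|\nabla\theta\|_{L_{x_2}^qL_{x_1}^p}$ by chaining the 1D Gagliardo--Nirenberg embedding of Lemma \ref{dfdasfwqew33} in the $x_1$ variable (trading $L_{x_1}^p$ for a geometric mean of $L_{x_1}^2$ and $\Lambda_{x_1}^{\alpha}L_{x_1}^2$), then applying Hölder or again Lemma \ref{dfdasfwqew33} in the $x_2$ variable to pass from the outer $L_{x_2}^q$ into $L_{x_2}^2$ with a $\Lambda_{x_2}^{\beta}$ correction. The outcome is a bound of the form
\begin{equation*}
\|\nabla\theta\|_{L_{x_2}^qL_{x_1}^p}\le C\|\nabla\theta\|_{L^2}^{\nu_0}\|\Lambda_{x_1}^{\alpha}\nabla\theta\|_{L^2}^{\nu_1}\|\Lambda_{x_2}^{\beta}\nabla\theta\|_{L^2}^{\nu_2},
\end{equation*}
with $\nu_0,\nu_1,\nu_2\ge 0$ and $\nu_0+\nu_1+\nu_2=1$ (the scaling relation forced by the Fourier-side analysis in Lemma \ref{gfhgj}). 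Inserting this into the estimate for $|I|$ and abbreviating $A=\|\Lambda_{x_1}^{\alpha}\nabla\theta\|_{L^2}$, $B=\|\Lambda_{x_2}^{\beta}\nabla\theta\|_{L^2}$, one obtains $|I|\le C\|\nabla\theta\|_{L^2}^{\Theta_0}A^{\Theta_1}B^{\Theta_2}$ for explicit exponents depending on $p,q,\alpha,\beta$.

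The algebraic heart of the argument is then to choose $p,q$ so that simultaneously $\Theta_1<2$, $\Theta_2<2$, $\Theta_1+\Theta_2<2$, and $\Theta_0$ is a nonnegative power that can be controlled by the energy bound $\|\theta(t)\|_{L^2}\le\|\theta_0\|_{L^2}$ of Proposition \ref{Spr1} and an interpolation with $\|\nabla\theta\|_{L^2}$. A direct calculation shows these constraints reduce exactly to $\beta>\frac{1}{2\alpha+1}$ in the regime $0<\alpha\le \frac{1}{2}$, which is the hypothesis. Once such $p,q$ are fixed, Young's inequality lets one absorb the $A$ and $B$ factors into the dissipation, leaving
\begin{equation*}
\frac{d}{dt}\|\nabla\theta\|_{L^2}^2+\|\Lambda_{x_1}^{\alpha}\nabla\theta\|_{L^2}^2+\|\Lambda_{x_2}^{\beta}\nabla\theta\|_{L^2}^2\le C(\theta_0)\big(1+\|\nabla\theta\|_{L^2}^2\big)^{\kappa}
\end{equation*}
for some finite $\kappa>0$. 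Gronwall (in its generalized polynomial form, combined with Proposition \ref{Spr1}) then yields (\ref{asddfgdfht302}) on any finite time interval $[0,T]$.

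The main obstacle I anticipate is item (3): deriving the mixed norm estimate on $\nabla\theta$ with precisely the exponents needed so that the final inequality closes. The book-keeping of exponents through two nested 1D interpolations, combined with the requirement that both $p>1/\alpha$ and $q>1/\beta$ be compatible with $\Theta_1+\Theta_2<2$, is where the condition $\beta>\frac{1}{2\alpha+1}$ is consumed; getting the sharp threshold (rather than something weaker) will require choosing $p,q$ optimally rather than conveniently.
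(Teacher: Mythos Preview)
There is a genuine gap at the closing step. Your final differential inequality reads
\[
\frac{d}{dt}\|\nabla\theta\|_{L^2}^2 \le C(\theta_0)\bigl(1+\|\nabla\theta\|_{L^2}^2\bigr)^{\kappa},
\]
and you appeal to a ``generalized polynomial Gronwall''. But for any $\kappa>1$ such an ODE blows up in finite time, and a scaling check shows that $\kappa>1$ is unavoidable in your scheme. Under the isotropic dilation $\theta\mapsto\theta(\lambda\cdot)$ in $\mathbb{R}^2$, the trilinear form $I$ scales like $\lambda$, the norm $\|\nabla\theta\|_{L^2}$ is invariant, and $A,B$ scale like $\lambda^{\alpha},\lambda^{\beta}$. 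Hence any bound $|I|\le C\|\nabla\theta\|_{L^2}^{\Theta_0}A^{\Theta_1}B^{\Theta_2}$ must obey $\Theta_0+\Theta_1+\Theta_2=3$ and $\alpha\Theta_1+\beta\Theta_2=1$; after Young's inequality the residual exponent on $\|\nabla\theta\|_{L^2}$ is $\tfrac{2\Theta_0}{2-\Theta_1-\Theta_2}=2+\tfrac{2}{2-\Theta_1-\Theta_2}>2$, so $\kappa>1$ necessarily. Trading $\|\nabla\theta\|_{L^2}$ for $\|\theta\|_{L^2}$ via interpolation does not rescue this: the same scaling bookkeeping shows one would need $\alpha\Theta_1+\beta\Theta_2\ge 2$ with $\Theta_1+\Theta_2<2$, impossible when $\alpha,\beta\le 1$.

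The paper closes the estimate by a different mechanism. It does not apply \eqref{qtri} to $I$ globally; instead, each of $\mathcal{J}_1,\dots,\mathcal{J}_4$ is integrated by parts so that one factor is an \emph{undifferentiated} $u$ or $\theta$, and then the commutator bound \eqref{yzz3} together with Lemma~\ref{dfdasfwqew33} peels off a factor $\|\theta\|_{L^q}$ with $q>2$, which is \emph{a~priori} bounded for all time by Proposition~\ref{Spr1}. The remaining two high-frequency factors are interpolated via Lemma~\ref{gfhgj} so that their combined $A,B$ weight is exactly $2$, and the choice of the splitting parameter $\delta=\frac{1-\alpha\beta}{\alpha+1}$ together with the specific $q,\widetilde q,\widehat q$ is precisely where the constraint $\beta>\frac{1}{2\alpha+1}$ enters. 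The resulting inequality is
\[
\frac{d}{dt}\|\nabla\theta\|_{L^2}^2+\|\Lambda_{x_1}^{\alpha}\nabla\theta\|_{L^2}^2+\|\Lambda_{x_2}^{\beta}\nabla\theta\|_{L^2}^2\le F(t)\,\|\theta\|_{L^2}^2,
\]
with $F(t)$ a polynomial in $\|\theta\|_{L^q}$ that is uniformly bounded by Proposition~\ref{Spr1}; this integrates directly. Your proposal is missing exactly this extraction of an undifferentiated $\|\theta\|_{L^q}$ factor, and without it the estimate cannot close. A secondary issue is that you treat $I$ as a single block, whereas the anisotropy forces the four pieces $\mathcal{J}_i$ to be handled with different splittings (the location of $\partial_{x_1}$ versus $\partial_{x_2}$ dictates which directional dissipation can absorb which factor).
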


\begin{proof}
Taking the inner product of $(\ref{SQG})$ with $\Delta \theta$ and using the divergence-free condition $\partial_{x_{1}} u_{1}+\partial_{x_{2}} u_{2}=0$, we infer that
\begin{align}\label{t3326t002}
\frac{1}{2}\frac{d}{dt}\|\nabla \theta(t)\|_{L^{2}}^{2}+
 \|\Lambda_{x_{1}}^{\alpha}\nabla\theta\|_{L^{2}}^{2}+
 \|\Lambda_{x_{2}}^{\beta}\nabla\theta\|_{L^{2}}^{2} =&\int_{\mathbb{R}^{2}}{(u \cdot \nabla)\theta\Delta\theta\,dx}\nonumber\\
 =& \mathcal{J}_{1}+\mathcal{J}_{2}+\mathcal{J}_{3}+\mathcal{J}_{4},
\end{align}
where
$$
\mathcal{J}_{1}=-\int_{\mathbb{R}^{2}}{\partial_{x_{1}}u_{1} \partial_{x_{1}}\theta\partial_{x_{1}}\theta\,dx},\quad \mathcal{J}_{2}=-\int_{\mathbb{R}^{2}}{\partial_{x_{1}}u_{2} \partial_{x_{2}}\theta\partial_{x_{1}}\theta\,dx},
$$
$$
\mathcal{J}_{3}=-\int_{\mathbb{R}^{2}}{\partial_{x_{2}}u_{1} \partial_{x_{1}}\theta\partial_{x_{2}}\theta\,dx},\quad \mathcal{J}_{4}=-\int_{\mathbb{R}^{2}}{\partial_{x_{2}}u_{2} \partial_{x_{2}}\theta\partial_{x_{2}}\theta\,dx}.
$$
Using $\partial_{x_{1}} u_{1}+\partial_{x_{2}} u_{2}=0$ and \eqref{yzz3}, we derive
\begin{align}\label{t3326t003}
\mathcal{J}_{1} =&\int_{\mathbb{R}^{2}}{\partial_{x_{2}}u_{2} \partial_{x_{1}}\theta\partial_{x_{1}}\theta\,dx}\nonumber\\
 =&-2\int_{\mathbb{R}^{2}}{u_{2} \partial_{x_{1}}\theta \partial_{x_{2}x_{1}}\theta\,dx}
\nonumber\\
 =&-2\int_{\mathbb{R}^{2}}{\Lambda_{x_{2}}^{1-\delta}(u_{2} \partial_{x_{1}}\theta) \Lambda_{x_{2}}^{\delta}\Lambda_{x_{2}}^{-1}\partial_{x_{2}}\partial_{x_{1}}\theta\,dx}
\nonumber\\
 \leq&C\|\Lambda_{x_{2}}^{\delta}\partial_{x_{1}}\theta\|_{L^{\frac{2q}{q-2}}} \|\Lambda_{x_{2}}^{1-\delta}(u_{2} \partial_{x_{1}}\theta)\|_{L^{\frac{2q}{q+2}}}
\nonumber\\
 \leq&C\|\Lambda_{x_{2}}^{\delta}\partial_{x_{1}}\theta\|_{L^{\frac{2q}{q-2}}} (\|\Lambda_{x_{2}}^{1-\delta}(u_{2} \partial_{x_{1}}\theta)-\Lambda_{x_{2}}^{1-\delta}u_{2} \partial_{x_{1}}\theta\|_{L^{\frac{2q}{q+2}}} +\|\Lambda_{x_{2}}^{1-\delta}u_{2} \partial_{x_{1}}\theta\|_{L^{\frac{2q}{q+2}}} )
\nonumber\\
 \leq&C\|\Lambda^{\frac{2}{q}}\Lambda_{x_{2}}^{\delta}\partial_{x_{1}}\theta\|_{L^{2}} ( \|u_{2}\|_{L^{q}} \|\Lambda_{x_{2}}^{1-\delta}\partial_{x_{1}}\theta\|_{L^{2}}+ \|\Lambda_{x_{2}}^{1-\delta}u_{2}\|_{L^{\frac{2(1+\alpha)q}{\alpha q+2}}} \| \partial_{x_{1}}\theta\|_{L^{\frac{2(1+\alpha)q}{ q+2\alpha}}}),
\end{align}
where $q\in (2,\infty)$ will be fixed hereafter. Here we have appealed to the Sobolev embedding $\dot{H}^{\frac{2}{q}}(\mathbb{R}^{2})\hookrightarrow L^{\frac{2q}{q-2}}(\mathbb{R}^{2})$.
If we choose
\begin{align}\label{udfr1}
\delta=\frac{1-\alpha\beta}{\alpha +1},
\end{align}
then we conclude by \eqref{ghpoiu908} that
\begin{align}\label{udfr2}
\|\Lambda_{x_{2}}^{1-\delta}u_{2}\|_{L^{\frac{2(1+\alpha)q}{\alpha q+2}}}&\leq C\|u_{2}\|_{L^{q}}^{\frac{1}{\alpha+1}}\|\Lambda_{x_{2}}^{\beta+1} u_{2}\|_{L^{2}}^{\frac{\alpha}{\alpha+1}}\nonumber\\&\leq C\|\theta\|_{L^{q}}^{\frac{1}{\alpha+1}}\|\Lambda_{x_{2}}^{\beta+1} \theta\|_{L^{2}}^{\frac{\alpha}{\alpha+1}}\nonumber\\&\leq C\|\theta\|_{L^{q}}^{\frac{1}{\alpha+1}}\|\Lambda_{x_{2}}^{\beta} \nabla\theta\|_{L^{2}}^{\frac{\alpha}{\alpha+1}},
\end{align}
\begin{align}\label{udfr3}
\| \partial_{x_{1}}\theta\|_{L^{\frac{2(1+\alpha)q}{ q+2\alpha}}}&\leq C\|\theta\|_{L^{q}}^{\frac{\alpha}{\alpha+1}}\|\Lambda_{x_{1}}^{\alpha+1} \theta\|_{L^{2}}^{\frac{1}{\alpha+1}}\nonumber\\&\leq
C\|\theta\|_{L^{q}}^{\frac{\alpha}{\alpha+1}}\|\Lambda_{x_{1}}^{\alpha} \nabla \theta\|_{L^{2}}^{\frac{1}{\alpha+1}}.
\end{align}
If we further let $\delta$ of \eqref{udfr1} satisfy $\delta\in [1-\beta,\, \beta]$, then \eqref{t2326001} allows us to show
\begin{align}\label{udfr4}
\|\Lambda_{x_{2}}^{1-\delta}\partial_{x_{1}}\theta\|_{L^{2}}\leq C\|\Lambda_{x_{2}}^{\beta+1} \theta\|_{L^{2}}^{\frac{\alpha}{\alpha+1}}\|\Lambda_{x_{1}}^{\alpha+1} \theta\|_{L^{2}}^{\frac{1}{\alpha+1}}\leq C\|\Lambda_{x_{2}}^{\beta} \nabla\theta\|_{L^{2}}^{\frac{\alpha}{\alpha+1}}\|\Lambda_{x_{1}}^{\alpha} \nabla\theta\|_{L^{2}}^{\frac{1}{\alpha+1}}.
\end{align}
Putting \eqref{udfr2}, \eqref{udfr3} and \eqref{udfr4} into \eqref{t3326t003} yields
\begin{align}\label{udfr5}
\mathcal{J}_{1}  &\leq C\|\Lambda^{\frac{2}{q}}\Lambda_{x_{2}}^{\delta}\partial_{x_{1}}\theta\|_{L^{2}} \|\theta\|_{L^{q}} \|\Lambda_{x_{2}}^{\beta} \nabla\theta\|_{L^{2}}^{\frac{\alpha}{\alpha+1}}\|\Lambda_{x_{1}}^{\alpha} \nabla\theta\|_{L^{2}}^{\frac{1}{\alpha+1}}.
\end{align}
It is easy to see that
\begin{align}\label{ubnnmii91} \|\Lambda^{\frac{2}{q}}\Lambda_{x_{2}}^{\delta}\partial_{x_{1}}\theta\|_{L^{2}} \leq C \|\Lambda_{x_{1}}^{1+\frac{2}{q}}\Lambda_{x_{2}}^{\delta} \theta\|_{L^{2}} +C\|\Lambda_{x_{1}}\Lambda_{x_{2}}^{\delta+\frac{2}{q}} \theta\|_{L^{2}}.
 \end{align}
By \eqref{t2326001}, the first term at the right hand side of \eqref{ubnnmii91} can be bounded by
\begin{align}\label{udfr7}
\|\Lambda_{x_{1}}^{1+\frac{2}{q}}\Lambda_{x_{2}}^{\delta} \theta\|_{L^{2}}&\leq C
 \|\theta\|_{L^{2}}^{1-\frac{\delta}{\beta}-\frac{1+\frac{2}{q}
 -\frac{\delta}{\beta}}{\alpha+1}} \|\Lambda_{x_{2}}^{\beta}\Lambda_{x_{1}}\theta\|_{L^{2}}^{\frac{\delta}{\beta}}
 \|\Lambda_{x_{1}}^{\alpha+1} \theta\|_{L^{2}}^{\frac{1+\frac{2}{q}
 -\frac{\delta}{\beta}}{\alpha+1}}\nonumber\\
 &\leq C
 \|\theta\|_{L^{2}}^{\frac{\epsilon\delta}{\beta}} \|\Lambda_{x_{2}}^{\beta}\nabla\theta\|_{L^{2}}^{\frac{\delta}{\beta}}
 \|\Lambda_{x_{1}}^{\alpha}\nabla\theta\|_{L^{2}}
 ^{\frac{\beta-(1+\epsilon)\delta}{\beta}}.
\end{align}
To bound the last term at the right hand side of \eqref{ubnnmii91} effectively, we fix $q\in (2,\infty)$ as
\begin{align}\label{udfr6}
q=\frac{2\beta}{\alpha\beta+\delta-(\alpha+1)(1+\epsilon)\delta},
\end{align}
where $\delta<\frac{\alpha\beta}{\alpha+(\alpha+1)\epsilon}$ with small $\epsilon>0$ to be fixed after.
As the $q$ of \eqref{udfr6}, it follows from \eqref{t2326001} that
\begin{align}\label{udfr8}
\|\Lambda_{x_{1}}\Lambda_{x_{2}}^{\delta+\frac{2}{q}} \theta\|_{L^{2}}&\leq C
 \|\theta\|_{L^{2}}^{1-\frac{\delta}{\beta}-\frac{1+\frac{2}{q}
 -\frac{\delta}{\beta}}{\alpha+1}} \|\Lambda_{x_{2}}^{\beta+\frac{2(1+\epsilon)}{q}}
 \Lambda_{x_{1}}^{1-\frac{2(1+\epsilon)}{q}}\theta\|_{L^{2}}^{\frac{\delta}{\beta}}
 \|\Lambda_{x_{1}}^{\alpha+1-\frac{2}{q}}\Lambda_{x_{2}}^{\frac{2}{q}}\theta\|_{L^{2}}^{\frac{1+\frac{2}{q}
 -\frac{\delta}{\beta}}{\alpha+1}}\nonumber\\
 &\leq C
 \|\theta\|_{L^{2}}^{\frac{\epsilon\delta}{\beta}} \|\Lambda_{x_{2}}^{\beta}\nabla\theta\|_{L^{2}}^{\frac{\delta}{\beta}}
 \|\Lambda_{x_{1}}^{\alpha}\nabla\theta\|_{L^{2}}
 ^{\frac{\beta-(1+\epsilon)\delta}{\beta}}.
\end{align}
We remark that the following facts have been applied in establishing \eqref{udfr8} \begin{align}\label{dffgtyu889}
\|\Lambda_{x_{i}}^{k+l}\Lambda_{x_{j}}^{1-l}\theta\|_{L^{2}}\leq C \|\Lambda_{x_{i}}^{k}\nabla\theta\|_{L^{2}},\qquad \|\Lambda_{x_{i}}^{k-l}\Lambda_{x_{j}}^{l}\theta\|_{L^{2}}\leq C \|\Lambda_{x_{i}}^{k-1}\nabla\theta\|_{L^{2}},
\end{align}
where $0\leq l\leq 1$ and $i=1,j=2$ or $i=2,j=1$. Actually, due to Plancherel's theorem, we have
$$\|\Lambda_{x_{i}}^{k+l}\Lambda_{x_{j}}^{1-l}\theta\|_{L^{2}}=
\||\xi_{i}|^{k+l}|\xi_{j}|^{1-l}\widehat{\theta}\|_{L^{2}}\leq C\||\xi_{i}|^{k}|\xi|\widehat{\theta}\|_{L^{2}}=C \|\Lambda_{x_{i}}^{k}\nabla\theta\|_{L^{2}},$$
$$\|\Lambda_{x_{i}}^{k-l}\Lambda_{x_{j}}^{l}\theta\|_{L^{2}}
=\||\xi_{i}|^{k-l}|\xi_{j}|^{l}\widehat{\theta}\|_{L^{2}}\leq C
\||\xi_{i}|^{k-1}|\xi|\widehat{\theta}\|_{L^{2}}=C \|\Lambda_{x_{i}}^{k-1}\nabla\theta\|_{L^{2}},$$
where we have used the simple facts
$$|\xi_{i}|^{k+l}|\xi_{j}|^{1-l}\leq C|\xi_{i}|^{k}|\xi|,\qquad |\xi_{i}|^{k-l}|\xi_{j}|^{l}\leq C|\xi_{i}|^{k-1}|\xi|.$$
Inserting \eqref{udfr7} and \eqref{udfr8} into \eqref{udfr5} shows
\begin{align}\label{udfr9}
\mathcal{J}_{1}  &\leq C\|\theta\|_{L^{2}}^{\frac{\epsilon\delta}{\beta}} \|\Lambda_{x_{2}}^{\beta}\nabla\theta\|_{L^{2}}^{\frac{\delta}{\beta}}
 \|\Lambda_{x_{1}}^{\alpha}\nabla\theta\|_{L^{2}}
 ^{\frac{\beta-(1+\epsilon)\delta}{\beta}} \|\theta\|_{L^{q}} \|\Lambda_{x_{2}}^{\beta} \nabla\theta\|_{L^{2}}^{\frac{\alpha}{\alpha+1}}\|\Lambda_{x_{1}}^{\alpha} \nabla\theta\|_{L^{2}}^{\frac{1}{\alpha+1}}\nonumber\\&\leq\eta\|\Lambda_{x_{1}}^{\alpha}\nabla\theta\|_{L^{2}}^{2}+
 \eta\|\Lambda_{x_{2}}^{\beta}\nabla\theta\|_{L^{2}}^{2}
 +C_{\eta}\|\theta\|_{L^{q}}^{\frac{2\beta}{\epsilon\delta}}\|\theta\|_{L^{2}}^{2}
 \nonumber\\&=\eta\|\Lambda_{x_{1}}^{\alpha}\nabla\theta\|_{L^{2}}^{2}+
 \eta\|\Lambda_{x_{2}}^{\beta}\nabla\theta\|_{L^{2}}^{2}
 +C_{\eta}\|\theta\|_{L^{q}}^{\frac{2(\alpha+1)\beta}
 {\epsilon(1-\alpha\beta)}}\|\theta\|_{L^{2}}^{2}.
\end{align}
Keeping in mind $\delta=\frac{1-\alpha\beta}{\alpha +1}\in [1-\beta,\beta]$ and $\delta<\frac{\alpha\beta}{\alpha+(\alpha+1)\epsilon}$, we can check that
$$0<\epsilon<\frac{\alpha[(2\alpha +1)\beta-1]}{(1-\alpha\beta)(\alpha +1)},$$
which requires the condition
$$\beta>\frac{1}{2\alpha+1}.$$
By the same argument adopted in handing $\mathcal{J}_{1}$, one gets
\begin{align}\label{t3326t005}
\mathcal{J}_{2} =&\int_{\mathbb{R}^{2}}{ \theta\partial_{x_{2}x_{1}}u_{2}\partial_{x_{1}}\theta\,dx}
+\int_{\mathbb{R}^{2}}{ \theta\partial_{x_{1}}u_{2}\partial_{x_{2}x_{1}}\theta\,dx}\nonumber\\
 \leq&C\|\Lambda_{x_{2}}^{\delta}\partial_{x_{1}}u_{2}\|_{L^{\frac{2q}{q-2}}} \|\Lambda_{x_{2}}^{1-\delta}(\theta \partial_{x_{1}}\theta)\|_{L^{\frac{2q}{q+2}}}+
C\|\Lambda_{x_{2}}^{\delta}\partial_{x_{1}}\theta\|_{L^{\frac{2q}{q-2}}} \|\Lambda_{x_{2}}^{1-\delta}(\theta \partial_{x_{1}}u_{2})\|_{L^{\frac{2q}{q+2}}}
\nonumber\\ \leq&C\|\Lambda_{x_{2}}^{\delta}\partial_{x_{1}}\mathcal {R}_{1}\theta\|_{L^{\frac{2q}{q-2}}}( \|\theta\|_{L^{q}} \|\Lambda_{x_{2}}^{1-\delta}\partial_{x_{1}}\theta\|_{L^{2}}+ \|\Lambda_{x_{2}}^{1-\delta}\theta \partial_{x_{1}}\theta\|_{L^{\frac{2q}{q+2}}})\nonumber\\&
+C\|\Lambda_{x_{2}}^{\delta}\partial_{x_{1}} \theta\|_{L^{2}}( \|\theta\|_{L^{q}} \|\Lambda_{x_{2}}^{1-\delta}\partial_{x_{1}}u_{2}\|_{L^{2}}+ \|\Lambda_{x_{2}}^{1-\delta}\theta \partial_{x_{1}}u_{2}\|_{L^{\frac{2q}{q+2}}})\nonumber\\ \leq&\eta\|\Lambda_{x_{1}}^{\alpha}\nabla\theta\|_{L^{2}}^{2}+
 \eta\|\Lambda_{x_{2}}^{\beta}\nabla\theta\|_{L^{2}}^{2}
 +C_{\eta}\|\theta\|_{L^{q}}^{\frac{2(\alpha+1)\beta}
 {\epsilon(1-\alpha\beta)}}\|\theta\|_{L^{2}}^{2},
\end{align}
where we have used the fact ${u}=(-\mathcal{R}_{2}\theta,\,\,\mathcal {R}_{1}\theta)$ several times.
For the third term $\mathcal{J}_{3}$, it follows from integration by parts that
\begin{align}
\mathcal{J}_{3} =&\int_{\mathbb{R}^{2}}{ u_{1} \partial_{x_{2}x_{1}}\theta\partial_{x_{2}}\theta\,dx}
+\int_{\mathbb{R}^{2}}{ u_{1} \partial_{x_{1}}\theta\partial_{x_{2}x_{2}}\theta\,dx}\nonumber\\
=&\frac{1}{2}\int_{\mathbb{R}^{2}}{ u_{1} \partial_{x_{1}}(\partial_{x_{2}}\theta\partial_{x_{2}}\theta)\,dx}
+\int_{\mathbb{R}^{2}}{ u_{1} \partial_{x_{1}}\theta\partial_{x_{2}x_{2}}\theta\,dx}
\nonumber\\
=&-\frac{1}{2}\int_{\mathbb{R}^{2}}{ \partial_{x_{1}}u_{1} \partial_{x_{2}}\theta\partial_{x_{2}}\theta\,dx}
+\int_{\mathbb{R}^{2}}{ u_{1} \partial_{x_{1}}\theta\partial_{x_{2}x_{2}}\theta\,dx}
\nonumber\\
=&\frac{1}{2}\int_{\mathbb{R}^{2}}{ \partial_{x_{2}}u_{2} \partial_{x_{2}}\theta\partial_{x_{2}}\theta\,dx}
+\int_{\mathbb{R}^{2}}{ u_{1} \partial_{x_{1}}\theta\partial_{x_{2}x_{2}}\theta\,dx}
\nonumber\\
=&-\frac{1}{2}\mathcal{J}_{4}
+\int_{\mathbb{R}^{2}}{ u_{1} \partial_{x_{1}}\theta\partial_{x_{2}x_{2}}\theta\,dx}
\nonumber\\
= :&-\frac{1}{2}\mathcal{J}_{4}+\widetilde{\mathcal{J}_{3}}.\nonumber
\end{align}
Just as for $\mathcal{J}_{1}$, we may show
\begin{align}\label{hhh3e456}
\widetilde{\mathcal{J}_{3}}
 \leq&C\|\Lambda_{x_{2}}^{\delta}\partial_{x_{2}}\theta
 \|_{L^{\frac{2\widetilde{q}}{\widetilde{q}-2}}} \|\Lambda_{x_{2}}^{1-\delta}(u_{1} \partial_{x_{1}}\theta)\|_{L^{\frac{2\widetilde{q}}{\widetilde{q}+2}}}
\nonumber\\
 \leq&C\|\Lambda_{x_{2}}^{\delta}\partial_{x_{2}}\theta
 \|_{L^{\frac{2\widetilde{q}}{\widetilde{q}-2}}} (\|\Lambda_{x_{2}}^{1-\delta}(u_{1} \partial_{x_{1}}\theta)-\Lambda_{x_{2}}^{1-\delta}u_{1} \partial_{x_{1}}\theta\|_{L^{\frac{2\widetilde{q}}{\widetilde{q}+2}}} +\|\Lambda_{x_{2}}^{1-\delta}u_{1} \partial_{x_{1}}\theta\|_{L^{\frac{2\widetilde{q}}{\widetilde{q}+2}}} )
\nonumber\\
 \leq&C\|\Lambda_{x_{2}}^{\delta}\partial_{x_{2}}\theta
 \|_{L^{\frac{2\widetilde{q}}{\widetilde{q}-2}}}  ( \|u_{1}\|_{L^{\widetilde{q}}} \|\Lambda_{x_{2}}^{1-\delta}\partial_{x_{1}}\theta\|_{L^{2}}+ \|\Lambda_{x_{2}}^{1-\delta}u_{1}\|_{L^{\frac{2(1+\alpha)\widetilde{q}}{\alpha \widetilde{q}+2}}} \| \partial_{x_{1}}\theta\|_{L^{\frac{2(1+\alpha)\widetilde{q}}{ \widetilde{q}+2\alpha}}})
 \nonumber\\
 \leq&C\|\Lambda^{\frac{2}{\widetilde{q}}}\Lambda_{x_{2}}^{\delta}\partial_{x_{2}}
\theta\|_{L^{2}}
 \|\theta\|_{L^{\widetilde{q}}} \|\Lambda_{x_{2}}^{\beta} \nabla\theta\|_{L^{2}}^{\frac{\alpha}{\alpha+1}}\|\Lambda_{x_{1}}^{\alpha} \nabla\theta\|_{L^{2}}^{\frac{1}{\alpha+1}},
\end{align}
where $\widetilde{q}\in (2,\infty)$ will be fixed hereafter. We furthermore obtain from \eqref{t2326001} and \eqref{avbnuy8911} that
\begin{align}\label{udfhy01y} \|\Lambda^{\frac{2}{\widetilde{q}}}\Lambda_{x_{2}}^{\delta}\partial_{x_{2}}
\theta\|_{L^{2}} &\leq C \|\Lambda_{x_{1}}^{\frac{2}{\widetilde{q}}}\Lambda_{x_{2}}^{\delta+1} \theta\|_{L^{2}} +C\|\Lambda_{x_{2}}^{1+\delta+\frac{2}{\widetilde{q}}} \theta\|_{L^{2}}
\nonumber\\&\leq
C\|\theta\|_{L^{2}}^{1-\frac{\delta+1}{\beta+1}-\frac{\frac{2}{\widetilde{q}}}
{\alpha+1}} \|\Lambda_{x_{2}}^{\beta+1}\theta\|_{L^{2}}^{\frac{\delta+1}{\beta+1}}
 \|\Lambda_{x_{1}}^{\alpha+1}\theta\|_{L^{2}}
 ^{\frac{\frac{2}{\widetilde{q}}}{\alpha+1}}\nonumber\\&\quad+
 C\|\theta\|_{L^{2}}^{1-\frac{\delta+1+\frac{2}{\widetilde{q}}}{\beta+1}} \|\Lambda_{x_{2}}^{\beta+1}\theta\|_{L^{2}}^{
 \frac{\delta+1+\frac{2}{\widetilde{q}}}{\beta+1}}\nonumber\\&\leq
C\|\theta\|_{L^{2}}^{1-\frac{\delta+1}{\beta+1}-\frac{\frac{2}{\widetilde{q}}}
{\alpha+1}} \|\Lambda_{x_{2}}^{\beta}\nabla\theta\|_{L^{2}}^{\frac{\delta+1}{\beta+1}}
 \|\Lambda_{x_{1}}^{\alpha}\nabla\theta\|_{L^{2}}
 ^{\frac{\frac{2}{\widetilde{q}}}{\alpha+1}}\nonumber\\&\quad+
 C\|\theta\|_{L^{2}}^{1-\frac{\delta+1+\frac{2}{\widetilde{q}}}{\beta+1}} \|\Lambda_{x_{2}}^{\beta}\nabla\theta\|_{L^{2}}^{
 \frac{\delta+1+\frac{2}{\widetilde{q}}}{\beta+1}},
\end{align}
where $\delta<\beta-\frac{\beta+1}{\alpha+1}\frac{2}{\widetilde{q}}$. This along with \eqref{udfr1} gives
$$\widetilde{q}>\frac{2(\beta+1)}{(2\alpha+1)\beta-1}.$$
Putting \eqref{udfhy01y} into \eqref{hhh3e456} and using the H$\rm\ddot{o}$lder inequality, we arrive at
\begin{align}\label{hcvby71}
\widetilde{\mathcal{J}_{3}}
 &\leq \eta\|\Lambda_{x_{2}}^{\beta} \nabla\theta\|_{L^{2}}^{2}+\eta\|\Lambda_{x_{1}}^{\alpha} \nabla\theta\|_{L^{2}}^{2}+C_{\eta}
 \|\theta\|_{L^{\widetilde{q}}}^{\frac{2}{1-\frac{\delta+1}
 {\beta+1}-\frac{\frac{2}{\widetilde{q}}}
{\alpha+1}}}\|\theta\|_{L^{2}}^{2}+C_{\eta}
 \|\theta\|_{L^{\widetilde{q}}}^{\frac{2}
 {1-\frac{\delta+1+\frac{2}{\widetilde{q}}}{\beta+1}}}\|\theta\|_{L^{2}}^{2}
 \nonumber\\&= \eta\|\Lambda_{x_{2}}^{\beta} \nabla\theta\|_{L^{2}}^{2}+\eta\|\Lambda_{x_{1}}^{\alpha} \nabla\theta\|_{L^{2}}^{2}+C_{\eta}\|\theta\|_{L^{\widetilde{q}}}
 ^{\frac{2(\alpha+1)(\beta+1)\widetilde{q}}{[(2\alpha+1)\beta-1]
 \widetilde{q}-2(\beta+1)}}\|\theta\|_{L^{2}}^{2}
  \nonumber\\&\quad+C_{\eta}\|\theta\|_{L^{\widetilde{q}}}
 ^{\frac{2(\alpha+1)(\beta+1)\widetilde{q}}{[(2\alpha+1)\beta-1]
 \widetilde{q}-2(\alpha+1)}}\|\theta\|_{L^{2}}^{2}.
\end{align}
Finally, the term $\mathcal{J}_{4}$ can be bounded by
\begin{align}\label{t3nhy71qw}
\mathcal{J}_{4} =&2\int_{\mathbb{R}^{2}}{ u_{2} \partial_{x_{2}x_{2}}\theta\partial_{x_{2}}\theta\,dx}
\nonumber\\
 \leq&C\|\Lambda_{x_{2}}^{1-\beta}\partial_{x_{2}}\theta
 \|_{L^{\frac{2\widehat{q}}{\widehat{q}-2}}} \|\Lambda_{x_{2}}^{\beta}(u_{2} \partial_{x_{2}}\theta)\|_{L^{\frac{2\widehat{q}}{\widehat{q}+2}}} \nonumber\\
 \leq&C\|\Lambda_{x_{2}}^{1-\beta}\partial_{x_{2}}\theta
 \|_{L^{\frac{2\widehat{q}}{\widehat{q}-2}}} (\|\Lambda_{x_{2}}^{\beta}(u_{2} \partial_{x_{2}}\theta)-\Lambda_{x_{2}}^{\beta}u_{2} \partial_{x_{2}}\theta\|_{L^{\frac{2\widehat{q}}{\widehat{q}+2}}} +\|\Lambda_{x_{2}}^{\beta}u_{2} \partial_{x_{2}}\theta\|_{L^{\frac{2\widehat{q}}{\widehat{q}+2}}} )
\nonumber\\
 \leq&C\|\Lambda_{x_{2}}^{1-\beta}\partial_{x_{2}}\theta
 \|_{L^{\frac{2\widehat{q}}{\widehat{q}-2}}}  ( \|u_{2}\|_{L^{\widehat{q}}} \|\Lambda_{x_{2}}^{\beta}\partial_{x_{2}}\theta\|_{L^{2}}+ \|\Lambda_{x_{2}}^{\beta}u_{2}\|_{L^{\frac{2(1+\beta)\widehat{q}}{\beta \widehat{q}+2}}} \| \partial_{x_{2}}\theta\|_{L^{\frac{2(1+\beta)\widehat{q}}{ \widehat{q}+2\beta}}}) \nonumber\\
 \leq&C\|\Lambda_{x_{2}}^{1-\beta}\partial_{x_{2}}\theta
 \|_{L^{\frac{2\widehat{q}}{\widehat{q}-2}}}  ( \|u_{2}\|_{L^{\widehat{q}}} \|\Lambda_{x_{2}}^{\beta}\partial_{x_{2}}\theta\|_{L^{2}}+\|u_{2}\|_{L^{ \widehat{q}}}^{\frac{1}{\beta+1}} \|\Lambda_{x_{2}}^{\beta+1}u_{2}\|_{L^{2}}^{\frac{\beta}{\beta+1}} \|\theta\|_{L^{ \widehat{q}}}^{\frac{\beta}{\beta+1}} \|\Lambda_{x_{2}}^{\beta+1}\theta\|_{L^{2}}^{\frac{1}{\beta+1}})
 \nonumber\\
 \leq&C\|\Lambda^{\frac{2}{\widehat{q}}}\Lambda_{x_{2}}^{1-\beta}\partial_{x_{2}}\theta
 \|_{L^{2}}   \|\theta\|_{L^{\widehat{q}}} \|\Lambda_{x_{2}}^{\beta}\nabla\theta\|_{L^{2}},
\end{align}
where $\widehat{q}\in (2,\infty)$ will be fixed after. Keeping in mind \eqref{udfhy01y}, one gets
\begin{align}\label{uwvty7y1}
\|\Lambda^{\frac{2}{\widehat{q}}}\Lambda_{x_{2}}^{1-\beta}\partial_{x_{2}}\theta
 \|_{L^{2}} &\leq
C\|\theta\|_{L^{2}}^{1-\frac{2-\beta}{\beta+1}-\frac{\frac{2}{\widehat{q}}}
{\alpha+1}} \|\Lambda_{x_{2}}^{\beta}\nabla\theta\|_{L^{2}}^{\frac{2-\beta}{\beta+1}}
 \|\Lambda_{x_{1}}^{\alpha}\nabla\theta\|_{L^{2}}
 ^{\frac{\frac{2}{\widehat{q}}}{\alpha+1}}\nonumber\\&\quad+
 C\|\theta\|_{L^{2}}^{1-\frac{2-\beta+\frac{2}{\widehat{q}}}{\beta+1}} \|\Lambda_{x_{2}}^{\beta}\nabla\theta\|_{L^{2}}^{
 \frac{2-\beta+\frac{2}{\widehat{q}}}{\beta+1}},
\end{align}
where we need the restriction $1-\beta<\beta-\frac{\beta+1}{\alpha+1}\frac{2}{\widehat{q}}$. It thus gives rise to
$$\widehat{q}>\frac{2(\beta+1)}{(\alpha+1)(2\beta-1)}.$$
Inserting \eqref{uwvty7y1} into \eqref{t3nhy71qw} and using the H$\rm\ddot{o}$lder inequality, we observe that
\begin{align}\label{hccvgty21er}
{\mathcal{J}_{4}}
 &\leq \eta\|\Lambda_{x_{2}}^{\beta} \nabla\theta\|_{L^{2}}^{2}+\eta\|\Lambda_{x_{1}}^{\alpha} \nabla\theta\|_{L^{2}}^{2}+ C_{\eta}\|\theta\|_{L^{\widehat{q}}}^{\frac{2(\alpha+1)(\beta+1)\widehat{q}}
 {(\alpha+1)(2\beta-1)\widehat{q}-2(\beta+1)}}\|\theta\|_{L^{2}}^{2}\nonumber\\&\quad
 + C_{\eta}\|\theta\|_{L^{\widehat{q}}}^{\frac{2(\beta+1)\widehat{q}}
 {(2\beta-1)\widehat{q}-2}}\|\theta\|_{L^{2}}^{2}.
\end{align}
Putting (\ref{udfr9}), (\ref{t3326t005}), (\ref{hcvby71}) and (\ref{hccvgty21er}) into \eqref{t3326t002}, we get by selecting small $\eta>0$ that
\begin{eqnarray}\label{t3326t009}
 \frac{d}{dt}\|\nabla \theta(t)\|_{L^{2}}^{2}+
 \|\Lambda_{x_{1}}^{\alpha}\nabla\theta\|_{L^{2}}^{2}+
 \|\Lambda_{x_{2}}^{\beta}\nabla\theta\|_{L^{2}}^{2} \leq F(t)\| \theta\|_{L^{2}}^{2},
\end{eqnarray}
where
\begin{align}F(t)=&
C\Big(\|\theta\|_{L^{q}}^{\frac{2(\alpha+1)\beta}
 {\epsilon(1-\alpha\beta)}}+\|\theta\|_{L^{\widetilde{q}}}
 ^{\frac{2(\alpha+1)(\beta+1)\widetilde{q}}{[(2\alpha+1)\beta-1]
 \widetilde{q}-2(\beta+1)}}+\|\theta\|_{L^{\widetilde{q}}}
 ^{\frac{2(\alpha+1)(\beta+1)\widetilde{q}}{[(2\alpha+1)\beta-1]
 \widetilde{q}-2(\alpha+1)}}+\|\theta\|_{L^{\widehat{q}}}^{\frac{2(\alpha+1)(\beta+1)\widehat{q}}
 {(\alpha+1)(2\beta-1)\widehat{q}-2(\beta+1)}}\nonumber\\&+\|\theta\|_{L^{\widehat{q}}}^{\frac{2(\beta+1)\widehat{q}}
 {(2\beta-1)\widehat{q}-2}}\nonumber
 \Big)\end{align}
with $q=\frac{2(\alpha+1)\beta}{\alpha(\alpha+1)\beta-\epsilon(1-\alpha\beta)}$ for any $0<\epsilon<\frac{\alpha[(2\alpha +1)\beta-1]}{(1-\alpha\beta)(\alpha +1)}$, $\frac{2(\beta+1)}{(2\alpha+1)\beta-1}<\widetilde{q}<\infty$ and $\frac{2(\beta+1)}{(\alpha+1)(2\beta-1)}<\widehat{q}<\infty$.
Noticing the estimates of Proposition \ref{Spr1} and integrating \eqref{t3326t009} in time yield the desired estimate \eqref{asddfgdfht302}.
 Consequently, we complete the proof of Proposition \ref{Lpp302}.
\end{proof}

\vskip .1in
Next we will prove the global
$\dot{H}^1$-bound in the case when $\beta>\frac{1-\alpha}{2\alpha}$ and $\frac{1}{2}<\alpha<1$.
\begin{Pros}\label{Lpp30we3}
Let $\theta$ be the solution of (\ref{SQG}). If $\alpha$ and $\beta$ satisfy
$$\beta>\frac{1-\alpha}{2\alpha}, \ \ \frac{1}{2}<\alpha<1,$$
then for any $t>0$
\begin{eqnarray}\label{cvgyt25y027}
\|\nabla\theta(t)\|_{L^{2}}^{2}+ \int_{0}^{t}{(\|\Lambda_{x_{1}}^{\alpha}\nabla\theta(
\tau)\|_{L^{2}}^{2}+\|\Lambda_{x_{2}}^{\beta}\nabla\theta(
\tau)\|_{L^{2}}^{2})\,d\tau}\leq
C(t,\,\theta_{0}),
\end{eqnarray}
where $C(t, \,\theta_{0})$ is a constant depending on $t$ and the initial norm $\|\theta_0\|_{H^{1}}$.
\end{Pros}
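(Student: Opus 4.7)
The strategy is to mirror the proof of Proposition \ref{Lpp302} by exploiting the symmetry between the roles of $x_1$ and $x_2$ (equivalently, between $\alpha$ and $\beta$). Note that the assumption $\beta>\frac{1-\alpha}{2\alpha}$ with $\frac{1}{2}<\alpha<1$ is algebraically equivalent to $\alpha>\frac{1}{2\beta+1}$ with $0<\beta\leq \frac{1}{2}$ (up to the endpoint cases), which is precisely the condition of Proposition \ref{Lpp302} after swapping $\alpha\leftrightarrow\beta$. In the present regime, the stronger dissipation lives in the $x_1$-direction, so wherever the previous proof introduced the auxiliary fractional derivative $\Lambda_{x_2}^{\delta}$ together with its complement $\Lambda_{x_2}^{1-\delta}$, we should now insert $\Lambda_{x_1}^{\delta}$ and $\Lambda_{x_1}^{1-\delta}$.

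First I would test equation \eqref{SQG} against $-\Delta\theta$ and use $\mathrm{div}\,u=0$ to get
\begin{equation*}
\frac{1}{2}\frac{d}{dt}\|\nabla\theta\|_{L^2}^2+\|\Lambda_{x_1}^{\alpha}\nabla\theta\|_{L^2}^2+\|\Lambda_{x_2}^{\beta}\nabla\theta\|_{L^2}^2=\mathcal{J}_1+\mathcal{J}_2+\mathcal{J}_3+\mathcal{J}_4,
\end{equation*}
with the same $\mathcal{J}_i$ as before. For the analogue of $\mathcal{J}_1$, I would use $\partial_{x_1}u_1=-\partial_{x_2}u_2$ and, after one integration by parts, treat the analogue of $\mathcal{J}_4$ as the critical term: write
\begin{equation*}
\mathcal{J}_4=2\int u_1\,\partial_{x_1 x_2}\theta\,\partial_{x_2}\theta\,dx
\end{equation*}
(using $\partial_{x_2}u_2=-\partial_{x_1}u_1$), and introduce $\Lambda_{x_1}^{\delta}$ on one factor and $\Lambda_{x_1}^{1-\delta}$ on the other. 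Then by the commutator estimate \eqref{yzz3}, Sobolev embedding $\dot{H}^{2/q}\hookrightarrow L^{2q/(q-2)}$, the anisotropic inequality \eqref{ghpoiu908} (in the $x_1$-direction), the anisotropic interpolation \eqref{t2326001}, and the frequency-shifting facts \eqref{dffgtyu889}, each factor gets rewritten as a suitable power of $\|\Lambda_{x_1}^{\alpha}\nabla\theta\|_{L^2}$, $\|\Lambda_{x_2}^{\beta}\nabla\theta\|_{L^2}$, $\|\theta\|_{L^2}$, and $\|\theta\|_{L^r}$ for some $r$. The natural choice is $\delta=\frac{1-\alpha\beta}{\beta+1}$, and the requirement $\delta\in[1-\alpha,\alpha]$ together with the Young-inequality absorption step forces the existence of a small $\epsilon>0$ with
\begin{equation*}
0<\epsilon<\frac{\beta\bigl[(2\beta+1)\alpha-1\bigr]}{(1-\alpha\beta)(\beta+1)},
\end{equation*}
which is exactly equivalent to $\beta>\frac{1-\alpha}{2\alpha}$. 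The terms $\mathcal{J}_2$ and $\mathcal{J}_3$ are handled analogously, with $\mathcal{J}_2$ requiring the same antisymmetrization trick that turned $\mathcal{J}_3$ in Proposition \ref{Lpp302} into $-\tfrac{1}{2}\mathcal{J}_4+\widetilde{\mathcal{J}_3}$; here the roles of the two indices are swapped, and the Riesz identities from $u=(-\mathcal{R}_2\theta,\mathcal{R}_1\theta)$ are used to trade $u$ for $\theta$ inside the critical $L^{2q/(q-2)}$ norms.

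After collecting all four bounds, choosing $\eta$ small enough to absorb $\eta\|\Lambda_{x_1}^{\alpha}\nabla\theta\|_{L^2}^2+\eta\|\Lambda_{x_2}^{\beta}\nabla\theta\|_{L^2}^2$ into the dissipation on the left, we arrive at a differential inequality of the form
\begin{equation*}
\frac{d}{dt}\|\nabla\theta\|_{L^2}^2+\|\Lambda_{x_1}^{\alpha}\nabla\theta\|_{L^2}^2+\|\Lambda_{x_2}^{\beta}\nabla\theta\|_{L^2}^2\leq G(t)\|\theta\|_{L^2}^2,
\end{equation*}
where $G(t)$ is a finite sum of powers of $\|\theta\|_{L^q}$, $\|\theta\|_{L^{\widetilde{q}}}$, $\|\theta\|_{L^{\widehat{q}}}$ with $q,\widetilde{q},\widehat{q}$ in ranges analogous to those in the previous proposition (with $\alpha,\beta$ swapped). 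The $L^p$-maximum principle from Proposition \ref{Spr1} makes these $L^r$-norms uniformly bounded by $\|\theta_0\|_{L^r}$, so integration in time yields \eqref{cvgyt25y027}. The main obstacle, as in the previous proposition, is bookkeeping: making sure the chosen $\delta$, $q$, $\widetilde{q}$, $\widehat{q}$ simultaneously satisfy the admissibility constraints from \eqref{t2326001} and \eqref{ghpoiu908} while still allowing Young's inequality to absorb the dissipative norms. The algebra is strictly parallel to the previous case once one consistently swaps $(x_1,\alpha)\leftrightarrow(x_2,\beta)$.
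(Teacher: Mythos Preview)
Your symmetry idea is viable, but there is one overstatement to flag, and the paper in fact takes a genuinely different route rather than swapping.

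The claim that the hypothesis of Proposition~\ref{Lpp30we3} is ``algebraically equivalent'' to that of Proposition~\ref{Lpp302} after swapping $\alpha\leftrightarrow\beta$ is not correct as stated: the literal swap of Proposition~\ref{Lpp302}'s hypothesis yields $\alpha>\tfrac{1}{2\beta+1}$ \emph{together with} $0<\beta\le\tfrac12$, whereas Proposition~\ref{Lpp30we3} allows all $\beta\in(\tfrac{1-\alpha}{2\alpha},1)$, in particular $\beta>\tfrac12$. So the swap, at the level of stated hypotheses, only covers the sub-range $\beta\le\tfrac12$. Your approach is nonetheless salvageable: a careful reading of the proof of Proposition~\ref{Lpp302} shows that $\alpha\le\tfrac12$ is never used---the effective constraints extracted are only $\beta>\tfrac{1}{2\alpha+1}$ (from the $\delta,\epsilon,q,\widetilde q$ analysis) and $\beta>\tfrac12$ (from the $\widehat q$ step in $\mathcal J_4$). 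After swapping these become precisely $\alpha>\tfrac{1}{2\beta+1}$ and $\alpha>\tfrac12$, i.e.\ the full hypothesis of Proposition~\ref{Lpp30we3}. You should make this point explicit rather than hide it behind ``up to the endpoint cases''.

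The paper does not swap. For $\mathcal J_1$ it exploits $\alpha>\tfrac12$ directly: one writes $\mathcal J_1=2\int u_1\,\partial_{x_1}\theta\,\partial_{x_1x_1}\theta\,dx$ and splits via $\Lambda_{x_1}^{1-\alpha}$ and $\Lambda_{x_1}^{\alpha}$, with no auxiliary $\delta$. For the mixed terms $\widetilde{\mathcal J_2},\mathcal J_3,\mathcal J_4$ a \emph{different} parameter $\widetilde\delta=\tfrac{(\alpha+1)\beta}{\beta+1}$ is introduced (not your $\delta=\tfrac{1-\alpha\beta}{\beta+1}$), still with the $\Lambda_{x_1}$-split; the admissibility constraint that emerges is $2\alpha\beta+\alpha-1>0$, which is exactly $\beta>\tfrac{1-\alpha}{2\alpha}$. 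Both routes arrive at a differential inequality $\frac{d}{dt}\|\nabla\theta\|_{L^2}^2+\cdots\le D(t)\|\theta\|_{L^2}^2$ with $D(t)$ controlled by Proposition~\ref{Spr1}. Your swap buys a one-line reduction once the gap above is closed; the paper's direct choice of $\widetilde\delta$ works uniformly over the full range from the start and makes the appearance of the threshold $2\alpha\beta+\alpha>1$ transparent.
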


\begin{proof}
We recall from (\ref{t3326t002}) that
\begin{eqnarray}\label{t36t0ghd}
\frac{1}{2}\frac{d}{dt}\|\nabla \theta(t)\|_{L^{2}}^{2}+
 \|\Lambda_{x_{1}}^{\alpha}\nabla\theta\|_{L^{2}}^{2}+
 \|\Lambda_{x_{2}}^{\beta}\nabla\theta\|_{L^{2}}^{2}= \mathcal{J}_{1}+\mathcal{J}_{2}+\mathcal{J}_{3}+\mathcal{J}_{4}.
\end{eqnarray}
Noting $\alpha>\frac{1}{2}$, we derive by \eqref{yzz3} and \eqref{ghpoiu908} that
\begin{align} \label{ghtyde229}
\mathcal{J}_{1} =&-\int_{\mathbb{R}^{2}}{\partial_{x_{1}}u_{1} \partial_{x_{1}}\theta\partial_{x_{1}}\theta\,dx}\nonumber\\
 =&2\int_{\mathbb{R}^{2}}{u_{1} \partial_{x_{1}}\theta \partial_{x_{1}x_{1}}\theta\,dx}
\nonumber\\
 \leq&C\|\Lambda_{x_{1}}^{1-\alpha}\partial_{x_{1}}\theta\|_{L^{\frac{2r}{r-2}}} \|\Lambda_{x_{1}}^{\alpha}(u_{1} \partial_{x_{1}}\theta)\|_{L^{\frac{2r}{r+2}}}
 \nonumber\\
 \leq&C\|\Lambda_{x_{1}}^{1-\alpha}\partial_{x_{1}}\theta\|_{L^{\frac{2r}{r-2}}} (\|\Lambda_{x_{1}}^{\alpha}(u_{1}  \partial_{x_{1}}\theta)-\Lambda_{x_{1}}^{\alpha}u_{1} \partial_{x_{1}}\theta\|_{L^{\frac{2r}{r+2}}}+\|\Lambda_{x_{1}}^{\alpha}u_{1} \partial_{x_{1}}\theta\|_{L^{\frac{2r}{r+2}}})
\nonumber\\
 \leq&C\|\Lambda_{x_{1}}^{1-\alpha}\partial_{x_{1}}\theta\|_{L^{\frac{2r}{r-2}}}( \|u_{1}\|_{L^{r}} \|\Lambda_{x_{1}}^{\alpha}\partial_{x_{1}}\theta\|_{L^{2}}+ \|\Lambda_{x_{1}}^{\alpha}u_{1}\|_{L^{\frac{2(\alpha+1)r}{\alpha r+2}}} \| \partial_{x_{1}}\theta\|_{L^{\frac{2(\alpha+1)r}{2\alpha+r}}})
\nonumber\\
 \leq&C\|\Lambda_{x_{1}}^{1-\alpha}\partial_{x_{1}}\theta\|_{L^{\frac{2r}{r-2}}}( \|u_{1}\|_{L^{r}} \|\Lambda_{x_{1}}^{\alpha}\partial_{x_{1}}\theta\|_{L^{2}}+ \|u_{1}\|_{L^{r}}^{\frac{1}{\alpha+1}}\|\Lambda_{x_{1}}^{\alpha+1}u_{1}\|_{L^{r}}
 ^{\frac{\alpha}{\alpha+1}}
 \|\theta\|_{L^{r}}^{\frac{\alpha}{\alpha+1}}\|\Lambda_{x_{1}}^{\alpha+1}\theta
 \|_{L^{r}}^{\frac{1}{\alpha+1}})
\nonumber\\
 \leq&C\|\Lambda_{x_{1}}^{1-\alpha}\partial_{x_{1}}\theta\|_{L^{\frac{2r}{r-2}}} \|\theta\|_{L^{r}} \|\Lambda_{x_{1}}^{\alpha}\partial_{x_{1}}\theta\|_{L^{2}}
 \nonumber\\
 \leq&C\|\Lambda^{\frac{2}{r}}\Lambda_{x_{1}}^{1-\alpha}\partial_{x_{1}}
\theta\|_{L^{2}} \|\theta\|_{L^{r}} \|\Lambda_{x_{1}}^{\alpha}\partial_{x_{1}}\theta\|_{L^{2}}.
\end{align}
According to \eqref{avbnuy8911} and \eqref{t2326001}, we obtain
\begin{align}\label{ghtyde230}
\|\Lambda^{\frac{2}{r}}\Lambda_{x_{1}}^{1-\alpha}\partial_{x_{1}}
\theta\|_{L^{2}} &\leq C \|\Lambda_{x_{1}}^{2-\alpha+\frac{2}{r}} \theta\|_{L^{2}} +C\|\Lambda_{x_{1}}^{2-\alpha}\Lambda_{x_{2}}^{\frac{2}{r}} \theta\|_{L^{2}}
\nonumber\\&\leq C\|\theta\|_{L^{2}}^{1-\frac{2-\alpha+\frac{2}{r}}{\alpha+1}} \|\Lambda_{x_{1}}^{\alpha+1}\theta\|_{L^{2}}^{
 \frac{2-\alpha+\frac{2}{r}}{\alpha+1}}
\nonumber\\&\quad+
C\|\theta\|_{L^{2}}^{1-\frac{2-\alpha}{\alpha+1}-\frac{\frac{2}{r}}
{\beta+1}} \|\Lambda_{x_{2}}^{\beta+1}\theta\|_{L^{2}}^{\frac{\frac{2}{r}}{\beta+1}}
 \|\Lambda_{x_{1}}^{\alpha+1}\theta\|_{L^{2}}
 ^{\frac{2-\alpha}{\alpha+1}}\nonumber\\&\leq C\|\theta\|_{L^{2}}^{1-\frac{2-\alpha+\frac{2}{r}}{\alpha+1}} \|\Lambda_{x_{1}}^{\alpha}\nabla\theta\|_{L^{2}}^{
 \frac{2-\alpha+\frac{2}{r}}{\alpha+1}}
\nonumber\\&\quad+
C\|\theta\|_{L^{2}}^{1-\frac{2-\alpha}{\alpha+1}-\frac{\frac{2}{r}}
{\beta+1}} \|\Lambda_{x_{2}}^{\beta}\nabla\theta\|_{L^{2}}^{\frac{\frac{2}{r}}{\beta+1}}
 \|\Lambda_{x_{1}}^{\alpha}\nabla\theta\|_{L^{2}}
 ^{\frac{2-\alpha}{\alpha+1}},
\end{align}
where
$$r> \frac{2(\alpha+1)}{\min\{\alpha+1,\,\beta+1\}(2\alpha-1)}.$$
Inserting \eqref{ghtyde230} into \eqref{ghtyde229} and using the H$\rm\ddot{o}$lder inequality, it gives
\begin{align} \label{ghtyde231}
\mathcal{J}_{1}
 \leq& \eta\|\Lambda_{x_{2}}^{\beta} \nabla\theta\|_{L^{2}}^{2}+\eta\|\Lambda_{x_{1}}^{\alpha} \nabla\theta\|_{L^{2}}^{2}+ C_{\eta}\|\theta\|_{L^{r}}^{\frac{2}{1-\frac{2-\alpha+\frac{2}{r}}
 {\alpha+1}}}\|\theta\|_{L^{2}}^{2}+ C_{\eta}\|\theta\|_{L^{r}}^{\frac{2}{1-\frac{2-\alpha}{\alpha+1}-\frac{\frac{2}{r}}
{\beta+1}}}\|\theta\|_{L^{2}}^{2}\nonumber\\ =& \eta\|\Lambda_{x_{2}}^{\beta} \nabla\theta\|_{L^{2}}^{2}+\eta\|\Lambda_{x_{1}}^{\alpha} \nabla\theta\|_{L^{2}}^{2} + C_{\eta}\|\theta\|_{L^{r}}^{\frac{2(\alpha+1)r}{(2\alpha-1)r-2}}\|\theta\|_{L^{2}}^{2}
\nonumber\\&+ C_{\eta}\|\theta\|_{L^{r}}^{\frac{2(\alpha+1)(\beta+1)r}
{(2\alpha-1)(\beta+1)r-2(\alpha+1)}}\|\theta\|_{L^{2}}^{2}.
\end{align}
Integration by parts, $\mathcal{J}_{2}$ can be rewritten as follows
\begin{align}
\mathcal{J}_{2}&=\int_{\mathbb{R}^{2}}{ u_{2} \partial_{x_{1}x_{2}}\theta\partial_{x_{1}}\theta\,dx}
+\int_{\mathbb{R}^{2}}{ u_{2} \partial_{x_{2}}\theta\partial_{x_{1}x_{1}}\theta\,dx}
\nonumber\\
&=\frac{1}{2}\int_{\mathbb{R}^{2}}{\partial_{x_{1}} u_{1} \partial_{x_{1}}\theta\partial_{x_{1}}\theta\,dx}
+\int_{\mathbb{R}^{2}}{ u_{2} \partial_{x_{2}}\theta\partial_{x_{1}x_{1}}\theta\,dx}
\nonumber\\
&\triangleq-\frac{1}{2}\mathcal{J}_{1}+\widetilde{\mathcal{J}_{2}}.\nonumber
\end{align}
Taking $\widetilde{\delta}=\frac{(\alpha+1)\beta}{\beta+1}$ and using \eqref{yzz3} as well as \eqref{ghpoiu908}, we may conclude that
\begin{align}\label{ghtyde232}
\widetilde{\mathcal{J}_{2}} \leq&C\|\Lambda_{x_{1}}^{1-\widetilde{\delta}}\partial_{x_{1}}
\theta\|_{L^{\frac{2\widetilde{r}}{\widetilde{r}-2}}} \|\Lambda_{x_{1}}^{\widetilde{\delta}}(u_{2} \partial_{x_{2}}\theta)\|_{L^{\frac{2\widetilde{r}}{\widetilde{r}+2}}}
\nonumber\\
\leq&C\|\Lambda_{x_{1}}^{1-\widetilde{\delta}}\partial_{x_{1}}
\theta\|_{L^{\frac{2\widetilde{r}}{\widetilde{r}-2}}} (\|\Lambda_{x_{1}}^{\widetilde{\delta}}(u_{2} \partial_{x_{2}}\theta)-\Lambda_{x_{1}}^{\widetilde{\delta}}u_{2} \partial_{x_{2}}\theta\|_{L^{\frac{2\widetilde{r}}{\widetilde{r}+2}}}
+\|\Lambda_{x_{1}}^{\widetilde{\delta}}u_{2} \partial_{x_{2}}\theta\|_{L^{\frac{2\widetilde{r}}{\widetilde{r}+2}}})
\nonumber\\
 \leq& C\|\Lambda_{x_{1}}^{1-\widetilde{\delta}}\partial_{x_{1}}
 \theta\|_{L^{\frac{2\widetilde{r}}{\widetilde{r}-2}}}  (\|u_{2}\|_{L^{\widetilde{r}}}\|\Lambda_{x_{1}}^{\widetilde{\delta}}
 \partial_{x_{2}}\theta\|_{L^{2}}
+\|\Lambda_{x_{1}}^{\widetilde{\delta}}u_{2}
\|_{L^{\frac{2(\beta+1)\widetilde{r}}{\beta\widetilde{r}+2}}}
\|\partial_{x_{2}}\theta\|_{L^{\frac{2(\beta+1)\widetilde{r}}
{2\beta+\widetilde{r}}}})
\nonumber\\
 \leq& C\|\Lambda^{\frac{2}{\widetilde{r}}}\Lambda_{x_{1}}^{1-\widetilde{\delta}}\partial_{x_{1}}
 \theta\|_{L^{2}}  (\|\theta\|_{L^{\widetilde{r}}}\|\Lambda_{x_{1}}^{\alpha+1}\theta
\|_{L^{2}}^{\frac{\beta}{\beta+1}} \|\Lambda_{x_{2}}^{\beta+1}\theta
\|_{L^{2}}^{\frac{1}{\beta+1}}
\nonumber\\&+\|u_{2}\|_{L^{r}}^{\frac{1}{\beta+1}}\|\Lambda_{x_{1}}^{\alpha+1}u_{2}
\|_{L^{2}}^{\frac{\beta}{\beta+1}}
\|\theta\|_{L^{r}}^{\frac{\beta}{\beta+1}}\|\Lambda_{x_{2}}^{\beta+1}\theta
\|_{L^{2}}^{\frac{1}{\beta+1}})\nonumber\\
 \leq& C\|\Lambda^{\frac{2}{\widetilde{r}}}
 \Lambda_{x_{1}}^{1-\widetilde{\delta}}\partial_{x_{1}}
 \theta\|_{L^{2}}  \|\theta\|_{L^{\widetilde{r}}}\|\Lambda_{x_{1}}^{\alpha+1}\theta
\|_{L^{2}}^{\frac{\beta}{\beta+1}} \|\Lambda_{x_{2}}^{\beta+1}\theta
\|_{L^{2}}^{\frac{1}{\beta+1}}.
\end{align}
Again, we resort to \eqref{avbnuy8911} and \eqref{t2326001} to show
\begin{align}\label{ghtyde233}
\|\Lambda^{\frac{2}{\widetilde{r}}}
 \Lambda_{x_{1}}^{1-\widetilde{\delta}}\partial_{x_{1}}
 \theta\|_{L^{2}}  &\leq C \|\Lambda_{x_{1}}^{2-\widetilde{\delta}+\frac{2}{\widetilde{r}}} \theta\|_{L^{2}} +C\|\Lambda_{x_{1}}^{2-\widetilde{\delta}}\Lambda_{x_{2}}^{\frac{2}{\widetilde{r}}} \theta\|_{L^{2}}
\nonumber\\&\leq C\|\theta\|_{L^{2}}^{1-\frac{2-\widetilde{\delta}+\frac{2}{\widetilde{r}}}{\alpha+1}} \|\Lambda_{x_{1}}^{\alpha+1}\theta\|_{L^{2}}^{
 \frac{2-\widetilde{\delta}+\frac{2}{\widetilde{r}}}{\alpha+1}}
\nonumber\\&\quad+
C\|\theta\|_{L^{2}}^{1-\frac{2-\widetilde{\delta}}{\alpha+1}-\frac{\frac{2}
{\widetilde{r}}}
{\beta+1}} \|\Lambda_{x_{2}}^{\beta+1}\theta\|_{L^{2}}^{\frac{\frac{2}{\widetilde{r}}}{\beta+1}}
 \|\Lambda_{x_{1}}^{\alpha+1}\theta\|_{L^{2}}
 ^{\frac{2-\widetilde{\delta}}{\alpha+1}}\nonumber\\&\leq C\|\theta\|_{L^{2}}^{1-\frac{2-\widetilde{\delta}+\frac{2}{\widetilde{r}}}{\alpha+1}} \|\Lambda_{x_{1}}^{\alpha}\nabla\theta\|_{L^{2}}^{
 \frac{2-\widetilde{\delta}+\frac{2}{\widetilde{r}}}{\alpha+1}}
\nonumber\\&\quad+
C\|\theta\|_{L^{2}}^{1-\frac{2-\widetilde{\delta}}{\alpha+1}-\frac{\frac{2}
{\widetilde{r}}}
{\beta+1}} \|\Lambda_{x_{2}}^{\beta}\nabla\theta\|_{L^{2}}^{\frac{\frac{2}{\widetilde{r}}}{\beta+1}}
 \|\Lambda_{x_{1}}^{\alpha}\nabla\theta\|_{L^{2}}
 ^{\frac{2-\widetilde{\delta}}{\alpha+1}},
\end{align}
where
$$\widetilde{r}>\frac{2\max\{\alpha+1,\,\beta+1\}}{2\alpha\beta+\alpha-1}.$$
This requires the condition
$$\beta>\frac{1-\alpha}{2\alpha}.$$
Keeping in mind $\widetilde{\delta}=\frac{(\alpha+1)\beta}{\beta+1}$, we deduce by inserting \eqref{ghtyde233} into \eqref{ghtyde232} and using the H$\rm\ddot{o}$lder inequality that
\begin{align} \label{ghtyde234}
\widetilde{\mathcal{J}_{2}}
 \leq& \eta\|\Lambda_{x_{2}}^{\beta} \nabla\theta\|_{L^{2}}^{2}+\eta\|\Lambda_{x_{1}}^{\alpha} \nabla\theta\|_{L^{2}}^{2}+ C_{\eta}\|\theta\|_{L^{\widetilde{r}}}^{\frac{2}{1-\frac{2-\widetilde{\delta}
 +\frac{2}{\widetilde{r}}}{\alpha+1}}}\theta\|_{L^{2}}^{2}+ C_{\eta}\|\theta\|_{L^{\widetilde{r}}}^{\frac{2}{1-\frac{2-\widetilde{\delta}}{\alpha+1}-\frac{\frac{2}
{\widetilde{r}}}
{\beta+1}}}\theta\|_{L^{2}}^{2}\nonumber\\ =& \eta\|\Lambda_{x_{2}}^{\beta} \nabla\theta\|_{L^{2}}^{2}+\eta\|\Lambda_{x_{1}}^{\alpha} \nabla\theta\|_{L^{2}}^{2} + C_{\eta}\|\theta\|_{L^{\widetilde{r}}}^{\frac{2(\alpha+1)(\beta+1)\widetilde{r}}{
(2\alpha\beta+\alpha-1)\widetilde{r}-2(\beta+1)}}\|\theta\|_{L^{2}}^{2}
\nonumber\\&+ C_{\eta}\|\theta\|_{L^{\widetilde{r}}}^{\frac{2(\alpha+1)(\beta+1)\widetilde{r}}{
(2\alpha\beta+\alpha-1)\widetilde{r}-2(\alpha+1)}}\|\theta\|_{L^{2}}^{2}.
\end{align}
The third term $\mathcal{J}_{3} $ should be handled differently. Firstly, one obtains
\begin{align}\label{ghtyde235}
\mathcal{J}_{3} =&\int_{\mathbb{R}^{2}}{\theta\partial_{x_{1}x_{2}}u_{1}\partial_{x_{2}}\theta\,dx}
+\int_{\mathbb{R}^{2}}{\theta\partial_{x_{2}}u_{1}\partial_{x_{1}x_{2}}\theta\,dx}
\nonumber\\
 \leq&C\|\Lambda_{x_{1}}^{1-\widetilde{\delta}}\partial_{x_{2}}u_{1}
 \|_{L^{\frac{2\widehat{r}}{\widehat{r}-2}}} \|\Lambda_{x_{1}}^{\widetilde{\delta}}(\theta\partial_{x_{2}}\theta)
 \|_{L^{\frac{2\widehat{r}}{\widehat{r}+2}}}
+C\|\Lambda_{x_{1}}^{1-\widetilde{\delta}}\partial_{x_{2}}\theta
\|_{L^{\frac{2\widehat{r}}{\widehat{r}-2}}} \|\Lambda_{x_{1}}^{\widetilde{\delta}}(\theta\partial_{x_{2}}u_{1})
\|_{L^{\frac{2\widehat{r}}{\widehat{r}+2}}}
\nonumber\\
 \leq& C\|\Lambda^{\frac{2}{\widehat{r}}}
 \Lambda_{x_{1}}^{1-\widetilde{\delta}}\partial_{x_{2}}
 \theta\|_{L^{2}}  \|\theta\|_{L^{\widehat{r}}}\|\Lambda_{x_{1}}^{\alpha+1}\theta
\|_{L^{2}}^{\frac{\beta}{\beta+1}} \|\Lambda_{x_{2}}^{\beta+1}\theta
\|_{L^{2}}^{\frac{1}{\beta+1}}.
\end{align}
Choosing $\widehat{r}\in (2,\infty)$ as
\begin{align}
\widehat{r}=\frac{2\alpha}{\alpha\beta+1-\widetilde{\delta}
-(\beta+1)(1+\widetilde{\epsilon})(1-\widetilde{\delta})},\nonumber
\end{align}
with $\widetilde{\delta}=\frac{(\alpha+1)\beta}{\beta+1}>\frac{(\beta+1)\widetilde{\epsilon}+(1-\alpha)
\beta}{(\beta+1)\widetilde{\epsilon}+\beta}$ for positive $\widetilde{\epsilon}<\frac{(2\alpha\beta+\alpha-1)\beta}{(\beta+1)(1-\alpha\beta)}$, we thus deduce from \eqref{t2326001} and \eqref{dffgtyu889} that
\begin{align}\label{ghtyde236}
\|\Lambda^{\frac{2}{\widehat{r}}}
 \Lambda_{x_{1}}^{1-\widetilde{\delta}}\partial_{x_{2}}
 \theta\|_{L^{2}}  &\leq C \|\Lambda_{x_{1}}^{1-\widetilde{\delta}+\frac{2}{\widehat{r}}}\Lambda_{x_{2}}\theta\|_{L^{2}} +C\|\Lambda_{x_{1}}^{1-\widetilde{\delta}}\Lambda_{x_{2}}^{1+\frac{2}{\widehat{r}}} \theta\|_{L^{2}}
\nonumber\\&\leq C\|\theta\|_{L^{2}}^{1-\frac{1-\widetilde{\delta}}{\alpha}-\frac{1+\frac{2}
{\widehat{r}}-\frac{1-\widetilde{\delta}}{\alpha}}
{\beta+1}} \|\Lambda_{x_{2}}^{\beta+1-\frac{2}{\widehat{r}}}
\Lambda_{x_{1}}^{\frac{2}{\widehat{r}}}\theta
\|_{L^{2}}^{\frac{1+\frac{2}{\widehat{r}}
-\frac{1-\widetilde{\delta}}{\alpha}}{\beta+1}}
 \|\Lambda_{x_{1}}^{\alpha+\frac{2(1+\widetilde{\epsilon})}{r}}
 \Lambda_{x_{2}}^{1-\frac{2(1+\widetilde{\epsilon})}{r}} \theta\|_{L^{2}}
 ^{\frac{1-\widetilde{\delta}}{\alpha}}
\nonumber\\&\quad+
C\|\theta\|_{L^{2}}^{1-\frac{1-\widetilde{\delta}}{\alpha}-\frac{1+\frac{2}
{\widehat{r}}-\frac{1-\widetilde{\delta}}{\alpha}}
{\beta+1}} \|\Lambda_{x_{2}}^{\beta+1}\theta\|_{L^{2}}^{\frac{1+\frac{2}{\widehat{r}}
-\frac{1-\widetilde{\delta}}{\alpha}}{\beta+1}}
 \|\Lambda_{x_{1}}^{\alpha}\Lambda_{x_{2}}\theta\|_{L^{2}}
 ^{\frac{1-\widetilde{\delta}}{\alpha}}
 \nonumber\\&\leq
C\|\theta\|_{L^{2}}^{1-\frac{1-\widetilde{\delta}}{\alpha}-\frac{1+\frac{2}
{\widehat{r}}-\frac{1-\widetilde{\delta}}{\alpha}}
{\beta+1}} \|\Lambda_{x_{2}}^{\beta}\nabla\theta\|_{L^{2}}^{\frac{1+\frac{2}{\widehat{r}}
-\frac{1-\widetilde{\delta}}{\alpha}}{\beta+1}}
 \|\Lambda_{x_{1}}^{\alpha}\nabla\theta\|_{L^{2}}
 ^{\frac{1-\widetilde{\delta}}{\alpha}}
  \nonumber\\&=
C\|\theta\|_{L^{2}}^{\frac{\widetilde{\epsilon}(1-\alpha\beta)}{\alpha(\beta+1)}} \|\Lambda_{x_{2}}^{\beta}\nabla\theta\|_{L^{2}}^{1-\frac{(1+\widetilde{\epsilon})
(1-\alpha\beta)}{\alpha(\beta+1)}}
 \|\Lambda_{x_{1}}^{\alpha}\nabla\theta\|_{L^{2}}
 ^{\frac{1-\alpha\beta}{\alpha(\beta+1)}}.
\end{align}
Inserting \eqref{ghtyde236} into \eqref{ghtyde235} and using the H$\rm\ddot{o}$lder inequality imply
\begin{align} \label{ghtyde237}
 {\mathcal{J}_{3}}
 \leq& \eta\|\Lambda_{x_{2}}^{\beta} \nabla\theta\|_{L^{2}}^{2}+\eta\|\Lambda_{x_{1}}^{\alpha} \nabla\theta\|_{L^{2}}^{2}+ C_{\eta}\|\theta\|_{L^{\widehat{r}}}^{\frac{2\alpha(\beta+1)}
 {\widetilde{\epsilon}(1-\alpha\beta)}}\|\theta\|_{L^{2}}^{2}.
\end{align}
Using the same argument adopted in dealing with (\ref{ghtyde237}) yields
\begin{align}\label{ghtyde238}
\mathcal{J}_{4} =&\int_{\mathbb{R}^{2}}{\partial_{x_{1}}u_{1} \partial_{x_{2}}\theta\partial_{x_{2}}\theta\,dx}
\nonumber\\
 =&-2
\int_{\mathbb{R}^{2}}{ u_{1} \partial_{x_{2}}\theta\partial_{x_{1}x_{2}}\theta\,dx}
\nonumber\\
 \leq&
C\|\Lambda_{x_{1}}^{1-\widetilde{\delta}}\partial_{x_{2}}
\theta\|_{L^{\frac{2\widehat{r}}{\widehat{r}-2}}} \|\Lambda_{x_{1}}^{\widetilde{\delta}}(u_{1}\partial_{x_{2}}
\theta)\|_{L^{\frac{2\widehat{r}}{\widehat{r}+2}}}
\nonumber\\
 \leq&\eta\|\Lambda_{x_{2}}^{\beta} \nabla\theta\|_{L^{2}}^{2}+\eta\|\Lambda_{x_{1}}^{\alpha} \nabla\theta\|_{L^{2}}^{2}+ C_{\eta}\|\theta\|_{L^{\widehat{r}}}^{\frac{2\alpha(\beta+1)}
 {\widetilde{\epsilon}(1-\alpha\beta)}}\|\theta\|_{L^{2}}^{2}.
\end{align}
Putting \eqref{ghtyde231}, \eqref{ghtyde234}, \eqref{ghtyde237} and \eqref{ghtyde238} into \eqref{t36t0ghd} and taking $\eta$ sufficiently small, we have
\begin{eqnarray}\label{ghtyde239}
 \frac{d}{dt}\|\nabla \theta(t)\|_{L^{2}}^{2}+
 \|\Lambda_{x_{1}}^{\alpha}\nabla\theta\|_{L^{2}}^{2}+
 \|\Lambda_{x_{2}}^{\beta}\nabla\theta\|_{L^{2}}^{2} \leq D(t)\| \theta\|_{L^{2}}^{2},
\end{eqnarray}
where
\begin{align}D(t)&=
C\Big(\|\theta\|_{L^{r}}^{\frac{2(\alpha+1)r}{(2\alpha-1)r-2}}+
\|\theta\|_{L^{r}}^{\frac{2(\alpha+1)(\beta+1)r}
{(2\alpha-1)(\beta+1)r-2(\alpha+1)}}+\|\theta\|_{L^{\widetilde{r}}}^{\frac{2(\alpha+1)(\beta+1)\widetilde{r}}{
(2\alpha\beta+\alpha-1)\widetilde{r}-2(\beta+1)}}\nonumber\\&\quad+
\|\theta\|_{L^{\widetilde{r}}}^{\frac{2(\alpha+1)(\beta+1)\widetilde{r}}{
(2\alpha\beta+\alpha-1)\widetilde{r}-2(\alpha+1)}}+\|\theta\|_{L^{\widehat{r}}}^{\frac{2\alpha(\beta+1)}
 {\widetilde{\epsilon}(1-\alpha\beta)}}\Big)\nonumber
 \end{align}
with $\frac{2(\alpha+1)}{\min\{\alpha+1,\,\beta+1\}(2\alpha-1)}<r<\infty$, $\frac{2\max\{\alpha+1,\,\beta+1\}}{2\alpha\beta+\alpha-1}<\widetilde{r}<\infty$
and $\widehat{r}=\frac{2(\alpha+1)\beta}{\alpha(\alpha+1)\beta
-\widetilde{\epsilon}(1-\alpha\beta)}$ for any $0<\widetilde{\epsilon}<\frac{(2\alpha\beta+\alpha-1)\beta}{(\beta+1)(1-\alpha\beta)}$.
By the estimates of Proposition \ref{Spr1}, we obtain \eqref{cvgyt25y027} by integrating \eqref{ghtyde239} in time. This completes the proof of Proposition \ref{Lpp30we3}.
\end{proof}

\vskip .1in
With the global
${\dot{H}}^1$-bound of $\theta$ in hand, we are ready to show the global
${\dot{H}}^2$-bound.
\begin{Pros}\label{Lvfgt}
Let $\theta$ be the solution of (\ref{SQG}). If $\alpha$ and $\beta$ satisfy (\ref{sdf2334}),
then for any $t>0$
\begin{eqnarray}\label{ghtyde240}
\|\Delta\theta(t)\|_{L^{2}}^{2}+ \int_{0}^{t}{(\|\Lambda_{x_{1}}^{\alpha}\Delta\theta(
\tau)\|_{L^{2}}^{2}+\|\Lambda_{x_{2}}^{\beta}\Delta\theta(
\tau)\|_{L^{2}}^{2})\,d\tau}\leq
C(t,\,\theta_{0}),
\end{eqnarray}
where $C(t, \,\theta_{0})$ is a constant depending on $t$ and the initial norm $\|\theta_0\|_{H^{2}}$.
\end{Pros}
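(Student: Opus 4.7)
The plan is to carry out an $L^2$ energy estimate at the $\dot{H}^2$ level that parallels, one derivative higher, the $\dot{H}^1$ estimate of Propositions \ref{Lpp302} and \ref{Lpp30we3}. Concretely, I would apply $\Delta$ to \eqref{SQG} and pair the result with $\Delta\theta$ in $L^2$. Using $\nabla\cdot u=0$ together with the identity
\begin{align*}
\Delta(u\cdot\nabla\theta)=\Delta u\cdot\nabla\theta+2\,\partial_{x_i}u_k\,\partial_{x_i}\partial_{x_k}\theta+u\cdot\nabla\Delta\theta,
\end{align*}
the transport piece $u\cdot\nabla\Delta\theta$ drops out after integration by parts, leaving
\begin{align*}
\frac{1}{2}\frac{d}{dt}\|\Delta\theta\|_{L^2}^2+\|\Lambda_{x_1}^\alpha\Delta\theta\|_{L^2}^2+\|\Lambda_{x_2}^\beta\Delta\theta\|_{L^2}^2 = -\int_{\mathbb{R}^2}\bigl(\Delta u\cdot\nabla\theta+2\,\partial_{x_i}u_k\,\partial_{x_i}\partial_{x_k}\theta\bigr)\Delta\theta\,dx.
\end{align*}
Splitting the integrand by components yields four terms $\mathcal{K}_1,\ldots,\mathcal{K}_4$ entirely analogous to $\mathcal{J}_1,\ldots,\mathcal{J}_4$ of \eqref{t3326t002}, but one order higher.

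I would then estimate each $\mathcal{K}_j$ by the same recipe used for the corresponding $\mathcal{J}_j$: insert a directional fractional derivative $\Lambda_{x_i}^{s}$ on one factor and a compensating $\Lambda_{x_i}^{1-s}$ on the remaining product, apply the commutator-type bound \eqref{yzz3}, invoke the anisotropic Sobolev embedding \eqref{ghpoiu908} and the anisotropic interpolation \eqref{t2326001}, and use the $L^p$-boundedness of the Riesz transforms to trade $u$ for $\theta$. The outcome is an estimate of the shape
\begin{align*}
|\mathcal{K}_j|\leq \eta\bigl(\|\Lambda_{x_1}^\alpha\Delta\theta\|_{L^2}^2+\|\Lambda_{x_2}^\beta\Delta\theta\|_{L^2}^2\bigr)+C_\eta\,H_j(t)\,\|\nabla\theta\|_{L^2}^2,
\end{align*}
where $H_j(t)$ is a polynomial in already-controlled lower-order quantities, namely $\|\theta\|_{L^p}$ from Proposition \ref{Spr1} and $\|\Lambda_{x_1}^\alpha\nabla\theta\|_{L^2}$, $\|\Lambda_{x_2}^\beta\nabla\theta\|_{L^2}$, which lie in $L^2_{\mathrm{loc}}(dt)$ by Propositions \ref{Lpp302}--\ref{Lpp30we3}. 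Choosing $\eta$ small and summing produces
\begin{align*}
\frac{d}{dt}\|\Delta\theta\|_{L^2}^2+\|\Lambda_{x_1}^\alpha\Delta\theta\|_{L^2}^2+\|\Lambda_{x_2}^\beta\Delta\theta\|_{L^2}^2\le H(t)\bigl(1+\|\Delta\theta\|_{L^2}^2\bigr)
\end{align*}
with $H\in L^1_{\mathrm{loc}}([0,\infty))$, and Gronwall's inequality then yields \eqref{ghtyde240}.

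The main obstacle is verifying that the interpolation exponents close under \eqref{sdf2334} at this higher level. Because the cubic integrand in $\mathcal{K}_j$ has total derivative count three (versus two for $\mathcal{J}_j$), matching a directional fractional derivative of $\Delta\theta$ against $\Lambda_{x_1}^\alpha\Delta\theta$ and $\Lambda_{x_2}^\beta\Delta\theta$ through \eqref{t2326001} leads to essentially the same scaling balance as in Propositions \ref{Lpp302} and \ref{Lpp30we3}, after the formal substitution $\theta\rightsquigarrow\nabla\theta$, so no new parameter regime is forced. However, the bookkeeping of the conjugate indices, the careful reintroduction of the Riesz transforms at the right stage, and the check that every intermediate $L^p$-norm is bounded by quantities already known to be finite (rather than requiring new a priori control) is the step that calls for the most care.
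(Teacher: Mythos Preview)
Your outline could likely be made to close, but it takes a harder route than the paper and rests on an imprecise analogy. The paper observes that \eqref{sdf2334} forces $\max\{\alpha,\beta\}>\tfrac12$ and then, citing \cite{Yenonli} (with full details given later in Proposition~\ref{Lfgyt789j}), applies the anisotropic triple-product inequality \eqref{qtri} directly to each of the six terms $\Xi_m$ (there are six, not four). In each term the factor $\nabla\theta$ (or $\nabla u$) plays the $L^2$ role in \eqref{qtri} while the two second-order factors are interpolated toward the dissipation, yielding
\[
\frac{d}{dt}\|\Delta\theta\|_{L^2}^2+\|\Lambda_{x_1}^\alpha\Delta\theta\|_{L^2}^2+\|\Lambda_{x_2}^\beta\Delta\theta\|_{L^2}^2\le C\,\|\nabla\theta\|_{L^2}^{\frac{2(2\gamma-1)}{4\gamma-1}}\|\Lambda_{x_i}^{\gamma}\nabla\theta\|_{L^2}^{\frac{2}{4\gamma-1}}\,\|\Delta\theta\|_{L^2}^2
\]
with $\gamma\in\{\alpha,\beta\}$ whichever exceeds $\tfrac12$, after which Gronwall and Propositions~\ref{Lpp302}--\ref{Lpp30we3} finish. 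Note the remainder multiplies $\|\Delta\theta\|_{L^2}^2$, not $\|\nabla\theta\|_{L^2}^2$ as you write; your stated shape of the $\mathcal{K}_j$-bound and your final differential inequality are inconsistent with each other.

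More importantly, your ``formal substitution $\theta\rightsquigarrow\nabla\theta$'' misreads the structure: at the $\dot H^1$ level all three factors in $\mathcal{J}_j$ carry one derivative, whereas at the $\dot H^2$ level one factor ($\nabla u$ or $\nabla\theta$) remains at order one while the other two are at order two. This asymmetry is exactly what makes the $\dot H^2$ step \emph{easier} once $\dot H^1$ is in hand---the order-one factor sits in $L^2$, and only the mild requirement $\max\{\alpha,\beta\}>\tfrac12$ is needed for \eqref{qtri} to close, not the full delicacy of \eqref{sdf2334}. Replaying the commutator/embedding machinery of Propositions~\ref{Lpp302}--\ref{Lpp30we3} one level up would re-derive all those parameter choices for no benefit.
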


\begin{proof}
It follows from the proof of \cite[Proposition 3.4]{Yenonli} that
\begin{eqnarray*}
 \frac{d}{dt}\|\Delta\theta(t)\|_{L^{2}}^{2}+\|\Lambda_{x_{1}}^{\alpha}\Delta\theta\|_{L^{2}}^{2}+
 \|\Lambda_{x_{2}}^{\beta}\Delta\theta\|_{L^{2}}^{2}
\leq C
\|\nabla\theta\|_{L^2}^{\frac{2(2\alpha-1)}{4\alpha-1}}\|\Lambda_{x_{1}}^{\alpha}\nabla\theta\|_{L^2}^{ \frac{2}{4\alpha-1}}\,\|\Delta \theta\|_{L^2}^{2},
\end{eqnarray*}
for $\alpha>\frac{1}{2}$, while for $\beta>\frac{1}{2}$
\begin{eqnarray*}
 \frac{d}{dt}\|\Delta\theta(t)\|_{L^{2}}^{2}+\|\Lambda_{x_{1}}^{\alpha}\Delta\theta\|_{L^{2}}^{2}+
 \|\Lambda_{x_{2}}^{\beta}\Delta\theta\|_{L^{2}}^{2}
\leq C
\|\nabla\theta\|_{L^2}^{\frac{2(2\beta-1)}{4\beta-1}}
\|\Lambda_{x_{2}}^{\beta}\nabla\theta\|_{L^2}^{\frac{2}{4\beta-1}}\,\|\Delta \theta\|_{L^2}^{2}.
\end{eqnarray*}
The classical Gronwall inequality along with (\ref{asddfgdfht302}) and (\ref{cvgyt25y027}) allow us to derive \eqref{ghtyde240}.
Therefore, we end the proof of Proposition \ref{Lvfgt}.
\end{proof}

\vskip .1in
With the above obtained
$H^2$-bound in hand, we are now ready to establish the global $H^\rho$-estimate.
\begin{proof}[Proof of the global $H^\rho$-estimate]
By (3.38) and (3.39) of \cite{Yenonli}, we deduce from the obtained $H^\rho$-bound that \begin{eqnarray*}  \int_{0}^{t}(\|\nabla \theta(\tau)\|_{L^{\infty}}+\|\nabla u(\tau)\|_{L^{\infty}})\,d\tau\leq
C(t,\,\theta_{0}).
\end{eqnarray*}
According to the basic $H^{\rho}$-estimate of (\ref{SQG}), one has
\begin{eqnarray*}
 \frac{d}{dt}
\|\theta(t)\|_{H^{\rho}}^{2} +\|\Lambda_{x_{1}}^{\alpha}\theta\|_{H^{\rho}}^{2}
+\|\Lambda_{x_{2}}^{\beta}\theta\|_{H^{\rho}}^{2}
 \leq  C(\|\nabla u\|_{L^{\infty}}+\|\nabla \theta\|_{L^{\infty}}) \|\theta\|_{H^{\rho}}^{2}.
\end{eqnarray*}
The Gronwall inequality implies that
$$
\|\theta(t)\|_{H^{\rho}}^{2}+\int_{0}^{t}(\|\Lambda_{x_{1}}^{\alpha}
\theta(\tau)\|_{H^{\rho}}^{2}
+\|\Lambda_{x_{2}}^{\beta}\theta(\tau)\|_{H^{\rho}}^{2})\,d\tau\leq
C(t,\,\theta_{0}).$$
With the above estimate in hand, the uniqueness is not difficult to derive (see P.95--P.97 of \cite{Yenonli} for more details). This ends the proof of the global regularity part of Theorem \ref{OKTh1}.
\end{proof}

\vskip .1in
\section{The proof of Theorem \ref{OKTh1}: Time decay}\setcounter{equation}{0}
\label{sec3}

This section is devoted to the proof of the time decay as stated in Theorem \ref{OKTh1}. Here we are aiming at developing effective approaches to understand the large-time behavior of the system(s) with anisotropic fractional dissipation.
We begin with the following simple lemma, which is a direct consequence of the classical ODE argument.
\begin{lemma}\label{lem111}
If the non-negative function $X(t)$ satisfies
\begin{eqnarray}
\frac{d X(t)}{dt}+\nu X^{k}(t)\leq 0,\quad X(0)\geq 0,\nonumber
\end{eqnarray}
where the constants $\nu>0$ and $k>1$, then it holds
$$X(t)\leq \left[X(0)^{1-k}+(k-1)\nu t\right]^{-\frac{1}{k-1}}.$$
\end{lemma}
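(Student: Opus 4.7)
The plan is to convert the differential inequality into an explicit comparison via separation of variables after the substitution $Y(t)=X(t)^{1-k}$. First I would dispense with the degenerate cases: if $X(0)=0$, the bound holds trivially with the convention $X(0)^{1-k}=+\infty$; and if $X$ vanishes at some first time $t_{0}>0$, then at $t_{0}$ the hypothesis gives $X'(t_{0})\leq -\nu X(t_{0})^{k}=0$ while $X\geq 0$ forces $X'(t_{0})\geq 0$, so $X\equiv 0$ on $[t_{0},\infty)$ and the stated bound again holds automatically.

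On any maximal interval where $X(t)>0$, I would divide the hypothesis by $X^{k}$ to obtain $X^{-k}\,X'\leq -\nu$, which rewrites as
$$\frac{d}{dt}\,X(t)^{1-k}\;\geq\;(k-1)\,\nu.$$
Integrating from $0$ to $t$ yields $X(t)^{1-k}\geq X(0)^{1-k}+(k-1)\nu t$. Since $k>1$, the map $y\mapsto y^{-1/(k-1)}$ is decreasing on $(0,\infty)$, so inverting this inequality gives exactly
$$X(t)\;\leq\;\bigl[X(0)^{1-k}+(k-1)\nu t\bigr]^{-\frac{1}{k-1}},$$
which is the conclusion.

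There is no substantial analytic obstacle here; the argument is the standard ODE comparison for Bernoulli-type decay. The only point that deserves a line of justification is the dichotomy in the first paragraph, which legitimizes dividing by $X^{k}$ and guarantees that the integrated inequality extends from the set $\{X>0\}$ to all of $[0,\infty)$.
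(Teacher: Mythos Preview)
Your argument is correct and is precisely the ``classical ODE argument'' the paper alludes to without writing out; the paper gives no proof beyond that phrase, so there is nothing further to compare. One cosmetic remark: your treatment of the first vanishing time can be streamlined by noting that $X'\leq -\nu X^{k}\leq 0$ makes $X$ nonincreasing, so once $X(t_{0})=0$ it stays at zero thereafter.
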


\vskip .1in
With Lemma \ref{lem111} and Proposition \ref{Spr1} in hand, we are able to show the basic $L^2$-decay rate for any weak solution of (\ref{SQG}).
\begin{Pros}\label{Prosty666} Let $\alpha \in (0,1],\, \beta\in (0,1]$ and $\theta_{0}\in L^{2}(\mathbb{R}^{2})\cap L^p(\mathbb{R}^{2})$ with $p\in [1,2)$, then any weak solution of (\ref{SQG}) satisfies
\begin{equation}\label{Proshsh78}
\|\theta(t)\|_{L^2}\leq C_{0}(1+t)^{-\frac{(\alpha+\beta)(2-p)}{4\alpha\beta p}},\end{equation}
 where the constant $C_{0}>0$ depends only on $\alpha,\beta,p$ and $\theta_{0}$.
\end{Pros}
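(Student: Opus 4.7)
The plan is to carry out a Fourier splitting argument adapted to the anisotropic dissipation, using only the basic energy inequality and the $L^p$-preservation from Proposition \ref{Spr1}. The starting point is
\begin{equation*}
\frac{d}{dt}\|\theta(t)\|_{L^2}^2 + 2\|\Lambda_{x_1}^{\alpha}\theta\|_{L^2}^2 + 2\|\Lambda_{x_2}^{\beta}\theta\|_{L^2}^2 = 0,
\end{equation*}
which by Plancherel can be recast as
\begin{equation*}
\frac{d}{dt}\|\theta(t)\|_{L^2}^2 + 2\int_{\mathbb{R}^2}(|\xi_1|^{2\alpha}+|\xi_2|^{2\beta})|\widehat{\theta}(\xi)|^2\,d\xi = 0.
\end{equation*}

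The main step is to introduce, for a decreasing function $g(t)>0$ to be chosen, the anisotropic Fourier ball
\begin{equation*}
S(t)=\{\xi\in\mathbb{R}^2:\ |\xi_1|^{2\alpha}+|\xi_2|^{2\beta}\leq g(t)\}.
\end{equation*}
On $S(t)^{c}$ we use $(|\xi_1|^{2\alpha}+|\xi_2|^{2\beta})\geq g(t)$ to obtain
\begin{equation*}
\frac{d}{dt}\|\theta\|_{L^2}^2+2g(t)\|\theta\|_{L^2}^2\leq 2g(t)\int_{S(t)}|\widehat{\theta}(\xi)|^2\,d\xi.
\end{equation*}
On $S(t)$ I would apply H\"older's inequality together with the Hausdorff--Young inequality $\|\widehat{\theta}\|_{L^{p'}}\leq C\|\theta\|_{L^p}$ (where $p'=p/(p-1)$, valid since $p\in[1,2)$) to get
\begin{equation*}
\int_{S(t)}|\widehat{\theta}|^2\,d\xi\leq |S(t)|^{\frac{2-p}{p}}\|\widehat{\theta}\|_{L^{p'}}^{2}\leq C\,|S(t)|^{\frac{2-p}{p}}\|\theta_0\|_{L^p}^{2}.
\end{equation*}
A scaling change of variables $\eta_i=\xi_i/g(t)^{1/(2\alpha_i)}$ (with $\alpha_1=\alpha,\alpha_2=\beta$) shows that $|S(t)|=C_{\alpha,\beta}\,g(t)^{\frac{\alpha+\beta}{2\alpha\beta}}$, so
\begin{equation*}
\frac{d}{dt}\|\theta\|_{L^2}^2+2g(t)\|\theta\|_{L^2}^2\leq C\,\|\theta_0\|_{L^p}^{2}\,g(t)^{1+\frac{(\alpha+\beta)(2-p)}{2\alpha\beta p}}.
\end{equation*}

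The final step is the standard ODE manipulation: choose $g(t)=K/(1+t)$ for $K$ sufficiently large (specifically $2K>\frac{(\alpha+\beta)(2-p)}{2\alpha\beta p}$), multiply by the integrating factor $(1+t)^{2K}$, and integrate in time. This yields
\begin{equation*}
(1+t)^{2K}\|\theta(t)\|_{L^2}^{2}\leq \|\theta_0\|_{L^2}^{2}+C(1+t)^{2K-\frac{(\alpha+\beta)(2-p)}{2\alpha\beta p}},
\end{equation*}
and taking square roots gives the advertised bound with exponent $\frac{(\alpha+\beta)(2-p)}{4\alpha\beta p}$.

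I do not expect any serious obstacle. The only point requiring care is the volume computation for $S(t)$: because the dissipation is anisotropic, $S(t)$ is not a ball in the usual sense but a rescaling of the fixed unit ``anisotropic'' region $\{|\eta_1|^{2\alpha}+|\eta_2|^{2\beta}\leq 1\}$, which has finite Lebesgue measure, and the scaling factor produces exactly the exponent $\frac{\alpha+\beta}{2\alpha\beta}$ that drives the final decay rate. Note that no regularity assumption beyond $\theta_0\in L^2\cap L^p$ is needed, so the result applies to any weak solution, and \eqref{sdf2334} plays no role here.
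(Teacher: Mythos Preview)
Your argument is correct: the Fourier splitting method with the anisotropic ball $S(t)$, the Hausdorff--Young bound, the volume scaling $|S(t)|=C_{\alpha,\beta}g(t)^{(\alpha+\beta)/(2\alpha\beta)}$, and the integrating-factor ODE all work exactly as you describe, and they produce the claimed exponent.

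However, your route is \emph{not} the one the paper takes. In fact the paper explicitly remarks that it avoids Fourier splitting for this proposition and instead proves an anisotropic interpolation inequality of the form
\[
\|\theta\|_{L^{2}}\leq C\|\theta\|_{L^{p}}^{1-\mu}
\|\Lambda_{x_{1}}^{\alpha}\theta\|_{L^{2}}^{\mu_1}
\|\Lambda_{x_{2}}^{\beta}\theta\|_{L^{2}}^{\mu_2},
\]
with exponents chosen so that, after inserting $\|\theta(t)\|_{L^p}\le\|\theta_0\|_{L^p}$, one obtains a lower bound $\|\Lambda_{x_1}^\alpha\theta\|_{L^2}^2+\|\Lambda_{x_2}^\beta\theta\|_{L^2}^2\ge c\|\theta\|_{L^2}^{2k}$ for some $k>1$; the energy identity then becomes the autonomous ODE $\frac{d}{dt}X+cX^{k}\le 0$, which integrates directly (Lemma~\ref{lem111}). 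The trade-offs: your Fourier-splitting argument is the classical Schonbek approach and its only delicate point is the anisotropic volume computation you flagged; the paper's interpolation approach requires establishing the mixed-derivative inequality (via one-dimensional Sobolev, H\"older, and Plancherel) but then yields the decay without ever choosing a cutoff radius or an integrating factor. Both are self-contained and use only $\theta_0\in L^2\cap L^p$; neither needs the restriction~\eqref{sdf2334}.
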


\begin{rem}
We remark that the argument adopted in establishing the decay estimate of the previous works relies heavily on the classical Fourier Splitting method introduced by Schonbek \cite{Schonbek}. However, in the proof of Proposition \ref{Prosty666}, we chose not to apply Fourier Splitting techniques but to use the direct interpolation inequality.
\end{rem}
\begin{proof}
By the simple energy estimate, it gives
$$
\frac{1}{2}\frac{d}{dt}\|\theta(t)\|_{L^{2}}^{2}+
\int_{\mathbb{R}^{2}}{\Lambda_{x_{1}}^{2\alpha}\theta \theta\,dx}+
\int_{\mathbb{R}^{2}}{\Lambda_{x_{2}}^{2\beta}\theta\theta\,dx}=0,
$$
which equals to
\begin{align}\label{fgfgcvf3}
\frac{1}{2}\frac{d}{dt}\|\theta(t)\|_{L^{2}}^{2}+
 \|\Lambda_{x_{1}}^{\alpha}\theta\|_{L^{2}}^{2}+
 \|\Lambda_{x_{2}}^{\beta}\theta\|_{L^{2}}^{2}= 0.
\end{align}
Now we claim that the following anisotropic interpolation inequality holds for $p\in [1,\,2)$
\begin{align}\label{fggh89p1}
\|\theta\|_{L^{2}}\leq C\|\theta\|_{L^{p}}^{1-\frac{(\alpha+\beta)(\frac{1}{p}-\frac{1}{2})}{(\alpha+\beta)(\frac{1}{p}-\frac{1}{2})+\alpha\beta}}
\|\Lambda_{x_{1}}^{\alpha}\theta\|_{L^{2}}
^{\frac{\beta(\frac{1}{p}-\frac{1}{2})}{(\alpha+\beta)(\frac{1}{p}-\frac{1}{2})+\alpha\beta}}\|\Lambda_{x_{2}}^{\beta}\theta\|_{L^{2}}
^{\frac{\alpha(\frac{1}{p}-\frac{1}{2})}{(\alpha+\beta)(\frac{1}{p}-\frac{1}{2})+\alpha\beta}}.
\end{align}
Making use of several interpolation inequalities in one-dimension, we can show
\begin{align}\label{fgfgcvf4}
\|\theta\|_{L^{2}}&=\left\|\|\theta(x_{1},x_{2})\|_{L_{x_{1}}^{2}}\right\|_{L_{x_{2}}^{2}}\nonumber\\
&\leq C\left\|\|\theta(x_{1},x_{2})\|_{L_{x_{1}}^{p}}^{1-\mu}
\|\Lambda_{x_{1}}^{k_{1}}\theta(x_{1},x_{2})\|_{L_{x_{1}}^{2}}^{\mu}\right\|_{L_{x_{2}}^{2}}
\nonumber\\
&\leq C\left\|\|\theta(x_{1},x_{2})\|_{L_{x_{1}}^{p}}^{1-\mu}
\right\|_{L_{x_{2}}^{\frac{2r}{r-2}}}\left\|
\|\Lambda_{x_{1}}^{k_{1}}\theta(x_{1},x_{2})\|_{L_{x_{1}}^{2}}^{\mu}\right\|_{L_{x_{2}}^{r}}
\nonumber\\
&=C\left\|\|\theta(x_{1},x_{2})\|_{L_{x_{1}}^{p}}
\right\|_{L_{x_{2}}^{\frac{2(1-\mu)r}{r-2}}}^{1-\mu}\left\|
\|\Lambda_{x_{1}}^{k_{1}}\theta(x_{1},x_{2})\|_{L_{x_{1}}^{2}}\right\|_{L_{x_{2}}^{\mu r}}^{\mu}
\nonumber\\
&=C\left\|\|\theta(x_{1},x_{2})\|_{L_{x_{1}}^{p}}
\right\|_{L_{x_{2}}^{p}}^{1-\mu}\left\|
\|\Lambda_{x_{1}}^{k_{1}}\theta(x_{1},x_{2})\|_{L_{x_{1}}^{2}}\right\|_{L_{x_{2}}^{\mu r}}^{\mu}
\nonumber\\
&\leq C\left\|\|\theta(x_{1},x_{2})\|_{L_{x_{1}}^{p}}
\right\|_{L_{x_{2}}^{p}}^{1-\mu}\left\|
\|\Lambda_{x_{2}}^{k_{2}}\Lambda_{x_{1}}^{k_{1}}\theta(x_{1},x_{2})\|_{L_{x_{1}}^{2}}\right\|_{L_{x_{2}}^{2}}^{\mu}
\nonumber\\
&=C\left\|\theta
\right\|_{L^{p}}^{1-\mu}\left\|
 \Lambda_{x_{2}}^{k_{2}}\Lambda_{x_{1}}^{k_{1}}\theta(x_{1},x_{2})\right\|_{L^{2}}^{\mu},
\end{align}
where $\mu\in (0,1),k_{1}>0,k_{2}>0,r> 2,p\in[1,2)$ should satisfy
\begin{align}\label{fgfgcvf5}
\frac{1}{2}=\frac{1-\mu}{p}+\left(\frac{1}{2}-k_{1}\right)\mu,\quad \frac{2(1-\mu)r}{r-2}=p,\quad  \frac{1}{2}-k_{2}=\frac{1}{\mu r}.
\end{align}
Now we further choose
\begin{align}\label{fgfgcvf6}
\frac{k_{1}}{\alpha}+\frac{k_{2}}{\beta}=1,
\end{align}
then it implies
\begin{align}\label{fgfgcvf7}
\|\Lambda_{x_{2}}^{k_{2}}\Lambda_{x_{1}}^{k_{1}}\theta(x_{1},x_{2})
\|_{L^{2}}
 =&\Big(\int_{\mathbb{R}^{2}}{|\xi_{2}|^{2k_{2}}|\xi_{1}|^{2k_{1}}
|\widehat{\theta}(\xi)|^{2}\,d\xi}\Big)^{\frac{1}{2}}\nonumber\\
 =&\Big(\int_{\mathbb{R}^{2}}{\big(|\xi_{2}|^{2k_{2}}
|\widehat{h}(\xi)|^{\frac{2k_{2}}{\beta}}\big)
\big(|\xi_{1}|^{2k_{1}}|\widehat{\theta}(\xi)|
^{\frac{2k_{1}}{\alpha}}}\big)
\,d\xi\Big)^{\frac{1}{2}}
\nonumber\\
 \leq&C\Big(\int_{\mathbb{R}^{2}}{|\xi_{2}|^{2\beta}
|\widehat{\theta}(\xi)|^{2}\,d\xi}\Big)^{\frac{k_{2}}{2\beta}}
\Big(\int_{\mathbb{R}^{2}}{|\xi_{1}|^{2\alpha}
|\widehat{\theta}(\xi)|^{2}\,d\xi}\Big)^{\frac{k_{1}}{2\alpha}}
\nonumber\\
 =&C
\|\Lambda_{x_{2}}^{\beta}\theta\|_{L^{2}}
^{\frac{k_{2}}{\beta}}
\|\Lambda_{x_{1}}^{\alpha}\theta\|_{L^{2}}
^{\frac{k_{1}}{\alpha}}.
\end{align}
Collecting \eqref{fgfgcvf5} and \eqref{fgfgcvf6}, we finally derive that
\begin{align}\label{fgfgcvf9}
k_{1}=k_{2}=\frac{\alpha\beta}{\alpha+\beta},\ \ \ \mu=\frac{\frac{1}{p}-\frac{1}{2}}{\frac{1}{p}+\frac{\alpha\beta}{\alpha+\beta}-\frac{1}{2}}, \ \
r=\frac{\frac{1}{p}+\frac{\alpha\beta}{\alpha+\beta}-\frac{1}{2}}{(\frac{1}{2}-\frac{\alpha\beta}{\alpha+\beta})
(\frac{1}{p}-\frac{1}{2})}.\end{align}
Putting \eqref{fgfgcvf7} into \eqref{fgfgcvf4} and using \eqref{fgfgcvf9} yields the desired \eqref{fggh89p1}.
Recalling the fact $\|\theta(t)\|_{L^p}\leq \|\theta_0\|_{L^p}$, it follows from \eqref{fggh89p1} that
\begin{align}
\|\theta\|_{L^{2}}&\leq C
\|\Lambda_{x_{1}}^{\alpha}\theta\|_{L^{2}}
^{\frac{\beta(\frac{1}{p}-\frac{1}{2})}{(\alpha+\beta)(\frac{1}{p}-\frac{1}{2})+\alpha\beta}}\|\Lambda_{x_{2}}^{\beta}\theta\|_{L^{2}}
^{\frac{\alpha(\frac{1}{p}-\frac{1}{2})}{(\alpha+\beta)(\frac{1}{p}-\frac{1}{2})+\alpha\beta}}\nonumber\\
&=C
(\|\Lambda_{x_{1}}^{\alpha}\theta\|_{L^{2}}^{2})
^{\frac{\beta(\frac{1}{p}-\frac{1}{2})}{2(\alpha+\beta)(\frac{1}{p}-\frac{1}{2})+2\alpha\beta}}
(\|\Lambda_{x_{2}}^{\beta}\theta\|_{L^{2}}^{2})
^{\frac{\alpha(\frac{1}{p}-\frac{1}{2})}{2(\alpha+\beta)(\frac{1}{p}-\frac{1}{2})+2\alpha\beta}}\nonumber\\
&\leq C
(\|\Lambda_{x_{1}}^{\alpha}\theta\|_{L^{2}}^{2}+\|\Lambda_{x_{2}}^{\beta}\theta\|_{L^{2}}^{2})
^{\frac{(\alpha+\beta)(\frac{1}{p}-\frac{1}{2})}{2(\alpha+\beta)(\frac{1}{p}-\frac{1}{2})+2\alpha\beta}},\nonumber
\end{align}
which yields
\begin{align}
\|\Lambda_{x_{1}}^{\alpha}\theta\|_{L^{2}}^{2}+\|\Lambda_{x_{2}}^{\beta}\theta\|_{L^{2}}^{2}\geq C\|\theta\|_{L^{2}}^{\frac{2(\alpha+\beta)(\frac{1}{p}-\frac{1}{2})+2\alpha\beta}{(\alpha+\beta)
(\frac{1}{p}-\frac{1}{2})}}.\nonumber
\end{align}
This together with \eqref{fgfgcvf3} shows
\begin{align}\label{fgfgcvf10}
\frac{d}{dt}\|\theta(t)\|_{L^{2}}^{2}+C(\|\theta\|_{L^{2}}^{2})^{\frac{(\alpha+\beta)(\frac{1}{p}-\frac{1}{2})+\alpha\beta}{(\alpha+\beta)
(\frac{1}{p}-\frac{1}{2})}}\leq 0.
\end{align}
Applying Lemma \ref{lem111} to \eqref{fgfgcvf10} gives
$$\|\theta(t)\|_{L^2}\leq C_{0}(1+t)^{-\frac{(\alpha+\beta)(2-p)}{4\alpha\beta p}}.$$
Therefore, we end the proof of Proposition \ref{Prosty666}.
\end{proof}

We remark that if we only have $\theta_{0}\in L^{2}(\mathbb{R}^{2})$, then we can show the following non-uniform decay result.
\begin{Pros}\label{mkjqq1}
Let $\min\{\alpha+3\beta,\,3\alpha+\beta\}\geq4\alpha\beta$ with $\alpha>0,\,\beta>0$ and $\theta_{0}\in L^{2}(\mathbb{R}^{2})$, then any weak solution of (\ref{SQG}) satisfies
\begin{align}\label{cvbmklp1}
\lim_{t\rightarrow\infty}\|\theta(t)\|_{L^2}=0.
\end{align}
\end{Pros}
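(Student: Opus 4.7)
The plan is to combine the Fourier splitting method of Schonbek with a contradiction argument that handles the borderline case $\min\{\alpha+3\beta,\,3\alpha+\beta\}=4\alpha\beta$. Let $E(t)=\|\theta(t)\|_{L^{2}}^{2}$ and $D(t)=\|\Lambda_{x_{1}}^{\alpha}\theta\|_{L^{2}}^{2}+\|\Lambda_{x_{2}}^{\beta}\theta\|_{L^{2}}^{2}$. By Proposition \ref{Spr1}, $E$ is nonincreasing, so $E_{\infty}:=\lim_{t\to\infty}E(t)\ge 0$ exists and $\int_{0}^{\infty}D(\tau)\,d\tau\le \tfrac{1}{2}\|\theta_{0}\|_{L^{2}}^{2}<\infty$. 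I would argue by contradiction: assume $E_{\infty}>0$.

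For a parameter $k>0$ to be chosen later, set $g(t)=k/(1+t)$ and $S(t)=\{\xi\in\mathbb{R}^{2}:\,|\xi_{1}|^{2\alpha}+|\xi_{2}|^{2\beta}\le g(t)\}$. The Fourier splitting lower bound reads
\begin{equation*}
D(t)\ge g(t)\bigl(E(t)-\mathcal{I}(t)\bigr),\qquad \mathcal{I}(t):=\int_{S(t)}|\widehat{\theta}(\xi,t)|^{2}\,d\xi.
\end{equation*}
To control $\mathcal{I}(t)$ I would exploit $\nabla\cdot u=0$ to write $u\cdot\nabla\theta=\nabla\cdot(u\theta)$, so $|\widehat{u\cdot\nabla\theta}(\xi,s)|\le C|\xi|\,\|u\theta\|_{L^{1}}\le C|\xi|\,\|\theta_{0}\|_{L^{2}}^{2}$ by Riesz boundedness on $L^{2}$ and the energy bound. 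Duhamel then yields
\begin{equation*}
|\widehat{\theta}(\xi,t)|\le|\widehat{\theta_{0}}(\xi)|+C\,t\,|\xi|\,\|\theta_{0}\|_{L^{2}}^{2}.
\end{equation*}

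Squaring and integrating over $S(t)$ gives $\mathcal{I}(t)\le 2\mathcal{I}_{1}(t)+2\mathcal{I}_{2}(t)$, with $\mathcal{I}_{1}(t)=\int_{S(t)}|\widehat{\theta_{0}}|^{2}d\xi$ and $\mathcal{I}_{2}(t)\le C\,t^{2}\|\theta_{0}\|_{L^{2}}^{4}\int_{S(t)}|\xi|^{2}\,d\xi$. An anisotropic change of variables $\xi_{1}=g^{1/(2\alpha)}\eta_{1},\,\xi_{2}=g^{1/(2\beta)}\eta_{2}$ yields $\int_{S(t)}|\xi|^{2}\,d\xi\le C\,g(t)^{m}$ for $t$ large, where $m:=\min\{\alpha+3\beta,\,3\alpha+\beta\}/(2\alpha\beta)\ge 2$ by hypothesis. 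Since $t^{2}/(1+t)^{m}\le 1$ whenever $m\ge 2$, this produces the uniform-in-time bound $\mathcal{I}_{2}(t)\le C'k^{m}\|\theta_{0}\|_{L^{2}}^{4}$. Meanwhile $\mathcal{I}_{1}(t)\to 0$ as $t\to\infty$ by absolute continuity of the Lebesgue integral, since $|\widehat{\theta_{0}}|^{2}\in L^{1}$ and $|S(t)|\to 0$.

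Finally, I would first choose $k>0$ so small that $2\mathcal{I}_{2}(t)\le E_{\infty}/4$ for every $t$, and then take $T$ large enough that $2\mathcal{I}_{1}(t)\le E_{\infty}/4$ for $t\ge T$. Combined with $E(t)\ge E_{\infty}$, this gives $\mathcal{I}(t)\le E_{\infty}/2\le E(t)/2$ on $[T,\infty)$, whence $D(t)\ge \tfrac{1}{2}g(t)E(t)\ge kE_{\infty}/(2(1+t))$ there. Integrating forces $\int_{T}^{\infty}D(\tau)\,d\tau=\infty$, contradicting the finiteness of the dissipation. Hence $E_{\infty}=0$. The main obstacle is precisely the boundary case $m=2$, where $\mathcal{I}_{2}(t)$ does not decay and is merely bounded uniformly by $Ck^{2}\|\theta_{0}\|_{L^{2}}^{4}$. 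The freedom to shrink the splitting parameter $k$, which still preserves $\int_{0}^{\infty}g(t)\,dt=\infty$, is the crucial ingredient that allows the contradiction to close without the stronger hypothesis $\min>4\alpha\beta$.
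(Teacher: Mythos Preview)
Your proof is correct and takes a genuinely different route from the paper's. Both arguments rest on the same two ingredients: the Fourier-splitting lower bound for the dissipation and the Duhamel pointwise estimate $|\widehat{\theta}(\xi,t)|\le|\widehat{\theta_{0}}(\xi)|+Ct|\xi|$. The divergence lies in how these are combined. The paper chooses the splitting radius $f(t)=k/\bigl((e+t)\ln(e+t)\bigr)$ with $k$ \emph{large}, multiplies the energy inequality by the integrating factor $e^{\int f}\sim\ln^{k}(e+t)$, and integrates directly; the hypothesis $m:=\min\{\alpha+3\beta,3\alpha+\beta\}/(2\alpha\beta)\ge 2$ is used to bound $(1+t)^{2}f(t)^{m}$ by $(e+t)^{-1}\ln^{-m}(e+t)$, and the extra logarithmic decay is what makes the nonlinear term integrable against the weight. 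As a byproduct one gets a quantitative (if very slow) rate $\|\theta(t)\|_{L^{2}}^{2}\lesssim \ln^{-m}(e+t)$ on the way to the $\epsilon$-argument.

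You instead take $g(t)=k/(1+t)$ with $k$ \emph{small} and argue by contradiction. The observation driving your proof is that the nonlinear remainder obeys $\mathcal{I}_{2}(t)\le C\,t^{2}g(t)^{m}\,\|\theta_{0}\|_{L^{2}}^{4}\le Ck^{m}\|\theta_{0}\|_{L^{2}}^{4}$ uniformly in $t$ once $m\ge 2$, so it can be made smaller than $E_{\infty}/4$ by shrinking $k$; meanwhile $g$ remains non-integrable for every $k>0$, forcing $\int D=\infty$. This is more elementary than the paper's argument (no logarithmic weight, no explicit integration of the differential inequality) and isolates very cleanly why the borderline case $m=2$ still works: the freedom in $k$ decouples the size of the remainder from the divergence of $\int g$. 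What you give up is any quantitative rate; the paper's direct estimate yields one, yours does not.
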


\begin{rem}
The result of Proposition \ref{mkjqq1} for the case $0<\alpha=\beta\leq1$ corresponds to the result of Niche and Schonbek \cite[Theorem 1.1]{Nichesc11}.
\end{rem}

\begin{proof}
The proof is based on the Fourier splitting method. We first claim
\begin{align}\label{mkttye2}
|\widehat{\theta}(\xi,t)|\leq |\widehat{\theta_{0}}(\xi)|e^{-(|\xi_{1}|^{2\alpha}+|\xi_{2}|^{2\beta})t}+Ct|\xi|.
\end{align}
Taking the Fourier transformation to \eqref{SQG}, it yields
 \begin{align}
 \partial_{t}\widehat{\theta}(\xi,  t)
  +(|\xi_{1}|^{2\alpha}+|\xi_{2}|^{2\beta})\widehat{\theta}(\xi,  t)
 =-\widehat{(u\cdot\nabla)\theta}(\xi, t),\nonumber
\end{align}
which leads to
 \begin{align}\label{mkttye3}
 \widehat{\theta}(\xi,t)&=\widehat{\theta_{0}}(\xi)e^{-(|\xi_{1}|^{2\alpha}
 +|\xi_{2}|^{2\beta})t}-\int_{0}^{t}
e^{-(|\xi_{1}|^{2\alpha}+|\xi_{2}|^{2\beta})(t-\tau)}
\widehat{(u\cdot\nabla)\theta}(\xi,\tau)\,d\tau.
\end{align}
Using the fact that the Fourier transform is a bounded map from $L^{1}$ into $L^{\infty}$, we derive
\begin{align}
  |\widehat{(u\cdot\nabla)\theta}(\xi,  t)|
 = |\widehat{\nabla\cdot(u\theta)}(\xi,  t)|
 \leq |\xi||\widehat{(u\theta)}(\xi,  t)|
 \leq |\xi|\|u\theta\|_{L^{1}} \leq  |\xi|\,
           \|u\|_{L^{2}}\|\theta\|_{L^{2}}.\nonumber
\end{align}
Then we deduce from \eqref{mkttye3} that
 \begin{align}
 |\widehat{\theta}(\xi,t)|&\leq |\widehat{\theta_{0}}(\xi)e^{-(|\xi_{1}|^{2\alpha}
 +|\xi_{2}|^{2\beta})t}|+C|\xi|\int_{0}^{t}
e^{-(|\xi_{1}|^{2\alpha}+|\xi_{2}|^{2\beta})(t-\tau)}
\|u(\tau)\|_{L^{2}}\|\theta(\tau)\|_{L^{2}}\,d\tau\nonumber\\
&\leq |\widehat{\theta_{0}}(\xi)e^{-(|\xi_{1}|^{2\alpha}
 +|\xi_{2}|^{2\beta})t}|+C|\xi|\int_{0}^{t}
\|\theta(\tau)\|_{L^{2}}^{2}\,d\tau\nonumber\\
&\leq |\widehat{\theta_{0}}(\xi)|e^{-(|\xi_{1}|^{2\alpha}
 +|\xi_{2}|^{2\beta})t}+Ct|\xi|,\nonumber
\end{align}
which yields \eqref{mkttye2}.
Recalling \eqref{fgfgcvf3}, we have
\begin{align}\label{mkttye13}
\frac{1}{2}\frac{d}{dt}\|\theta(t)\|_{L^{2}}^{2}+
 \|\Lambda_{x_{1}}^{\alpha}\theta\|_{L^{2}}^{2}+
 \|\Lambda_{x_{2}}^{\beta}\theta\|_{L^{2}}^{2}= 0.
\end{align}
We denote
$$E(t)=\{\xi\in \mathbb{R}^2: \  |\xi_{1}|^{2\alpha}+|\xi_{2}|^{2\beta}\leq f(t)\},$$
where $f(t)\leq C(1+t)^{-1}$ is a smooth decreasing function to be fixed later on.
It is easy to check that
\begin{align}
\|\Lambda_{x_{1}}^{\alpha} \theta\|_{L^{2}}^{2}+
 \|\Lambda_{x_{2}}^{\beta} \theta\|_{L^{2}}^{2}
 &=\int_{\mathbb{R}^{2}}|\xi_{1}|^{2\alpha}|\widehat{ \theta}(t,\xi)|^{2}\,d\xi +\int_{\mathbb{R}^{2}}|\xi_{2}|^{2\beta}|\widehat{ \theta}(t,\xi)|^{2}\,d\xi\nonumber\\
 &=\int_{\mathbb{R}^{2}}(|\xi_{1}|^{2\alpha}+|\xi_{2}|^{2\beta})|\widehat{ \theta}(t,\xi)|^{2}\,d\xi
 \nonumber\\
 &\geq \int_{E(t)^{c}}(|\xi_{1}|^{2\alpha}+|\xi_{2}|^{2\beta})|\widehat{ \theta}(t,\xi)|^{2}\,d\xi
 \nonumber\\
 &\geq f(t)\int_{E(t)^{c}}|\widehat{ \theta}(t,\xi)|^{2}\,d\xi\nonumber\\
 &= f(t)\int_{\mathbb{R}^{2}}|\widehat{ \theta}(t,\xi)|^{2}\,d\xi
 -f(t)\int_{E(t)}|\widehat{ \theta}(t,\xi)|^{2}\,d\xi,\nonumber
\end{align}
which together with \eqref{mkttye13} implies
\begin{align} \label{mkttye14}
\frac{d}{dt}\| {\theta}(t)\|_{L^{2}}^{2}+f(t)\| {\theta}\|_{L^{2}}^{2}\leq& f(t)\int_{E(t)}|\widehat{ {\theta}}(t,\xi)|^{2}\,d\xi.
\end{align}
Making use of \eqref{mkttye2}, one obtains
\begin{align}
\int_{E(t)}|\widehat{ {\theta}}(t,\xi)|^{2}\,d\xi
&\leq
\int_{E(t)}|\widehat{\theta_{0}}(\xi)e^{-(|\xi_{1}|^{2\alpha}+|\xi_{2}|^{2\beta})t}|^{2}\, d\xi
+Ct^{2}\int_{E(t)}|\xi|^{2}\, d\xi\nonumber\\
&\leq
\int_{E(t)}|\widehat{\theta_{0}}(\xi)e^{-(|\xi_{1}|^{2\alpha}+|\xi_{2}|^{2\beta})t}|^{2}\, d\xi
+Ct^{2}\left(f(t)^{\frac{\alpha+3\beta}{2\alpha\beta}}+f(t)^{\frac{3\alpha+\beta}
{2\alpha\beta}}\right),\nonumber
\end{align}
where we have used \eqref{diff14} below, namely,
\begin{align}
\int_{E(t)}|\xi|^{2}\,d\xi&\leq C\left(f(t)^{\frac{\alpha+3\beta}{2\alpha\beta}}+f(t)^{\frac{3\alpha+\beta}
{2\alpha\beta}}\right).\nonumber
\end{align}
Thus, one concludes from \eqref{mkttye14} that
\begin{align} \label{mkttye15}
\frac{d}{dt}\| {\theta}(t)\|_{L^{2}}^{2}+f(t)\| {\theta}\|_{L^{2}}^{2}\leq& f(t)\int_{E(t)}|\widehat{\theta_{0}}
(\xi)|^{2}e^{-2(|\xi_{1}|^{2\alpha}+|\xi_{2}|^{2\beta})t}\, d\xi\nonumber\\&+Ct^{2}\left(f(t)^{1+\frac{\alpha+3\beta}{2\alpha\beta}}+f(t)^{1+\frac{3\alpha+\beta}
{2\alpha\beta}}\right)\nonumber\\ \leq& f(t)\int_{E(t)}|\widehat{\theta_{0}}
(\xi)|^{2}\, d\xi\nonumber\\&+Ct^{2}\left(f(t)^{1+\frac{\alpha+3\beta}{2\alpha\beta}}+f(t)^{1+\frac{3\alpha+\beta}
{2\alpha\beta}}\right).
\end{align}
Thanks to the definition of $E(t)$ and $\theta_{0}\in L^{2}$, it is shown that for any $\epsilon>0$, there exists a $t_{\epsilon}>0$ such that
$$\int_{E(t)}|\widehat{\theta _{0}}(\xi)|^{2}\, d\xi\leq \epsilon,\ \ \forall t\geq t_{\epsilon}.$$
It follows from \eqref{mkttye15} that for any $t\geq t_{\epsilon}$
\begin{align}\label{mkttye16}
 \frac{d}{dt}\| {\theta}(t)\|_{L^{2}}^{2}+f(t)\| {\theta}\|_{L^{2}}^{2} \leq \epsilon f(t)
+C(1+t)^{2}\left(f(t)^{1+\frac{\alpha+3\beta}{2\alpha\beta}}+f(t)^{1+\frac{3\alpha+\beta}
{2\alpha\beta}}\right).
\end{align}
Multiplying $e^{\int_{t_{\epsilon}}^{t}f(r)\, d r}$ both side of (\ref{mkttye16}) and integrating in time over $[t_{\epsilon},t]$ imply
\begin{align}\label{split2}
e^{ \int_{t_{\epsilon}}^{t}f(r)\, d r}\|\widehat{\theta}(t)\|_{L^{2}}^{2}
&\leq \|\widehat{\theta}(t_{\epsilon})\|_{L^{2}}^{2}
+\epsilon\int_{t_{\epsilon}}^{t}e^{\int_{t_{\epsilon}}^{\lambda}
f(r)\, d r} f(\lambda)\,d\lambda\nonumber\\&\quad+C\int_{t_{\epsilon}}^{t}e^{\int_{t_{\epsilon}}^{\lambda}
f(r)\, d r}  (1+\lambda)^{2}\left(f(\lambda)^{1+\frac{\alpha+3\beta}{2\alpha\beta}}+f(\lambda)^{1+\frac{3\alpha+\beta}
{2\alpha\beta}}\right)\,d\lambda.
\end{align}
Taking $g(t)=\frac{k}{(e+t)\ln(e+t)}$ with some $k>\max\{\frac{\alpha+3\beta}{2\alpha\beta},\,\frac{3\alpha+\beta}{2\alpha\beta}\}$, we deduce from (\ref{split2}) that
\begin{align}
 \|\widehat{\theta}(t)\|_{L^{2}}^{2}\frac{\ln^{k}(e+t)}{\ln^{k}(e+t_{\epsilon})}
 &\leq \|\widehat{\theta}(t_{\epsilon})\|_{L^{2}}^{2}+\frac{k\epsilon}{\ln^{k}
 (e+t_{\epsilon})}\int_{t_{\epsilon}}^{t}\frac{\ln^{k-1}(e+\lambda)}
 {e+\lambda}\,d\lambda\nonumber\\&\quad+\frac{C}{\ln^{k}(e+t_{\epsilon})}
 \int_{t_{\epsilon}}^{t}\frac{\ln^{k-1-\frac{\alpha+3\beta}{2\alpha\beta}} (e+\lambda)}
 {(e+\lambda)^{\frac{\alpha+3\beta}{2\alpha\beta}-1}}\, d \lambda
 \nonumber\\&\quad+\frac{C}{\ln^{k}(e+t_{\epsilon})}
 \int_{t_{\epsilon}}^{t}\frac{\ln^{k-1-\frac{3\alpha+\beta}{2\alpha\beta}} (e+\lambda)}
 {(e+\lambda)^{\frac{3\alpha+\beta}{2\alpha\beta}-1}}\, d \lambda
 \nonumber\\
  &\leq \|{\theta}(t_{\epsilon})\|_{L^{2}}^{2}+\frac{k\epsilon}{\ln^{k}
 (e+t_{\epsilon})}\int_{t_{\epsilon}}^{t}\frac{\ln^{k-1}(e+\lambda)}
 {e+\lambda}\,d\lambda\nonumber\\&\quad+\frac{C}{\ln^{k}(e+t_{\epsilon})}
 \int_{t_{\epsilon}}^{t}\frac{\ln^{k-1-\frac{\alpha+3\beta}{2\alpha\beta}} (e+\lambda)}
 {e+\lambda}\, d \lambda\nonumber\\&\quad+\frac{C}{\ln^{k}(e+t_{\epsilon})}
 \int_{t_{\epsilon}}^{t}\frac{\ln^{k-1-\frac{3\alpha+\beta}{2\alpha\beta}} (e+\lambda)}
 {e+\lambda}\, d \lambda
 \nonumber\\
&\leq\|{ \theta}_{0}\|_{L^{2}}^{2}+\epsilon\frac{\ln^{k}(e+t)-\ln^{k}
(e+t_{\epsilon})}{\ln^{k}(e+t_{\epsilon})}
\nonumber\\&\quad+C\frac{\ln^{k-\frac{\alpha+3\beta}{2\alpha\beta}}(e+t)-
\ln^{k-\frac{\alpha+3\beta}{2\alpha\beta}}
(e+t_{\epsilon})}{\ln^{k}
(e+t_{\epsilon})}\nonumber\\&\quad+C\frac{\ln^{k-\frac{3\alpha+\beta}{2\alpha\beta}}(e+t)-
\ln^{k-\frac{3\alpha+\beta}{2\alpha\beta}}
(e+t_{\epsilon})}{\ln^{k}
(e+t_{\epsilon})},
\nonumber\\
&\leq\|{\theta}_{0}\|_{L^{2}}^{2}+\epsilon\frac{\ln^{k}(e+t)}{\ln^{k}(e+t_{\epsilon})}
+C\frac{\ln^{k-\frac{\alpha+3\beta}{2\alpha\beta}}(e+t)}{\ln^{k}(e+t_{\epsilon})}
+C\frac{\ln^{k-\frac{3\alpha+\beta}{2\alpha\beta}}(e+t)}{\ln^{k}(e+t_{\epsilon})},\nonumber
\end{align}
where we have used $\frac{\alpha+3\beta}{2\alpha\beta}-1\geq1$ and $\frac{3\alpha+\beta}{2\alpha\beta}-1\geq1$ due to $\min\{\alpha+3\beta,\,3\alpha+\beta\}\geq4\alpha\beta$.
This yields
\begin{align}\label{sdfghhyu1}
 \|\widehat{\theta}(t)\|_{L^{2}}^{2}
 &\leq \frac{\ln^{k}(e+t_{\epsilon})}{\ln^{k}(e+t)} \|{\theta}_{0}\|_{L^{2}}^{2}
 +\epsilon+\frac{C}{\ln^{\frac{\alpha+3\beta}{2\alpha\beta}}(e+t)}
 +\frac{C}{\ln^{\frac{3\alpha+\beta}{2\alpha\beta}}(e+t)}.
\end{align}
Based on \eqref{sdfghhyu1}, with regard to the above $\epsilon>0$, there exists a $T_{\epsilon}>t_{\epsilon}>0$ such that
$$\frac{\ln^{k}(e+t_{\epsilon})}{\ln^{k}(e+t)} \|{\theta}_{0}\|_{L^{2}}^{2}
 +\frac{C}{\ln^{\frac{\alpha+3\beta}{2\alpha\beta}}(e+t)}
 +\frac{C}{\ln^{\frac{3\alpha+\beta}{2\alpha\beta}}(e+t)}\leq  \epsilon,\ \ \forall t\geq T_{\epsilon}.$$
As a result, we show that for any $\epsilon>0$, there exists a $T_{\epsilon}>0$ such that
 $$\|\widehat{\theta}(t)\|_{L^{2}}^{2}\leq 2\epsilon,\ \ \forall t\geq T_{\epsilon}.$$
This implies
 $$\lim_{t\rightarrow\infty}\|\widehat{\theta}(t)\|_{L^{2}}=0.$$
Thus, one obtains
\begin{align}
 \lim_{t\rightarrow\infty}\|\theta(t)\|_{L^{2}}=0.\nonumber
\end{align}
Therefore, this ends the proof of Proposition \ref{mkjqq1}.
\end{proof}

\vskip .1in
Next, we derive the  preliminary decay rate for $\nabla \theta$, which is essential to obtain the sharp decay rate for itself.
\begin{Pros}\label{fggytq11} If $\alpha$ and $\beta$ satisfy (\ref{sdf2334}) and $\theta_{0}\in \dot{H}^{1}(\mathbb{R}^{2})\cap L^p(\mathbb{R}^{2})$ with $p\in [1,2)$, then the solution of (\ref{SQG}) admits the decay estimate
\begin{align}\label{pre310}
 &\|\nabla\theta(t)\|_{L^2}\leq C_{0}(1+t)^{-\frac{(\alpha+\beta)(2-p) }{4\alpha\beta p}},
\end{align}
where the constant $C_{0}>0$ depends only on $\alpha,\,\beta,\,p$ and $\theta_{0}$.
\end{Pros}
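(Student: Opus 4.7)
The plan is to combine the $\dot{H}^{1}$ energy inequality established in the proofs of Propositions~\ref{Lpp302} and \ref{Lpp30we3} with a Fourier Splitting argument at a \emph{fixed} low-frequency threshold, driven by the basic $L^{2}$ decay of Proposition~\ref{Prosty666}. The intuition is that outside a small ball in frequency space the operators $\Lambda_{x_{1}}^{\alpha}\nabla$ and $\Lambda_{x_{2}}^{\beta}\nabla$ provide an effective exponential damping of $\|\nabla\theta\|_{L^{2}}^{2}$, while inside the low-frequency ball the contribution is crudely controlled by $\|\theta\|_{L^{2}}^{2}$; hence $\|\nabla\theta\|_{L^{2}}$ inherits the $L^{2}$ decay rate.

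Concretely, re-inspecting the proofs of Propositions~\ref{Lpp302} and \ref{Lpp30we3}, one extracts a differential inequality of the form
\begin{equation*}
\frac{d}{dt}\|\nabla\theta\|_{L^{2}}^{2}+\|\Lambda_{x_{1}}^{\alpha}\nabla\theta\|_{L^{2}}^{2}+\|\Lambda_{x_{2}}^{\beta}\nabla\theta\|_{L^{2}}^{2}\leq F(t)\|\theta\|_{L^{2}}^{2},
\end{equation*}
in which $F(t)$ is a finite sum of powers of Lebesgue norms $\|\theta(t)\|_{L^{q_{i}}}$. By the maximum principle of Proposition~\ref{Spr1} each such norm is bounded by $\|\theta_{0}\|_{L^{q_{i}}}$, so $F(t)\leq C$. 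Next I fix a small $c>0$ and set $E=\{\xi\in\mathbb{R}^{2}:|\xi_{1}|^{2\alpha}+|\xi_{2}|^{2\beta}\leq c\}$; Plancherel yields
\begin{equation*}
\|\Lambda_{x_{1}}^{\alpha}\nabla\theta\|_{L^{2}}^{2}+\|\Lambda_{x_{2}}^{\beta}\nabla\theta\|_{L^{2}}^{2}\geq c\|\nabla\theta\|_{L^{2}}^{2}-c\int_{E}|\xi|^{2}|\widehat{\theta}|^{2}\,d\xi,
\end{equation*}
and on $E$ the crude bound $|\xi|^{2}\leq c^{1/\alpha}+c^{1/\beta}$ controls the remainder integral by $C\|\theta\|_{L^{2}}^{2}$. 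Plugging these into the energy inequality and invoking Proposition~\ref{Prosty666} produces
\begin{equation*}
\frac{d}{dt}\|\nabla\theta\|_{L^{2}}^{2}+c\|\nabla\theta\|_{L^{2}}^{2}\leq C\|\theta\|_{L^{2}}^{2}\leq C(1+t)^{-\frac{(\alpha+\beta)(2-p)}{2\alpha\beta p}}.
\end{equation*}
Duhamel's formula together with a standard splitting of the time integral $\int_{0}^{t}e^{-c(t-\tau)}(1+\tau)^{-\frac{(\alpha+\beta)(2-p)}{2\alpha\beta p}}\,d\tau$ at $\tau=t/2$ then yields $\|\nabla\theta(t)\|_{L^{2}}^{2}\leq C(1+t)^{-\frac{(\alpha+\beta)(2-p)}{2\alpha\beta p}}$, which, after taking square roots, is the desired estimate \eqref{pre310}.

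The main obstacle is the bookkeeping required to verify that every nonlinear contribution in the proofs of Propositions~\ref{Lpp302} and \ref{Lpp30we3} is absorbed by the fractional dissipation with a coefficient controlled by the maximum principle alone, so that $F(t)\leq C$ uniformly in $t$. A subtle but important design choice is the use of a constant, rather than time-decaying, Fourier-splitting threshold; had one chosen the standard $f(t)=k/(1+t)$, the integrating-factor argument would only yield $\|\nabla\theta\|_{L^{2}}^{2}\leq C(1+t)^{1-\frac{(\alpha+\beta)(2-p)}{2\alpha\beta p}}$, losing a full factor of $(1+t)^{1/2}$ relative to the target. The fixed-threshold choice is precisely what allows the preliminary $\dot{H}^{1}$ rate to exactly match the basic $L^{2}$ rate.
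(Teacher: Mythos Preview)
Your proposal is correct and essentially identical to the paper's proof. The only cosmetic difference is that where you extract the damping term via a fixed-threshold Fourier splitting $E=\{|\xi_{1}|^{2\alpha}+|\xi_{2}|^{2\beta}\le c\}$, the paper instead applies the interpolation \eqref{avbnuy8911} followed by Young's inequality to write $\|\nabla\theta\|_{L^{2}}^{2}\le \|\Lambda_{x_{1}}^{\alpha}\nabla\theta\|_{L^{2}}^{2}+\|\Lambda_{x_{2}}^{\beta}\nabla\theta\|_{L^{2}}^{2}+C\|\theta\|_{L^{2}}^{2}$; both routes yield the same ODE $\frac{d}{dt}\|\nabla\theta\|_{L^{2}}^{2}+c\|\nabla\theta\|_{L^{2}}^{2}\le C(1+t)^{-\frac{(\alpha+\beta)(2-p)}{2\alpha\beta p}}$ and conclude via the integrating factor $e^{ct}$ together with \eqref{pre314}.
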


\begin{proof}
It follows from \eqref{t3326t009} and \eqref{ghtyde239} that
\begin{eqnarray}
 \frac{d}{dt}\|\nabla \theta(t)\|_{L^{2}}^{2}+
 \|\Lambda_{x_{1}}^{\alpha}\nabla\theta\|_{L^{2}}^{2}+
 \|\Lambda_{x_{2}}^{\beta}\nabla\theta\|_{L^{2}}^{2} \leq G(t)\| \theta\|_{L^{2}}^{2},\nonumber
\end{eqnarray}
where $G(t)$ is given by
 $$G(t)=\max\{F(t),\, D(t)\}.$$
By means of Proposition \ref{Spr1}, one derives
$$\sup_{t\geq0}G(t)\leq C$$
for some absolute constant $C$ depending only on $\alpha,\,\beta$ and $\theta_{0}$. This further shows
\begin{align}\label{pre311}
 \frac{d}{dt}\|\nabla \theta(t)\|_{L^{2}}^{2}+
 \|\Lambda_{x_{1}}^{\alpha}\nabla\theta\|_{L^{2}}^{2}+
 \|\Lambda_{x_{2}}^{\beta}\nabla\theta\|_{L^{2}}^{2} \leq C\| \theta\|_{L^{2}}^{2}.
\end{align}
Making use of the following interpolation inequalities (see \eqref{avbnuy8911})
$$\|\Lambda_{x_{1}} \theta\|_{L^{2}}\leq C\|\theta\|_{L^{2}}^{\frac{\alpha}{1+\alpha}}
\|\Lambda_{x_{1}}^{1+\alpha}\theta\|_{L^{2}}^{\frac{1}{1+\alpha}},\qquad \|\Lambda_{x_{2}}\theta\|_{L^{2}}\leq C\|\theta\|_{L^{2}}^{\frac{\beta}{1+\beta}}
\|\Lambda_{x_{2}}^{1+\beta}\theta\|_{L^{2}}^{\frac{1}{1+\beta}},$$
we have that
\begin{align}
 \|\nabla {\theta}\|_{L^{2}}^{2}&=\|\Lambda_{x_{1}}\theta\|_{L^{2}}^{2}+\|\Lambda_{x_{2}}\theta\|_{L^{2}}^{2}
 \nonumber\\& \leq C\|\theta\|_{L^{2}}^{\frac{2\alpha}{1+\alpha}}
\|\Lambda_{x_{1}}^{1+\alpha}\theta\|_{L^{2}}^{\frac{2}{1+\alpha}}
+C\|\theta\|_{L^{2}}^{\frac{2\beta}{1+\beta}}
\|\Lambda_{x_{2}}^{1+\beta}\theta\|_{L^{2}}^{\frac{2}{1+\beta}}\nonumber\\ &\leq
 \|\Lambda_{x_{1}}^{\alpha}\nabla {\theta}\|_{L^{2}}^{2}+
 \|\Lambda_{x_{2}}^{\beta}\nabla{\theta}\|_{L^{2}}^{2}+ {C}\| {\theta}\|_{L^{2}}^{2},\nonumber
\end{align}
which along with \eqref{pre311} and \eqref{Proshsh78} yield
\begin{align}\label{pre312}
\frac{d}{dt}\|\nabla{\theta}(t)\|_{L^{2}}^{2}
+ \|\nabla{\theta}\|_{L^{2}}^{2} \leq C\|\theta\|_{L^{2}}^{2} \leq C(1+t)^{-\frac{(\alpha+\beta)(2-p)}{2\alpha\beta p}}.
\end{align}
We deduce from \eqref{pre312} that
\begin{align}
\frac{d}{dt}(e^{t}\|\nabla{\theta}(t)\|_{L^{2}}^{2})  \leq Ce^{t}(1+t)^{-\frac{(\alpha+\beta)(2-p)}{2\alpha\beta p}},\nonumber
\end{align}
which further yields
\begin{align}
e^{t}\|\nabla{\theta}(t)\|_{L^{2}}^{2}  \leq \|\nabla {\theta}_{0}\|_{L^{2}}^{2}+C\int_{0}^{t}e^{\tau}(1+\tau)^{-\frac{(\alpha+\beta)(2-p)}{2\alpha\beta p}}\,d\tau.\nonumber
\end{align}
As a result, one has
\begin{align}\label{pre313}
\|\nabla{\theta}(t)\|_{L^{2}}^{2}  \leq e^{-t}\|\nabla {\theta}_{0}\|_{L^{2}}^{2}+C\int_{0}^{t}e^{-(t-\tau)}(1+\tau)^{-\frac{(\alpha+\beta)(2-p)}{2\alpha\beta p}}\,d\tau.
\end{align}
We now appeal to the following simple fact
\begin{align}\label{pre314}
\int_{0}^{t}e^{-\vartheta(t-\tau)}(1+\tau)^{-\varrho}\,d\tau\leq C(\vartheta,\varrho)(1+t)^{-\varrho},\quad \vartheta>0,\,\varrho>0,\end{align}
which is due to
\begin{align}
\int_{0}^{t}e^{-\vartheta(t-\tau)}(1+\tau)^{-\varrho}\,d\tau&=\int_{0}^{\frac{t}{2}}
e^{-\vartheta(t-\tau)}
(1+\tau)^{-\varrho}\,d\tau+
\int_{\frac{t}{2}}^{t}e^{-\vartheta(t-\tau)}(1+\tau)^{-\varrho}\,d\tau
\nonumber\\
&\leq e^{-\frac{\vartheta t}{2}}\int_{0}^{\frac{t}{2}}(1+\tau)^{-\varrho}
\,d\tau+\Big(1+\frac{t}{2}\Big)^{-\varrho}
\int_{\frac{t}{2}}^{t}e^{-\vartheta(t-\tau)}\,d\tau\nonumber\\
&\leq e^{-\frac{\vartheta t}{2}}\frac{t}{2}+\Big(1+\frac{t}{2}\Big)^{-\varrho}
\int_{0}^{\frac{t}{2}}e^{-\vartheta s}\,d s
\nonumber\\
&\leq
C(\vartheta)(1+t)^{-\varrho}+ \frac{C(\varrho)}{\vartheta}(1+t)^{-\varrho}\nonumber\\
&\leq C(\vartheta,\varrho)(1+t)^{-\varrho}.\nonumber
\end{align}
Invoking \eqref{pre314}, we deduce from \eqref{pre313} that
\begin{align*}
 &\|\nabla\theta(t)\|_{L^2}\leq C(1+t)^{-\frac{(\alpha+\beta)(2-p) }{4\alpha\beta p}}.
\end{align*}
Therefore, this completes the proof of Proposition \ref{fggytq11}.
\end{proof}

\vskip .1in
We now proceed to improve the decay estimate \eqref{pre310}, which
turns out to be the key to obtain all the higher order decay estimates.
\begin{Pros}\label{fggyvby7}
If $\alpha$ and $\beta$ satisfy (\ref{sdf2334}) and $\theta_{0}\in \dot{H}^{1}(\mathbb{R}^{2})\cap L^p(\mathbb{R}^{2})$ with $p\in [1,2)$, then the solution admits the decay estimate
\begin{align}\label{pre1y315}
 &\|\nabla\theta(t)\|_{L^2}\leq C_{0}(1+t)^{-\frac{(\alpha+\beta)(2-p)+2\min\{\alpha,\,\beta\}p}{4\alpha\beta p}},
\end{align}
where the constant $C_{0}>0$ depends only on $\alpha,\,\beta,\,p$ and $\theta_{0}$.
\end{Pros}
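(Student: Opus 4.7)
The plan is to run a Fourier splitting argument directly on the $\dot H^1$-energy inequality already available, but after first sharpening the crude bound on the source term $G(t)\|\theta\|_{L^2}^2$ used in the proof of Proposition \ref{fggytq11}. Throughout, let $\gamma := \frac{(\alpha+\beta)(2-p)+2\min\{\alpha,\beta\}p}{2\alpha\beta p}$ denote the target exponent for $\|\nabla\theta\|_{L^2}^2$, and $\gamma_p := \frac{(\alpha+\beta)(2-p)}{2\alpha\beta p}$ the one already supplied by Proposition \ref{Prosty666} for $\|\theta\|_{L^2}^2$.

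First, I would return to \eqref{t3326t009} and \eqref{ghtyde239} to rewrite the $\dot H^1$-inequality as
$$\frac{d}{dt}\|\nabla\theta\|_{L^2}^2 + \|\Lambda_{x_1}^\alpha\nabla\theta\|_{L^2}^2 + \|\Lambda_{x_2}^\beta\nabla\theta\|_{L^2}^2 \leq G(t)\|\theta\|_{L^2}^2,$$
where $G(t)$ is a finite sum of powers $\|\theta\|_{L^{q_i}}^{r_i}$ of the type appearing in $F(t)$ and $D(t)$. Interpolating $\|\theta\|_{L^{q_i}}\leq C\|\theta\|_{L^2}^{2/q_i}\|\theta\|_{L^\infty}^{1-2/q_i}$ and combining the maximum principle of Proposition \ref{Spr1} with Proposition \ref{Prosty666} yields $G(t)\leq C\sum(1+t)^{-\gamma_p r_i/q_i}$. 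Since the free parameters $\epsilon$ in \eqref{udfr9}--\eqref{hccvgty21er} and $\widetilde\epsilon$ in \eqref{ghtyde237}--\eqref{ghtyde238} allow the ratios $r_i/q_i$ to be made arbitrarily large, the source term becomes $G(t)\|\theta\|_{L^2}^2 \leq C(1+t)^{-N}$ for any prescribed $N>0$.

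Next, I would perform the anisotropic Fourier splitting on the shrinking set $E(t)=\{\xi\in\mathbb{R}^2 : |\xi_1|^{2\alpha}+|\xi_2|^{2\beta}\leq k/(1+t)\}$ with $k$ a large constant to be fixed. The Plancherel-based lower bound $\|\Lambda_{x_1}^\alpha\nabla\theta\|_{L^2}^2+\|\Lambda_{x_2}^\beta\nabla\theta\|_{L^2}^2 \geq \frac{k}{1+t}\|\nabla\theta\|_{L^2}^2 - \frac{k}{1+t}\int_{E(t)}|\xi|^2|\widehat\theta|^2\,d\xi$ turns the inequality into
$$\frac{d}{dt}\|\nabla\theta\|_{L^2}^2 + \frac{k}{1+t}\|\nabla\theta\|_{L^2}^2 \leq C(1+t)^{-N} + \frac{k}{1+t}\int_{E(t)}|\xi|^2|\widehat\theta|^2\,d\xi.$$
The Fourier remainder is then estimated by inserting the Duhamel identity for $\widehat\theta$ together with $|\widehat{\nabla\cdot(u\theta)}|\leq |\xi|\|u\|_{L^2}\|\theta\|_{L^2}\leq C|\xi|\|\theta\|_{L^2}^2$, giving
$$|\widehat\theta(\xi,t)|^2 \leq 2|\widehat{\theta_0}(\xi)|^2 + C|\xi|^2\Bigl(\int_0^t\|\theta(\tau)\|_{L^2}^2\,d\tau\Bigr)^2.$$
The linear piece is controlled via Hausdorff--Young $\|\widehat{\theta_0}\|_{L^{p'}}\leq C\|\theta_0\|_{L^p}$ combined with H\"older's inequality and the anisotropic volume estimate
$$\int_{E(t)}|\xi|^{2n}\,d\xi \leq C\Bigl((1+t)^{-\frac{(2n+1)\alpha+\beta}{2\alpha\beta}}+(1+t)^{-\frac{(2n+1)\beta+\alpha}{2\alpha\beta}}\Bigr),\quad n=1,2;$$
a careful matching of exponents (with $n=1$) yields precisely $\int_{E(t)}|\xi|^2|\widehat{\theta_0}|^2\,d\xi \leq C(1+t)^{-\gamma-1}$. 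The nonlinear piece is treated by a case split on $\gamma_p>1$ or $\gamma_p<1$, using Proposition \ref{Prosty666} to bound $\int_0^t\|\theta\|_{L^2}^2\,d\tau$ and the $n=2$ volume estimate; in either case the outcome is shown to decay at least as fast as $(1+t)^{-\gamma-1}$.

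Assembling these ingredients, the differential inequality reduces to
$$\frac{d}{dt}\|\nabla\theta\|_{L^2}^2 + \frac{k}{1+t}\|\nabla\theta\|_{L^2}^2 \leq C(1+t)^{-N}+C(1+t)^{-\gamma-1}.$$
Choosing $k>\gamma$ and $N>\gamma$, multiplying by the integrating factor $(1+t)^k$ and integrating on $[0,t]$ produces $\|\nabla\theta(t)\|_{L^2}^2 \leq C(1+t)^{-\gamma}$, which is precisely \eqref{pre1y315} upon taking square roots. The main obstacle is the careful bookkeeping of anisotropic exponents in the Hausdorff--Young/H\"older estimate for $\int_{E(t)}|\xi|^2|\widehat{\theta_0}|^2\,d\xi$, which must recover the factor $\min\{\alpha,\beta\}/(\alpha\beta)$ on the nose for every admissible $p\in[1,2)$, together with the verification that the nonlinear Fourier contribution does not spoil this sharp rate across the entire range of $(\alpha,\beta)$ satisfying \eqref{sdf2334}.
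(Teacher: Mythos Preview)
Your Fourier splitting skeleton is sound, and the Duhamel/Hausdorff--Young treatment of the low-frequency piece $\int_{E(t)}|\xi|^2|\widehat\theta|^2\,d\xi$ does recover the sharp rate $(1+t)^{-\gamma}$ (your stated exponent $-\gamma-1$ for this integral itself is off by one, but that is harmless since the extra $(1+t)^{-1}$ comes from the prefactor $\frac{k}{1+t}$). The genuine gap is the claim that the source $G(t)\|\theta\|_{L^2}^2$ can be made to decay like $(1+t)^{-N}$ for arbitrary $N$.

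Not every term in $F(t)$ and $D(t)$ carries a free $\epsilon$ or $\widetilde\epsilon$. For instance, in $D(t)$ (case $\tfrac12<\alpha<1$) the $\mathcal{J}_1$-contribution $\|\theta\|_{L^r}^{\frac{2(\alpha+1)r}{(2\alpha-1)r-2}}$, and in $F(t)$ the $\widetilde{\mathcal{J}_3}$-contribution $\|\theta\|_{L^{\widetilde q}}^{\frac{2(\alpha+1)(\beta+1)\widetilde q}{[(2\alpha+1)\beta-1]\widetilde q-2(\alpha+1)}}$, have ratios $r_i/q_i$ bounded above by $\frac{(\alpha+1)(\beta+1)}{|\alpha-\beta|}$ once the lower bound on $r$ (resp.\ $\widetilde q$) is saturated; sending the parameter the other way only makes the ratio smaller. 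Your interpolation then gives for that term decay at best $\gamma_p\cdot(\text{const}+1)$, and since $\gamma_p\to 0$ as $p\to 2^-$, for $p$ close to $2$ this is strictly smaller than the required $\gamma+1=\gamma_p+\frac{\min\{\alpha,\beta\}}{\alpha\beta}+1$. The differential inequality thus carries a source that decays too slowly, and the final rate is spoiled precisely in the regime $p$ near $2$.

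The paper avoids this by going back into the $\dot H^1$ estimates and reorganizing the nonlinear terms so that the right-hand side reads $C\sum\|\theta\|_{L^{q_i}}^{s_i}\|\nabla\theta\|_{L^2}^2$ rather than $\ldots\|\theta\|_{L^2}^2$ (see \eqref{pre1y320}, \eqref{pre1ffgy89k}). After invoking the \emph{preliminary} $\dot H^1$ decay of Proposition~\ref{fggytq11} together with $\|\theta\|_{L^{q_i}}\le C\|\theta\|_{L^2}^{2/q_i}\|\nabla\theta\|_{L^2}^{1-2/q_i}$, the coefficient becomes $(1+t)^{-\kappa}$ with $\kappa>1$, and this term is \emph{absorbed} into the Fourier-splitting gain $\frac{\varrho}{1+t}\|\nabla\theta\|_{L^2}^2$ rather than treated as an independent source. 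The low-frequency remainder is then handled by the elementary bound $\int_{E(t)}|\xi|^2|\widehat\theta|^2\,d\xi\le C\bigl(f(t)^{1/\alpha}+f(t)^{1/\beta}\bigr)\|\theta\|_{L^2}^2$, with no Duhamel decomposition needed, and this already produces the exact exponent.
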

\begin{proof}
We first claim that \eqref{pre1y315} holds true for the case: $\beta> \frac{1}{2\alpha+1}, \,0<\alpha\leq\frac{1}{2}$. In this case, we deduce  from \eqref{udfr5}, \eqref{hhh3e456} and \eqref{t3nhy71qw}, respectively
\begin{align}\label{pre1y316}
\mathcal{J}_{1}, \ \mathcal{J}_{2}&\leq C\|\Lambda^{\frac{2}{q}}\Lambda_{x_{2}}^{\delta}\partial_{x_{1}}\theta\|_{L^{2}} \|\theta\|_{L^{q}} \|\Lambda_{x_{2}}^{\beta} \nabla\theta\|_{L^{2}}^{\frac{\alpha}{\alpha+1}}\|\Lambda_{x_{1}}^{\alpha} \nabla\theta\|_{L^{2}}^{\frac{1}{\alpha+1}},
\end{align}
\begin{align}\label{pre1y317}
  \mathcal{J}_{3}\leq & C\|\Lambda^{\frac{2}{\widetilde{q}}}\Lambda_{x_{2}}^{\delta}\partial_{x_{2}}
\theta\|_{L^{2}}
 \|\theta\|_{L^{\widetilde{q}}} \|\Lambda_{x_{2}}^{\beta} \nabla\theta\|_{L^{2}}^{\frac{\alpha}{\alpha+1}}\|\Lambda_{x_{1}}^{\alpha} \nabla\theta\|_{L^{2}}^{\frac{1}{\alpha+1}}\nonumber\\&+C\|\Lambda^{\frac{2}{q}}\Lambda_{x_{2}}^{\delta}\partial_{x_{1}}\theta\|_{L^{2}} \|\theta\|_{L^{q}} \|\Lambda_{x_{2}}^{\beta} \nabla\theta\|_{L^{2}}^{\frac{\alpha}{\alpha+1}}\|\Lambda_{x_{1}}^{\alpha} \nabla\theta\|_{L^{2}}^{\frac{1}{\alpha+1}},
\end{align}
\begin{align}\label{pre1y318}
  \mathcal{J}_{4}
 \leq&C\|\Lambda^{\frac{2}{\widehat{q}}}\Lambda_{x_{2}}^{1-\beta}\partial_{x_{2}}\theta
 \|_{L^{2}}   \|\theta\|_{L^{\widehat{q}}} \|\Lambda_{x_{2}}^{\beta}\nabla\theta\|_{L^{2}},
\end{align}
for any $q,\widetilde{q},\widehat{q}\in (2,\,\infty)$ and $\delta=\frac{1-\alpha\beta}{\alpha +1}$. Plugging \eqref{pre1y316}, \eqref{pre1y317} and \eqref{pre1y318} into \eqref{t3326t002} gives
\begin{align}\label{pre1y319}
&\frac{1}{2}\frac{d}{dt}\|\nabla \theta(t)\|_{L^{2}}^{2}+
 \|\Lambda_{x_{1}}^{\alpha}\nabla\theta\|_{L^{2}}^{2}+
 \|\Lambda_{x_{2}}^{\beta}\nabla\theta\|_{L^{2}}^{2}\nonumber\\& \leq
 C\|\Lambda^{\frac{2}{q}}\Lambda_{x_{2}}^{\delta}\partial_{x_{1}}\theta\|_{L^{2}} \|\theta\|_{L^{q}} \|\Lambda_{x_{2}}^{\beta} \nabla\theta\|_{L^{2}}^{\frac{\alpha}{\alpha+1}}\|\Lambda_{x_{1}}^{\alpha} \nabla\theta\|_{L^{2}}^{\frac{1}{\alpha+1}}\nonumber\\&\quad +C\|\Lambda^{\frac{2}{\widetilde{q}}}\Lambda_{x_{2}}^{\delta}\partial_{x_{2}}
\theta\|_{L^{2}}
 \|\theta\|_{L^{\widetilde{q}}} \|\Lambda_{x_{2}}^{\beta} \nabla\theta\|_{L^{2}}^{\frac{\alpha}{\alpha+1}}\|\Lambda_{x_{1}}^{\alpha} \nabla\theta\|_{L^{2}}^{\frac{1}{\alpha+1}}
 \nonumber\\&\quad +
 C\|\Lambda^{\frac{2}{\widehat{q}}}\Lambda_{x_{2}}^{1-\beta}\partial_{x_{2}}\theta
 \|_{L^{2}}   \|\theta\|_{L^{\widehat{q}}} \|\Lambda_{x_{2}}^{\beta}\nabla\theta\|_{L^{2}}
 \nonumber\\& \leq
 C\|\Lambda_{x_{1}}^{\frac{2}{q}}\Lambda_{x_{2}}^{\delta}\nabla\theta\|_{L^{2}} \|\theta\|_{L^{q}} \|\Lambda_{x_{2}}^{\beta} \nabla\theta\|_{L^{2}}^{\frac{\alpha}{\alpha+1}}\|\Lambda_{x_{1}}^{\alpha} \nabla\theta\|_{L^{2}}^{\frac{1}{\alpha+1}}\nonumber\\&\quad +C\|\Lambda_{x_{2}}^{\frac{2}{\widetilde{q}}}\Lambda_{x_{2}}^{\delta}\nabla
\theta\|_{L^{2}}
 \|\theta\|_{L^{\widetilde{q}}} \|\Lambda_{x_{2}}^{\beta} \nabla\theta\|_{L^{2}}^{\frac{\alpha}{\alpha+1}}\|\Lambda_{x_{1}}^{\alpha} \nabla\theta\|_{L^{2}}^{\frac{1}{\alpha+1}}
 \nonumber\\&\quad +
 C\|\Lambda_{x_{2}}^{\frac{2}{\widehat{q}}}\Lambda_{x_{2}}^{1-\beta}\nabla\theta
 \|_{L^{2}}   \|\theta\|_{L^{\widehat{q}}} \|\Lambda_{x_{2}}^{\beta}\nabla\theta\|_{L^{2}}
 \nonumber\\& \leq C
 \|\nabla\theta\|_{L^{2}}^{1-\frac{\delta}{\beta}-\frac{\frac{2}{q}}{\alpha}} \|\Lambda_{x_{1}}^{\alpha}\nabla\theta\|_{L^{2}}^{\frac{\frac{2}{q}}{\alpha}}
 \|\Lambda_{x_{2}}^{\beta}\nabla\theta\|_{L^{2}}^{\frac{\delta}{\beta}}
 \|\theta\|_{L^{q}} \|\Lambda_{x_{2}}^{\beta} \nabla\theta\|_{L^{2}}^{\frac{\alpha}{\alpha+1}}\|\Lambda_{x_{1}}^{\alpha} \nabla\theta\|_{L^{2}}^{\frac{1}{\alpha+1}}\nonumber\\&\quad +C
 \|\nabla\theta\|_{L^{2}} ^{1-\frac{\delta+\frac{2}{\widetilde{q}}}{\beta}}
 \|\Lambda_{x_{2}}^{\beta} \nabla\theta\|_{L^{2}}^{\frac{\delta+\frac{2}{\widetilde{q}}}{\beta}}
 \|\theta\|_{L^{\widetilde{q}}} \|\Lambda_{x_{2}}^{\beta} \nabla\theta\|_{L^{2}}^{\frac{\alpha}{\alpha+1}}\|\Lambda_{x_{1}}^{\alpha} \nabla\theta\|_{L^{2}}^{\frac{1}{\alpha+1}}
 \nonumber\\&\quad +
 C\|\nabla\theta\|_{L^{2}} ^{1-\frac{1-\beta+\frac{2}{\widehat{q}}}{\beta}}
 \|\Lambda_{x_{2}}^{\beta} \nabla\theta\|_{L^{2}}^{\frac{1-\beta+\frac{2}{\widehat{q}}}{\beta}}
 \|\theta\|_{L^{\widehat{q}}} \|\Lambda_{x_{2}}^{\beta}\nabla\theta\|_{L^{2}}
  \nonumber\\& \leq
 \frac{1}{2}\|\Lambda_{x_{1}}^{\alpha}\nabla\theta\|_{L^{2}}^{2}+ \frac{1}{2}
 \|\Lambda_{x_{2}}^{\beta}\nabla\theta\|_{L^{2}}^{2}+C \|\theta\|_{L^{q}}^{\frac{2\alpha(\alpha+1)\beta q}{[(2\alpha+1)\beta-1]\alpha q-2(\alpha+1)\beta}} \|\nabla\theta\|_{L^{2}}^{2}\nonumber\\&\quad
 +C \|\theta\|_{L^{\widetilde{q}}}^{\frac{2(\alpha+1)\beta\widetilde{q}}
 {[(2\alpha+1)\beta-1]\widetilde{q}-2(\alpha+1)}} \|\nabla\theta\|_{L^{2}}^{2}
 +C \|\theta\|_{L^{q}}^{\frac{2\beta \widehat{q}}{(2\beta-1)\widehat{q}-2}} \|\nabla\theta\|_{L^{2}}^{2},
\end{align}
where we have used \eqref{avbnuy8911} and \eqref{t2326001}.
It follows from \eqref{pre1y319} that
\begin{align}\label{pre1y320}
  &\frac{d}{dt}\|\nabla \theta(t)\|_{L^{2}}^{2}+
 \|\Lambda_{x_{1}}^{\alpha}\nabla\theta\|_{L^{2}}^{2}+
 \|\Lambda_{x_{2}}^{\beta}\nabla\theta\|_{L^{2}}^{2}\nonumber\\ & \leq
 C \|\theta\|_{L^{q}}^{\frac{2\alpha(\alpha+1)\beta q}{[(2\alpha+1)\beta-1]\alpha q-2(\alpha+1)\beta}} \|\nabla\theta\|_{L^{2}}^{2}
 +C \|\theta\|_{L^{\widetilde{q}}}^{\frac{2(\alpha+1)\beta\widetilde{q}}
 {[(2\alpha+1)\beta-1]\widetilde{q}-2(\alpha+1)}} \|\nabla\theta\|_{L^{2}}^{2}
 \nonumber\\&\quad+C \|\theta\|_{L^{q}}^{\frac{2\beta \widehat{q}}{(2\beta-1)\widehat{q}-2}} \|\nabla\theta\|_{L^{2}}^{2}.
\end{align}
We deduce from \eqref{Proshsh78} and \eqref{pre310} that for any $k\in (2,\infty)$
\begin{align}\label{prbbhyu8}\|\theta\|_{L^{k}}\leq C\|\theta\|_{L^{2}}^{\frac{2}{ {k}}}\|\nabla\theta\|_{L^{2}}
^{\frac{k-2}{ {k}}}\leq C(1+t)^{-\frac{(\alpha+\beta)(2-p) }{4\alpha\beta p}}.
\end{align}
Combining this and \eqref{pre1y320} gives
\begin{align}\label{pre1y321}
 \frac{d}{dt}\|\nabla{\theta}(t)\|_{L^{2}}^{2}
+\|\Lambda_{x_{1}}^{\alpha}\nabla{\theta}\|_{L^{2}}^{2}+
 \|\Lambda_{x_{2}}^{\beta}\nabla{\theta}\|_{L^{2}}^{2}\leq C(1+t)^{-\frac{(\alpha+\beta)(2-p)\kappa }{4\alpha\beta p}}\|\nabla {\theta}\|_{L^{2}}^{2},
\end{align}
where $\kappa=\min\left\{\Gamma_{1},\,\Gamma_{2},\, \Gamma_{3}\right\}$
with
 $$\Gamma_{1}=\frac{2\alpha(\alpha+1)\beta q}{[(2\alpha+1)\beta-1]\alpha q-2(\alpha+1)\beta},$$
 $$ \Gamma_{2}=\frac{2(\alpha+1)\beta\widetilde{q}}
 {[(2\alpha+1)\beta-1]\widetilde{q}-2(\alpha+1)},\qquad \Gamma_{3}=\frac{2\beta \widehat{q}}{(2\beta-1)\widehat{q}-2}.$$
Due to the arbitrariness of $q,\widetilde{q},\widehat{q}\in (2,\,\infty)$, one can take some suitable $q,\widetilde{q},\widehat{q}$ such that
\begin{align}\label{pre1y322}
\frac{(\alpha+\beta)(2-p)\kappa}{4\alpha\beta p}>1.
\end{align}
Now we denote
$$E(t)=\{\xi\in \mathbb{R}^2: \  |\xi_{1}|^{2\alpha}+|\xi_{2}|^{2\beta}\leq f(t)\}$$
for a smooth decreasing function $f(t)$ to be determined hereafter.
It thus yields
\begin{align}\label{pre1y323}
\|\Lambda_{x_{1}}^{\alpha}\nabla\theta\|_{L^{2}}^{2}+
 \|\Lambda_{x_{2}}^{\beta}\nabla\theta\|_{L^{2}}^{2}
 &=\int_{\mathbb{R}^{2}}|\xi_{1}|^{2\alpha}|\widehat{\nabla\theta}(t,\xi)|^{2}\,d\xi +\int_{\mathbb{R}^{2}}|\xi_{2}|^{2\beta}|\widehat{\nabla\theta}(t,\xi)|^{2}\,d\xi\nonumber\\
 &=\int_{\mathbb{R}^{2}}(|\xi_{1}|^{2\alpha}+|\xi_{2}|^{2\beta})|\widehat{\nabla\theta}(t,\xi)|^{2}\,d\xi
 \nonumber\\
 &\geq \int_{E(t)^{c}}(|\xi_{1}|^{2\alpha}+|\xi_{2}|^{2\beta})|\widehat{\nabla\theta}(t,\xi)|^{2}\,d\xi
 \nonumber\\
 &\geq f(t)\int_{E(t)^{c}}|\widehat{\nabla\theta}(t,\xi)|^{2}\,d\xi\nonumber\\
 &= f(t)\int_{\mathbb{R}^{2}}|\widehat{\nabla\theta}(t,\xi)|^{2}\,d\xi
 -f(t)\int_{E(t)}|\widehat{\nabla\theta}(t,\xi)|^{2}\,d\xi,
\end{align}
which together with \eqref{pre1y321} gives
\begin{align}\label{pre1y324}
\frac{d}{dt}\|\nabla{\theta}(t)\|_{L^{2}}^{2}+f(t)\|\nabla {\theta}\|_{L^{2}}^{2}\leq& f(t)\int_{E(t)}|\widehat{\nabla {\theta}}(t,\xi)|^{2}\,d\xi\nonumber\\&+C(1+t)^{-\frac{(\alpha+\beta)(2-p)\kappa }{4\alpha\beta p}}  \|\nabla{\theta}\|_{L^{2}}^{2}.
\end{align}
By direct computations, we may check that
\begin{align}\label{pre1y325}
\int_{E(t)}|\widehat{\nabla {\theta}}(t,\xi)|^{2}\,d\xi
&\leq
\int_{E(t)}
|\xi|^{2}|\widehat{\theta}(\xi)|^{2}\,d\xi\nonumber\\
&\leq
\int_{E(t)}
(|\xi_{1}|^{2}+|\xi_{2}|^{2}) |\widehat{\theta}(\xi)|^{2}\,d\xi\nonumber\\
&\leq C\int_{E(t)}
\left(f(t)^{\frac{1}{\alpha}}+f(t)^{\frac{1}{\beta}}\right)|\widehat{\theta}(\xi)|^{2}\,d\xi
\nonumber\\
&\leq C\left(f(t)^{\frac{1}{\alpha}}+f(t)^{\frac{1}{\beta}}\right)\|{\theta}\|_{L^{2}}^{2}.
\end{align}
Putting \eqref{pre1y325} into \eqref{pre1y324} implies
\begin{align}\label{pre1y326}
\frac{d}{dt}\|\nabla{\theta}(t)\|_{L^{2}}^{2}+f(t)\|\nabla {\theta}\|_{L^{2}}^{2}\leq& C\left(f(t)^{1+\frac{1}{\alpha}}+f(t)^{1+\frac{1}{\beta}}\right)\|{\theta}\|_{L^{2}}^{2}\nonumber\\&+C(1+t)^{-\frac{(\alpha+\beta)(2-p)\kappa }{4\alpha\beta p}}  \|\nabla{\theta}\|_{L^{2}}^{2}\nonumber\\ \leq& C\left(f(t)^{1+\frac{1}{\alpha}}+f(t)^{1+\frac{1}{\beta}}\right)(1+t)^{-\frac{(\alpha+\beta)(2-p) }{2\alpha\beta p}}\nonumber\\&+C(1+t)^{-\frac{(\alpha+\beta)(2-p)\kappa }{4\alpha\beta p}}  \|\nabla{\theta}\|_{L^{2}}^{2}.
\end{align}
Taking $f(t)=\frac{\varrho}{1+t}$ with $\varrho$ suitable large, we deduce from \eqref{pre1y326} that
\begin{align}\label{pre1y327}
\frac{d}{dt}\|\nabla{\theta}(t)\|_{L^{2}}^{2}+\frac{\varrho}{1+t}\|\nabla {\theta}\|_{L^{2}}^{2}\leq& C(1+t)^{-1-\min\{\frac{1}{\alpha},\,\frac{1}{\beta}\}-\frac{(\alpha+\beta)(2-p) }{2\alpha\beta p}}\nonumber\\&+\widetilde{C}(1+t)^{-\frac{(\alpha+\beta)(2-p)\kappa }{4\alpha\beta p}}  \|\nabla{\theta}\|_{L^{2}}^{2}.
\end{align}
Recalling \eqref{pre1y322}, we deduce from \eqref{pre1y327} that
\begin{align}
\frac{d}{dt}\|\nabla{\theta}(t)\|_{L^{2}}^{2}+\frac{\varrho-\widetilde{C}}{1+t}
\|\nabla {\theta}\|_{L^{2}}^{2}\leq& C(1+t)^{-1-\frac{\min\{\alpha,\,\beta\}}{\alpha\beta}-\frac{(\alpha+\beta)(2-p) }{2\alpha\beta p}}.\nonumber
\end{align}
As a result, one derives
\begin{align}\label{pre1y328}
\frac{d}{dt}\left((1+t)^{\varrho-\widetilde{C}}\|\nabla{\theta}(t)\|_{L^{2}}^{2}\right)\leq& C(1+t)^{\varrho-\widetilde{C}-1-\frac{\min\{\alpha,\,\beta\}}{\alpha\beta}-\frac{(\alpha+\beta)(2-p) }{2\alpha\beta p}}.
\end{align}
Integrating \eqref{pre1y328} in time, we obtain
\begin{align}
(1+t)^{\varrho-\widetilde{C}}\|\nabla{\theta}(t)\|_{L^{2}}^{2}\leq& \|\nabla{\theta_{0}}\|_{L^{2}}^{2}+ C(1+t)^{\varrho-\widetilde{C}-\frac{\min\{\alpha,\,\beta\} }{\alpha\beta} -\frac{(\alpha+\beta)(2-p) }{2\alpha\beta p}},\nonumber
\end{align}
which implies
\begin{align}
\|\nabla{\theta}(t)\|_{L^{2}}^{2}\leq&  C(1+t)^{-\frac{\min\{\alpha,\,\beta\} }{\alpha\beta}-\frac{(\alpha+\beta)(2-p)}{2\alpha\beta p}}.\nonumber
\end{align}

Next we claim that \eqref{pre1y315} still holds true for the case: $\beta>\frac{1-\alpha}{2\alpha}, \,\frac{1}{2}<\alpha<1$.
We get from \eqref{ghtyde229}, \eqref{ghtyde232} and  \eqref{ghtyde235}, respectively
\begin{align} \label{pre1y329}
\mathcal{J}_{1}
 \leq&C\|\Lambda^{\frac{2}{r}}\Lambda_{x_{1}}^{1-\alpha}\partial_{x_{1}}
\theta\|_{L^{2}} \|\theta\|_{L^{r}} \|\Lambda_{x_{1}}^{\alpha}\partial_{x_{1}}\theta\|_{L^{2}},
\end{align}
\begin{align} \label{pre1y330}
\mathcal{J}_{2}
 \leq&C\|\Lambda^{\frac{2}{\widetilde{r}}}
 \Lambda_{x_{1}}^{1-\widetilde{\delta}}\partial_{x_{1}}
 \theta\|_{L^{2}}  \|\theta\|_{L^{\widetilde{r}}}\|\Lambda_{x_{1}}^{\alpha+1}\theta
\|_{L^{2}}^{\frac{\beta}{\beta+1}} \|\Lambda_{x_{2}}^{\beta+1}\theta
\|_{L^{2}}^{\frac{1}{\beta+1}}\nonumber\\&+C\|\Lambda^{\frac{2}{r}}\Lambda_{x_{1}}^{1-\alpha}\partial_{x_{1}}
\theta\|_{L^{2}} \|\theta\|_{L^{r}} \|\Lambda_{x_{1}}^{\alpha}\partial_{x_{1}}\theta\|_{L^{2}},
\end{align}
\begin{align} \label{pre1y331}
\mathcal{J}_{3},\,\mathcal{J}_{4}
\leq& C\|\Lambda^{\frac{2}{\widehat{r}}}
 \Lambda_{x_{1}}^{1-\widetilde{\delta}}\partial_{x_{2}}
 \theta\|_{L^{2}}  \|\theta\|_{L^{\widehat{r}}}\|\Lambda_{x_{1}}^{\alpha+1}\theta
\|_{L^{2}}^{\frac{\beta}{\beta+1}} \|\Lambda_{x_{2}}^{\beta+1}\theta
\|_{L^{2}}^{\frac{1}{\beta+1}}
\end{align}
for any $r,\widetilde{r},\widehat{r}\in (2,\,\infty)$ and $\widetilde{\delta}=\frac{(\alpha+1)\beta}{\beta+1}$.
Putting \eqref{pre1y329}, \eqref{pre1y330} and \eqref{pre1y331} into \eqref{t36t0ghd} yields
\begin{align}
&\frac{1}{2}\frac{d}{dt}\|\nabla \theta(t)\|_{L^{2}}^{2}+
 \|\Lambda_{x_{1}}^{\alpha}\nabla\theta\|_{L^{2}}^{2}+
 \|\Lambda_{x_{2}}^{\beta}\nabla\theta\|_{L^{2}}^{2}\nonumber\\ &\leq C\|\Lambda^{\frac{2}{r}}\Lambda_{x_{1}}^{1-\alpha}\partial_{x_{1}}
\theta\|_{L^{2}} \|\theta\|_{L^{r}} \|\Lambda_{x_{1}}^{\alpha+1} \theta\|_{L^{2}}\nonumber\\&\quad +
C\|\Lambda^{\frac{2}{\widetilde{r}}}
 \Lambda_{x_{1}}^{1-\widetilde{\delta}}\partial_{x_{1}}
 \theta\|_{L^{2}}  \|\theta\|_{L^{\widetilde{r}}}\|\Lambda_{x_{1}}^{\alpha+1}\theta
\|_{L^{2}}^{\frac{\beta}{\beta+1}} \|\Lambda_{x_{2}}^{\beta+1}\theta
\|_{L^{2}}^{\frac{1}{\beta+1}}
\nonumber\\&\quad +
C\|\Lambda^{\frac{2}{\widehat{r}}}
 \Lambda_{x_{1}}^{1-\widetilde{\delta}}\partial_{x_{2}}
 \theta\|_{L^{2}}  \|\theta\|_{L^{\widehat{r}}}\|\Lambda_{x_{1}}^{\alpha+1}\theta
\|_{L^{2}}^{\frac{\beta}{\beta+1}} \|\Lambda_{x_{2}}^{\beta+1}\theta
\|_{L^{2}}^{\frac{1}{\beta+1}}
\nonumber\\ &\leq C\|\Lambda_{x_{1}}^{\frac{2}{r}}\Lambda_{x_{1}}^{1-\alpha}\nabla
\theta\|_{L^{2}} \|\theta\|_{L^{r}} \|\Lambda_{x_{1}}^{\alpha}\nabla \theta\|_{L^{2}}\nonumber\\&\quad +
C\|\Lambda_{x_{1}}^{\frac{2}{\widetilde{r}}}
 \Lambda_{x_{1}}^{1-\widetilde{\delta}}\nabla
 \theta\|_{L^{2}}  \|\theta\|_{L^{\widetilde{r}}}\|\Lambda_{x_{1}}^{\alpha}\nabla\theta
\|_{L^{2}}^{\frac{\beta}{\beta+1}} \|\Lambda_{x_{2}}^{\beta}\nabla\theta
\|_{L^{2}}^{\frac{1}{\beta+1}}
\nonumber\\&\quad +
C\|\Lambda_{x_{2}}^{\frac{2}{\widehat{r}}}
 \Lambda_{x_{1}}^{1-\widetilde{\delta}}\nabla
 \theta\|_{L^{2}}  \|\theta\|_{L^{\widehat{r}}}\|\Lambda_{x_{1}}^{\alpha}\nabla\theta
\|_{L^{2}}^{\frac{\beta}{\beta+1}} \|\Lambda_{x_{2}}^{\beta}\nabla\theta
\|_{L^{2}}^{\frac{1}{\beta+1}}
\nonumber\\ &\leq C\|\nabla
\theta\|_{L^{2}}^{1-\frac{1-\alpha+\frac{2}{r}}{\alpha}}\|\Lambda_{x_{1}}^{\alpha}\nabla \theta\|_{L^{2}}^{\frac{1-\alpha+\frac{2}{r}}{\alpha}}
\|\theta\|_{L^{r}} \|\Lambda_{x_{1}}^{\alpha}\nabla \theta\|_{L^{2}}\nonumber\\&\quad +C\|\nabla
\theta\|_{L^{2}}^{1-\frac{1-\widetilde{\delta}+\frac{2}{\widetilde{r}}}{\alpha}}\|\Lambda_{x_{1}}^{\alpha}\nabla \theta\|_{L^{2}}^{\frac{1-\widetilde{\delta}+\frac{2}{\widetilde{r}}}{\alpha}} \|\theta\|_{L^{\widetilde{r}}}\|\Lambda_{x_{1}}^{\alpha}\nabla\theta
\|_{L^{2}}^{\frac{\beta}{\beta+1}} \|\Lambda_{x_{2}}^{\beta}\nabla\theta
\|_{L^{2}}^{\frac{1}{\beta+1}}
\nonumber\\&\quad +
C\|\nabla
\theta\|_{L^{2}}^{1-\frac{1-\widetilde{\delta}}
{\alpha}-\frac{\frac{2}{\widehat{r}}}{\beta}}
\|\Lambda_{x_{1}}^{\alpha}\nabla
\theta\|_{L^{2}}^{\frac{1-\widetilde{\delta}}{\alpha}}
\|\Lambda_{x_{2}}^{\beta}\nabla
\theta\|_{L^{2}}^{\frac{\frac{2}{\widehat{r}}}{\beta}}
 \|\theta\|_{L^{\widehat{r}}}\|\Lambda_{x_{1}}^{\alpha}\nabla\theta
\|_{L^{2}}^{\frac{\beta}{\beta+1}} \|\Lambda_{x_{2}}^{\beta}\nabla\theta
\|_{L^{2}}^{\frac{1}{\beta+1}}
  \nonumber\\& \leq
 \frac{1}{2}\|\Lambda_{x_{1}}^{\alpha}\nabla\theta\|_{L^{2}}^{2}+ \frac{1}{2}
 \|\Lambda_{x_{2}}^{\beta}\nabla\theta\|_{L^{2}}^{2}+C \|\theta\|_{L^{r}}^{\frac{2\alpha r}{(2\alpha-1)r-2}} \|\nabla\theta\|_{L^{2}}^{2}\nonumber\\&\quad
 +C \|\theta\|_{L^{\widetilde{r}}}^{\frac{2\alpha(\beta+1)\widetilde{r}}
 {(2\alpha\beta+\alpha-1)\widetilde{r}-2(\beta+1)}} \|\nabla\theta\|_{L^{2}}^{2}
 +C \|\theta\|_{L^{q}}^{\frac{2\alpha\beta(\beta+1) \widehat{r}}{(2\alpha\beta+\alpha-1)\beta \widehat{r}-2\alpha(\beta+1)} } \|\nabla\theta\|_{L^{2}}^{2},\nonumber
\end{align}
where we have used \eqref{avbnuy8911} and \eqref{t2326001} again. As a result, we have
\begin{align}\label{pre1ffgy89k}
& \frac{d}{dt}\|\nabla \theta(t)\|_{L^{2}}^{2}+
 \|\Lambda_{x_{1}}^{\alpha}\nabla\theta\|_{L^{2}}^{2}+
 \|\Lambda_{x_{2}}^{\beta}\nabla\theta\|_{L^{2}}^{2} \nonumber\\& \leq
 C \|\theta\|_{L^{r}}^{\frac{2\alpha r}{(2\alpha-1)r-2}} \|\nabla\theta\|_{L^{2}}^{2}
 +C \|\theta\|_{L^{\widetilde{r}}}^{\frac{2\alpha(\beta+1)\widetilde{r}}
 {(2\alpha\beta+\alpha-1)\widetilde{r}-2(\beta+1)}} \|\nabla\theta\|_{L^{2}}^{2}
 \nonumber\\&\quad+C \|\theta\|_{L^{q}}^{\frac{2\alpha\beta(\beta+1) \widehat{r}}{(2\alpha\beta+\alpha-1)\beta\widehat{r}-2\alpha(\beta+1)} } \|\nabla\theta\|_{L^{2}}^{2}.
\end{align}
Using \eqref{prbbhyu8}, we conclude from \eqref{pre1ffgy89k} that
\begin{align}\label{pnbh2nm}
 \frac{d}{dt}\|\nabla{\theta}(t)\|_{L^{2}}^{2}
+\|\Lambda_{x_{1}}^{\alpha}\nabla{\theta}\|_{L^{2}}^{2}+
 \|\Lambda_{x_{2}}^{\beta}\nabla{\theta}\|_{L^{2}}^{2}\leq C(1+t)^{-\frac{(\alpha+\beta)(2-p)\widetilde{\kappa} }{4\alpha\beta p}}\|\nabla {\theta}\|_{L^{2}}^{2},
\end{align}
where $\widetilde{\kappa}$ is given by
\begin{align}
\widetilde{\kappa}=\min\left\{\frac{2\alpha r}{(2\alpha-1)r-2},\,\frac{2\alpha(\beta+1)\widetilde{r}}
 {(2\alpha\beta+\alpha-1)\widetilde{r}-2(\beta+1)},\, \frac{2\alpha\beta(\beta+1) \widehat{r}}{(2\alpha\beta+\alpha-1)\beta\widehat{r}-2\alpha(\beta+1)}\right\}.
 \nonumber
 \end{align}
Due to the arbitrariness of $r,\widetilde{r},\widehat{r}\in (2,\,\infty)$, we take some suitable $r,\widetilde{r},\widehat{r}$ such that
\begin{align}
\frac{(\alpha+\beta)(2-p)\widetilde{\kappa}}{4\alpha\beta p}>1. \nonumber
\end{align}
The remainder proof of this case can be performed as that for the above case:  $\beta> \frac{1}{2\alpha+1}, \,0<\alpha\leq\frac{1}{2}$. Thus, we omit the details. Consequently, we complete the proof of Proposition \ref{fggyvby7}.
\end{proof}

\vskip .1in

As a corollary of the proof of Proposition \ref{fggyvby7}, we have the following result.
\begin{Pros}\label{fgdsfe34}
If $\alpha$ and $\beta$ satisfy (\ref{sdf2334}) and $\theta_{0}\in \dot{H}^{1}(\mathbb{R}^{2})\cap L^p(\mathbb{R}^{2})$ with $p\in [1,2)$, then it holds
\begin{align}\label{pre1y335}
\int_{0}^{\infty}(\|\Lambda_{x_{1}}^{\alpha}\nabla{\theta}(t)\|_{L^{2}}^{2}+
 \|\Lambda_{x_{2}}^{\beta}\nabla{\theta}(t)\|_{L^{2}}^{2})\leq C_{0},
\end{align}
where the constant $C_{0}>0$ depends only on $\alpha,\,\beta,\,p$ and $\theta_{0}$.
\end{Pros}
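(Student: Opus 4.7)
The plan is to revisit the differential inequality derived in the proof of Proposition \ref{fggyvby7} and integrate it in time on $[0,\infty)$, then close the estimate using the sharp decay rate \eqref{pre1y315} just obtained. More precisely, in the case $\beta>\frac{1}{2\alpha+1},\ 0<\alpha\le\frac{1}{2}$, the proof of Proposition \ref{fggyvby7} produced the bound
\begin{eqnarray*}
\frac{d}{dt}\|\nabla\theta(t)\|_{L^{2}}^{2}
+\|\Lambda_{x_{1}}^{\alpha}\nabla\theta\|_{L^{2}}^{2}
+\|\Lambda_{x_{2}}^{\beta}\nabla\theta\|_{L^{2}}^{2}
\leq C(1+t)^{-\frac{(\alpha+\beta)(2-p)\kappa}{4\alpha\beta p}}\|\nabla\theta\|_{L^{2}}^{2},
\end{eqnarray*}
where $q,\widetilde q,\widehat q$ have been chosen so that the exponent $\frac{(\alpha+\beta)(2-p)\kappa}{4\alpha\beta p}>1$. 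An entirely analogous inequality, involving an exponent $\widetilde{\kappa}$ with $\frac{(\alpha+\beta)(2-p)\widetilde\kappa}{4\alpha\beta p}>1$, is valid in the regime $\beta>\frac{1-\alpha}{2\alpha},\ \frac{1}{2}<\alpha<1$. So in both cases covered by \eqref{sdf2334} we have a usable differential inequality of the same form.

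The next step is to integrate the above inequality over $[0,t]$, which gives
\begin{eqnarray*}
\|\nabla\theta(t)\|_{L^{2}}^{2}
+\int_{0}^{t}\!\big(\|\Lambda_{x_{1}}^{\alpha}\nabla\theta(\tau)\|_{L^{2}}^{2}
+\|\Lambda_{x_{2}}^{\beta}\nabla\theta(\tau)\|_{L^{2}}^{2}\big)d\tau
\leq \|\nabla\theta_{0}\|_{L^{2}}^{2}
+C\!\int_{0}^{t}(1+\tau)^{-\frac{(\alpha+\beta)(2-p)\kappa}{4\alpha\beta p}}\|\nabla\theta(\tau)\|_{L^{2}}^{2}d\tau.
\end{eqnarray*}
Now I would plug in the sharp pointwise decay \eqref{pre1y315} established in Proposition \ref{fggyvby7}, namely
$\|\nabla\theta(\tau)\|_{L^{2}}^{2}\le C(1+\tau)^{-\frac{(\alpha+\beta)(2-p)+2\min\{\alpha,\beta\}p}{2\alpha\beta p}}$,
so that the time integral on the right-hand side is controlled by
\begin{eqnarray*}
C\int_{0}^{t}(1+\tau)^{-\frac{(\alpha+\beta)(2-p)\kappa}{4\alpha\beta p}-\frac{(\alpha+\beta)(2-p)+2\min\{\alpha,\beta\}p}{2\alpha\beta p}}d\tau.
\end{eqnarray*}

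Since the exponent $\frac{(\alpha+\beta)(2-p)\kappa}{4\alpha\beta p}$ is already strictly greater than $1$ by construction, and the additional negative contribution from the $\|\nabla\theta\|_{L^{2}}^{2}$-factor only makes the decay rate larger, the integrand is uniformly integrable on $[0,\infty)$. Letting $t\to\infty$ and absorbing constants into $C_{0}=C_{0}(\alpha,\beta,p,\theta_{0})$ yields precisely \eqref{pre1y335}. The main obstacle I anticipate is purely bookkeeping: one must verify that, among the multiple exponents $\Gamma_{1},\Gamma_{2},\Gamma_{3}$ (resp.\ their analogues defining $\widetilde\kappa$), the choice of parameters $q,\widetilde q,\widehat q$ (resp.\ $r,\widetilde r,\widehat r$) that was selected in Proposition \ref{fggyvby7} is simultaneously compatible with the integrability requirement here; but since the condition $\frac{(\alpha+\beta)(2-p)\kappa}{4\alpha\beta p}>1$ already ensured convergence of an $L^{\infty}$-type Gronwall integral there, the same choice suffices directly and no new parameter tuning is required.
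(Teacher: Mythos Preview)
Your proposal is correct and follows essentially the same route as the paper: both start from the differential inequalities \eqref{pre1y321} and \eqref{pnbh2nm} with integrable-in-time coefficient (exponent $>1$). The only cosmetic difference is that the paper applies the Gronwall inequality directly to obtain simultaneously the uniform bound $\sup_{t\ge 0}\|\nabla\theta(t)\|_{L^2}^2\le C_0$ and the desired integral bound, whereas you integrate first and then invoke the decay rate \eqref{pre1y315}; since \eqref{pre1y315} was itself derived from these same inequalities, the paper's Gronwall argument is slightly more self-contained, but both are valid and require no new parameter tuning.
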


\begin{proof}
We recall \eqref{pre1y321} and \eqref{pnbh2nm}, namely
\begin{align}\label{pre1y336}
 \frac{d}{dt}\|\nabla{\theta}(t)\|_{L^{2}}^{2}
+\|\Lambda_{x_{1}}^{\alpha}\nabla{\theta}\|_{L^{2}}^{2}+
 \|\Lambda_{x_{2}}^{\beta}\nabla{\theta}\|_{L^{2}}^{2}\leq C(1+t)^{-\frac{(\alpha+\beta)(2-p)\kappa }{4\alpha\beta p}}\|\nabla {\theta}\|_{L^{2}}^{2},
\end{align}
\begin{align}\label{pre1y337}
 \frac{d}{dt}\|\nabla{\theta}(t)\|_{L^{2}}^{2}
+\|\Lambda_{x_{1}}^{\alpha}\nabla{\theta}\|_{L^{2}}^{2}+
 \|\Lambda_{x_{2}}^{\beta}\nabla{\theta}\|_{L^{2}}^{2}\leq C(1+t)^{-\frac{(\alpha+\beta)(2-p)\widetilde{\kappa} }{4\alpha\beta p}}\|\nabla {\theta}\|_{L^{2}}^{2},
\end{align}
where $\frac{(\alpha+\beta)(2-p)\kappa }{4\alpha\beta p}>1$ and $\frac{(\alpha+\beta)(2-p)\widetilde{\kappa} }{4\alpha\beta p}>1$. We thus get by applying the Gronwall inequality to \eqref{pre1y336} and \eqref{pre1y337} that
$$\sup_{t\geq0}\|\nabla{\theta}(t)\|_{L^{2}}^{2}+\int_{0}^{\infty}
(\|\Lambda_{x_{1}}^{\alpha}\nabla{\theta}(\tau)\|_{L^{2}}^{2}+
 \|\Lambda_{x_{2}}^{\beta}\nabla{\theta}(\tau)\|_{L^{2}}^{2})\,d\tau\leq C_{0}.$$
 This completes the proof of Proposition \ref{fgdsfe34}.
\end{proof}

\vskip .1in
However, the estimates obtained above are not sufficient to show the decay estimate of the solution in $\dot{H}^{s}$-norm for $s>1$ because we consider all the case $p\in[1,2)$ other than $p\in[1,p_{0})$ for some $p_{0}\in[1,2)$.
To overcome this difficulty, we need to establish the uniform-in-time of the $\dot{H}^2$-bound of the solution. With \eqref{pre1y315} and \eqref{pre1y335} in hand, we are ready to show following result.
\begin{Pros}\label{Lfgyt789j}
If $\alpha$ and $\beta$ satisfy (\ref{sdf2334}) and $\theta_{0}\in \dot{H}^{2}(\mathbb{R}^{2})\cap L^p(\mathbb{R}^{2})$ with $p\in [1,2)$, then it holds
\begin{eqnarray}\label{pre1y338}
\sup_{t\geq0}\|\Delta\theta(t)\|_{L^{2}}\leq
C_{0},
\end{eqnarray}
where the constant $C_{0}>0$ depends only on $\alpha,\,\beta,\,p$ and $\theta_{0}$.
\end{Pros}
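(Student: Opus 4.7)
The plan is to apply Gronwall's inequality to the differential inequality from Proposition \ref{Lvfgt}. Under condition \eqref{sdf2334}, at least one of $\alpha>\tfrac12$ or $\beta>\tfrac12$ must hold; assume $\alpha>\tfrac12$ without loss of generality, the other case being symmetric. Then Proposition \ref{Lvfgt} provides
\begin{equation*}
\frac{d}{dt}\|\Delta\theta(t)\|_{L^2}^2 + \|\Lambda_{x_1}^\alpha\Delta\theta\|_{L^2}^2 + \|\Lambda_{x_2}^\beta\Delta\theta\|_{L^2}^2 \leq \Psi(t)\,\|\Delta\theta\|_{L^2}^2,
\end{equation*}
where $\Psi(t) = C\|\nabla\theta\|_{L^2}^{\frac{2(2\alpha-1)}{4\alpha-1}}\|\Lambda_{x_1}^\alpha\nabla\theta\|_{L^2}^{\frac{2}{4\alpha-1}}$. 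Dropping the dissipative terms and integrating, Gronwall yields $\|\Delta\theta(t)\|_{L^2}^2 \leq \|\Delta\theta_0\|_{L^2}^2 \exp\bigl(\int_0^t \Psi(\tau)\,d\tau\bigr)$, so the uniform bound reduces to proving $\int_0^\infty \Psi(\tau)\,d\tau<\infty$.

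To establish this integrability, the key ingredients are the sharp $\dot H^1$-decay from Proposition \ref{fggyvby7}, $\|\nabla\theta\|_{L^2}^2\leq C(1+t)^{-\mu_1}$ with $\mu_1=\frac{(\alpha+\beta)(2-p)+2\min\{\alpha,\beta\}p}{2\alpha\beta p}$, together with the $L^1_t$-integrability of $\|\Lambda_{x_1}^\alpha\nabla\theta\|_{L^2}^2$ supplied by Proposition \ref{fgdsfe34}. Writing
\begin{equation*}
\Psi(t) = \bigl[\|\nabla\theta\|_{L^2}^2\bigr]^{(2\alpha-1)/(4\alpha-1)}\bigl[\|\Lambda_{x_1}^\alpha\nabla\theta\|_{L^2}^2\bigr]^{1/(4\alpha-1)}
\end{equation*}
and applying Hölder in time with the conjugate pair $\bigl(\tfrac{4\alpha-1}{4\alpha-2},4\alpha-1\bigr)$ produces
\begin{equation*}
\int_0^\infty\Psi\,d\tau \leq C\Big(\int_0^\infty\|\nabla\theta\|_{L^2}\,d\tau\Big)^{(4\alpha-2)/(4\alpha-1)}\Big(\int_0^\infty\|\Lambda_{x_1}^\alpha\nabla\theta\|_{L^2}^2\,d\tau\Big)^{1/(4\alpha-1)}.
\end{equation*}
The second factor is finite by Proposition \ref{fgdsfe34}, and the first is controlled using $\|\nabla\theta\|_{L^2}\leq C(1+t)^{-\mu_1/2}$ from Proposition \ref{fggyvby7}.

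The main obstacle is the fractional exponent $\tfrac{2}{4\alpha-1}<2$ attached to $\|\Lambda_{x_1}^\alpha\nabla\theta\|_{L^2}$, which prevents a direct appeal to Proposition \ref{fgdsfe34}. A clean alternative route is via Young's inequality with conjugate exponents $\bigl(\tfrac{4\alpha-1}{2\alpha-1},\tfrac{4\alpha-1}{2\alpha}\bigr)$, giving
\begin{equation*}
\Psi(t) \leq \tfrac{2\alpha-1}{4\alpha-1}\|\nabla\theta\|_{L^2}^2 + \tfrac{2\alpha}{4\alpha-1}\|\Lambda_{x_1}^\alpha\nabla\theta\|_{L^2}^{1/\alpha},
\end{equation*}
after which the first summand is integrable by the sharp decay of Proposition \ref{fggyvby7}, and the second is handled by combining the anisotropic interpolation $\|\Lambda_{x_1}^\alpha\nabla\theta\|_{L^2}\leq C\|\nabla\theta\|_{L^2}^{1-\alpha}\|\Delta\theta\|_{L^2}^\alpha$ with the (time-dependent) $\dot H^2$-bound of Proposition \ref{Lvfgt} in a bootstrap manner. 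Once $\int_0^\infty\Psi<\infty$ is secured, Gronwall's inequality delivers $\sup_{t\geq0}\|\Delta\theta(t)\|_{L^2}\leq C_0$ with $C_0$ depending only on $\alpha,\beta,p$ and $\theta_0$.
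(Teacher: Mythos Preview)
Your plan has a genuine gap: the differential inequality borrowed from Proposition~\ref{Lvfgt} is too crude to yield a coefficient in $L^1(0,\infty)$ for all admissible $\alpha,\beta,p$.

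Your H\"older computation is correct and leads to
\[
\int_0^\infty \Psi(\tau)\,d\tau \le C\Big(\int_0^\infty \|\nabla\theta(\tau)\|_{L^2}\,d\tau\Big)^{\frac{4\alpha-2}{4\alpha-1}}\Big(\int_0^\infty \|\Lambda_{x_1}^\alpha\nabla\theta(\tau)\|_{L^2}^2\,d\tau\Big)^{\frac{1}{4\alpha-1}}.
\]
The second factor is finite by Proposition~\ref{fgdsfe34}, but the first requires $\mu_1>2$ with $\mu_1=\frac{(\alpha+\beta)(2-p)+2\min\{\alpha,\beta\}p}{2\alpha\beta p}$, and this can fail. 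For instance, take $\alpha=0.9$, $\beta=0.1$ (which satisfies \eqref{sdf2334} since $\beta>\frac{1-\alpha}{2\alpha}=\tfrac{1}{18}$) and $p=1.9$: then $\mu_1\approx 1.40<2$, so $\int_0^\infty\|\nabla\theta\|_{L^2}\,d\tau=\infty$. In fact any H\"older splitting that puts $\|\Lambda_{x_1}^\alpha\nabla\theta\|_{L^2}$ at the $L^2_t$-exponent forces $\|\nabla\theta\|_{L^2}$ to appear at exponent~$1$, so this obstruction is intrinsic to the form of $\Psi$.

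Your Young-inequality alternative produces the residual term $\|\Lambda_{x_1}^\alpha\nabla\theta\|_{L^2}^{1/\alpha}$ with $1/\alpha\in(1,2)$; the proposed fix via $\|\Lambda_{x_1}^\alpha\nabla\theta\|_{L^2}\le C\|\nabla\theta\|_{L^2}^{1-\alpha}\|\Delta\theta\|_{L^2}^{\alpha}$ reinserts $\|\Delta\theta\|_{L^2}$ into the coefficient, making the Gronwall argument circular (the resulting inequality is nonlinear in $\|\Delta\theta\|_{L^2}^2$ and does not close as a bootstrap).

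The paper avoids this by \emph{not} recycling Proposition~\ref{Lvfgt}. It re-estimates the six terms $\Xi_1,\dots,\Xi_6$ in the $\dot H^2$ energy identity more carefully (using \eqref{qtri} with anisotropic Lebesgue exponents for $\Xi_1,\Xi_6$ in Case~1, resp.\ $\Xi_2,\Xi_3$ in Case~2), arriving at
\[
\frac{d}{dt}\|\Delta\theta\|_{L^2}^2+\|\Lambda_{x_1}^\alpha\Delta\theta\|_{L^2}^2+\|\Lambda_{x_2}^\beta\Delta\theta\|_{L^2}^2 \le C\Big(\|\nabla\theta\|_{L^2}^{\frac{2\alpha}{2\alpha-1}}+\|\nabla\theta\|_{L^2}^{2}+\|\Lambda_{x_1}^\alpha\nabla\theta\|_{L^2}^{2}\Big)\|\Delta\theta\|_{L^2}^2
\]
(for $\alpha>\tfrac12$; symmetrically for $\beta>\tfrac12$). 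Each summand is now in $L^1(0,\infty)$: the third by Proposition~\ref{fgdsfe34}, and the first two because the exponent $\frac{2\alpha}{2\alpha-1}>1$ combined with $\mu_1>1$ (which always holds since $\alpha,\beta<1$) gives $\mu_1\cdot\frac{\alpha}{2\alpha-1}>1$. The crucial gain is the exponent $\frac{2\alpha}{2\alpha-1}$ on $\|\nabla\theta\|_{L^2}$ rather than~$1$; this is exactly what your starting inequality lacks.
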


\begin{proof}
The proof is inspired by the proof of \cite[Proposition 3.4]{Yenonli}.
But compared with \cite[Proposition 3.4]{Yenonli}, here we need more delicate estimates, which act some key roles in deriving the sharp decay estimate of the solution in $\dot{H}^{2}$-norm (see \eqref{pdfgtrq1} below).
To achieve this goal, we apply $\Delta$ to $(\ref{SQG})$ and multiply it by $\Delta \theta$, then integrate it over $\mathbb{R}^{2}$ by parts to show
\begin{eqnarray}\label{uhjefgre1}
\frac{1}{2}\frac{d}{dt}\|\Delta\theta(t)\|_{L^{2}}^{2}+\|\Lambda_{x_{1}}^{\alpha}\Delta\theta\|_{L^{2}}^{2}+
 \|\Lambda_{x_{2}}^{\beta}\Delta\theta\|_{L^{2}}^{2}
=-\int_{\mathbb{R}^{2}}{\Delta\{(u\cdot\nabla)\theta\} \Delta \theta\,dx}.
\end{eqnarray}
By means of the divergence free condition, the term at the right hand side of (\ref{uhjefgre1}) can be rewritten as follows
\begin{align}\label{uhjefgre2}
& -\int_{\mathbb{R}^{2}}{\Delta\{(u\cdot\nabla)\theta\} \Delta \theta\,dx}
\nonumber\\
&=  \int_{\mathbb{R}^{2}}{\Delta u_{1} \partial_{x_{1}}\theta \Delta \theta\,dx}+\int_{\mathbb{R}^{2}}{ \Delta u_{2} \partial_{x_{2}}\theta  \Delta \theta\,dx}
+2\int_{\mathbb{R}^{2}}{\partial_{x_{1}}u_{1}\partial_{x_{1}x_{1}}\theta \Delta \theta\,dx}\nonumber\\& \quad +2\int_{\mathbb{R}^{2}}{\partial_{x_{2}}u_{1}\partial_{x_{1}x_{2}}
\theta \Delta \theta\,dx}+2\int_{\mathbb{R}^{2}}{\partial_{x_{1}}u_{2}\partial_{x_{1}x_{2}}\theta \Delta \theta\,dx}
+2
\int_{\mathbb{R}^{2}}{\partial_{x_{2}}u_{2}\partial_{x_{2}x_{2}}\theta\Delta \theta\,dx}\nonumber\\
&\triangleq \sum_{m=1}^{6}\Xi_{m}.
\end{align}
Now our target is to handle the six terms at the right hand side of (\ref{uhjefgre2}).
We should point out that if $\alpha$ and $\beta$ satisfy (\ref{sdf2334}), then $\alpha>\frac{1}{2}$ or $\beta>\frac{1}{2}$ holds true. Therefore, we split the proof into two cases, namely,
$$\mbox{\textbf{Case 1}}:\  \alpha>\frac{1}{2};\qquad \mbox{\textbf{Case 2}}:\  \beta>\frac{1}{2}.$$
We first deal with \textbf{Case 1}. In this case, the terms $\Xi_{2}$-$\Xi_{5}$ can be estimated by using (\ref{qtri}) that
\begin{align}
\Xi_{2}
 \leq& C\|\partial_{x_{2}}\theta\|_{L^2} \, \|\Delta \theta\|_{L^2}^{1-\frac{1}{2\alpha}}
 \|\Lambda_{x_{1}}^{\alpha}\Delta \theta\|_{L^2}^{ \frac{1}{2\alpha}} \,\|\Delta u_{2}\|_{L^2}^{1-\frac{1}{2\alpha}}\|\Lambda_{x_{2}}^{\alpha}\Delta u_{2}\|_{L^2}^{ \frac{1}{2\alpha}}\nonumber\\
 \leq& C\|\nabla\theta\|_{L^2} \, \|\Delta\theta\|_{L^2}^{1-\frac{1}{2\alpha}}\|\Lambda_{x_{1}}^{\alpha}\Delta\theta\|_{L^2}^{ \frac{1}{2\alpha}} \,\|\Delta \theta\|_{L^2}^{1-\frac{1}{2\alpha}}\|\Lambda_{x_{1}}^{\alpha}\Delta \theta\|_{L^2}^{ \frac{1}{2\alpha}}
\nonumber\\
 \leq&
\epsilon\|\Lambda_{x_{1}}^{\alpha}\Delta\theta\|_{L^{2}}^{2}+C(\epsilon)
\|\nabla\theta\|_{L^2}^{\frac{2\alpha}{2\alpha-1}}\,\|\Delta \theta\|_{L^2}^{2},\nonumber
\end{align}
\begin{align}
\Xi_{3}
 \leq& C\|\partial_{x_{1}}u_{1}\|_{L^2} \, \|\Delta \theta\|_{L^2}^{1-\frac{1}{2\alpha}}
 \|\Lambda_{x_{1}}^{\alpha}\Delta \theta\|_{L^2}^{ \frac{1}{2\alpha}} \,\|\partial_{x_{1}x_{1}}\theta\|_{L^2}^{1-\frac{1}{2\alpha}}
\|\Lambda_{x_{2}}^{\alpha}\partial_{x_{1}x_{1}}\theta\|_{L^2}^{ \frac{1}{2\alpha}}\nonumber\\
 \leq& C\|\nabla \theta\|_{L^2} \, \|\Delta\theta\|_{L^2}^{1-\frac{1}{2\alpha}}
 \|\Lambda_{x_{1}}^{\alpha}\Delta\theta\|_{L^2}^{ \frac{1}{2\alpha}} \,\|\Delta \theta\|_{L^2}^{1-\frac{1}{2\alpha}}\|\Lambda_{x_{1}}^{\alpha}\Delta \theta\|_{L^2}^{ \frac{1}{2\alpha}}
\nonumber\\
 \leq&
\epsilon\|\Lambda_{x_{1}}^{\alpha}\Delta\theta\|_{L^{2}}^{2}+C(\epsilon)
\|\nabla\theta\|_{L^2}^{\frac{2\alpha}{2\alpha-1}}\,\|\Delta \theta\|_{L^2}^{2},\nonumber
\end{align}
\begin{align}
\Xi_{4}
 \leq& C\|\partial_{x_{2}}u_{1}\|_{L^2} \, \|\Delta \theta\|_{L^2}^{1-\frac{1}{2\alpha}}\|\Lambda_{x_{1}}^{\alpha}\Delta \theta\|_{L^2}^{ \frac{1}{2\alpha}} \,\|\partial_{x_{1}x_{2}}
\theta\|_{L^2}^{1-\frac{1}{2\alpha}}
\|\Lambda_{x_{2}}^{\alpha}\partial_{x_{1}x_{2}}
\theta\|_{L^2}^{ \frac{1}{2\alpha}}\nonumber\\
 \leq& C\|\nabla \theta\|_{L^2} \, \|\Delta\theta\|_{L^2}^{1-\frac{1}{2\alpha}}\|\Lambda_{x_{1}}^{\alpha}\Delta\theta\|_{L^2}^{ \frac{1}{2\alpha}} \,\|\Delta \theta\|_{L^2}^{1-\frac{1}{2\alpha}}\|\Lambda_{x_{1}}^{\alpha}\Delta \theta\|_{L^2}^{ \frac{1}{2\alpha}}
\nonumber\\
 \leq&
\epsilon\|\Lambda_{x_{1}}^{\alpha}\Delta\theta\|_{L^{2}}^{2}+C(\epsilon)
\|\nabla\theta\|_{L^2}^{\frac{2\alpha}{2\alpha-1}}\,\|\Delta \theta\|_{L^2}^{2},\nonumber
\end{align}
\begin{align}
\Xi_{5}
 \leq& C\|\partial_{x_{1}}u_{2}\|_{L^2} \, \|\Delta \theta\|_{L^2}^{1-\frac{1}{2\alpha}}\|\Lambda_{x_{1}}^{\alpha}\Delta \theta\|_{L^2}^{ \frac{1}{2\alpha}} \,\|\partial_{x_{1}x_{2}}\theta\|_{L^2}^{1-\frac{1}{2\alpha}}
\|\Lambda_{x_{2}}^{\alpha}\partial_{x_{1}x_{2}}\theta\|_{L^2}^{ \frac{1}{2\alpha}}\nonumber\\
 \leq& C\| \nabla\theta\|_{L^2} \, \|\Delta\theta\|_{L^2}^{1-\frac{1}{2\alpha}}\|\Lambda_{x_{1}}^{\alpha}\Delta\theta\|_{L^2}^{ \frac{1}{2\alpha}} \,\|\Delta \theta\|_{L^2}^{1-\frac{1}{2\alpha}}\|\Lambda_{x_{1}}^{\alpha}\Delta \theta\|_{L^2}^{ \frac{1}{2\alpha}}
\nonumber\\
 \leq&
\epsilon\|\Lambda_{x_{1}}^{\alpha}\Delta\theta\|_{L^{2}}^{2}+C(\epsilon)
\|\nabla\theta\|_{L^2}^{\frac{2\alpha}{2\alpha-1}}\,\|\Delta \theta\|_{L^2}^{2},\nonumber
\end{align}
where we have frequently used the Plancherel Theorem and the fact $u=(-\mathcal
{R}_{2}\theta,\,\,\mathcal {R}_{1}\theta)$. The details can be found from  (3.26) to (3.36) of \cite[Proposition 3.4]{Yenonli}.
The terms $\Xi_{1}$ and $\Xi_{6}$ should be handled differently. More precisely, we appeal to (\ref{qtri}) to derive
\begin{align}
\Xi_{1}
 \leq& C\|\Delta \theta\|_{L_{x_{1}}^2L_{x_{2}}^{\frac{2\alpha}{\alpha-(2\alpha-1)\beta}}} \, \|\partial_{x_{1}}\theta\|_{L^2}
 ^{1-\frac{\alpha-(2\alpha-1)\beta}{2\alpha^{2}}}
 \|\Lambda_{x_{2}}^{\alpha}\partial_{x_{1}}\theta\|_{L^2}^{ \frac{\alpha-(2\alpha-1)\beta}{2\alpha^{2}}} \,\|\Delta u_{1}\|_{L^2}^{1-\frac{1}{2\alpha}}\|\Lambda_{x_{1}}^{\alpha}\Delta u_{1}\|_{L^2}^{ \frac{1}{2\alpha}}\nonumber\\
 \leq&
 C\|\Lambda_{x_{2}}^{\frac{(2\alpha-1)\beta}{2\alpha}}\Delta \theta\|_{L_{x_{1}}^2L_{x_{2}}^{2}} \, \|\nabla\theta\|_{L^2}
 ^{1-\frac{\alpha-(2\alpha-1)\beta}{2\alpha^{2}}}
 \|\Lambda_{x_{1}}^{\alpha}\nabla\theta\|_{L^2}^{ \frac{\alpha-(2\alpha-1)\beta}{2\alpha^{2}}} \,\|\Delta \theta\|_{L^2}^{1-\frac{1}{2\alpha}}\|\Lambda_{x_{1}}^{\alpha}\Delta \theta\|_{L^2}^{ \frac{1}{2\alpha}}\nonumber\\
 \leq& C\|\Delta \theta\|_{L^{2}}^{\frac{1}{2\alpha}}\|\Lambda_{x_{2}}^{\beta}\Delta \theta\|_{L^{2}}^{\frac{2\alpha-1}{2\alpha}} \, \|\nabla\theta\|_{L^2}
 ^{1-\frac{\alpha-(2\alpha-1)\beta}{2\alpha^{2}}}
 \|\Lambda_{x_{1}}^{\alpha}\nabla\theta\|_{L^2}^{ \frac{\alpha-(2\alpha-1)\beta}{2\alpha^{2}}} \,\|\Delta \theta\|_{L^2}^{1-\frac{1}{2\alpha}}\|\Lambda_{x_{1}}^{\alpha}\Delta \theta\|_{L^2}^{ \frac{1}{2\alpha}}
\nonumber\\
 \leq&
\epsilon\|\Lambda_{x_{1}}^{\alpha}\Delta\theta\|_{L^{2}}^{2}
+\epsilon\|\Lambda_{x_{2}}^{\beta}\Delta\theta\|_{L^{2}}^{2}+C(\epsilon)
\|\nabla\theta\|_{L^2}
 ^{2-\frac{\alpha-(2\alpha-1)\beta}{\alpha^{2}}}
 \|\Lambda_{x_{1}}^{\alpha}\nabla\theta\|_{L^2}^{ \frac{\alpha-(2\alpha-1)\beta}{\alpha^{2}}} \,\|\Delta \theta\|_{L^2}^{2}\nonumber\\
 \leq&
\epsilon\|\Lambda_{x_{1}}^{\alpha}\Delta\theta\|_{L^{2}}^{2}
+\epsilon\|\Lambda_{x_{2}}^{\beta}\Delta\theta\|_{L^{2}}^{2}+C(\epsilon)
(\|\nabla\theta\|_{L^2}
 ^{2}+
 \|\Lambda_{x_{1}}^{\alpha}\nabla\theta\|_{L^2}^{ 2}) \,\|\Delta \theta\|_{L^2}^{2},\nonumber
\end{align}
\begin{align}
\Xi_{6}=&2
\int_{\mathbb{R}^{2}}{\partial_{x_{1}}\mathcal {R}_{2}\theta\partial_{x_{2}x_{2}}\theta\Delta \theta\,dx}
 \nonumber\\ \leq& C\|\Delta \theta\|_{L_{x_{1}}^2L_{x_{2}}^{\frac{2\alpha}{\alpha-(2\alpha-1)\beta}}} \, \|\partial_{x_{1}}\mathcal {R}_{2}\theta\|_{L^2}
 ^{1-\frac{\alpha-(2\alpha-1)\beta}{2\alpha^{2}}}
 \|\Lambda_{x_{2}}^{\alpha}\partial_{x_{1}}\mathcal {R}_{2}\theta\|_{L^2}^{ \frac{\alpha-(2\alpha-1)\beta}{2\alpha^{2}}} \nonumber\\ &\times\|\partial_{x_{2}x_{2}}\theta\|_{L^2}^{1-\frac{1}{2\alpha}}
 \|\Lambda_{x_{1}}^{\alpha}\partial_{x_{2}x_{2}}\theta\|_{L^2}^{ \frac{1}{2\alpha}}\nonumber\\
 \leq&
 C\|\Lambda_{x_{2}}^{\frac{(2\alpha-1)\beta}{2\alpha}}\Delta \theta\|_{L_{x_{1}}^2L_{x_{2}}^{2}} \, \|\nabla\theta\|_{L^2}
 ^{1-\frac{\alpha-(2\alpha-1)\beta}{2\alpha^{2}}}
 \|\Lambda_{x_{1}}^{\alpha}\nabla\theta\|_{L^2}^{ \frac{\alpha-(2\alpha-1)\beta}{2\alpha^{2}}} \,\|\Delta \theta\|_{L^2}^{1-\frac{1}{2\alpha}}\|\Lambda_{x_{1}}^{\alpha}\Delta \theta\|_{L^2}^{ \frac{1}{2\alpha}}\nonumber\\
 \leq& C\|\Delta \theta\|_{L^{2}}^{\frac{1}{2\alpha}}\|\Lambda_{x_{2}}^{\beta}\Delta \theta\|_{L^{2}}^{\frac{2\alpha-1}{2\alpha}} \, \|\nabla\theta\|_{L^2}
 ^{1-\frac{\alpha-(2\alpha-1)\beta}{2\alpha^{2}}}
 \|\Lambda_{x_{1}}^{\alpha}\nabla\theta\|_{L^2}^{ \frac{\alpha-(2\alpha-1)\beta}{2\alpha^{2}}} \,\|\Delta \theta\|_{L^2}^{1-\frac{1}{2\alpha}}\|\Lambda_{x_{1}}^{\alpha}\Delta \theta\|_{L^2}^{ \frac{1}{2\alpha}}
\nonumber\\
 \leq&
\epsilon\|\Lambda_{x_{1}}^{\alpha}\Delta\theta\|_{L^{2}}^{2}
+\epsilon\|\Lambda_{x_{2}}^{\beta}\Delta\theta\|_{L^{2}}^{2}+C(\epsilon)
(\|\nabla\theta\|_{L^2}
 ^{2}+
 \|\Lambda_{x_{1}}^{\alpha}\nabla\theta\|_{L^2}^{ 2}) \,\|\Delta \theta\|_{L^2}^{2}.\nonumber
\end{align}
Combining the above estimates and taking $\epsilon$ suitable small, we have
\begin{align}\label{hjywe350}
 &\frac{d}{dt}\|\Delta\theta(t)\|_{L^{2}}^{2}+\|\Lambda_{x_{1}}^{\alpha}\Delta\theta\|_{L^{2}}^{2}+
 \|\Lambda_{x_{2}}^{\beta}\Delta\theta\|_{L^{2}}^{2}
\nonumber\\ &\leq C
(\|\nabla\theta\|_{L^2}^{\frac{2\alpha}{2\alpha-1}}+\|\nabla\theta\|_{L^2}
 ^{2}+
 \|\Lambda_{x_{1}}^{\alpha}\nabla\theta\|_{L^2}^{ 2}) \,\|\Delta \theta\|_{L^2}^{2}.
\end{align}
Keeping in mind $\alpha>\frac{1}{2}$, \eqref{pre1y315} and \eqref{pre1y335}, it is not hard to check that
$$ \int_{0}^{\infty}
(\|\nabla\theta(\tau)\|_{L^2}^{\frac{2\alpha}{2\alpha-1}}+\|\nabla\theta(\tau)\|_{L^2}
 ^{2}+
 \|\Lambda_{x_{1}}^{\alpha}\nabla\theta(\tau)\|_{L^2}^{ 2})\,d\tau\leq C_{0}.$$
As a result, by the Gronwall inequality, we conclude
$$\sup_{t\geq0}\|\Delta\theta(t)\|_{L^{2}}^{2}+ \int_{0}^{\infty}{(\|\Lambda_{x_{1}}^{\alpha}\Delta\theta(
\tau)\|_{L^{2}}^{2}+\|\Lambda_{x_{2}}^{\beta}\Delta\theta(
\tau)\|_{L^{2}}^{2})\,d\tau}\leq
C_{0}.$$
Now let us turn our attention to \textbf{Case 2}. In this case, one may conclude by using the inequality (\ref{qtri}) that
 \begin{align}
\Xi_{1}
 \leq& C\|\partial_{x_{1}} \theta\|_{L^2} \, \|\Delta\theta\|_{L^2}^{1-\frac{1}{2\beta}}\|\Lambda_{x_{2}}^{\beta}\Delta\theta\|_{L^2}^{ \frac{1}{2\beta}} \,\|\Delta u_{1}\|_{L^2}^{1-\frac{1}{2\beta}}\|\Lambda_{x_{1}}^{\beta}\Delta u_{1}\|_{L^2}^{ \frac{1}{2\beta}}\nonumber\\
 \leq& C\| \nabla\theta\|_{L^2} \, \|\Delta\theta\|_{L^2}^{1-\frac{1}{2\beta}}\|\Lambda_{x_{2}}^{\beta}\Delta\theta\|_{L^2}^{ \frac{1}{2\beta}} \,\|\Delta \theta\|_{L^2}^{1-\frac{1}{2\beta}}\|\Lambda_{x_{2}}^{\beta}\Delta \theta\|_{L^2}^{ \frac{1}{2\beta}}
  \nonumber\\
 \leq&
\epsilon\|\Lambda_{x_{2}}^{\beta}\Delta\theta\|_{L^{2}}^{2}+C(\epsilon)
\|\nabla\theta\|_{L^2}^{\frac{2\beta}{2\beta-1}} \,\|\Delta \theta\|_{L^2}^{2},\nonumber
\end{align}
\begin{align}
\Xi_{4}
 \leq& C\|\partial_{x_{2}}u_{1}\|_{L^2} \, \|\Delta \theta\|_{L^2}^{1-\frac{1}{2\beta}}\|\Lambda_{x_{2}}^{\beta}\Delta \theta\|_{L^2}^{ \frac{1}{2\beta}} \,\|\partial_{x_{1}x_{2}}\theta\|_{L^2}^{1-\frac{1}{2\beta}}
\|\Lambda_{x_{1}}^{\beta}\partial_{x_{1}x_{2}}\theta\|_{L^2}^{ \frac{1}{2\beta}}\nonumber\\
 \leq& C\|\nabla \theta\|_{L^2} \, \|\Delta\theta\|_{L^2}^{1-\frac{1}{2\beta}}\|\Lambda_{x_{2}}^{\beta}\Delta\theta\|_{L^2}^{ \frac{1}{2\beta}} \,\|\Delta \theta\|_{L^2}^{1-\frac{1}{2\beta}}\|\Lambda_{x_{2}}^{\beta}\Delta \theta\|_{L^2}^{ \frac{1}{2\beta}}
\nonumber\\
 \leq&
\epsilon\|\Lambda_{x_{2}}^{\beta}\Delta\theta\|_{L^{2}}^{2}+C(\epsilon)
\|\nabla\theta\|_{L^2}^{\frac{2\beta}{2\beta-1}} \,\|\Delta \theta\|_{L^2}^{2},\nonumber
\end{align}
\begin{align}
\Xi_{5}
 \leq& C\|\partial_{x_{1}}u_{2}\|_{L^2} \, \|\Delta \theta\|_{L^2}^{1-\frac{1}{2\beta}}\|\Lambda_{x_{2}}^{\beta}\Delta\theta\|_{L^2}^{ \frac{1}{2\beta}} \,\|\partial_{x_{1}x_{2}}\theta\|_{L^2}^{1-\frac{1}{2\beta}}
\|\Lambda_{x_{1}}^{\beta}\partial_{x_{1}x_{2}}\theta\|_{L^2}^{ \frac{1}{2\beta}}\nonumber\\
 \leq& C\|\nabla \theta\|_{L^2} \, \|\Delta\theta\|_{L^2}^{1-\frac{1}{2\beta}}\|\Lambda_{x_{2}}^{\beta}\Delta\theta\|_{L^2}^{ \frac{1}{2\beta}} \,\|\Delta \theta\|_{L^2}^{1-\frac{1}{2\beta}}\|\Lambda_{x_{2}}^{\beta}\Delta \theta\|_{L^2}^{ \frac{1}{2\beta}}
\nonumber\\
 \leq&
\epsilon\|\Lambda_{x_{2}}^{\beta}\Delta\theta\|_{L^{2}}^{2}+C(\epsilon)
\|\nabla\theta\|_{L^2}^{\frac{2\beta}{2\beta-1}} \,\|\Delta \theta\|_{L^2}^{2}, \nonumber
\end{align}
\begin{align}
\Xi_{6}
 \leq& C\|\partial_{x_{2}}u_{2}\|_{L^2} \, \|\Delta \theta\|_{L^2}^{1-\frac{1}{2\beta}}
 \|\Lambda_{x_{2}}^{\beta}\Delta \theta\|_{L^2}^{ \frac{1}{2\beta}} \,\|\partial_{x_{2}x_{2}}\theta\|_{L^2}^{1-\frac{1}{2\beta}}
\|\Lambda_{x_{1}}^{\beta}\partial_{x_{2}x_{2}}\theta\|_{L^2}^{ \frac{1}{2\beta}}\nonumber\\
 \leq& C\|\nabla \theta\|_{L^2} \, \|\Delta\theta\|_{L^2}^{1-\frac{1}{2\beta}}\|\Lambda_{x_{2}}^{\beta}\Delta\theta\|_{L^2}^{ \frac{1}{2\beta}} \,\|\Delta \theta\|_{L^2}^{1-\frac{1}{2\beta}}\|\Lambda_{x_{2}}^{\beta}\Delta \theta\|_{L^2}^{ \frac{1}{2\beta}}
\nonumber\\
 \leq&
\epsilon\|\Lambda_{x_{2}}^{\beta}\Delta\theta\|_{L^{2}}^{2}+C(\epsilon)
\|\nabla\theta\|_{L^2}^{\frac{2\beta}{2\beta-1}} \,\|\Delta \theta\|_{L^2}^{2}. \nonumber
\end{align}
The terms $\Xi_{2}$ and $\Xi_{3}$ should be treated differently. Again, according to (\ref{qtri}), one has
\begin{align}
\Xi_{2}
\leq& C\|\Delta \theta\|_{L_{x_{1}}^{\frac{2\beta}{\beta-(2\beta-1)\alpha}}L_{x_{2}}^2} \, \|\partial_{x_{2}}\theta\|_{L^2}
 ^{1-\frac{\beta-(2\beta-1)\alpha}{2\beta^{2}}}
 \|\Lambda_{x_{1}}^{\beta}\partial_{x_{2}}\theta\|_{L^2}^{ \frac{\beta-(2\beta-1)\alpha}{2\beta^{2}}} \,\|\Delta u_{2}\|_{L^2}^{1-\frac{1}{2\beta}}\|\Lambda_{x_{2}}^{\beta}\Delta u_{2}\|_{L^2}^{ \frac{1}{2\beta}}\nonumber\\
 \leq&
 C\|\Lambda_{x_{1}}^{ \frac{(2\beta-1)\alpha}{2\beta}}\Delta \theta\|_{L^2} \, \|\nabla\theta\|_{L^2}
 ^{1-\frac{\beta-(2\beta-1)\alpha}{2\beta^{2}}}
 \|\Lambda_{x_{2}}^{\beta}\nabla\theta\|_{L^2}^{ \frac{\beta-(2\beta-1)\alpha}{2\beta^{2}}} \,\|\Delta \theta\|_{L^2}^{1-\frac{1}{2\beta}}\|\Lambda_{x_{2}}^{\beta}\Delta \theta\|_{L^2}^{ \frac{1}{2\beta}}
 \nonumber\\
 \leq&  C\| \Delta \theta\|_{L^2}^{ 1- \frac{2\beta-1}{2\beta}}\|\Lambda_{x_{1}}^{\alpha}\Delta \theta\|_{L^2}^{ \frac{2\beta-1}{2\beta}} \, \|\nabla\theta\|_{L^2}
 ^{1-\frac{\beta-(2\beta-1)\alpha}{2\beta^{2}}}
 \|\Lambda_{x_{2}}^{\beta}\nabla\theta\|_{L^2}^{ \frac{\beta-(2\beta-1)\alpha}{2\beta^{2}}} \,\|\Delta \theta\|_{L^2}^{1-\frac{1}{2\beta}}\|\Lambda_{x_{2}}^{\beta}\Delta \theta\|_{L^2}^{ \frac{1}{2\beta}}
\nonumber\\
 \leq&
\epsilon\|\Lambda_{x_{1}}^{\alpha}\Delta\theta\|_{L^{2}}^{2}
+\epsilon\|\Lambda_{x_{2}}^{\beta}\Delta\theta\|_{L^{2}}^{2}+C(\epsilon)
 \|\nabla\theta\|_{L^2}
 ^{2-\frac{\beta-(2\beta-1)\alpha}{\beta^{2}}}
 \|\Lambda_{x_{2}}^{\beta}\nabla\theta\|_{L^2}^{ \frac{\beta-(2\beta-1)\alpha}{\beta^{2}}}\,\|\Delta \theta\|_{L^2}^{2}
 \nonumber\\
 \leq&
\epsilon\|\Lambda_{x_{1}}^{\alpha}\Delta\theta\|_{L^{2}}^{2}
+\epsilon\|\Lambda_{x_{2}}^{\beta}\Delta\theta\|_{L^{2}}^{2}+C(\epsilon)(
 \|\nabla\theta\|_{L^2}^{2}+\|\Lambda_{x_{2}}^{\beta}\nabla\theta\|_{L^2}^{2})\,\|\Delta \theta\|_{L^2}^{2},\nonumber
\end{align}
\begin{align}
\Xi_{3}=&
-2\int_{\mathbb{R}^{2}}{\partial_{x_{2}}\mathcal{R}_{1}\theta
\partial_{x_{1}x_{1}}\theta \Delta \theta\,dx}
\nonumber\\
 \leq&  C\|\Delta \theta\|_{L_{x_{1}}^{\frac{2\beta}{\beta-(2\beta-1)\alpha}}L_{x_{2}}^2} \, \|\partial_{x_{2}}\mathcal{R}_{1}\theta\|_{L^2}
 ^{1-\frac{\beta-(2\beta-1)\alpha}{2\beta^{2}}}
 \|\Lambda_{x_{1}}^{\beta}\partial_{x_{2}}\mathcal{R}_{1}\theta\|_{L^2}^{ \frac{\beta-(2\beta-1)\alpha}{2\beta^{2}}} \nonumber\\&\times\|\partial_{x_{1}x_{1}}\theta \|_{L^2}^{1-\frac{1}{2\beta}}\|\Lambda_{x_{2}}^{\beta}\partial_{x_{1}x_{1}}\theta \|_{L^2}^{ \frac{1}{2\beta}}\nonumber\\
 \leq&
 C\|\Lambda_{x_{1}}^{ \frac{(2\beta-1)\alpha}{2\beta}}\Delta \theta\|_{L^2} \, \|\nabla\theta\|_{L^2}
 ^{1-\frac{\beta-(2\beta-1)\alpha}{2\beta^{2}}}
 \|\Lambda_{x_{2}}^{\beta}\nabla\theta\|_{L^2}^{ \frac{\beta-(2\beta-1)\alpha}{2\beta^{2}}} \,\|\Delta \theta\|_{L^2}^{1-\frac{1}{2\beta}}\|\Lambda_{x_{2}}^{\beta}\Delta \theta\|_{L^2}^{ \frac{1}{2\beta}}
 \nonumber\\
 \leq&  C\| \Delta \theta\|_{L^2}^{ 1- \frac{2\beta-1}{2\beta}}\|\Lambda_{x_{1}}^{\alpha}\Delta \theta\|_{L^2}^{ \frac{2\beta-1}{2\beta}} \, \|\nabla\theta\|_{L^2}
 ^{1-\frac{\beta-(2\beta-1)\alpha}{2\beta^{2}}}
 \|\Lambda_{x_{2}}^{\beta}\nabla\theta\|_{L^2}^{ \frac{\beta-(2\beta-1)\alpha}{2\beta^{2}}} \,\|\Delta \theta\|_{L^2}^{1-\frac{1}{2\beta}}\|\Lambda_{x_{2}}^{\beta}\Delta \theta\|_{L^2}^{ \frac{1}{2\beta}}
 \nonumber\\
 \leq&
\epsilon\|\Lambda_{x_{1}}^{\alpha}\Delta\theta\|_{L^{2}}^{2}
+\epsilon\|\Lambda_{x_{2}}^{\beta}\Delta\theta\|_{L^{2}}^{2}+C(\epsilon)(
 \|\nabla\theta\|_{L^2}^{2}+\|\Lambda_{x_{2}}^{\beta}\nabla\theta\|_{L^2}^{2})\,\|\Delta \theta\|_{L^2}^{2}.\nonumber
\end{align}
Combining the above estimates and taking $\epsilon$ suitable small, it follows that
\begin{align}\label{hjywe351}
 &\frac{d}{dt}\|\Delta\theta(t)\|_{L^{2}}^{2}+\|\Lambda_{x_{1}}^{\alpha}\Delta\theta\|_{L^{2}}^{2}+
 \|\Lambda_{x_{2}}^{\beta}\Delta\theta\|_{L^{2}}^{2}
\nonumber\\&\leq C(\|\nabla\theta\|_{L^2}^{\frac{2\beta}{2\beta-1}}+
 \|\nabla\theta\|_{L^2}^{2}+\|\Lambda_{x_{2}}^{\beta}\nabla\theta\|_{L^2}^{2})\|\Delta \theta\|_{L^2}^{2}.
\end{align}
Keeping in mind $\beta>\frac{1}{2}$, \eqref{pre1y315} and \eqref{pre1y335}, we  see
$$ \int_{0}^{\infty}
(\|\nabla\theta(\tau)\|_{L^2}^{\frac{2\beta}{2\beta-1}}+\|\nabla\theta(\tau)\|_{L^2}
 ^{2}+
 \|\Lambda_{x_{2}}^{\beta}\nabla\theta(\tau)\|_{L^2}^{ 2})\,d\tau\leq C_{0}.$$
It thus follows from the Gronwall inequality that
$$\sup_{t\geq0}\|\Delta\theta(t)\|_{L^{2}}^{2}+ \int_{0}^{\infty}{(\|\Lambda_{x_{1}}^{\alpha}\Delta\theta(
\tau)\|_{L^{2}}^{2}+\|\Lambda_{x_{2}}^{\beta}\Delta\theta(
\tau)\|_{L^{2}}^{2})\,d\tau}\leq
C_{0}.$$
Therefore, we end the proof of Proposition \ref{Lfgyt789j}.
\end{proof}

\vskip .1in
With Proposition \ref{Lfgyt789j} and Proposition \ref{fggyvby7} at hand, we are now in a position to derive the preliminary decay rate of the $\dot{H}^s$-norm of the solution.
\begin{Pros}\label{dfdsfp36}
If $\alpha$ and $\beta$ satisfy (\ref{sdf2334}), $\theta_{0}\in \dot{H}^{s}(\mathbb{R}^{2})\cap L^{p}(\mathbb{R}^{2})$ with $s\geq2$ and $p\in [1,2)$.
Then the global smooth solution of \eqref{SQG} admits the decay estimate
\begin{align}\label{pre1y343}
  \|\Lambda^{s} \theta(t)\|_{L^2}\leq C_{0}(1+t)^{-\frac{(\alpha+\beta)(2-p)+2\min\{\alpha,\,\beta\}p}{4\alpha\beta p}},
\end{align}
where the constant $C_{0}>0$ depends only on $\alpha,\,\beta,\,s,\,p$ and $\theta_{0}$.
\end{Pros}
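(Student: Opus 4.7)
My plan is to combine an energy identity for $\|\Lambda^{s}\theta\|_{L^{2}}^{2}$ with the Fourier splitting device, making essential use of the sharp $\dot{H}^{1}$-decay rate from Proposition \ref{fggyvby7} together with the time-integrability from Proposition \ref{fgdsfe34}, and the uniform $\dot{H}^{2}$-bound of Proposition \ref{Lfgyt789j}. Since the exponent $\gamma=\frac{(\alpha+\beta)(2-p)+2\min\{\alpha,\beta\}p}{4\alpha\beta p}$ we need is exactly the sharp $\dot{H}^{1}$-rate (in particular it is independent of $s$), the target is only a preliminary bound and the scheme is flexible enough to cover all $s\geq 2$ uniformly.

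First, applying $\Lambda^{s}$ to \eqref{SQG}, pairing with $\Lambda^{s}\theta$ and using $\nabla\cdot u=0$ together with the commutator estimate \eqref{yzz1} and the boundedness of the Riesz transforms, I would establish an energy inequality of the form
\[
\frac{d}{dt}\|\Lambda^{s}\theta\|_{L^{2}}^{2}
+\|\Lambda_{x_{1}}^{\alpha}\Lambda^{s}\theta\|_{L^{2}}^{2}
+\|\Lambda_{x_{2}}^{\beta}\Lambda^{s}\theta\|_{L^{2}}^{2}
\leq H(t)\,\|\Lambda^{s}\theta\|_{L^{2}}^{2},
\]
where $H(t)$ is controlled via the anisotropic interpolation Lemmas \ref{gfhgj} and \ref{dfdasfwqew33} by precisely the mixed norms of $\nabla\theta$ whose integrability was established in Propositions \ref{fggyvby7} and \ref{fgdsfe34}, so that $\int_{0}^{\infty}H(\tau)\,d\tau\leq C_{0}<\infty$.

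Next, I would mimic the pattern used in the proof of Proposition \ref{fggyvby7} by setting $E(t)=\{\xi:|\xi_{1}|^{2\alpha}+|\xi_{2}|^{2\beta}\leq f(t)\}$ with $f(t)=\varrho/(1+t)$ for $\varrho$ large, which gives
\[
\|\Lambda_{x_{1}}^{\alpha}\Lambda^{s}\theta\|_{L^{2}}^{2}+\|\Lambda_{x_{2}}^{\beta}\Lambda^{s}\theta\|_{L^{2}}^{2}
\geq f(t)\|\Lambda^{s}\theta\|_{L^{2}}^{2}-f(t)\int_{E(t)}|\xi|^{2s}|\widehat{\theta}|^{2}\,d\xi.
\]
On $E(t)$ one has $|\xi_{1}|^{2}\leq f(t)^{1/\alpha}$ and $|\xi_{2}|^{2}\leq f(t)^{1/\beta}$, so $|\xi|^{2(s-1)}\leq Cf(t)^{(s-1)/\max\{\alpha,\beta\}}$, whence
\[
\int_{E(t)}|\xi|^{2s}|\widehat{\theta}|^{2}\,d\xi
\leq Cf(t)^{(s-1)/\max\{\alpha,\beta\}}\|\nabla\theta\|_{L^{2}}^{2}
\leq Cf(t)^{(s-1)/\max\{\alpha,\beta\}}(1+t)^{-2\gamma}
\]
by Proposition \ref{fggyvby7}. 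Inserting this back into the energy identity produces
\[
\frac{d}{dt}\|\Lambda^{s}\theta\|_{L^{2}}^{2}+\frac{\varrho}{1+t}\|\Lambda^{s}\theta\|_{L^{2}}^{2}\leq C(1+t)^{-1-\frac{s-1}{\max\{\alpha,\beta\}}-2\gamma}+H(t)\|\Lambda^{s}\theta\|_{L^{2}}^{2}.
\]

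To close, I would multiply by the integrating factor $(1+t)^{\varrho}\exp\bigl(-\int_{0}^{t}H(\tau)\,d\tau\bigr)$, which is comparable to $(1+t)^{\varrho}$ because $H\in L^{1}$. Integrating in time and selecting $\varrho$ larger than $(s-1)/\max\{\alpha,\beta\}+2\gamma$ yields
\[
\|\Lambda^{s}\theta(t)\|_{L^{2}}^{2}\leq C(1+t)^{-2\gamma-(s-1)/\max\{\alpha,\beta\}}\leq C(1+t)^{-2\gamma},
\]
which is exactly the announced bound (and in fact somewhat stronger, as one should expect from a preliminary step that will be refined to the sharp $s$-dependent rate later in Proposition \ref{dfdsfp37}). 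The main obstacle is the first step: closing the energy identity with a time-integrable $H(t)$ forces a delicate anisotropic commutator bookkeeping, distributing the horizontal and vertical dissipations between the two factors in the commutator exactly as was done for \eqref{pre1y321} and \eqref{pnbh2nm}, since any misallocation would yield a coefficient whose time integral diverges and break the bootstrap.
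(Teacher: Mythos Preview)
Your overall architecture is sound, but the paper takes a different and more economical route, and your crucial Step~1 is more problematic than you acknowledge.

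\textbf{Where the two arguments diverge.} After the commutator bound $\bigl|\int[\Lambda^{s},u]\!\cdot\!\nabla\theta\,\Lambda^{s}\theta\bigr|\leq C\|\nabla\theta\|_{L^{q}}\|\Lambda^{s+1/q}\theta\|_{L^{2}}^{2}$, the paper does \emph{not} interpolate $\|\Lambda^{s+1/q}\theta\|$ between $\|\Lambda^{s}\theta\|$ and the dissipation (which would produce your $H(t)\|\Lambda^{s}\theta\|^{2}$). Instead it interpolates between $\|\nabla\theta\|_{L^{2}}$ and the dissipation, so that after Young's inequality the right-hand side becomes $C\|\nabla\theta\|_{L^{q}}^{r}\,\|\nabla\theta\|_{L^{2}}^{2}$. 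The factor $\|\nabla\theta\|_{L^{q}}^{r}$ is bounded \emph{uniformly} via $\|\nabla\theta\|_{L^{q}}\leq C\|\nabla\theta\|_{L^{2}}^{2/q}\|\Delta\theta\|_{L^{2}}^{1-2/q}$ together with Proposition~\ref{Lfgyt789j} and Proposition~\ref{fggyvby7}. One more interpolation $\|\Lambda^{s}\theta\|_{L^{2}}^{2}\leq \|\Lambda_{x_{1}}^{\alpha}\Lambda^{s}\theta\|_{L^{2}}^{2}+\|\Lambda_{x_{2}}^{\beta}\Lambda^{s}\theta\|_{L^{2}}^{2}+C\|\nabla\theta\|_{L^{2}}^{2}$ then gives the clean ODE
\[
\frac{d}{dt}\|\Lambda^{s}\theta\|_{L^{2}}^{2}+\|\Lambda^{s}\theta\|_{L^{2}}^{2}\leq C\|\nabla\theta\|_{L^{2}}^{2}\leq C(1+t)^{-2\gamma},
\]
which is solved by multiplying by $e^{t}$ and using \eqref{pre314}. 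No Fourier splitting is needed at all.

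\textbf{Why your Step~1 is a genuine gap.} Your route needs $H(t)=C\|\nabla\theta\|_{L^{q}}^{\alpha q/(\alpha q-1)}+C\|\nabla\theta\|_{L^{q}}^{\beta q/(\beta q-1)}$ with $q>\max\{1/\alpha,1/\beta\}$ (so that both interpolation exponents are admissible). With only the uniform $\dot{H}^{2}$-bound and the $\dot{H}^{1}$-decay $\gamma$ available at this stage, one gets $\|\nabla\theta\|_{L^{q}}\lesssim (1+t)^{-2\gamma/q}$, hence $H(t)\lesssim (1+t)^{-2\gamma\beta/(\beta q-1)}$ for the worse of the two terms. When $\alpha$ and $\beta$ are far apart (e.g.\ $\alpha=0.1$, $\beta=0.9$, which satisfies \eqref{sdf2334}) the constraint $q>1/\alpha=10$ forces $\beta q/(\beta q-1)$ close to~$1$, and for $p$ close to~$2$ the exponent $2\gamma\beta/(\beta q-1)$ drops below~$1$: $\int_{0}^{\infty}H\,d\tau=\infty$. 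The $L^{2}_{t}$-norms from Proposition~\ref{fgdsfe34} do not rescue this, because after interpolation the relevant power of $\|\Lambda_{x_{i}}^{\alpha}\nabla\theta\|_{L^{2}}$ or $\|\Lambda_{x_{i}}^{\beta}\nabla\theta\|_{L^{2}}$ is strictly less than~$2$. Mimicking \eqref{pre1y321}--\eqref{pnbh2nm} does not help either: those inequalities exploit the explicit trilinear structure of the $\dot{H}^{1}$-energy, which for general $s$ is hidden behind a nonlocal commutator. The paper circumvents all of this by the different placement of the Young inequality described above, at the cost of obtaining only the $s$-independent rate $\gamma$---but that is exactly the statement of the proposition.
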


\begin{proof}
Applying $\Lambda^{s}$ to \eqref{SQG} and testing the equation by $\Lambda^{s}\theta$, we obtain by \eqref{yzz1} that
\begin{align}\label{pre1y344}
&\frac{1}{2}\frac{d}{dt}\|\Lambda^{s} {\theta}(t)\|_{L^{2}}^{2}
+\|\Lambda_{x_{1}}^{\alpha}\Lambda^{s} {\theta}\|_{L^{2}}^{2}+
 \|\Lambda_{x_{2}}^{\beta}\Lambda^{s} {\theta}\|_{L^{2}}^{2}\nonumber\\&=-\int_{\mathbb{R}^{2}}\Lambda^{s}(u \cdot \nabla\theta) \Lambda^{s}\theta\,dx\nonumber\\
 &=-\int_{\mathbb{R}^{2}}[\Lambda^{s},u]\cdot \nabla\theta  \Lambda^{s}\theta\,dx
 \nonumber\\
 &\leq C\|[\Lambda^{s},u]\cdot \nabla\theta\|_{L^{\frac{2q}{q+1}}}  \|\Lambda^{s}\theta\|_{L^{\frac{2q}{q-1}}}
  \nonumber\\
 &\leq C(\|\nabla u\|_{L^{q}} \|\Lambda^{s}\theta\|_{L^{\frac{2q}{q-1}}}+\|\nabla \theta\|_{L^{q}} \|\Lambda^{s}u\|_{L^{\frac{2q}{q-1}}}) \|\Lambda^{s}\theta\|_{L^{\frac{2q}{q-1}}}
 \nonumber\\
 &\leq C\|\nabla \theta\|_{L^{q}} \|\Lambda^{s}\theta\|_{L^{\frac{2q}{q-1}}}^{2}
  \nonumber\\
 &\leq C\|\nabla \theta\|_{L^{q}} \|\Lambda^{s+\frac{1}{q}}\theta\|_{L^{2}}^{2}.
\end{align}
According to the following interpolation inequalities (see \eqref{avbnuy8911})
$$\|\Lambda_{x_{1}}^{s+\frac{1}{q}}\theta\|_{L^{2}}\leq C\|\Lambda_{x_{1}}\theta\|_{L^{2}}^{\frac{\alpha-\frac{1}{q}}{s+\alpha-1}}
\|\Lambda_{x_{1}}^{s+\alpha}\theta\|_{L^{2}}^{\frac{s+\frac{1}{q}-1}{s+\alpha-1}},
\quad \|\Lambda_{x_{2}}^{s+\frac{1}{q}}\theta\|_{L^{2}}\leq C\|\Lambda_{x_{2}}\theta\|_{L^{2}}^{\frac{\beta-\frac{1}{q}}{s+\beta-1}}
\|\Lambda_{x_{2}}^{s+\beta}\theta\|_{L^{2}}^{\frac{s+\frac{1}{q}-1}{s+\beta-1}},$$
it follows from \eqref{pre1y344} that
\begin{align}
&\frac{1}{2}\frac{d}{dt}\|\Lambda^{s} {\theta}(t)\|_{L^{2}}^{2}
+\|\Lambda_{x_{1}}^{\alpha}\Lambda^{s} {\theta}\|_{L^{2}}^{2}+
 \|\Lambda_{x_{2}}^{\beta}\Lambda^{s} {\theta}\|_{L^{2}}^{2}\nonumber\\&\leq
  C\|\nabla \theta\|_{L^{q}} (\|\Lambda_{x_{1}}^{s+\frac{1}{q}}\theta\|_{L^{2}}^{2}
  +\|\Lambda_{x_{2}}^{s+\frac{1}{q}}\theta\|_{L^{2}}^{2})
  \nonumber\\&\leq
  C\|\nabla \theta\|_{L^{q}} (\|\Lambda_{x_{1}}\theta\|_{L^{2}}^{\frac{2\alpha-\frac{2}{q}}{s+\alpha-1}}
\|\Lambda_{x_{1}}^{s+\alpha}\theta\|_{L^{2}}^{\frac{2s+\frac{2}{q}-2}{s+\alpha-1}}
  +\|\Lambda_{x_{2}}\theta\|_{L^{2}}^{\frac{2\beta-\frac{2}{q}}{s+\beta-1}}
\|\Lambda_{x_{2}}^{s+\beta}\theta\|_{L^{2}}^{\frac{2s+\frac{2}{q}-2}{s+\beta-1}})
 \nonumber\\&\leq
  C\|\nabla \theta\|_{L^{q}} (\|\nabla\theta\|_{L^{2}}^{\frac{2\alpha-\frac{2}{q}}{s+\alpha-1}}
\|\Lambda_{x_{1}}^{\alpha}\Lambda^{s}\theta\|_{L^{2}}^{\frac{2s+\frac{2}{q}-2}{s+\alpha-1}}
  +\|\nabla\theta\|_{L^{2}}^{\frac{2\beta-\frac{2}{q}}{s+\beta-1}}
\|\Lambda_{x_{2}}^{\beta}\Lambda^{s}\theta\|_{L^{2}}^{\frac{2s+\frac{2}{q}-2}{s+\beta-1}})
 \nonumber\\&\leq
\frac{1}{2}\|\Lambda_{x_{1}}^{\alpha}\Lambda^{s} {\theta}\|_{L^{2}}^{2}+
 \frac{1}{2}\|\Lambda_{x_{2}}^{\beta}\Lambda^{s} {\theta}\|_{L^{2}}^{2}+C\|\nabla \theta\|_{L^{q}}^{\frac{(s+\alpha-1)q}{\alpha q-1}} \| \nabla{\theta}\|_{L^{2}}^{2}
 +C\|\nabla \theta\|_{L^{q}}^{\frac{(s+\beta-1)q}{\beta q-1}} \| \nabla {\theta}\|_{L^{2}}^{2}, \nonumber
\end{align}
which implies
\begin{align}\label{pre1y345}
  \frac{d}{dt}\|\Lambda^{s} {\theta}(t)\|_{L^{2}}^{2}
+\|\Lambda_{x_{1}}^{\alpha}\Lambda^{s} {\theta}\|_{L^{2}}^{2}+
 \|\Lambda_{x_{2}}^{\beta}\Lambda^{s} {\theta}\|_{L^{2}}^{2}\leq&
 C\|\nabla \theta\|_{L^{q}}^{\frac{(s+\alpha-1)q}{\alpha q-1}} \| \nabla{\theta}\|_{L^{2}}^{2}
 \nonumber\\&+C\|\nabla \theta\|_{L^{q}}^{\frac{(s+\beta-1)q}{\beta q-1}} \| \nabla{\theta}\|_{L^{2}}^{2}\nonumber\\ \leq&
 C\|\nabla \theta\|_{L^{2}}^{\frac{2(s+\alpha-1)}{\alpha q-1}}\|\Delta \theta\|_{L^{2}}^{\frac{(q-2)(s+\alpha-1)}{\alpha q-1}} \| \nabla{\theta}\|_{L^{2}}^{2}
 \nonumber\\&+C\|\nabla \theta\|_{L^{2}}^{\frac{2(s+\beta-1)}{\beta q-1}}\|\Delta \theta\|_{L^{2}}^{\frac{(q-2)(s+\beta-1)}{\beta q-1}} \| \nabla{\theta}\|_{L^{2}}^{2}\nonumber\\ \leq&
 C\|\nabla {\theta}\|_{L^{2}}^{2},
\end{align}
where in the last line we have used \eqref{pre310} and \eqref{pre1y338}.
Making use of the following interpolation inequalities (see \eqref{avbnuy8911})
$$\|\Lambda_{x_{1}}^{s}\theta\|_{L^{2}}\leq C\|\Lambda_{x_{1}}\theta\|_{L^{2}}^{\frac{\alpha}{s+\alpha-1}}
\|\Lambda_{x_{1}}^{s+\alpha}\theta\|_{L^{2}}^{\frac{s-1}{s+\alpha-1}},\qquad \|\Lambda_{x_{2}}^{s}\theta\|_{L^{2}}\leq C\|\Lambda_{x_{2}}\theta\|_{L^{2}}^{\frac{\beta}{s+\beta-1}}
\|\Lambda_{x_{2}}^{s+\beta}\theta\|_{L^{2}}^{\frac{s-1}{s+\beta-1}},$$
we have that
\begin{align}
 \|\Lambda^{s} {\theta}\|_{L^{2}}^{2}&=\|\Lambda_{x_{1}}^{s}\theta\|_{L^{2}}^{2}
 +\|\Lambda_{x_{2}}^{s}\theta\|_{L^{2}}^{2}
 \nonumber\\& \leq C\|\Lambda_{x_{1}}\theta\|_{L^{2}}^{\frac{2\alpha}{s+\alpha-1}}
\|\Lambda_{x_{1}}^{s+\alpha}\theta\|_{L^{2}}^{\frac{2s-2}{s+\alpha-1}}
+C\|\Lambda_{x_{2}}\theta\|_{L^{2}}^{\frac{2\beta}{s+\beta-1}}
\|\Lambda_{x_{2}}^{s+\beta}\theta\|_{L^{2}}^{\frac{2s-2}{s+\beta-1}}
 \nonumber\\& \leq C\|\nabla\theta\|_{L^{2}}^{\frac{2\alpha}{s+\alpha-1}}
\|\Lambda_{x_{1}}^{\alpha}\Lambda^{s}\theta\|_{L^{2}}^{\frac{2s-2}{s+\alpha-1}}
+C\|\nabla\theta\|_{L^{2}}^{\frac{2\beta}{s+\beta-1}}
\|\Lambda_{x_{2}}^{\beta}\Lambda^{s}\theta\|_{L^{2}}^{\frac{2s-2}{s+\beta-1}}\nonumber\\ &\leq
 \|\Lambda_{x_{1}}^{\alpha}\Lambda^{s} {\theta}\|_{L^{2}}^{2}+
 \|\Lambda_{x_{2}}^{\beta}\Lambda^{s} {\theta}\|_{L^{2}}^{2}+ {C}\| \nabla{\theta}\|_{L^{2}}^{2}.\nonumber
\end{align}
Putting the above estimate into \eqref{pre1y345} yields
\begin{align}\label{pre1y346}
\frac{d}{dt}\|\Lambda^{s} {\theta}(t)\|_{L^{2}}^{2}
+ \|\Lambda^{s} {\theta}\|_{L^{2}}^{2}&\leq C\|\nabla\theta\|_{L^{2}}^{2}\leq C(1+t)^{-\frac{(\alpha+\beta)(2-p)+2\min\{\alpha,\,\beta\}p}{2\alpha\beta p}},
\end{align}
where we have used \eqref{pre1y315}. It follows from \eqref{pre1y346} that
\begin{align}
\frac{d}{dt}(e^{t}\|\Lambda^{s} {\theta}(t)\|_{L^{2}}^{2})  \leq Ce^{t}(1+t)^{-\frac{(\alpha+\beta)(2-p)+2\min\{\alpha,\,\beta\}p}{2\alpha\beta p}},\nonumber
\end{align}
which further implies
\begin{align}
e^{t}\|\Lambda^{s} {\theta}(t)\|_{L^{2}}^{2}  \leq \|\Lambda^{s} {\theta}_{0}\|_{L^{2}}^{2}+C\int_{0}^{t}e^{\tau}(1+\tau)
^{-\frac{(\alpha+\beta)(2-p)+2\min\{\alpha,\,\beta\}p}{2\alpha\beta p}}\,d\tau.\nonumber
\end{align}
As a result, one has
\begin{align}\label{pre1y347}
\|\Lambda^{s} {\theta}(t)\|_{L^{2}}^{2}  \leq e^{-t}\|\Lambda^{s} {\theta}_{0}\|_{L^{2}}^{2}+C\int_{0}^{t}e^{-(t-\tau)}(1+\tau)^{-\frac{(\alpha+\beta)
(2-p)+2\min\{\alpha,\,\beta\}p}{2\alpha\beta p}}\,d\tau.
\end{align}
Invoking \eqref{pre314}, we get from \eqref{pre1y347} that
\begin{align*}
 &\|\Lambda^{s} \theta(t)\|_{L^2}\leq C_{0}(1+t)^{-\frac{(\alpha+\beta)(2-p)+2\min\{\alpha,\,\beta\}p}{4\alpha\beta p}}.
\end{align*}
We therefore finish the proof of Proposition \ref{dfdsfp36}.
\end{proof}

\vskip .1in
With \eqref{pre1y343} in hand, we are in the position to show the sharp decay  estimate of the solution in $\dot{H}^{2}$-norm, which plays a key role in obtaining the sharp decay estimate of the solution in $\dot{H}^{s}$-norm for any $s>2$.
\begin{Pros}\label{fxcdfn9}
If $\alpha$ and $\beta$ satisfy (\ref{sdf2334}) and $\theta_{0}\in \dot{H}^{2}(\mathbb{R}^{2})\cap L^p(\mathbb{R}^{2})$ with $p\in [1,2)$, then the solution admits the decay estimate
\begin{align}\label{pdfgtrq1}
 \|\Delta\theta(t)\|_{L^2}\leq C_{0}(1+t)^{-\frac{(\alpha+\beta)(2-p)+4\min\{\alpha,\,\beta\}p}{4\alpha\beta p}},
\end{align}
where the constant $C_{0}>0$ depends only on $\alpha,\,\beta,\,p$ and $\theta_{0}$.
\end{Pros}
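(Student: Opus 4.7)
The plan is to upgrade the uniform $\dot{H}^2$-bound of Proposition~\ref{Lfgyt789j} into the desired sharp decay by applying the Fourier splitting machinery of Proposition~\ref{fggyvby7} to the refined differential inequalities \eqref{hjywe350}--\eqref{hjywe351}. Condition \eqref{sdf2334} forces $\alpha>\tfrac{1}{2}$ or $\beta>\tfrac{1}{2}$, and in either subcase the corresponding inequality reads
\[
\frac{d}{dt}\|\Delta\theta(t)\|_{L^2}^2 + \|\Lambda_{x_1}^\alpha\Delta\theta\|_{L^2}^2 + \|\Lambda_{x_2}^\beta\Delta\theta\|_{L^2}^2 \leq C K(t) \|\Delta\theta\|_{L^2}^2,
\]
where $K(t)$ is a combination of terms of the form $\|\nabla\theta\|_{L^2}^{q}$ and $\|\Lambda_{x_i}^{\alpha_i}\nabla\theta\|_{L^2}^2$. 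Propositions~\ref{fggyvby7} and~\ref{fgdsfe34} together imply $K\in L^1((0,\infty))$: the sharp decay of $\|\nabla\theta\|_{L^2}$ makes all $\|\nabla\theta\|_{L^2}^q$ terms integrable (using the uniform bound on $\|\nabla\theta\|_{L^2}$ to reduce the higher powers to $\|\nabla\theta\|_{L^2}^2$), while the remaining $\|\Lambda_{x_i}^{\alpha_i}\nabla\theta\|_{L^2}^2$ is integrable by Proposition~\ref{fgdsfe34}.

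Next, I would apply Fourier splitting with $E(t)=\{\xi\in\mathbb{R}^2:|\xi_1|^{2\alpha}+|\xi_2|^{2\beta}\le f(t)\}$ and $f(t)=\varrho/(1+t)$ for $\varrho$ sufficiently large. As in the proof of Proposition~\ref{fggyvby7},
\[
\|\Lambda_{x_1}^\alpha\Delta\theta\|_{L^2}^2 + \|\Lambda_{x_2}^\beta\Delta\theta\|_{L^2}^2 \ge f(t)\|\Delta\theta\|_{L^2}^2 - f(t)\int_{E(t)}|\widehat{\Delta\theta}(t,\xi)|^2\,d\xi,
\]
and since $\xi\in E(t)$ forces $|\xi_1|^2 \le f(t)^{1/\alpha}$ and $|\xi_2|^2 \le f(t)^{1/\beta}$, the low-frequency piece satisfies
\[
\int_{E(t)}|\widehat{\Delta\theta}|^2\,d\xi \le \int_{E(t)}|\xi|^4|\widehat{\theta}|^2\,d\xi \le C\bigl(f(t)^{2/\alpha}+f(t)^{2/\beta}\bigr)\|\theta(t)\|_{L^2}^2.
\]
Combining these with the sharp $L^2$-decay $\|\theta(t)\|_{L^2}^2\le C(1+t)^{-(\alpha+\beta)(2-p)/(2\alpha\beta p)}$ from Proposition~\ref{Prosty666} yields
\[
\frac{d}{dt}\|\Delta\theta\|_{L^2}^2 + \frac{\varrho}{1+t}\|\Delta\theta\|_{L^2}^2 \le C(1+t)^{-1-\frac{2}{\max\{\alpha,\beta\}}-\frac{(\alpha+\beta)(2-p)}{2\alpha\beta p}} + C K(t) \|\Delta\theta\|_{L^2}^2.
\]

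The final step is to absorb $CK(t)\|\Delta\theta\|_{L^2}^2$ via the integrating factor $A(t)=\exp(-C\int_0^t K(\tau)\,d\tau)$, which is bounded above and below thanks to $K\in L^1$. Multiplying through by $A(t)(1+t)^\varrho$ and integrating in time, with $\varrho$ chosen strictly larger than $\frac{2}{\max\{\alpha,\beta\}}+\frac{(\alpha+\beta)(2-p)}{2\alpha\beta p}$, one obtains
\[
\|\Delta\theta(t)\|_{L^2}^2 \le C(1+t)^{-\frac{2\min\{\alpha,\,\beta\}}{\alpha\beta}-\frac{(\alpha+\beta)(2-p)}{2\alpha\beta p}},
\]
after which taking the square root and using the identity $1/\max\{\alpha,\beta\}=\min\{\alpha,\,\beta\}/(\alpha\beta)$ produces precisely \eqref{pdfgtrq1}. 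The hard part is exactly this absorption step: the piece $\|\Lambda_{x_i}^{\alpha_i}\nabla\theta\|_{L^2}^2$ inside $K(t)$ possesses no a-priori pointwise decay faster than $O(1)$, so it cannot be compared directly to $\frac{\varrho}{1+t}\|\Delta\theta\|_{L^2}^2$; the $L^1$-in-time bound of Proposition~\ref{fgdsfe34}, which itself relies on the more delicate choices made in the proof of Proposition~\ref{Lfgyt789j}, is what makes the entire scheme work.
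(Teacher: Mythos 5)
Your proof is correct, and it reaches \eqref{pdfgtrq1} by the same Fourier--splitting skeleton as the paper but with a genuinely different treatment of the perturbation term $K(t)\|\Delta\theta\|_{L^2}^{2}$. The paper first converts $K(t)$ into a \emph{pointwise} decaying coefficient: it interpolates $\|\Lambda_{x_1}^{\alpha}\nabla\theta\|_{L^2}^{2}\le\|\Lambda^{\alpha+1}\theta\|_{L^2}^{2}\le \|\nabla\theta\|_{L^2}^{2-2\alpha}\|\Delta\theta\|_{L^2}^{2\alpha}\le C(\|\nabla\theta\|_{L^2}^{2}+\|\Delta\theta\|_{L^2}^{2})$ and then invokes the preliminary $\dot H^{2}$-decay \eqref{pre1y343} of Proposition \ref{dfdsfp36} together with \eqref{pre1y315} to get $K(t)\le C_{0}(1+t)^{-\Omega'}$ with $\Omega'=\frac{(\alpha+\beta)(2-p)+2\min\{\alpha,\beta\}p}{2\alpha\beta p}>1$; the resulting term $\widetilde C(1+t)^{-\Omega'}\|\Delta\theta\|_{L^2}^{2}$ is then absorbed into $\frac{\varrho}{1+t}\|\Delta\theta\|_{L^2}^{2}$ exactly as in \eqref{pre1y351}--\eqref{tyxcv55}. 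You instead keep $K(t)$ as it stands, observe that it is merely $L^{1}$ in time (via Propositions \ref{fggyvby7} and \ref{fgdsfe34}, using the uniform $\dot H^{1}$-bound to reduce the power $\frac{2\alpha}{2\alpha-1}>2$ to the square), and remove it with the bounded integrating factor $\exp\bigl(-C\int_0^t K(\tau)\,d\tau\bigr)$. Both routes are valid; yours has the modest advantage of bypassing Proposition \ref{dfdsfp36} entirely (you only need Propositions \ref{Prosty666}, \ref{fggyvby7} and \ref{fgdsfe34}), whereas the paper's version records the intermediate rate \eqref{pre1y343} as a separate step. Your remaining ingredients --- the low-frequency bound $\int_{E(t)}|\widehat{\Delta\theta}|^{2}\,d\xi\le C\bigl(f(t)^{2/\alpha}+f(t)^{2/\beta}\bigr)\|\theta\|_{L^2}^{2}$, the choice $f(t)=\varrho/(1+t)$ with $\varrho>\frac{2}{\max\{\alpha,\beta\}}+\frac{(\alpha+\beta)(2-p)}{2\alpha\beta p}$, and the identity $1/\max\{\alpha,\beta\}=\min\{\alpha,\beta\}/(\alpha\beta)$ --- all check out and reproduce the exponent in \eqref{pdfgtrq1}.
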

\begin{proof}
It follows from the proof of Proposition \ref{Lfgyt789j} that for $\alpha>\frac{1}{2}$
\begin{align}\label{mpdfhj347}
 &\frac{d}{dt}\|\Delta\theta(t)\|_{L^{2}}^{2}+\|\Lambda_{x_{1}}^{\alpha}\Delta\theta\|_{L^{2}}^{2}+
 \|\Lambda_{x_{2}}^{\beta}\Delta\theta\|_{L^{2}}^{2}\nonumber\\
&\leq C
(\|\nabla\theta\|_{L^2}^{\frac{2\alpha}{2\alpha-1}}+\|\nabla\theta\|_{L^2}
 ^{2}+
 \|\Lambda_{x_{1}}^{\alpha}\nabla\theta\|_{L^2}^{ 2}) \,\|\Delta \theta\|_{L^2}^{2}\nonumber\\
&\leq C
(\|\nabla\theta\|_{L^2}^{\frac{2\alpha}{2\alpha-1}}+\|\nabla\theta\|_{L^2}
 ^{2}+
 \|\Lambda_{}^{\alpha+1} \theta\|_{L^2}^{ 2}) \,\|\Delta \theta\|_{L^2}^{2}
 \nonumber\\
&\leq C
(\|\nabla\theta\|_{L^2}^{\frac{2\alpha}{2\alpha-1}}+\|\nabla\theta\|_{L^2}
 ^{2}+\|\nabla\theta\|_{L^2}^{2-2\alpha}
 \|\Delta\theta\|_{L^2}^{2\alpha}) \,\|\Delta \theta\|_{L^2}^{2}
  \nonumber\\
&\leq C
(\|\nabla\theta\|_{L^2}^{\frac{2\alpha}{2\alpha-1}}+\|\nabla\theta\|_{L^2}
 ^{2}+
 \|\Delta\theta\|_{L^2}^{2 }) \,\|\Delta \theta\|_{L^2}^{2}
 \nonumber\\
&\leq
 C_{0}\left((1+t)^{-\frac{(\alpha+\beta)(2-p)+2\min\{\alpha,\,\beta\}p}{2\alpha\beta p}\frac{2\alpha}{2\alpha-1}}+(1+t)^{-\frac{(\alpha+\beta)(2-p)+2\min\{\alpha,\,\beta\}p}{2\alpha\beta p}}\right)\,\|\Delta \theta\|_{L^2}^{2}
 \nonumber\\
&\leq
 C_{0}(1+t)^{-\frac{(\alpha+\beta)(2-p)+2\min\{\alpha,\,\beta\}p}{2\alpha\beta p}} \,\|\Delta \theta\|_{L^2}^{2},
\end{align}
where we have used \eqref{pre1y315} and \eqref{pre1y343} with $s=2$.
Similarly, we obtain for $\beta>\frac{1}{2}$ that
\begin{align}\label{mpdfhj348}
 &\frac{d}{dt}\|\Delta\theta(t)\|_{L^{2}}^{2}+\|\Lambda_{x_{1}}^{\alpha}\Delta\theta\|_{L^{2}}^{2}+
 \|\Lambda_{x_{2}}^{\beta}\Delta\theta\|_{L^{2}}^{2}\nonumber\\
&\leq C
(\|\nabla\theta\|_{L^2}^{\frac{2\beta}{2\beta-1}}+\|\nabla\theta\|_{L^2}
 ^{2}+
 \|\Lambda_{x_{2}}^{\beta}\nabla\theta\|_{L^2}^{ 2}) \,\|\Delta \theta\|_{L^2}^{2}
  \nonumber\\
&\leq C
(\|\nabla\theta\|_{L^2}^{\frac{2\beta}{2\beta-1}}+\|\nabla\theta\|_{L^2}
 ^{2}+
 \|\Delta\theta\|_{L^2}^{2 }) \,\|\Delta \theta\|_{L^2}^{2}
 \nonumber\\
&\leq
 C_{0}(1+t)^{-\frac{(\alpha+\beta)(2-p)+2\min\{\alpha,\,\beta\}p}{2\alpha\beta p}} \,\|\Delta \theta\|_{L^2}^{2}.
\end{align}
In one word, we get by combining \eqref{mpdfhj347} and \eqref{mpdfhj348} that
\begin{align}\label{mpdfhj349}
 &\frac{d}{dt}\|\Delta\theta(t)\|_{L^{2}}^{2}+\|\Lambda_{x_{1}}^{\alpha}\Delta\theta\|_{L^{2}}^{2}+
 \|\Lambda_{x_{2}}^{\beta}\Delta\theta\|_{L^{2}}^{2}\nonumber\\
&\leq
 C_{0}(1+t)^{-\frac{(\alpha+\beta)(2-p)+2\min\{\alpha,\,\beta\}p}{2\alpha\beta p}} \,\|\Delta \theta\|_{L^2}^{2}.
\end{align}
Obviously, one has
$$\frac{(\alpha+\beta)(2-p)+2\min\{\alpha,\,\beta\}p}{2\alpha\beta p}>1.$$
Applying the same arguments used in dealing with \eqref{pre1y351}-\eqref{tyxcv55} below to \eqref{mpdfhj349}, we thus deduce
\begin{align}
  \|\Delta\theta(t)\|_{L^2}\leq C_{0}(1+t)^{-\frac{(\alpha+\beta)(2-p)+4\min\{\alpha,\,\beta\}p}{4\alpha\beta p}}.\nonumber
\end{align}
Consequently, Proposition \ref{fxcdfn9} is completed.
\end{proof}

\vskip .1in
With the help of \eqref{pdfgtrq1}, we are ready to improve the decay of the derivatives of order $s>2$ of the solution. The decay estimate \eqref{pre1y343} is not optimal but does provide an essential step to obtain the optimal decay, namely \eqref{fvbnm023}.
\begin{Pros}\label{dfdsfp37}
If $\alpha$ and $\beta$ satisfy (\ref{sdf2334}), $\theta_{0}\in \dot{H}^{s}(\mathbb{R}^{2})\cap L^{p}(\mathbb{R}^{2})$ with $s\geq0$ and $p\in [1,2)$.
Then the global smooth solution of \eqref{SQG} admits the decay estimate
\begin{align}\label{pre1y348}
 \|\Lambda^{s} \theta(t)\|_{L^2}\leq C_{0}(1+t)^{-\frac{(\alpha+\beta)(2-p)+2\min\{\alpha,\,\beta\}sp}{4\alpha\beta p}},
\end{align}
where the constant $C_{0}>0$ depends only on $\alpha,\,\beta,\,s,\,p$ and $\theta_{0}$.
\end{Pros}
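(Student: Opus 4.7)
The plan is to split the analysis into two ranges of $s$. For $s\in[0,2]$, the estimate follows at once from the sharp endpoint decays already obtained. Indeed, the logarithmic convexity inequality
$$
\|\Lambda^{s}\theta\|_{L^{2}}\leq \|\theta\|_{L^{2}}^{1-s/2}\,\|\Delta\theta\|_{L^{2}}^{s/2},\qquad s\in[0,2],
$$
combined with the sharp $L^{2}$-rate of Proposition \ref{Prosty666} and the sharp $\dot H^{2}$-rate \eqref{pdfgtrq1} of Proposition \ref{fxcdfn9}, produces an exponent equal to
$$
(1-\tfrac{s}{2})\frac{(\alpha+\beta)(2-p)}{4\alpha\beta p}+\frac{s}{2}\cdot\frac{(\alpha+\beta)(2-p)+4\min\{\alpha,\beta\}p}{4\alpha\beta p}=\frac{(\alpha+\beta)(2-p)+2\min\{\alpha,\beta\}sp}{4\alpha\beta p},
$$
which is exactly the rate claimed in \eqref{pre1y348}. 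Hence only $s>2$ requires genuine work.

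For $s>2$ the idea is to transplant the Fourier-splitting argument used in Proposition \ref{fggyvby7} from level $s=1$ to level $s$, with the sharp $\dot H^{2}$-decay \eqref{pdfgtrq1} playing the role there played by the $\dot H^{1}$-decay \eqref{pre310}. I would first derive an $\dot H^{s}$ energy inequality by applying $\Lambda^{s}$ to \eqref{SQG}, testing against $\Lambda^{s}\theta$ and using the commutator formulation of the nonlinearity
$$
-\!\int_{\mathbb{R}^{2}}\!\Lambda^{s}(u\cdot\nabla\theta)\,\Lambda^{s}\theta\,dx=-\!\int_{\mathbb{R}^{2}}\![\Lambda^{s},u]\!\cdot\!\nabla\theta\,\Lambda^{s}\theta\,dx.
$$
The Kato--Ponce estimate \eqref{yzz1}, Hölder in the mixed Lebesgue scale, and the anisotropic interpolations \eqref{avbnuy8911} then bound this quantity by a sum of terms of the form $\|\nabla\theta\|_{L^{q}}^{a}\|\Delta\theta\|_{L^{2}}^{b}\|\Lambda^{s}\theta\|_{L^{2}}^{2}$ after absorbing a small multiple of $\|\Lambda_{x_{1}}^{\alpha}\Lambda^{s}\theta\|_{L^{2}}^{2}+\|\Lambda_{x_{2}}^{\beta}\Lambda^{s}\theta\|_{L^{2}}^{2}$. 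Using the preliminary rate \eqref{pre1y343} and the sharp rate \eqref{pdfgtrq1} to evaluate the time-decaying coefficient, and choosing $q$ large enough (exactly as in the selection of $q,\widetilde q,\widehat q$ in Proposition \ref{fggyvby7}), I obtain a differential inequality of the shape
$$
\frac{d}{dt}\|\Lambda^{s}\theta\|_{L^{2}}^{2}+\|\Lambda_{x_{1}}^{\alpha}\Lambda^{s}\theta\|_{L^{2}}^{2}+\|\Lambda_{x_{2}}^{\beta}\Lambda^{s}\theta\|_{L^{2}}^{2}\leq C(1+t)^{-\mathcal{K}}\|\Lambda^{s}\theta\|_{L^{2}}^{2}
$$
with $\mathcal{K}>1$.

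The second stage is the Fourier-splitting argument. Setting $E(t)=\{\xi\in\mathbb{R}^{2}:|\xi_{1}|^{2\alpha}+|\xi_{2}|^{2\beta}\le f(t)\}$ with $f(t)=\varrho/(1+t)$ for $\varrho$ large, one has
$$
\|\Lambda_{x_{1}}^{\alpha}\Lambda^{s}\theta\|_{L^{2}}^{2}+\|\Lambda_{x_{2}}^{\beta}\Lambda^{s}\theta\|_{L^{2}}^{2}\ge f(t)\|\Lambda^{s}\theta\|_{L^{2}}^{2}-f(t)\!\int_{E(t)}\!|\xi|^{2s}|\widehat{\theta}(\xi)|^{2}\,d\xi,
$$
and on $E(t)$ the elementary bound $|\xi|^{2s}\le C\bigl(f(t)^{s/\alpha}+f(t)^{s/\beta}\bigr)$ together with $\|\theta\|_{L^{2}}^{2}\le C(1+t)^{-(\alpha+\beta)(2-p)/(2\alpha\beta p)}$ from Proposition \ref{Prosty666} yield
$$
f(t)\!\int_{E(t)}\!|\xi|^{2s}|\widehat{\theta}(\xi)|^{2}\,d\xi\le C(1+t)^{-1-s\min\{1/\alpha,1/\beta\}-(\alpha+\beta)(2-p)/(2\alpha\beta p)}.
$$
Inserting these two inequalities into the differential inequality above yields an ODE of the form $\frac{d}{dt}Y+\frac{\varrho-C(1+t)^{1-\mathcal{K}}}{1+t}Y\le C(1+t)^{-1-\sigma}$ with $\sigma=s\min\{1/\alpha,1/\beta\}+(\alpha+\beta)(2-p)/(2\alpha\beta p)$. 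Multiplying by an appropriate integrating factor $(1+t)^{\varrho-\widetilde C}$ and integrating, in exactly the manner of the end of the proof of Proposition \ref{fggyvby7}, gives $\|\Lambda^{s}\theta\|_{L^{2}}^{2}\le C(1+t)^{-\sigma}$, which is \eqref{pre1y348}. The main obstacle I expect is the first stage: to ensure that every subproduct extracted from the commutator, after the anisotropic interpolations, carries a time weight that is \emph{integrable} (so $\mathcal{K}>1$). The decisive leverage comes from the fact that the sharp $\dot H^{2}$-rate \eqref{pdfgtrq1} has exponent strictly greater than $1$, thanks to the extra $4\min\{\alpha,\beta\}p$ in the numerator, which is exactly why the argument closes uniformly in $s$.
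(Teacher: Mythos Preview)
Your proposal is correct and follows essentially the same route as the paper: interpolation between the sharp $L^{2}$ and $\dot H^{2}$ rates for $0\le s\le 2$, and for $s>2$ the commutator energy inequality \eqref{pre1y344}--\eqref{pre1y349} combined with Fourier splitting. One small sharpening: the paper bounds the nonlinear term by $C\|\nabla\theta\|_{L^{q}}^{\frac{\alpha q}{\alpha q-1}}\|\Lambda^{s}\theta\|_{L^{2}}^{2}$ (plus the $\beta$-analogue) rather than a product also involving $\|\Delta\theta\|_{L^{2}}$, and then controls $\|\nabla\theta\|_{L^{q}}$ via $\|\theta\|_{L^{2}}^{1/q}\|\Delta\theta\|_{L^{2}}^{(q-1)/q}$ using \eqref{pdfgtrq1}; the resulting exponent $\Omega=\frac{\min\{\alpha,\beta\}}{\alpha\beta}\min\bigl\{\frac{\alpha(q-1)}{\alpha q-1},\frac{\beta(q-1)}{\beta q-1}\bigr\}$ exceeds $1$ because $\alpha,\beta<1$ (not merely because of the extra $4\min\{\alpha,\beta\}p$ in the $\dot H^{2}$ rate, although that is what makes the bound on $\|\nabla\theta\|_{L^{q}}$ strong enough).
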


\begin{proof}
We first claim that \eqref{pre1y348} is true for some $s>2$.
Thanks to \eqref{pre1y344}, we have
\begin{align}\label{pre1y349}
 &\frac{1}{2}\frac{d}{dt}\|\Lambda^{s} {\theta}(t)\|_{L^{2}}^{2}
+\|\Lambda_{x_{1}}^{\alpha}\Lambda^{s} {\theta}\|_{L^{2}}^{2}+
 \|\Lambda_{x_{2}}^{\beta}\Lambda^{s} {\theta}\|_{L^{2}}^{2}
 \nonumber\\ &\leq C\|\nabla \theta\|_{L^{q}} \|\Lambda^{s+\frac{1}{q}}\theta\|_{L^{2}}^{2}\nonumber\\ &\leq C\|\nabla \theta\|_{L^{q}} (\|\Lambda_{x_{1}}^{\frac{1}{q}}\Lambda^{s}\theta\|_{L^{2}}^{2}
 +\|\Lambda_{x_{2}}^{\frac{1}{q}}\Lambda^{s}\theta\|_{L^{2}}^{2}).
\end{align}
By means of the following interpolation inequalities (see \eqref{avbnuy8911})
$$\|\Lambda_{x_{1}}^{\frac{1}{q}}\Lambda^{s}\theta\|_{L^{2}}\leq C\|\Lambda^{s}\theta\|_{L^{2}}^{\frac{\alpha q-1}{\alpha q}}
\|\Lambda_{x_{1}}^{\alpha}\Lambda^{s}\theta\|_{L^{2}}^{\frac{1}{\alpha q}},\qquad \|\Lambda_{x_{2}}^{\frac{1}{q}}\Lambda^{s}\theta\|_{L^{2}}\leq C\|\Lambda^{s}\theta\|_{L^{2}}^{\frac{\beta q-1}{\beta q}}
\|\Lambda_{x_{2}}^{\beta}\Lambda^{s}\theta\|_{L^{2}}^{\frac{1}{\beta q}}$$
with $q>\max\{\frac{1}{\alpha},\, \frac{1}{\beta}\}$,
we get from \eqref{pre1y349} that
\begin{align}
&\frac{1}{2}\frac{d}{dt}\|\Lambda^{s} {\theta}(t)\|_{L^{2}}^{2}
+\|\Lambda_{x_{1}}^{\alpha}\Lambda^{s} {\theta}\|_{L^{2}}^{2}+
 \|\Lambda_{x_{2}}^{\beta}\Lambda^{s} {\theta}\|_{L^{2}}^{2}\nonumber\\&\leq C\|\nabla \theta\|_{L^{q}} (\|\Lambda^{s}\theta\|_{L^{2}}^{\frac{2\alpha q-2}{\alpha q}}
\|\Lambda_{x_{1}}^{\alpha}\Lambda^{s}\theta\|_{L^{2}}^{\frac{2}{\alpha q}}
 +\|\Lambda^{s}\theta\|_{L^{2}}^{\frac{2\beta q-2}{\beta q}}
\|\Lambda_{x_{2}}^{\beta}\Lambda^{s}\theta\|_{L^{2}}^{\frac{2}{\beta q}})
\nonumber\\&\leq \frac{1}{2}\|\Lambda_{x_{1}}^{\alpha}\Lambda^{s} {\theta}\|_{L^{2}}^{2}+\frac{1}{2}
 \|\Lambda_{x_{2}}^{\beta}\Lambda^{s} {\theta}\|_{L^{2}}^{2}+ C\|\nabla \theta\|_{L^{q}}^{\frac{\alpha q}{\alpha q-1}} \|\Lambda^{s}{\theta}\|_{L^{2}}^{2}
+C\|\nabla \theta\|_{L^{q}}^{\frac{\beta q}{\beta q-1}} \|\Lambda^{s}{\theta}\|_{L^{2}}^{2}.\nonumber
\end{align}
Hence, one immediately obtains
\begin{align}\label{pre1y350}
 \frac{d}{dt}\|\Lambda^{s} {\theta}(t)\|_{L^{2}}^{2}
+\|\Lambda_{x_{1}}^{\alpha}\Lambda^{s} {\theta}\|_{L^{2}}^{2}+
 \|\Lambda_{x_{2}}^{\beta}\Lambda^{s} {\theta}\|_{L^{2}}^{2}\leq C(\|\nabla \theta\|_{L^{q}}^{\frac{\alpha q}{\alpha q-1}}+\|\nabla \theta\|_{L^{q}}^{\frac{\beta q}{\beta q-1}} )\|\Lambda^{s} {\theta}\|_{L^{2}}^{2}.
\end{align}
By \eqref{pre1y315} and \eqref{pre1y343}, we show that
$$\|\nabla \theta\|_{L^{q}}\leq C\|\theta\|_{L^{2}}^{\frac{1}{ {q}}}\|\Delta\theta\|_{L^{2}}
^{\frac{q-1}{ {q}}}\leq C(1+t)^{-\frac{(\alpha+\beta)(2-p)}{4\alpha\beta p}-\frac{\min\{\alpha,\,\beta\}(q-1)}{\alpha\beta q}}\leq C(1+t)^{-\frac{\min\{\alpha,\,\beta\}(q-1)}{\alpha\beta q}}.$$
Combining this and \eqref{pre1y350} gives
\begin{align}\label{pre1y351}
 \frac{d}{dt}\|\Lambda^{s} {\theta}(t)\|_{L^{2}}^{2}
+\|\Lambda_{x_{1}}^{\alpha}\Lambda^{s} {\theta}\|_{L^{2}}^{2}+
 \|\Lambda_{x_{2}}^{\beta}\Lambda^{s} {\theta}\|_{L^{2}}^{2}\leq C(1+t)^{-\Omega}  \|\Lambda^{s} {\theta}\|_{L^{2}}^{2},
\end{align}
where $\Omega$ is given by
$$\Omega\triangleq\frac{\min\{\alpha,\,\beta\}}{\alpha\beta }
\min\left\{\frac{\alpha (q-1)}{\alpha q-1},\,\frac{\beta (q-1)}{\beta q-1}\right\}.$$
Obviously, we can check that $\Omega>1$.
We denote
$$E(t)=\{\xi\in \mathbb{R}^2: \  |\xi_{1}|^{2\alpha}+|\xi_{2}|^{2\beta}\leq f(t)\},$$
where $f(t)$ is a smooth decreasing function to be fixed after.
Similar to \eqref{pre1y323}, one derives
\begin{align}
\|\Lambda_{x_{1}}^{\alpha}\Lambda^{s} {\theta}\|_{L^{2}}^{2}+
 \|\Lambda_{x_{2}}^{\beta}\Lambda^{s} {\theta}\|_{L^{2}}^{2}
  \geq f(t)\int_{\mathbb{R}^{2}}|\widehat{\Lambda^{s} {\theta}}(t,\xi)|^{2}\,d\xi
 -f(t)\int_{E(t)}|\widehat{\Lambda^{s} {\theta}}(t,\xi)|^{2}\,d\xi,\nonumber
\end{align}
which together with \eqref{pre1y351} shows
\begin{align}\label{pre1y352}
\frac{d}{dt}\|\Lambda^{s}{\theta}(t)\|_{L^{2}}^{2}+f(t)\|\Lambda^{s} {\theta}\|_{L^{2}}^{2}\leq& f(t)\int_{E(t)}|\widehat{\Lambda^{s} {\theta}}(t,\xi)|^{2}\,d\xi+C(1+t)^{-\Omega}  \|\Lambda^{s} {\theta}\|_{L^{2}}^{2}.
\end{align}
By direct computations, we may check that
\begin{align}\label{pre1y353}
\int_{E(t)}|\widehat{\Lambda^{s} {\theta}}(t,\xi)|^{2}\,d\xi
&\leq
\int_{E(t)}
|\xi|^{2s}|\widehat{\theta}(\xi)|^{2}\,d\xi\nonumber\\
&\leq
\int_{E(t)}
(|\xi_{1}|^{2}+|\xi_{2}|^{2})^{s}|\widehat{\theta}(\xi)|^{2}\,d\xi\nonumber\\
&\leq C\int_{E(t)}
\left(f(t)^{\frac{s}{\alpha}}+f(t)^{\frac{s}{\beta}}\right)|\widehat{\theta}(\xi)|^{2}\,d\xi
\nonumber\\
&\leq C\left(f(t)^{\frac{s}{\alpha}}+f(t)^{\frac{s}{\beta}}\right)\|{\theta}\|_{L^{2}}^{2}.
\end{align}
Putting \eqref{pre1y353} into \eqref{pre1y352} leads to
\begin{align}\label{pre1y354}
\frac{d}{dt}\|\Lambda^{s}{\theta}(t)\|_{L^{2}}^{2}+f(t)\|\Lambda^{s} {\theta}\|_{L^{2}}^{2}\leq& C\left(f(t)^{1+\frac{s}{\alpha}}+f(t)^{1+\frac{s}{\beta}}\right)\|{\theta}\|_{L^{2}}^{2}
\nonumber\\&+C(1+t)^{-\Omega}  \|\Lambda^{s} {\theta}\|_{L^{2}}^{2}\nonumber\\ \leq& C\left(f(t)^{1+\frac{s}{\alpha}}+f(t)^{1+\frac{s}{\beta}}\right)(1+t)^{-\frac{(\alpha+\beta)(2-p) }{2\alpha\beta p}}\nonumber\\&+C(1+t)^{-\Omega}  \|\Lambda^{s} {\theta}\|_{L^{2}}^{2}.
\end{align}
Taking $f(t)=\frac{\varrho}{1+t}$ with $\varrho$ suitable large, we deduce from \eqref{pre1y354} that
\begin{align}
\frac{d}{dt}\|\Lambda^{s}{\theta}(t)\|_{L^{2}}^{2}+\frac{\varrho}{1+t}\|\Lambda^{s} {\theta}\|_{L^{2}}^{2}\leq& C(1+t)^{-1-s\min\{\frac{1}{\alpha},\,\frac{1}{\beta}\}-\frac{(\alpha+\beta)(2-p) }{2\alpha\beta p}}\nonumber\\&+\widetilde{C}(1+t)^{-\Omega}  \|\Lambda^{s} {\theta}\|_{L^{2}}^{2}.\nonumber
\end{align}
Due to $\Omega>1$, we thus get
\begin{align}
\frac{d}{dt}\|\Lambda^{s}{\theta}(t)\|_{L^{2}}^{2}+\frac{\varrho-\widetilde{C}}{1+t}\|\Lambda^{s} {\theta}\|_{L^{2}}^{2}\leq& C(1+t)^{-1-\frac{\min\{\alpha,\,\beta\}s}{\alpha\beta}-\frac{(\alpha+\beta)(2-p) }{2\alpha\beta p}}.\nonumber
\end{align}
As a result, one derives
\begin{align}\label{tyxcv55}
\frac{d}{dt}\left((1+t)^{\varrho-\widetilde{C}}\|\Lambda^{s}{\theta}(t)\|_{L^{2}}^{2}\right)\leq& C(1+t)^{\varrho-\widetilde{C}-1-\frac{\min\{\alpha,\,\beta\}s}{\alpha\beta}-\frac{(\alpha+\beta)(2-p) }{2\alpha\beta p}}.
\end{align}
Integrating \eqref{tyxcv55} in time, we obtain
\begin{align}
(1+t)^{\varrho-\widetilde{C}}\|\Lambda^{s}{\theta}(t)\|_{L^{2}}^{2}\leq& \|\Lambda^{s}{\theta_{0}}\|_{L^{2}}^{2}+ C(1+t)^{\varrho-\widetilde{C}-\frac{\min\{\alpha,\,\beta\}s}{\alpha\beta} -\frac{(\alpha+\beta)(2-p) }{2\alpha\beta p}},\nonumber
\end{align}
which implies
\begin{align}
\|\Lambda^{s}{\theta}(t)\|_{L^{2}} \leq&  C(1+t)^{-\frac{\min\{\alpha,\,\beta\}s}{2\alpha\beta}-\frac{(\alpha+\beta)(2-p)}{4\alpha\beta p}},\ \ \forall\, s\geq 2.\nonumber
\end{align}
For the case $0\leq s<2$, it follows from the direct interpolation that
\begin{align}
\|\Lambda^{s}{\theta}(t)\|_{L^{2}} &\leq C\|{\theta}(t)\|_{L^{2}} ^{1-\frac{s}{2}} \|\Lambda^{2}{\theta}(t)\|_{L^{2}}^{\frac{s}{2}}
\nonumber\\
&\leq C\left((1+t)^{-\frac{(\alpha+\beta)(2-p)}{4\alpha\beta p}}\right)^{1-\frac{s}{2}} \left((1+t)^{-\frac{\min\{\alpha,\,\beta\}}{\alpha\beta}-\frac{(\alpha+\beta)(2-p)}{4\alpha\beta p}}\right)^{\frac{s}{2}}
\nonumber\\
&=
C(1+t)^{-\frac{\min\{\alpha,\,\beta\}s}{2\alpha\beta}-\frac{(\alpha+\beta)(2-p)}{4\alpha\beta p}}.\nonumber
\end{align}
We thus obtain the desired decay estimate \eqref{pre1y348}. Consequently, the proof of Proposition \ref{dfdsfp37} is completed.
\end{proof}

\vskip .3in
\section{The proof of Theorem \ref{Lderq1}}\setcounter{equation}{0}
\label{dfcse11}
The proof of Theorem \ref{Lderq1} can be performed as that for the proof of Theorem \ref{OKTh1}. We sketch the proof as follows.
\vskip .1in
We first show the following key $\dot{H}^{1}$-decay estimate, whose proof is different from Proposition \ref{Lpp302} and Proposition \ref{Lpp30we3}.
\begin{Pros}\label{Thrcas11}
If $\alpha$ and $\beta$ satisfy (\ref{sdf2334}) and $\theta_{0}\in {H}^{1}(\mathbb{R}^{2})\cap L^{\infty}(\mathbb{R}^{2})$, then the solution admits the decay estimate
\begin{align}\label{Thcwq001}
 \|\nabla\theta(t)\|_{L^2}\leq C_{0}(1+t)^{-\frac{\min\{\alpha,\,\beta\}}{2\alpha\beta }},
\end{align}
where the constant $C_{0}>0$ depends only on $\alpha,\,\beta$ and $\theta_{0}$.
\end{Pros}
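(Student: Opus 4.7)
The plan is to mimic the Fourier-splitting argument of Proposition \ref{fggyvby7}, adapted to the present hypothesis $\theta_0 \in H^1(\mathbb R^2) \cap L^\infty(\mathbb R^2)$. The starting observation is that the embedding $H^1(\mathbb R^2) \cap L^\infty(\mathbb R^2) \hookrightarrow L^q(\mathbb R^2)$ holds for every $q \in [2, \infty]$; combined with Proposition \ref{Spr1} this yields the uniform bounds $\|\theta(t)\|_{L^q} \leq \|\theta_0\|_{L^q}$ for all $t \geq 0$ and $q \in [2, \infty]$. These uniform $L^q$-bounds will play the role that the decaying $L^q$-norms played in Section \ref{sec3}.

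I will first rerun the commutator and anisotropic interpolation estimates from the proofs of Propositions \ref{Lpp302} and \ref{Lpp30we3} term by term, substituting the uniform $L^q$-bounds for the $L^q$-decay factors, to obtain an energy inequality of the form
\begin{equation*}
\frac{d}{dt}\|\nabla\theta(t)\|_{L^2}^2 + \|\Lambda_{x_1}^\alpha\nabla\theta\|_{L^2}^2 + \|\Lambda_{x_2}^\beta\nabla\theta\|_{L^2}^2 \leq C_{\ast}\|\theta(t)\|_{L^2}^2,
\end{equation*}
with a constant $C_{\ast}$ depending only on $\alpha, \beta$ and $\|\theta_0\|_{H^1 \cap L^\infty}$. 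Next I will apply the anisotropic Schonbek Fourier-splitting with $E(t) = \{\xi \in \mathbb R^2 : |\xi_1|^{2\alpha} + |\xi_2|^{2\beta} \leq f(t)\}$ and $f(t) = \varrho/(1+t)$, using $\|\Lambda_{x_1}^\alpha\nabla\theta\|_{L^2}^2 + \|\Lambda_{x_2}^\beta\nabla\theta\|_{L^2}^2 \geq f(t)\|\nabla\theta\|_{L^2}^2 - f(t)\int_{E(t)}|\xi|^2|\hat\theta|^2\,d\xi$ together with the pointwise Duhamel bound $|\hat\theta(\xi,t)| \leq |\hat\theta_0(\xi)| + Ct|\xi|\|\theta_0\|_{L^2}^2$ from the proof of Proposition \ref{mkjqq1} (for which the uniform $L^\infty$-bound on $\theta$ provides the control $|\widehat{u\theta}(\xi)| \leq \|u\theta\|_{L^1} \leq C\|\theta_0\|_{L^2}^2$). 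The elementary bounds $|\xi|^k \leq C(f(t)^{k/(2\alpha)} + f(t)^{k/(2\beta)})$ on $E(t)$, for $k \in \{2, 4\}$, will then reduce the low-frequency integral to explicit decaying powers of $(1+t)$.

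The remaining (and most delicate) task will be to handle the residual forcing $C_{\ast}\|\theta(t)\|_{L^2}^2$. Since $\theta_0$ is not assumed in $L^p$ for any $p < 2$, the $L^2$-norm of $\theta$ only tends to zero without rate (Proposition \ref{mkjqq1}). To close the argument I plan to run a parallel Fourier-splitting on the energy identity $\frac{d}{dt}\|\theta\|_{L^2}^2 + 2(\|\Lambda_{x_1}^\alpha\theta\|_{L^2}^2 + \|\Lambda_{x_2}^\beta\theta\|_{L^2}^2) = 0$, combining the same Duhamel bound with a quantitative estimate of $\int_{E(t)}|\hat\theta_0(\xi)|^2\,d\xi$ (controlled by the absolute continuity of $|\hat\theta_0|^2\,d\xi$ together with the uniform $L^\infty$-control on the nonlinear piece); this should produce an auxiliary polynomial rate for $\|\theta(t)\|_{L^2}$ which, fed back into the inequality for $\|\nabla\theta\|_{L^2}^2$, converts the forcing term into a decaying one. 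Choosing $\varrho$ sufficiently large and integrating the resulting linear ODE will then yield $\|\nabla\theta(t)\|_{L^2}^2 \leq C_0(1+t)^{-\min\{\alpha,\beta\}/(\alpha\beta)}$, which is exactly the claim.

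The main obstacle I foresee lies precisely in this last step: absent an $L^p$-assumption for $p < 2$, the decay of $\|\theta(t)\|_{L^2}$ must be extracted purely from the combination of the $L^\infty$-bound on $\theta$ and the Duhamel structure of the equation. Balancing the Schonbek scale $f(t)$ against the tail of $|\hat\theta_0|^2$ on $E(t)$ in a way that is compatible with the nonlinear forcing — and that ultimately reproduces the sharp exponent $\min\{\alpha,\beta\}/(2\alpha\beta)$ matching the linear heat-kernel rate — is the technical heart of the argument.
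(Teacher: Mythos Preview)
Your plan has a genuine gap in the ``most delicate'' step you flag. Under only $\theta_0 \in H^1 \cap L^\infty$, no polynomial decay of $\|\theta(t)\|_{L^2}$ is available: absolute continuity of $|\hat\theta_0|^2\,d\xi$ gives only $\int_{E(t)}|\hat\theta_0|^2\,d\xi \to 0$ with no rate, and the Duhamel term $Ct|\xi|$ in \eqref{mkttye2} is the source of the logarithmic (not polynomial) decay in Proposition \ref{mkjqq1}. So the forcing $C_\ast\|\theta(t)\|_{L^2}^2$ on the right of your energy inequality cannot be upgraded to a polynomially decaying term, and the Fourier-splitting ODE for $\|\nabla\theta\|_{L^2}^2$ will not close at the sharp exponent $\min\{\alpha,\beta\}/(2\alpha\beta)$.

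The paper avoids this obstacle by reorganizing the commutator estimates so that the nonlinear terms are controlled not by $C_\ast\|\theta\|_{L^2}^2$ but by $C\Gamma(t)\bigl(\|\Lambda_{x_1}^\alpha\nabla\theta\|_{L^2}^2 + \|\Lambda_{x_2}^\beta\nabla\theta\|_{L^2}^2\bigr)$, where $\Gamma(t)$ is a finite sum of $\|\theta(t)\|_{L^{q_i}}$ with $q_i \in [2,\infty)$. Concretely, one revisits the bounds for $\mathcal J_1,\dots,\mathcal J_4$ in Propositions \ref{Lpp302}--\ref{Lpp30we3} and chooses the exponents at the endpoint $\epsilon = 0$ (respectively $\widetilde\epsilon = 0$), so that no residual $\|\theta\|_{L^2}$ factor survives and each $\mathcal J_i$ is bounded by $\|\theta\|_{L^{q_i}}$ times the full dissipation. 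Since $\|\theta(t)\|_{L^{q_i}} \le C\|\theta(t)\|_{L^2}^{\gamma}\|\theta_0\|_{L^\infty}^{1-\gamma} \to 0$ by Proposition \ref{mkjqq1}, one has $C\Gamma(t) \le \tfrac12$ for all $t \ge t_0$; the nonlinear terms are then absorbed into the dissipation, yielding
\[
\frac{d}{dt}\|\nabla\theta\|_{L^2}^2 + \|\Lambda_{x_1}^\alpha\nabla\theta\|_{L^2}^2 + \|\Lambda_{x_2}^\beta\nabla\theta\|_{L^2}^2 \le 0 \qquad (t \ge t_0),
\]
with zero forcing. Fourier splitting applied to this clean inequality (using only $\int_{E(t)}|\xi|^2|\hat\theta|^2\,d\xi \le C(f(t)^{1/\alpha}+f(t)^{1/\beta})\|\theta_0\|_{L^2}^2$) gives the rate $(1+t)^{-\min\{\alpha,\beta\}/(\alpha\beta)}$ directly. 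The missing idea in your approach is precisely this absorption structure: bound the nonlinearity by a vanishing coefficient times the dissipation, rather than by a non-decaying forcing.
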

\begin{proof}
We divide the proof into two cases, namely,
$$\textbf{Case 1:}\quad  \beta> \frac{1}{2\alpha+1}, \ \ 0<\alpha\leq\frac{1}{2},$$
$$\textbf{Case 2:}\quad  \beta>\frac{1-\alpha}{2\alpha}, \ \ \frac{1}{2}<\alpha<1.$$
We begin with \textbf{Case 1}.
Just as the proof of Proposition \ref{Lpp302}, we have
\begin{align}\label{Thcwq002}
\frac{1}{2}\frac{d}{dt}\|\nabla \theta(t)\|_{L^{2}}^{2}+
 \|\Lambda_{x_{1}}^{\alpha}\nabla\theta\|_{L^{2}}^{2}+
 \|\Lambda_{x_{2}}^{\beta}\nabla\theta\|_{L^{2}}^{2}
 =& \mathcal{J}_{1}+\mathcal{J}_{2}+\mathcal{J}_{3}+\mathcal{J}_{4}.
\end{align}
According to \eqref{udfr5}, one gets
\begin{align} \label{Thcwq003}
\mathcal{J}_{1}  &\leq C\|\Lambda^{\frac{2}{q}}\Lambda_{x_{2}}^{\delta}\partial_{x_{1}}\theta\|_{L^{2}} \|\theta\|_{L^{q}} \|\Lambda_{x_{2}}^{\beta} \nabla\theta\|_{L^{2}}^{\frac{\alpha}{\alpha+1}}\|\Lambda_{x_{1}}^{\alpha} \nabla\theta\|_{L^{2}}^{\frac{1}{\alpha+1}}.
\end{align}
If we take $\epsilon=0$ in \eqref{udfr7}, \eqref{udfr6} and \eqref{udfr8}, then we have
$$\|\Lambda^{\frac{2}{q}}\Lambda_{x_{2}}^{\delta}\partial_{x_{1}}\theta\|_{L^{2}}\leq C\|\Lambda_{x_{2}}^{\beta}\nabla\theta\|_{L^{2}}^{\frac{\delta}{\beta}}
 \|\Lambda_{x_{1}}^{\alpha}\nabla\theta\|_{L^{2}}
 ^{\frac{\beta-\delta}{\beta}},$$
where
$$q=\frac{2(\alpha+1)\beta}{\alpha[(2\alpha+1)\beta-1]}\geq2.$$
This along with \eqref{Thcwq003} shows
\begin{align} \label{Thcwq004}
\mathcal{J}_{1}   &\leq C \|\theta\|_{L^{q}} \|\Lambda_{x_{2}}^{\beta} \nabla\theta\|_{L^{2}}\|\Lambda_{x_{1}}^{\alpha} \nabla\theta\|_{L^{2}}\nonumber\\
&\leq C \|\theta\|_{L^{q}} (\|\Lambda_{x_{2}}^{\beta} \nabla\theta\|_{L^{2}}^{2}+\|\Lambda_{x_{1}}^{\alpha} \nabla\theta\|_{L^{2}}^{2}).
\end{align}
Moreover, $\mathcal{J}_{2}$ has the same bound of $\mathcal{J}_{1}$, namely,
\begin{align} \label{Thcwq005}
\mathcal{J}_{2}   \leq C \|\theta\|_{L^{q}} \|\Lambda_{x_{2}}^{\beta} \nabla\theta\|_{L^{2}}\|\Lambda_{x_{1}}^{\alpha} \nabla\theta\|_{L^{2}}.
\end{align}
We observe
\begin{align}
\mathcal{J}_{3} =-\frac{1}{2}\mathcal{J}_{4}+\widetilde{\mathcal{J}_{3}},\nonumber
\end{align}
and
\begin{align} \label{Thcwq006}
\widetilde{\mathcal{J}_{3}}
 \leq& C\|\Lambda^{\frac{2}{\widetilde{q}}}\Lambda_{x_{2}}^{\delta}\partial_{x_{2}}
\theta\|_{L^{2}}
 \|\theta\|_{L^{\widetilde{q}}} \|\Lambda_{x_{2}}^{\beta} \nabla\theta\|_{L^{2}}^{\frac{\alpha}{\alpha+1}}\|\Lambda_{x_{1}}^{\alpha} \nabla\theta\|_{L^{2}}^{\frac{1}{\alpha+1}}\nonumber\\
 \leq& C( \|\Lambda_{x_{1}}^{\frac{2}{\widetilde{q}}}\Lambda_{x_{2}}^{\delta+1} \theta\|_{L^{2}} +\|\Lambda_{x_{2}}^{1+\delta+\frac{2}{\widetilde{q}}} \theta\|_{L^{2}})
 \|\theta\|_{L^{\widetilde{q}}} \|\Lambda_{x_{2}}^{\beta} \nabla\theta\|_{L^{2}}^{\frac{\alpha}{\alpha+1}}\|\Lambda_{x_{1}}^{\alpha} \nabla\theta\|_{L^{2}}^{\frac{1}{\alpha+1}}
 \nonumber\\
 \leq& C(\|\Lambda_{x_{2}}^{\beta}\nabla\theta\|_{L^{2}} +\|\Lambda_{x_{2}}^{1+\beta} \theta\|_{L^{2}})
 \|\theta\|_{L^{\widetilde{q}}} \|\Lambda_{x_{2}}^{\beta} \nabla\theta\|_{L^{2}}^{\frac{\alpha}{\alpha+1}}\|\Lambda_{x_{1}}^{\alpha} \nabla\theta\|_{L^{2}}^{\frac{1}{\alpha+1}} \nonumber\\
 \leq& C\|\Lambda_{x_{2}}^{\beta}\nabla\theta\|_{L^{2}}
 \|\theta\|_{L^{\widetilde{q}}} \|\Lambda_{x_{2}}^{\beta} \nabla\theta\|_{L^{2}}^{\frac{\alpha}{\alpha+1}}\|\Lambda_{x_{1}}^{\alpha} \nabla\theta\|_{L^{2}}^{\frac{1}{\alpha+1}}
 \nonumber\\
\leq &C \|\theta\|_{L^{\widetilde{q}}} (\|\Lambda_{x_{2}}^{\beta} \nabla\theta\|_{L^{2}}^{2}+\|\Lambda_{x_{1}}^{\alpha} \nabla\theta\|_{L^{2}}^{2})
\end{align}
where
$$\widetilde{q}=\frac{2(\alpha+1) }{ (2\alpha+1)\beta-1}\geq2.$$
Finally, the term $\mathcal{J}_{4}$ can be bounded by
\begin{align}\label{Thcwq007}
\mathcal{J}_{4}
 &\leq C\|\Lambda^{\frac{2}{\widehat{q}}}\Lambda_{x_{2}}^{1-\beta}\partial_{x_{2}}\theta
 \|_{L^{2}}   \|\theta\|_{L^{\widehat{q}}} \|\Lambda_{x_{2}}^{\beta}\nabla\theta\|_{L^{2}}\nonumber\\
 &\leq C(\|\Lambda_{x_{1}}^{\frac{2}{\widehat{q}}}\Lambda_{x_{2}}^{1-\beta}\partial_{x_{2}}\theta
 \|_{L^{2}}+\|\Lambda_{x_{2}}^{\frac{2}{\widehat{q}}}\Lambda_{x_{2}}^{1-\beta}\partial_{x_{2}}\theta
 \|_{L^{2}})\|\theta\|_{L^{\widehat{q}}} \|\Lambda_{x_{2}}^{\beta}\nabla\theta\|_{L^{2}}
 \nonumber\\
 &\leq C(\|\Lambda_{x_{2}}^{\beta}\nabla\theta
 \|_{L^{2}}+\|\Lambda_{x_{2}}^{1+\beta} \theta
 \|_{L^{2}})\|\theta\|_{L^{\widehat{q}}} \|\Lambda_{x_{2}}^{\beta}\nabla\theta\|_{L^{2}}\nonumber\\
 &\leq C  \|\theta\|_{L^{\widehat{q}}} \|\Lambda_{x_{2}}^{\beta}\nabla\theta\|_{L^{2}}^{2},
\end{align}
where
$$\widehat{q}=\frac{2}{2\beta-1}\geq2.$$
Inserting \eqref{Thcwq004}, \eqref{Thcwq005}, \eqref{Thcwq006} and \eqref{Thcwq007} into \eqref{Thcwq002} yields
 \begin{align}\label{Thcwq008}
\frac{1}{2}\frac{d}{dt}\|\nabla \theta(t)\|_{L^{2}}^{2}+
 \|\Lambda_{x_{1}}^{\alpha}\nabla\theta\|_{L^{2}}^{2}+
 \|\Lambda_{x_{2}}^{\beta}\nabla\theta\|_{L^{2}}^{2}
 \leq C \mathcal{H}(t) (\|\Lambda_{x_{2}}^{\beta} \nabla\theta\|_{L^{2}}^{2}+\|\Lambda_{x_{1}}^{\alpha} \nabla\theta\|_{L^{2}}^{2}),
\end{align}
where
$$\mathcal{H}(t)\triangleq\|\theta(t)\|_{L^{q}}+\|\theta(t)\|_{L^{\widetilde{q}}}
+\|\theta(t)\|_{L^{\widehat{q}}}.$$

Now our attention focus on \textbf{Case 2}.
Actually, we just modify the proof of Proposition \ref{Lpp30we3}. It follows from \eqref{ghtyde229} that
\begin{align}  \label{Thcwq009}
\mathcal{J}_{1}
 &\leq C\|\Lambda^{\frac{2}{r}}\Lambda_{x_{1}}^{1-\alpha}\partial_{x_{1}}
\theta\|_{L^{2}} \|\theta\|_{L^{r}} \|\Lambda_{x_{1}}^{\alpha}\partial_{x_{1}}\theta\|_{L^{2}} \nonumber\\
&\leq C(\|\Lambda_{x_{1}}^{2-\alpha+\frac{2}{r}} \theta\|_{L^{2}} + \|\Lambda_{x_{1}}^{2-\alpha}\Lambda_{x_{2}}^{\frac{2}{r}} \theta\|_{L^{2}})
 \|\theta\|_{L^{r}} \|\Lambda_{x_{1}}^{\alpha}\partial_{x_{1}}\theta\|_{L^{2}}
\nonumber\\
&\leq C(\|\Lambda_{x_{1}}^{1+\alpha} \theta\|_{L^{2}} + \|\Lambda_{x_{1}}^{\alpha}\nabla \theta\|_{L^{2}})
  \|\theta\|_{L^{r}} \|\Lambda_{x_{1}}^{\alpha}\partial_{x_{1}}\theta\|_{L^{2}}
  \nonumber\\
&\leq C \|\theta\|_{L^{r}} \|\Lambda_{x_{1}}^{\alpha}\nabla \theta\|_{L^{2}}^{2},
\end{align}
where
$$r=\frac{2}{2\alpha-1}\geq2.$$
We notice that
\begin{align}
\mathcal{J}_{2}=-\frac{1}{2}\mathcal{J}_{1}+\widetilde{\mathcal{J}_{2}}\nonumber
\end{align}
and
\begin{align} \label{Thcwq010}
\widetilde{\mathcal{J}_{2}}
 &\leq C\|\Lambda^{\frac{2}{\widetilde{r}}}
 \Lambda_{x_{1}}^{1-\widetilde{\delta}}\partial_{x_{1}}
 \theta\|_{L^{2}}  \|\theta\|_{L^{\widetilde{r}}}\|\Lambda_{x_{1}}^{\alpha+1}\theta
\|_{L^{2}}^{\frac{\beta}{\beta+1}} \|\Lambda_{x_{2}}^{\beta+1}\theta
\|_{L^{2}}^{\frac{1}{\beta+1}}\nonumber\\
 &\leq C(\|\Lambda_{x_{1}}^{2-\widetilde{\delta}+\frac{2}{\widetilde{r}}} \theta\|_{L^{2}} +\|\Lambda_{x_{1}}^{2-\widetilde{\delta}}\Lambda_{x_{2}}^{\frac{2}{\widetilde{r}}} \theta\|_{L^{2}}) \|\theta\|_{L^{\widetilde{r}}}\|\Lambda_{x_{1}}^{\alpha+1}\theta
\|_{L^{2}}^{\frac{\beta}{\beta+1}} \|\Lambda_{x_{2}}^{\beta+1}\theta
\|_{L^{2}}^{\frac{1}{\beta+1}}
\nonumber\\
 &\leq C(\|\Lambda_{x_{1}}^{1+\alpha} \theta\|_{L^{2}} +\|\Lambda_{x_{1}}^{\alpha}\nabla \theta\|_{L^{2}}) \|\theta\|_{L^{\widetilde{r}}}\|\Lambda_{x_{1}}^{\alpha+1}\theta
\|_{L^{2}}^{\frac{\beta}{\beta+1}} \|\Lambda_{x_{2}}^{\beta+1}\theta
\|_{L^{2}}^{\frac{1}{\beta+1}}\nonumber\\
 &\leq C \|\theta\|_{L^{\widetilde{r}}}\|\Lambda_{x_{1}}^{\alpha}\nabla\theta
\|_{L^{2}}^{1+\frac{\beta}{\beta+1}} \|\Lambda_{x_{2}}^{\beta}\nabla\theta
\|_{L^{2}}^{\frac{1}{\beta+1}}\nonumber\\
 &\leq C \|\theta\|_{L^{\widetilde{r}}}(\|\Lambda_{x_{1}}^{\alpha}\nabla\theta
\|_{L^{2}}^{2}+\|\Lambda_{x_{2}}^{\beta}\nabla\theta
\|_{L^{2}}^{2}),
\end{align}
where
$$\widetilde{r}=\frac{2(\beta+1)}{2\alpha\beta+\alpha-1}\geq2.$$
By taking $\widetilde{\epsilon}=0$ in \eqref{ghtyde236}, the third term $\mathcal{J}_{3} $ can be bounded by
\begin{align}\label{Thcwq011}
\mathcal{J}_{3} =&\int_{\mathbb{R}^{2}}{\theta\partial_{x_{1}x_{2}}u_{1}\partial_{x_{2}}\theta\,dx}
+\int_{\mathbb{R}^{2}}{\theta\partial_{x_{2}}u_{1}\partial_{x_{1}x_{2}}\theta\,dx}
\nonumber\\
 \leq&C\|\Lambda_{x_{1}}^{1-\widetilde{\delta}}\partial_{x_{2}}u_{1}
 \|_{L^{\frac{2\widehat{r}}{\widehat{r}-2}}} \|\Lambda_{x_{1}}^{\widetilde{\delta}}(\theta\partial_{x_{2}}\theta)
 \|_{L^{\frac{2\widehat{r}}{\widehat{r}+2}}}
+C\|\Lambda_{x_{1}}^{1-\widetilde{\delta}}\partial_{x_{2}}\theta
\|_{L^{\frac{2\widehat{r}}{\widehat{r}-2}}} \|\Lambda_{x_{1}}^{\widetilde{\delta}}(\theta\partial_{x_{2}}u_{1})
\|_{L^{\frac{2\widehat{r}}{\widehat{r}+2}}}
\nonumber\\
 \leq& C\|\Lambda^{\frac{2}{\widehat{r}}}
 \Lambda_{x_{1}}^{1-\widetilde{\delta}}\partial_{x_{2}}
 \theta\|_{L^{2}}  \|\theta\|_{L^{\widehat{r}}}\|\Lambda_{x_{1}}^{\alpha+1}\theta
\|_{L^{2}}^{\frac{\beta}{\beta+1}} \|\Lambda_{x_{2}}^{\beta+1}\theta
\|_{L^{2}}^{\frac{1}{\beta+1}}\nonumber\\
 \leq& C\|\Lambda_{x_{2}}^{\beta}\nabla\theta\|_{L^{2}}^{1-\frac{
1-\alpha\beta}{\alpha(\beta+1)}}
 \|\Lambda_{x_{1}}^{\alpha}\nabla\theta\|_{L^{2}}
 ^{\frac{1-\alpha\beta}{\alpha(\beta+1)}}  \|\theta\|_{L^{\widehat{r}}}\|\Lambda_{x_{1}}^{\alpha+1}\theta
\|_{L^{2}}^{\frac{\beta}{\beta+1}} \|\Lambda_{x_{2}}^{\beta+1}\theta
\|_{L^{2}}^{\frac{1}{\beta+1}}\nonumber\\
\leq &C \|\theta\|_{L^{\widehat{r}}}(\|\Lambda_{x_{1}}^{\alpha}\nabla\theta
\|_{L^{2}}^{2}+\|\Lambda_{x_{2}}^{\beta}\nabla\theta
\|_{L^{2}}^{2}),
\end{align}
where
\begin{align}
\widehat{r}=\frac{2\alpha(\beta+1)}{(2\alpha\beta+\alpha-1)\beta}\geq2.\nonumber
\end{align}
Using the same argument adopted in dealing with (\ref{Thcwq011}), we have
\begin{align}\label{Thcwq012}
\mathcal{J}_{4} =&\int_{\mathbb{R}^{2}}{\partial_{x_{1}}u_{1} \partial_{x_{2}}\theta\partial_{x_{2}}\theta\,dx}
\nonumber\\
 =&-2
\int_{\mathbb{R}^{2}}{ u_{1} \partial_{x_{2}}\theta\partial_{x_{1}x_{2}}\theta\,dx}
\nonumber\\
 \leq&
C\|\Lambda_{x_{1}}^{1-\widetilde{\delta}}\partial_{x_{2}}
\theta\|_{L^{\frac{2\widehat{r}}{\widehat{r}-2}}} \|\Lambda_{x_{1}}^{\widetilde{\delta}}(u_{1}\partial_{x_{2}}
\theta)\|_{L^{\frac{2\widehat{r}}{\widehat{r}+2}}}
\nonumber\\
\leq &C \|\theta\|_{L^{\widehat{r}}}(\|\Lambda_{x_{1}}^{\alpha}\nabla\theta
\|_{L^{2}}^{2}+\|\Lambda_{x_{2}}^{\beta}\nabla\theta
\|_{L^{2}}^{2}).
\end{align}
Combining \eqref{Thcwq009}, \eqref{Thcwq010}, \eqref{Thcwq011}, \eqref{Thcwq012} and \eqref{Thcwq002} implies
 \begin{align}\label{Thcwq013}
\frac{1}{2}\frac{d}{dt}\|\nabla \theta(t)\|_{L^{2}}^{2}+
 \|\Lambda_{x_{1}}^{\alpha}\nabla\theta\|_{L^{2}}^{2}+
 \|\Lambda_{x_{2}}^{\beta}\nabla\theta\|_{L^{2}}^{2}
 \leq C \mathcal{\widetilde{H}}(t) (\|\Lambda_{x_{2}}^{\beta} \nabla\theta\|_{L^{2}}^{2}+\|\Lambda_{x_{1}}^{\alpha} \nabla\theta\|_{L^{2}}^{2}),
\end{align}
where
$$\mathcal{\widetilde{H}}(t)\triangleq\|\theta(t)\|_{L^{r}}
+\|\theta(t)\|_{L^{\widetilde{r}}}
+\|\theta(t)\|_{L^{\widehat{r}}}.$$
Consequently, it follows from \eqref{Thcwq009} and \eqref{Thcwq013} that
 \begin{align}\label{Thcwq014}
 \frac{1}{2}\frac{d}{dt}\|\nabla \theta(t)\|_{L^{2}}^{2}+
 \|\Lambda_{x_{1}}^{\alpha}\nabla\theta\|_{L^{2}}^{2}+
 \|\Lambda_{x_{2}}^{\beta}\nabla\theta\|_{L^{2}}^{2}
 \leq C \Gamma(t) (\|\Lambda_{x_{2}}^{\beta} \nabla\theta\|_{L^{2}}^{2}+\|\Lambda_{x_{1}}^{\alpha} \nabla\theta\|_{L^{2}}^{2}),
\end{align}
where $\alpha$ and $\beta$ satisfy (\ref{sdf2334}) and $\Gamma(t)$ is given by
$$\Gamma(t)=\max\{\mathcal{ {H}}(t),\,\mathcal{\widetilde{H}}(t)\}.$$
Notice that the fact
$$\|\theta(t)\|_{L^{\infty}}\leq \|\theta_0\|_{L^{\infty}},$$
we deduce
$$\Gamma(t)\leq C\|\theta(t)\|_{L^{2}}^{\gamma}\|\theta(t)\|_{L^{\infty}}^{1-\gamma}\leq C\|\theta(t)\|_{L^{2}}^{\gamma}\|\theta_0\|_{L^{\infty}}^{1-\gamma},\quad \gamma\in (0,1).$$
Keeping in mind of \eqref{cvbmklp1}, there exists a $t_{0}>0$ such that
$$C \Gamma(t)\leq \frac{1}{2},\quad \forall\, t\geq t_{0}.$$
As a result, we deduce from \eqref{Thcwq014} that
\begin{align}\label{Thcwq015}
 \frac{d}{dt}\|\nabla \theta(t)\|_{L^{2}}^{2}+
 \|\Lambda_{x_{1}}^{\alpha}\nabla\theta\|_{L^{2}}^{2}+
 \|\Lambda_{x_{2}}^{\beta}\nabla\theta\|_{L^{2}}^{2}
 \leq 0,\quad \forall\, t\geq t_{0}.
\end{align}
According to the proof of Proposition \ref{dfdsfp37}, we conclude from \eqref{Thcwq015} that
\begin{align}
 \|\nabla\theta(t)\|_{L^2}\leq C_{0}(1+t)^{-\frac{\min\{\alpha,\,\beta\}}{2\alpha\beta }},\quad \forall\, t\geq t_{0}.\nonumber
\end{align}
As proved in Theorem \ref{OKTh1} that  (\ref{SQG}) has a unique global solution as long as $\alpha$ and $\beta$ satisfy (\ref{sdf2334}), the above decay estimate actually holds true for any $t\geq0$, namely,
\begin{align}
 \|\nabla\theta(t)\|_{L^2}\leq C_{0}(1+t)^{-\frac{\min\{\alpha,\,\beta\}}{2\alpha\beta }},\nonumber
\end{align}
which is \eqref{Thcwq001}.
By the way, it also follows from \eqref{Thcwq015} that for any $t\geq t_{0}$,
\begin{align}\label{Tbnmuy1}
\int_{t_{0}}^{t}(\|\Lambda_{x_{1}}^{\alpha}\nabla{\theta}(t)\|_{L^{2}}^{2}+
 \|\Lambda_{x_{2}}^{\beta}\nabla{\theta}(t)\|_{L^{2}}^{2})\leq \|\nabla\theta(t_{0})\|_{L^2}.
\end{align}
Using again the fact that (\ref{SQG}) has a unique global solution as long as $\alpha$ and $\beta$ satisfy (\ref{sdf2334}), we thus obtain from \eqref{Tbnmuy1} that
\begin{align}\label{Thcwq016}
\int_{0}^{\infty}(\|\Lambda_{x_{1}}^{\alpha}\nabla{\theta}(t)\|_{L^{2}}^{2}+
 \|\Lambda_{x_{2}}^{\beta}\nabla{\theta}(t)\|_{L^{2}}^{2})\leq C_{0}.
\end{align}
This ends the proof of Proposition \ref{Thrcas11}.
\end{proof}

\vskip .1in
Based on \eqref{Thcwq001} and \eqref{Thcwq016}, we can show that \eqref{pre1y338} of Proposition \ref{Lfgyt789j} still holds true. More precisely, we have
\begin{Pros}
If $\alpha$ and $\beta$ satisfy (\ref{sdf2334}) and $\theta_{0}\in {H}^{2}(\mathbb{R}^{2})$, then it holds
\begin{eqnarray}\label{Thcwq017}
\sup_{t\geq0}\|\Delta\theta(t)\|_{L^{2}}\leq
C_{0},
\end{eqnarray}
where the constant $C_{0}>0$ depends only on $\alpha,\,\beta$ and $\theta_{0}$.
\end{Pros}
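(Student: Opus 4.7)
The plan is to recycle the energy identity and the six‐term decomposition \eqref{uhjefgre1}--\eqref{uhjefgre2} that produced the $\dot{H}^2$ differential inequality in Proposition \ref{Lfgyt789j}, and then close the estimate using only the weaker inputs available here, namely the decay \eqref{Thcwq001} and the time integrability \eqref{Thcwq016} supplied by Proposition \ref{Thrcas11}. Concretely, I would split again into \textbf{Case 1:} $\alpha>\frac{1}{2}$ and \textbf{Case 2:} $\beta>\frac{1}{2}$ (one of which must hold under \eqref{sdf2334}), apply the anisotropic triple product bound \eqref{qtri} to $\Xi_1,\ldots,\Xi_6$ exactly as in Proposition \ref{Lfgyt789j}, and arrive at the same differential inequalities \eqref{hjywe350} and \eqref{hjywe351}: for Case 1,
\begin{equation*}
\frac{d}{dt}\|\Delta\theta\|_{L^{2}}^{2}+\|\Lambda_{x_{1}}^{\alpha}\Delta\theta\|_{L^{2}}^{2}+\|\Lambda_{x_{2}}^{\beta}\Delta\theta\|_{L^{2}}^{2}\leq C\bigl(\|\nabla\theta\|_{L^2}^{\frac{2\alpha}{2\alpha-1}}+\|\nabla\theta\|_{L^2}^{2}+\|\Lambda_{x_{1}}^{\alpha}\nabla\theta\|_{L^{2}}^{2}\bigr)\|\Delta\theta\|_{L^2}^{2},
\end{equation*}
and analogously for Case 2 with $\alpha$ and $\beta$ interchanged.

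The only new ingredient required is to verify that the prefactor on the right-hand side is integrable on $[0,\infty)$ \emph{without} invoking the $L^p$-decay of $\theta$ used in Proposition \ref{Lfgyt789j}. For the term $\|\Lambda_{x_{1}}^{\alpha}\nabla\theta\|_{L^{2}}^{2}$ (and its Case 2 counterpart) this is exactly \eqref{Thcwq016}. For $\|\nabla\theta\|_{L^2}^{2}$, the decay \eqref{Thcwq001} gives an $L^2$-norm that decays like $(1+t)^{-\min\{\alpha,\beta\}/(2\alpha\beta)}$, so $\|\nabla\theta\|_{L^2}^{2}\lesssim(1+t)^{-\min\{\alpha,\beta\}/(\alpha\beta)}=(1+t)^{-1/\max\{\alpha,\beta\}}$, which is time-integrable because $\max\{\alpha,\beta\}<1$ under \eqref{sdf2334}. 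For the super-quadratic term $\|\nabla\theta\|_{L^2}^{2\alpha/(2\alpha-1)}$ in Case 1 the exponent that results from \eqref{Thcwq001} is $\min\{\alpha,\beta\}/(\beta(2\alpha-1))$; when $\beta\leq\alpha$ this equals $1/(2\alpha-1)>1$ since $\alpha<1$, and when $\alpha\leq\beta$ it equals $\alpha/(\beta(2\alpha-1))$, which exceeds $1$ because $\beta\leq 1<\alpha/(2\alpha-1)$ for $\alpha\in(\tfrac12,1)$. Case 2 is symmetric.

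With integrability of the prefactor established, the classical Gronwall inequality applied to either \eqref{hjywe350} or \eqref{hjywe351} immediately yields
\begin{equation*}
\sup_{t\geq 0}\|\Delta\theta(t)\|_{L^{2}}^{2}+\int_{0}^{\infty}\bigl(\|\Lambda_{x_{1}}^{\alpha}\Delta\theta(\tau)\|_{L^{2}}^{2}+\|\Lambda_{x_{2}}^{\beta}\Delta\theta(\tau)\|_{L^{2}}^{2}\bigr)\,d\tau\leq C_{0},
\end{equation*}
which is \eqref{Thcwq017}. The main obstacle I anticipate is precisely the bookkeeping in the last paragraph: one must verify the time-integrability of $\|\nabla\theta\|_{L^{2}}^{2\alpha/(2\alpha-1)}$ (respectively $\|\nabla\theta\|_{L^{2}}^{2\beta/(2\beta-1)}$) under the weaker decay available here, and it is essential that the assumption $\alpha,\beta\in(0,1)$ in \eqref{sdf2334} be strict — the marginal case $\alpha=1$ or $\beta=1$ would require a separate argument, but it is excluded from the hypotheses of Theorem \ref{Lderq1}.
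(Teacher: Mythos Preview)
Your proposal is correct and follows essentially the same approach as the paper: it too recalls the differential inequalities \eqref{hjywe350} and \eqref{hjywe351}, verifies that the prefactors are integrable on $[0,\infty)$ using \eqref{Thcwq001} and \eqref{Thcwq016}, and closes with Gronwall. Your integrability bookkeeping for $\|\nabla\theta\|_{L^2}^{2\alpha/(2\alpha-1)}$ (and its $\beta$-counterpart) is more explicit than what the paper records, and your observation that the strict inequalities $\alpha,\beta<1$ in \eqref{sdf2334} are what make this work is exactly the point.
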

\begin{proof}
It follows from \eqref{hjywe350} and \eqref{hjywe351} that
\begin{equation}\label{ghnjkm1}
\frac{d}{dt}\|\Delta\theta(t)\|_{L^{2}}^{2}\leq\left\{\aligned
&C
(\|\nabla\theta\|_{L^2}^{\frac{2\alpha}{2\alpha-1}}+\|\nabla\theta\|_{L^2}
 ^{2}+
 \|\Lambda_{x_{1}}^{\alpha}\nabla\theta\|_{L^2}^{ 2}) \,\|\Delta \theta\|_{L^2}^{2}, \quad \frac{1}{2}<\alpha<1,\\
&C(\|\nabla\theta\|_{L^2}^{\frac{2\beta}{2\beta-1}}+
 \|\nabla\theta\|_{L^2}^{2}+\|\Lambda_{x_{2}}^{\beta}\nabla\theta\|_{L^2}^{2})\|\Delta \theta\|_{L^2}^{2}, \quad \frac{1}{2}<\beta<1.
\endaligned\right.
\end{equation}
Thanks to \eqref{Thcwq001} and \eqref{Thcwq016}, we obtain
\begin{equation*}
\left\{\aligned
&
\int_{0}^{\infty}(\|\nabla\theta(t)\|_{L^2}^{\frac{2\alpha}{2\alpha-1}}
+\|\nabla\theta(t)\|_{L^2}
 ^{2}+
 \|\Lambda_{x_{1}}^{\alpha}\nabla\theta(t)\|_{L^2}^{ 2}) \,dt\leq C_{0}, \quad \frac{1}{2}<\alpha<1,\\
& \int_{0}^{\infty}(\|\nabla\theta(t)\|_{L^2}^{\frac{2\beta}{2\beta-1}}+
 \|\nabla\theta(t)\|_{L^2}^{2}+\|\Lambda_{x_{2}}^{\beta}\nabla\theta(t)\|_{L^2}^{2})
  \,dt\leq C_{0},\quad \frac{1}{2}<\beta<1.
\endaligned\right.
\end{equation*}
Applying this and the Gronwall inequality to \eqref{ghnjkm1} yields \eqref{Thcwq017}.
\end{proof}

\vskip .1in
We point out that with \eqref{Thcwq017} in hand, we are able to show the preliminary decay estimate.
\begin{Pros}
If $\alpha$ and $\beta$ satisfy (\ref{sdf2334}), $\theta_{0}\in \dot{H}^{s}(\mathbb{R}^{2})$ with $s\geq2$.
Then the global smooth solution of \eqref{SQG} admits the decay estimate
\begin{align} \label{Thcwq018}
  \|\Lambda^{s} \theta(t)\|_{L^2}\leq C_{0}(1+t)^{-\frac{\min\{\alpha,\,\beta\}}{2\alpha\beta }},
\end{align}
where the constant $C_{0}>0$ depends only on $\alpha,\,\beta,\,s$ and $\theta_{0}$.
\end{Pros}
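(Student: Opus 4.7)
The plan is to adapt the argument of Proposition \ref{dfdsfp36} to the present setting, where the $L^p$-hypothesis on the initial data has been replaced by the conclusions \eqref{Thcwq001} and \eqref{Thcwq017}. Applying $\Lambda^s$ to \eqref{SQG}, taking the $L^2$-inner product with $\Lambda^s\theta$, and invoking the commutator estimate \eqref{yzz1} together with the divergence-free condition gives, exactly as in \eqref{pre1y344},
\begin{align*}
\frac{1}{2}\frac{d}{dt}\|\Lambda^s\theta(t)\|_{L^2}^2 + \|\Lambda_{x_1}^\alpha \Lambda^s\theta\|_{L^2}^2 + \|\Lambda_{x_2}^\beta \Lambda^s\theta\|_{L^2}^2 \leq C\|\nabla\theta\|_{L^q}\|\Lambda^{s+\frac{1}{q}}\theta\|_{L^2}^2
\end{align*}
for some $q>\max\{1/\alpha,\,1/\beta\}$. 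I would then repeat the interpolation step from \eqref{pre1y345}, based on \eqref{avbnuy8911}, bounding $\|\Lambda^{s+\frac{1}{q}}\theta\|_{L^2}^2$ by powers of the two anisotropic dissipation norms and $\|\nabla\theta\|_{L^2}$. After absorbing the dissipation on the left via Young's inequality, one is left with an estimate of the shape
\begin{align*}
\frac{d}{dt}\|\Lambda^s\theta(t)\|_{L^2}^2 + \|\Lambda_{x_1}^\alpha \Lambda^s\theta\|_{L^2}^2 + \|\Lambda_{x_2}^\beta \Lambda^s\theta\|_{L^2}^2 \leq C\bigl(\|\nabla\theta\|_{L^q}^{a_1}+\|\nabla\theta\|_{L^q}^{a_2}\bigr)\|\nabla\theta\|_{L^2}^2,
\end{align*}
with exponents $a_1,a_2$ depending only on $s,\alpha,\beta,q$.

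The key observation is that the Gagliardo--Nirenberg inequality $\|\nabla\theta\|_{L^q} \leq C\|\nabla\theta\|_{L^2}^{2/q}\|\Delta\theta\|_{L^2}^{1-2/q}$ combined with the uniform bounds \eqref{Thcwq001} and \eqref{Thcwq017} produces a time-independent majorant for the prefactor, so that
\begin{align*}
\frac{d}{dt}\|\Lambda^s\theta(t)\|_{L^2}^2 + \|\Lambda_{x_1}^\alpha \Lambda^s\theta\|_{L^2}^2 + \|\Lambda_{x_2}^\beta \Lambda^s\theta\|_{L^2}^2 \leq C\|\nabla\theta\|_{L^2}^2.
\end{align*}
A further application of \eqref{avbnuy8911}, analogous to the one carried out right after \eqref{pre1y345}, then upgrades the dissipation on the left to $\|\Lambda^s\theta\|_{L^2}^2$, modulo an additional uniformly bounded multiple of $\|\nabla\theta\|_{L^2}^2$; combining with \eqref{Thcwq001} delivers
\begin{align*}
\frac{d}{dt}\|\Lambda^s\theta(t)\|_{L^2}^2 + \|\Lambda^s\theta\|_{L^2}^2 \leq C(1+t)^{-\frac{\min\{\alpha,\,\beta\}}{\alpha\beta}}.
\end{align*}
Multiplying by $e^t$, integrating in time, and invoking the elementary bound \eqref{pre314} yields $\|\Lambda^s\theta(t)\|_{L^2}^2 \leq C_0(1+t)^{-\min\{\alpha,\beta\}/(\alpha\beta)}$, which is \eqref{Thcwq018} after taking the square root.

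The main technical point will be the choice of the auxiliary index $q$, which must lie strictly above $\max\{1/\alpha,\,1/\beta\}$ (so that the commutator-based estimate and subsequent Young's inequality are licit), and simultaneously be large enough that the exponents of $\|\nabla\theta\|_{L^2}$ and $\|\Delta\theta\|_{L^2}$ appearing in the prefactor are nonnegative. In the absence of the sharp $L^p$-decay of $\nabla\theta$ used in Proposition \ref{dfdsfp36}, it is essential that the residual factor multiplying $\|\nabla\theta\|_{L^2}^2$ depend only on quantities already known to be uniformly bounded in time, namely $\|\nabla\theta\|_{L^2}$ and $\|\Delta\theta\|_{L^2}$; this is precisely the role of \eqref{Thcwq001} and \eqref{Thcwq017} in the argument.
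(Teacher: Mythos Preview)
Your proposal is correct and follows essentially the same route as the paper. The paper simply invokes the differential inequality \eqref{pre1y346} (whose derivation is precisely the commutator-plus-interpolation argument you outline, with the prefactor controlled via \eqref{Thcwq017} in place of \eqref{pre1y338}), substitutes the $\dot H^1$-decay \eqref{Thcwq001} on the right-hand side, and then finishes with the $e^t$-integrating-factor step and \eqref{pre314} exactly as you describe.
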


\begin{proof}
It follows from \eqref{pre1y346} and \eqref{Thcwq001} that
\begin{align}
\frac{d}{dt}\|\Lambda^{s} {\theta}(t)\|_{L^{2}}^{2}
+ \|\Lambda^{s} {\theta}\|_{L^{2}}^{2}&\leq C\|\nabla\theta\|_{L^{2}}^{2}\leq C(1+t)^{-\frac{\min\{\alpha,\,\beta\}}{2\alpha\beta }}.\nonumber
\end{align}
Following the same argument adopted in dealing with \eqref{pre1y347} yields \eqref{Thcwq018} immediately.
\end{proof}

\begin{Pros}
If $\alpha$ and $\beta$ satisfy (\ref{sdf2334}) and $\theta_{0}\in \dot{H}^{2}(\mathbb{R}^{2})$, then the solution admits the decay estimate
\begin{align}\label{Tnmkdd1}
  \|\Delta\theta(t)\|_{L^2}\leq C_{0}(1+t)^{-\frac{\min\{\alpha,\,\beta\}}{ \alpha\beta }},
\end{align}
where the constant $C_{0}>0$ depends only on $\alpha,\,\beta$ and $\theta_{0}$.
\end{Pros}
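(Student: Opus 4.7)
The plan is to mirror the structure used earlier in Proposition \ref{fxcdfn9}, but now compensate for the absence of $L^p$ control by replacing $\|\theta\|_{L^2}$-decay with the uniform bound $\|\theta(t)\|_{L^2}\le \|\theta_0\|_{L^2}$ and leaning more heavily on the Fourier Splitting step. First, I would start from the $\dot H^2$-energy estimate \eqref{hjywe350} (when $\alpha>\frac12$) or \eqref{hjywe351} (when $\beta>\frac12$), which reads
\[
\frac{d}{dt}\|\Delta\theta\|_{L^2}^2+\|\Lambda_{x_1}^\alpha\Delta\theta\|_{L^2}^2+\|\Lambda_{x_2}^\beta\Delta\theta\|_{L^2}^2\le C\bigl(\|\nabla\theta\|_{L^2}^{\frac{2\alpha}{2\alpha-1}}+\|\nabla\theta\|_{L^2}^2+\|\Lambda_{x_1}^\alpha\nabla\theta\|_{L^2}^2\bigr)\|\Delta\theta\|_{L^2}^2,
\]
and analogously with roles swapped. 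Using $\|\Lambda_{x_1}^\alpha\nabla\theta\|_{L^2}\le C\|\nabla\theta\|_{L^2}^{1-\alpha}\|\Delta\theta\|_{L^2}^\alpha$, the uniform bound \eqref{Thcwq017} on $\|\Delta\theta\|_{L^2}$, and the decay \eqref{Thcwq001} $\|\nabla\theta\|_{L^2}\le C_0(1+t)^{-\frac{\min\{\alpha,\beta\}}{2\alpha\beta}}$, the coefficient of $\|\Delta\theta\|_{L^2}^2$ can be bounded by $C(1+t)^{-\Omega}$ with some $\Omega>0$; the powers $\frac{2\alpha}{2\alpha-1},2$, and the interpolated term all decay at a rate strictly positive in $t$, which is all that is needed in what follows.

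Next I would bring in the Fourier Splitting device exactly as in \eqref{pre1y351}--\eqref{tyxcv55}. Setting $E(t)=\{\xi:|\xi_1|^{2\alpha}+|\xi_2|^{2\beta}\le f(t)\}$ with $f(t)=\varrho/(1+t)$, the anisotropic dissipation controls
\[
\|\Lambda_{x_1}^\alpha\Delta\theta\|_{L^2}^2+\|\Lambda_{x_2}^\beta\Delta\theta\|_{L^2}^2\ge f(t)\|\Delta\theta\|_{L^2}^2-f(t)\int_{E(t)}|\widehat{\Delta\theta}|^2\,d\xi.
\]
The crucial low-frequency estimate uses only the trivial bound $\|\theta\|_{L^2}\le\|\theta_0\|_{L^2}$: on $E(t)$ one has $|\xi_1|^2\le f(t)^{1/\alpha}$ and $|\xi_2|^2\le f(t)^{1/\beta}$, hence
\[
\int_{E(t)}|\widehat{\Delta\theta}|^2\,d\xi\le\int_{E(t)}|\xi|^4|\widehat\theta|^2\,d\xi\le C\bigl(f(t)^{2/\alpha}+f(t)^{2/\beta}\bigr)\|\theta_0\|_{L^2}^2,
\]
and since $2\min\{1/\alpha,1/\beta\}=2\min\{\alpha,\beta\}/(\alpha\beta)$, this yields the source term $f(t)\bigl(f(t)^{2/\alpha}+f(t)^{2/\beta}\bigr)\lesssim (1+t)^{-1-\frac{2\min\{\alpha,\beta\}}{\alpha\beta}}$.

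Combining these pieces gives
\[
\frac{d}{dt}\|\Delta\theta\|_{L^2}^2+\frac{\varrho}{1+t}\|\Delta\theta\|_{L^2}^2\le C(1+t)^{-1-\frac{2\min\{\alpha,\beta\}}{\alpha\beta}}+\widetilde C(1+t)^{-\Omega}\|\Delta\theta\|_{L^2}^2,
\]
and choosing $\varrho$ sufficiently large so that $\varrho-\widetilde C>\frac{2\min\{\alpha,\beta\}}{\alpha\beta}$, multiplying by $(1+t)^{\varrho-\widetilde C}$ and integrating in time (exactly as in \eqref{tyxcv55}) yields the advertised
\[
\|\Delta\theta(t)\|_{L^2}\le C_0(1+t)^{-\frac{\min\{\alpha,\beta\}}{\alpha\beta}}.
\]
The main obstacle is that, in contrast with Proposition \ref{fxcdfn9}, we lack quantitative decay of $\|\theta(t)\|_{L^2}$, so the gain in the Fourier Splitting source term must come entirely from the volume of $E(t)$ and from the four derivatives of $\Delta\theta$; one must check that $f(t)=\varrho/(1+t)$ is the correct admissible scale. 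A secondary subtlety is verifying in detail that the coefficient $C(1+t)^{-\Omega}$ produced in the first paragraph does not require $\Omega>1$ (only that it be absorbable by enlarging $\varrho$), since $\|\Delta\theta\|_{L^2}$ is only known to be bounded, not decaying, at the moment of writing the differential inequality.
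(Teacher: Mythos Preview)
Your overall strategy is the same as the paper's, but there is a genuine gap in the ``secondary subtlety'' you flag at the end: the absorption step \emph{does} require $\Omega>1$, and enlarging $\varrho$ cannot compensate when $\Omega\le 1$. Indeed, the passage from
\[
\frac{d}{dt}\|\Delta\theta\|_{L^2}^2+\frac{\varrho}{1+t}\|\Delta\theta\|_{L^2}^2\le C(1+t)^{-1-\frac{2\min\{\alpha,\beta\}}{\alpha\beta}}+\widetilde C(1+t)^{-\Omega}\|\Delta\theta\|_{L^2}^2
\]
to an inequality with left coefficient $\frac{\varrho-\widetilde C}{1+t}$ relies on the bound $(1+t)^{-\Omega}\le (1+t)^{-1}$, which fails for $\Omega<1$; when $\Omega<1$ the coefficient $\widetilde C(1+t)^{-\Omega}$ eventually dominates $\frac{\varrho}{1+t}$ for every fixed $\varrho$, and the integrating-factor route also breaks down because $\int_0^t(1+s)^{-\Omega}\,ds$ diverges. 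And $\Omega<1$ does occur with your inputs: using only \eqref{Thcwq017} in the interpolation $\|\Lambda_{x_1}^\alpha\nabla\theta\|_{L^2}^2\le C\|\nabla\theta\|_{L^2}^{2-2\alpha}\|\Delta\theta\|_{L^2}^{2\alpha}\le C\|\nabla\theta\|_{L^2}^{2-2\alpha}$ together with \eqref{Thcwq001} gives the contribution $(1+t)^{-\frac{\min\{\alpha,\beta\}(1-\alpha)}{\alpha\beta}}$, and for instance $\alpha=0.6$, $\beta=0.5$ yields exponent $\frac{1-\alpha}{\alpha}=\tfrac{2}{3}<1$.

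The fix is exactly what the paper does: feed in the \emph{preliminary} decay \eqref{Thcwq018}, namely $\|\Delta\theta(t)\|_{L^2}\le C_0(1+t)^{-\frac{\min\{\alpha,\beta\}}{2\alpha\beta}}$, rather than just the uniform bound \eqref{Thcwq017}. With this, the three pieces $\|\nabla\theta\|_{L^2}^{\frac{2\alpha}{2\alpha-1}}$, $\|\nabla\theta\|_{L^2}^2$, and $\|\Delta\theta\|_{L^2}^2$ all decay at the common rate $(1+t)^{-\frac{\min\{\alpha,\beta\}}{\alpha\beta}}$ (or faster), and since $\frac{\min\{\alpha,\beta\}}{\alpha\beta}=\frac{1}{\max\{\alpha,\beta\}}>1$ for $\alpha,\beta\in(0,1)$, one legitimately obtains $\Omega>1$ and the Fourier Splitting argument \eqref{pre1y351}--\eqref{tyxcv55} closes as you describe. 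So the missing ingredient is precisely the intermediate Proposition giving \eqref{Thcwq018}; once you invoke it, your sketch becomes correct and coincides with the paper's proof.
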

\begin{proof}
We deduce from \eqref{mpdfhj347} that for $\alpha>\frac{1}{2}$
\begin{align}
  \frac{d}{dt}\|\Delta\theta(t)\|_{L^{2}}^{2}+\|\Lambda_{x_{1}}^{\alpha}\Delta\theta\|_{L^{2}}^{2}+
 \|\Lambda_{x_{2}}^{\beta}\Delta\theta\|_{L^{2}}^{2}
&\leq C
(\|\nabla\theta\|_{L^2}^{\frac{2\alpha}{2\alpha-1}}+\|\nabla\theta\|_{L^2}
 ^{2}+
 \|\Delta\theta\|_{L^2}^{2 }) \,\|\Delta \theta\|_{L^2}^{2}
 \nonumber\\
&\leq
 C_{0}\left((1+t)^{-\frac{\min\{\alpha,\,\beta\}}{\alpha\beta }\frac{2\alpha}{2\alpha-1}}+(1+t)^{-\frac{\min\{\alpha,\,\beta\}}{\alpha\beta }}\right)\nonumber\\&\quad \times\|\Delta \theta\|_{L^2}^{2}
 \nonumber\\
&\leq
 C_{0}(1+t)^{-\frac{\min\{\alpha,\,\beta\}}{\alpha\beta }} \,\|\Delta \theta\|_{L^2}^{2}.\nonumber
\end{align}
Similarly, \eqref{mpdfhj348} ensures that for $\beta>\frac{1}{2}$
\begin{align}
  \frac{d}{dt}\|\Delta\theta(t)\|_{L^{2}}^{2}+\|\Lambda_{x_{1}}^{\alpha}\Delta\theta\|_{L^{2}}^{2}+
 \|\Lambda_{x_{2}}^{\beta}\Delta\theta\|_{L^{2}}^{2}
&\leq C
(\|\nabla\theta\|_{L^2}^{\frac{2\beta}{2\beta-1}}+\|\nabla\theta\|_{L^2}
 ^{2}+
 \|\Delta\theta\|_{L^2}^{2 }) \,\|\Delta \theta\|_{L^2}^{2}
 \nonumber\\
&\leq
 C_{0}(1+t)^{-\frac{\min\{\alpha,\,\beta\}}{\alpha\beta }} \,\|\Delta \theta\|_{L^2}^{2}.\nonumber
\end{align}
Then for $\alpha$ and $\beta$ satisfying (\ref{sdf2334}), one derives
\begin{align}
  \frac{d}{dt}\|\Delta\theta(t)\|_{L^{2}}^{2}+\|\Lambda_{x_{1}}^{\alpha}\Delta\theta\|_{L^{2}}^{2}+
 \|\Lambda_{x_{2}}^{\beta}\Delta\theta\|_{L^{2}}^{2}
 \leq
 C_{0}(1+t)^{-\frac{\min\{\alpha,\,\beta\}}{\alpha\beta }} \,\|\Delta \theta\|_{L^2}^{2}.\nonumber
\end{align}
Notice that
$$\frac{\min\{\alpha,\,\beta\}}{\alpha\beta }>1,$$
we thus derive \eqref{Tnmkdd1} by using the same arguments used in dealing with \eqref{pre1y351}-\eqref{tyxcv55}.
\end{proof}

At this stage, we are ready to finish the proof of Theorem \ref{Lderq1}.
\begin{Pros}
If $\alpha$ and $\beta$ satisfy (\ref{sdf2334}), $\theta_{0}\in \dot{H}^{s}(\mathbb{R}^{2})$ with $s\geq0$.
Then the global smooth solution of \eqref{SQG} admits the decay estimate
\begin{align}
 \|\Lambda^{s} \theta(t)\|_{L^2}\leq C_{0}(1+t)^{-\frac{ \min\{\alpha,\,\beta\}s }{2\alpha\beta }},\nonumber
\end{align}
where the constant $C_{0}>0$ depends only on $\alpha,\,\beta,\,s$ and $\theta_{0}$.
\end{Pros}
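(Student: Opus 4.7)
The plan is to mimic Proposition \ref{dfdsfp37}, treating the two ranges $0\le s\le 2$ and $s>2$ separately, and to replace every appearance of an $L^{p}$-driven $L^{2}$ decay rate by the uniform bound $\|\theta(t)\|_{L^{2}}\le\|\theta_{0}\|_{L^{2}}$ from Proposition \ref{Spr1} together with the sharp $\dot H^{2}$-decay \eqref{Tnmkdd1}.

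For $0\le s\le 2$, nothing more than direct interpolation is required. Using \eqref{avbnuy8911} componentwise (or equivalently $|\xi|^{s}\le|\xi|^{2\cdot s/2}$ in Fourier) yields
$$\|\Lambda^{s}\theta(t)\|_{L^{2}}\le C\|\theta(t)\|_{L^{2}}^{1-s/2}\|\Delta\theta(t)\|_{L^{2}}^{s/2}\le C\|\theta_{0}\|_{L^{2}}^{1-s/2}\bigl((1+t)^{-\min\{\alpha,\beta\}/(\alpha\beta)}\bigr)^{s/2},$$
which is precisely the asserted bound with exponent $s\min\{\alpha,\beta\}/(2\alpha\beta)$.

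For $s>2$ I would carry out the Fourier-splitting argument of \eqref{pre1y351}--\eqref{tyxcv55}. Applying $\Lambda^{s}$ to \eqref{SQG}, pairing with $\Lambda^{s}\theta$, and invoking the commutator estimate \eqref{yzz1} together with \eqref{avbnuy8911} as in \eqref{pre1y349}--\eqref{pre1y350} leads to
$$\frac{d}{dt}\|\Lambda^{s}\theta\|_{L^{2}}^{2}+\|\Lambda_{x_{1}}^{\alpha}\Lambda^{s}\theta\|_{L^{2}}^{2}+\|\Lambda_{x_{2}}^{\beta}\Lambda^{s}\theta\|_{L^{2}}^{2}\le C\bigl(\|\nabla\theta\|_{L^{q}}^{\alpha q/(\alpha q-1)}+\|\nabla\theta\|_{L^{q}}^{\beta q/(\beta q-1)}\bigr)\|\Lambda^{s}\theta\|_{L^{2}}^{2}.$$
Interpolating $\|\nabla\theta\|_{L^{q}}\le C\|\nabla\theta\|_{L^{2}}^{2/q}\|\Delta\theta\|_{L^{2}}^{(q-2)/q}$ and inserting \eqref{Thcwq001} and \eqref{Tnmkdd1} gives $\|\nabla\theta\|_{L^{q}}\le C(1+t)^{-\min\{\alpha,\beta\}(q-1)/(\alpha\beta q)}$, so the right-hand side is controlled by $C(1+t)^{-\Omega}\|\Lambda^{s}\theta\|_{L^{2}}^{2}$ with
$$\Omega=\frac{\min\{\alpha,\beta\}}{\alpha\beta}\min\left\{\frac{\alpha(q-1)}{\alpha q-1},\,\frac{\beta(q-1)}{\beta q-1}\right\}.$$
Introducing $E(t)=\{\xi:|\xi_{1}|^{2\alpha}+|\xi_{2}|^{2\beta}\le\varrho/(1+t)\}$ and splitting the dissipation exactly as in \eqref{pre1y323}--\eqref{pre1y354}, but with the uniform bound $\|\theta\|_{L^{2}}\le\|\theta_{0}\|_{L^{2}}$ in place of the $L^{p}$-based decay of $\|\theta\|_{L^{2}}$, one arrives at
$$\frac{d}{dt}\|\Lambda^{s}\theta\|_{L^{2}}^{2}+\frac{\varrho-\widetilde C}{1+t}\|\Lambda^{s}\theta\|_{L^{2}}^{2}\le C(1+t)^{-1-\min\{\alpha,\beta\}s/(\alpha\beta)}.$$
Multiplying by $(1+t)^{\varrho-\widetilde C}$, integrating in time, and choosing $\varrho$ large enough then delivers $\|\Lambda^{s}\theta(t)\|_{L^{2}}\le C(1+t)^{-s\min\{\alpha,\beta\}/(2\alpha\beta)}$, which matches the bound obtained for $s\le 2$ and completes the range $s\ge 0$.

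The only delicate point is ensuring $\Omega>1$ for a finite (and explicit) $q$, so that the commutator term can indeed be absorbed by the factor $\varrho/(1+t)$ on the dissipative side. Since $\lim_{q\to\infty}\Omega=\min\{\alpha,\beta\}/(\alpha\beta)=1/\max\{\alpha,\beta\}$ and the constraint \eqref{sdf2334} forces $\max\{\alpha,\beta\}<1$, this limit is strictly larger than $1$, so $\Omega>1$ holds for all sufficiently large $q$. All remaining manipulations are minor variants of the computations already carried out in Section \ref{sec3}, and no genuinely new difficulty appears.
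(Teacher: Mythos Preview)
Your proposal is correct and follows essentially the same route as the paper: the paper likewise recalls \eqref{pre1y350}, bounds $\|\nabla\theta\|_{L^{q}}$ via interpolation between $\|\theta\|_{L^{2}}$ (uniformly bounded) and $\|\Delta\theta\|_{L^{2}}$ (decaying by \eqref{Tnmkdd1}) to obtain the same $(1+t)^{-\Omega}$ factor, and then defers to the Fourier-splitting calculation of Proposition~\ref{dfdsfp37}. Your explicit treatment of the range $0\le s\le 2$ by interpolation and your verification that $\Omega>1$ (the paper simply asserts this) are welcome clarifications but do not constitute a different argument.
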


\begin{proof}
We come back to \eqref{pre1y350} to get
\begin{align}
 \frac{d}{dt}\|\Lambda^{s} {\theta}(t)\|_{L^{2}}^{2}
+\|\Lambda_{x_{1}}^{\alpha}\Lambda^{s} {\theta}\|_{L^{2}}^{2}+
 \|\Lambda_{x_{2}}^{\beta}\Lambda^{s} {\theta}\|_{L^{2}}^{2}\leq C(\|\nabla \theta\|_{L^{q}}^{\frac{\alpha q}{\alpha q-1}}+\|\nabla \theta\|_{L^{q}}^{\frac{\beta q}{\beta q-1}} )\|\Lambda^{s} {\theta}\|_{L^{2}}^{2}.\nonumber
\end{align}
By \eqref{Tnmkdd1}, it implies
$$\|\nabla \theta\|_{L^{q}}\leq C\|\theta\|_{L^{2}}^{\frac{1}{ {q}}}\|\Delta\theta\|_{L^{2}}
^{\frac{q-1}{ {q}}}\leq C(1+t)^{ -\frac{\min\{\alpha,\,\beta\}(q-1)}{\alpha\beta q}}\leq C(1+t)^{-\frac{\min\{\alpha,\,\beta\}(q-1)}{\alpha\beta q}}.$$
We hence have
\begin{align}
 \frac{d}{dt}\|\Lambda^{s} {\theta}(t)\|_{L^{2}}^{2}
+\|\Lambda_{x_{1}}^{\alpha}\Lambda^{s} {\theta}\|_{L^{2}}^{2}+
 \|\Lambda_{x_{2}}^{\beta}\Lambda^{s} {\theta}\|_{L^{2}}^{2}\leq C(1+t)^{-\Omega}  \|\Lambda^{s} {\theta}\|_{L^{2}}^{2}.\nonumber
\end{align}
The remainder proof is the same as that for the proof of Proposition \ref{dfdsfp37}.
Consequently, we finish the proof of Theorem \ref{Lderq1}.
\end{proof}

\vskip .3in
\section{The proof of Theorem \ref{okspcrca11} }\setcounter{equation}{0}
\label{secadd1}
In this section, we are going to prove Theorem \ref{okspcrca11}.
Firstly, we apply $\Lambda^{s}$ to \eqref{SQG} and testing the equation by $\Lambda^{s}\theta$ to obtain
\begin{align}\label{sptadd401}
&\frac{1}{2}\frac{d}{dt}\|\Lambda^{s} {\theta}(t)\|_{L^{2}}^{2}
+\|\Lambda_{x_{1}}^{\alpha}\Lambda^{s} {\theta}\|_{L^{2}}^{2}+
 \|\Lambda_{x_{2}}^{\beta}\Lambda^{s} {\theta}\|_{L^{2}}^{2}\nonumber\\&=-\int_{\mathbb{R}^{2}}\Lambda^{s}(u \cdot \nabla\theta) \Lambda^{s}\theta\,dx\nonumber\\
 &=-\int_{\mathbb{R}^{2}}[\Lambda^{s},u]\cdot \nabla\theta  \Lambda^{s}\theta\,dx
 \nonumber\\
 &\leq C\|[\Lambda^{s},u]\cdot \nabla\theta\|_{L^{\frac{2(\alpha+\beta)}{\alpha+\beta+2\alpha\beta}}} \|\Lambda^{s}\theta\|_{L^{\frac{2(\alpha+\beta)}{\alpha+\beta-2\alpha\beta}}}
  \nonumber\\
 &\leq C(\|\nabla u\|_{L^{\frac{\alpha+\beta}{2\alpha\beta}}} \|\Lambda^{s}\theta\|_{L^{\frac{2(\alpha+\beta)}{\alpha+\beta-2\alpha\beta}}}
 +\|\nabla \theta\|_{L^{\frac{\alpha+\beta}{2\alpha\beta}}} \|\Lambda^{s}u\|_{L^{\frac{2(\alpha+\beta)}{\alpha+\beta-2\alpha\beta}}}) \|\Lambda^{s}\theta\|_{L^{\frac{2(\alpha+\beta)}{\alpha+\beta-2\alpha\beta}}}
 \nonumber\\
 &\leq C\|\nabla \theta\|_{L^{\frac{\alpha+\beta}{2\alpha\beta}}} \|\Lambda^{s}\theta\|_{L^{\frac{2(\alpha+\beta)}{\alpha+\beta-2\alpha\beta}}}^{2}.
\end{align}
Now we claim
\begin{align}\label{sptadd402}
 \|f\|_{L^{\frac{2(\alpha+\beta)}{\alpha+\beta-2\alpha\beta}}}\leq C
 \|\Lambda_{x_{1}}^{\alpha}f\|_{L^{2}}^{\frac{\beta}{\alpha+\beta}}
 \|\Lambda_{x_{2}}^{\beta}f\|_{L^{2}}^{\frac{\alpha}{\alpha+\beta}},
\end{align}
where $\alpha>0,\,\beta>0$ satisfy $\alpha+\beta>2\alpha\beta$. To show \eqref{sptadd402}, we recall the following fact due to \eqref{xcvyadi1},
\begin{align}\label{sfvhyt001u}
 \|\Lambda_{x_{2}}^{\delta_{2}}\Lambda_{x_{1}}^{\delta_{1}}f
\|_{L^{2}}\leq C\|\Lambda_{x_{2}}^{\frac{\delta_{2}}{1-\gamma}}f\|_{L^{2}}
^{1-\gamma}
\|\Lambda_{x_{1}}^{\frac{\delta_{1}}{\gamma}}f\|_{L^{2}}
^{\gamma}
\end{align}
where $\delta_{1}>0,\ \delta_{2}>0,\ \gamma\in (0,1)$.
By means of the one-dimensional Sobolev inequality, the Minkowski inequality and \eqref{sfvhyt001u}, we observe
\begin{align}
 \|f\|_{L^{\frac{2(\alpha+\beta)}{\alpha+\beta-2\alpha\beta}}}&= \Big\| \|f\|_{L_{x_{2}}^{\frac{2(\alpha+\beta)}{\alpha+\beta-2\alpha\beta}}}
\Big\|_{L_{x_{1}}^{\frac{2(\alpha+\beta)}{\alpha+\beta-2\alpha\beta}}}
\nonumber\\&\leq C \Big\|  \|\Lambda_{x_{2}}^{\frac{\alpha\beta}{\alpha+\beta}}f\|_{L_{x_{2}}^{2}}
\Big\|_{L_{x_{1}}^{\frac{2(\alpha+\beta)}{\alpha+\beta-2\alpha\beta}}}
\nonumber\\&\leq C \Big\|  \|\Lambda_{x_{2}}^{\frac{\alpha\beta}{\alpha+\beta}}f
\|_{L_{x_{1}}^{\frac{2(\alpha+\beta)}{\alpha+\beta-2\alpha\beta}}}
\Big\|_{L_{x_{2}}^{2}}
\nonumber\\&\leq
C \Big\|  \|\Lambda_{x_{1}}^{\frac{\alpha\beta}{\alpha+\beta}}\Lambda_{x_{2}}^{\frac{\alpha\beta}{\alpha+\beta}}f
\|_{L_{x_{1}}^{2}}
\Big\|_{L_{x_{2}}^{2}}
\nonumber\\&\equiv C \|  \Lambda_{x_{1}}^{\frac{\alpha\beta}{\alpha+\beta}}\Lambda_{x_{2}}^{\frac{\alpha\beta}{\alpha+\beta}}f
\|_{L^{2}}
\nonumber\\&\leq
C\|\Lambda_{x_{1}}^{\alpha}f\|_{L^{2}}^{\frac{\beta}{\alpha+\beta}}
\|\Lambda_{x_{2}}^{\beta}f\|_{L^{2}}^{\frac{\alpha}{\alpha+\beta}}
,\nonumber
\end{align}
which yields \eqref{sptadd402}. It is obvious to check that $\alpha+\beta>2\alpha\beta$ when $(\alpha,\,\beta)\in \digamma$. Consequently, using \eqref{sptadd402}, we deduce from \eqref{sptadd401} that
\begin{align}\label{sptadd403}
&\frac{1}{2}\frac{d}{dt}\|\Lambda^{s} {\theta}(t)\|_{L^{2}}^{2}
+\|\Lambda_{x_{1}}^{\alpha}\Lambda^{s} {\theta}\|_{L^{2}}^{2}+
 \|\Lambda_{x_{2}}^{\beta}\Lambda^{s} {\theta}\|_{L^{2}}^{2}
 \nonumber\\
 &\leq C\|\nabla \theta\|_{L^{\frac{\alpha+\beta}{2\alpha\beta}}}
 \|\Lambda_{x_{1}}^{\alpha}\Lambda^{s}\theta\|_{L^{2}}^{\frac{2\beta}{\alpha+\beta}}
 \|\Lambda_{x_{2}}^{\beta}\Lambda^{s}\theta\|_{L^{2}}^{\frac{2\alpha}{\alpha+\beta}}\nonumber\\
 &\leq C_{1}\|\Lambda^{2-\frac{4\alpha\beta}{\alpha+\beta}} {\theta}\|_{L^{2}}
 \|\Lambda_{x_{1}}^{\alpha}\Lambda^{s}\theta\|_{L^{2}}^{\frac{2\beta}{\alpha+\beta}}
 \|\Lambda_{x_{2}}^{\beta}\Lambda^{s}\theta\|_{L^{2}}^{\frac{2\alpha}{\alpha+\beta}},
\end{align}
where we have used the fact $\frac{\alpha+\beta}{2\alpha\beta}\geq2$ due to $(\alpha,\,\beta)\in \digamma$. Taking $s=2-\frac{4\alpha\beta}{\alpha+\beta}\geq0$, \eqref{sptadd403} reduces to
\begin{align}
&\frac{1}{2}\frac{d}{dt}\|\Lambda^{2-\frac{4\alpha\beta}{\alpha+\beta}} {\theta}(t)\|_{L^{2}}^{2}
+\|\Lambda_{x_{1}}^{\alpha}\Lambda^{2-\frac{4\alpha\beta}{\alpha+\beta}} {\theta}\|_{L^{2}}^{2}+
 \|\Lambda_{x_{2}}^{\beta}\Lambda^{2-\frac{4\alpha\beta}{\alpha+\beta}} {\theta}\|_{L^{2}}^{2}
 \nonumber\\
 &\leq C\|\Lambda^{2-\frac{4\alpha\beta}{\alpha+\beta}} {\theta}\|_{L^{2}}
 \|\Lambda_{x_{1}}^{\alpha}\Lambda^{2-\frac{4\alpha\beta}
 {\alpha+\beta}}\theta\|_{L^{2}}^{\frac{2\beta}{\alpha+\beta}}
 \|\Lambda_{x_{2}}^{\beta}\Lambda^{2-\frac{4\alpha\beta}
 {\alpha+\beta}}\theta\|_{L^{2}}^{\frac{2\alpha}{\alpha+\beta}}
 \nonumber\\
 &\leq C\|\Lambda^{2-\frac{4\alpha\beta}{\alpha+\beta}} {\theta}\|_{L^{2}}
 (\|\Lambda_{x_{1}}^{\alpha}\Lambda^{2-\frac{4\alpha\beta}
 {\alpha+\beta}}\theta\|_{L^{2}}^{2}+
 \|\Lambda_{x_{2}}^{\beta}\Lambda^{2-\frac{4\alpha\beta}
 {\alpha+\beta}}\theta\|_{L^{2}}^{2}),\nonumber
\end{align}
which implies
\begin{align}\label{sptadd404}
&\frac{1}{2}\frac{d}{dt}\|\Lambda^{2-\frac{4\alpha\beta}{\alpha+\beta}} {\theta}(t)\|_{L^{2}}^{2}
+\|\Lambda_{x_{1}}^{\alpha}\Lambda^{2-\frac{4\alpha\beta}{\alpha+\beta}} {\theta}\|_{L^{2}}^{2}+
 \|\Lambda_{x_{2}}^{\beta}\Lambda^{2-\frac{4\alpha\beta}{\alpha+\beta}} {\theta}\|_{L^{2}}^{2}
\nonumber\\
 &\leq C_{2}\|\Lambda^{2-\frac{4\alpha\beta}{\alpha+\beta}} {\theta}\|_{L^{2}}
 (\|\Lambda_{x_{1}}^{\alpha}\Lambda^{2-\frac{4\alpha\beta}
 {\alpha+\beta}}\theta\|_{L^{2}}^{2}+
 \|\Lambda_{x_{2}}^{\beta}\Lambda^{2-\frac{4\alpha\beta}
 {\alpha+\beta}}\theta\|_{L^{2}}^{2}).
\end{align}
Therefore, if initially
$$\|\Lambda^{2-\frac{4\alpha\beta}{\alpha+\beta}} {\theta}_{0}\|_{L^{2}}\leq \epsilon$$
with $\epsilon\leq\frac{1}{ C_{2}}$, then we get by applying the bootstrapping
argument to \eqref{sptadd404} that for all $t\geq0$,
\begin{align}\label{sptadd405}
\|\Lambda^{2-\frac{4\alpha\beta}{\alpha+\beta}} {\theta}(t)\|_{L^{2}}\leq \|\Lambda^{2-\frac{4\alpha\beta}{\alpha+\beta}} {\theta}_{0}\|_{L^{2}}\leq \epsilon.
\end{align}
Coming back to \eqref{sptadd403}, one gets
\begin{align}\label{sptadd406}
&\frac{1}{2}\frac{d}{dt}\|\Lambda^{s} {\theta}(t)\|_{L^{2}}^{2}
+\|\Lambda_{x_{1}}^{\alpha}\Lambda^{s} {\theta}\|_{L^{2}}^{2}+
 \|\Lambda_{x_{2}}^{\beta}\Lambda^{s} {\theta}\|_{L^{2}}^{2}
  \nonumber\\
 &\leq C_{1}\|\Lambda^{2-\frac{4\alpha\beta}{\alpha+\beta}} {\theta}\|_{L^{2}}
 \|\Lambda_{x_{1}}^{\alpha}\Lambda^{s}\theta\|_{L^{2}}^{\frac{2\beta}{\alpha+\beta}}
 \|\Lambda_{x_{2}}^{\beta}\Lambda^{s}\theta\|_{L^{2}}^{\frac{2\alpha}{\alpha+\beta}}
  \nonumber\\
 &\leq C_{1}\epsilon(
 \|\Lambda_{x_{1}}^{\alpha}\Lambda^{s}\theta\|_{L^{2}}^{2}+
 \|\Lambda_{x_{2}}^{\beta}\Lambda^{s}\theta\|_{L^{2}}^{2}),
\end{align}
where in the last line we have used \eqref{sptadd405}.
Taking $\epsilon$ small again such that $\epsilon\leq\frac{1}{2C_{1}}$, we derive from \eqref{sptadd406} that
\begin{align}\label{sptadd407}
 \frac{d}{dt}\|\Lambda^{s} {\theta}(t)\|_{L^{2}}^{2}
+\|\Lambda_{x_{1}}^{\alpha}\Lambda^{s} {\theta}\|_{L^{2}}^{2}+
 \|\Lambda_{x_{2}}^{\beta}\Lambda^{s} {\theta}\|_{L^{2}}^{2}
 \leq0.
\end{align}
Integrating \eqref{sptadd407} in time yields the global solution in $H^{s}$ with small initial data. To show the uniqueness, we consider two solutions $\theta^{(1)}$ and $\theta^{(2)}$ of \eqref{SQG} with the same initial data. Denoting $\widetilde{\theta}=\theta^{(1)}-\theta^{(2)}$ and $\widetilde{u}=u^{(1)}-u^{(2)}$ with $\widetilde{u}_{1}=-\mathcal
{R}_{2}\widetilde{\theta}$ and $\widetilde{u}_{2}=\mathcal {R}_{1}\widetilde{\theta}$, one thus derives
\begin{equation}\label{dfdnmu19}
\left\{\aligned
&\partial_{t}\widetilde{\theta}+(u^{(1)}\cdot \nabla)\widetilde{\theta}+\Lambda_{x_{1}}^{2\alpha}\widetilde{\theta}+
\Lambda_{x_{2}}^{2\beta}\widetilde{\theta}=-(\widetilde{u} \cdot \nabla)\theta^{(2)},\\
&\widetilde{\theta}(x, 0)=0.
\endaligned\right.
\end{equation}
It comes out by applying the basic $L^{2}$-estimate to \eqref{dfdnmu19} that
\begin{align}
\frac{1}{2}\frac{d}{dt}\|\widetilde{\theta}(t)\|_{L^{2}}^{2}+
 \|\Lambda_{x_{1}}^{\alpha}\widetilde{\theta}\|_{L^{2}}^{2}+
 \|\Lambda_{x_{2}}^{\beta}\widetilde{\theta}\|_{L^{2}}^{2}
 &=-\int_{\mathbb{R}^{2}}{(\widetilde{u} \cdot \nabla)\theta^{(2)}\widetilde{\theta}\,dx}\nonumber\\&
 \leq C\|\widetilde{u}\|_{L^{\frac{2(\alpha+\beta)}{\alpha+\beta-2\alpha\beta}}}\|\nabla \theta^{(2)}\|_{L^{\frac{\alpha+\beta}{2\alpha\beta}}}
 \|\widetilde{\theta}\|_{L^{\frac{2(\alpha+\beta)}{\alpha+\beta-2\alpha\beta}}} \nonumber\\&
 \leq C \|\nabla \theta^{(2)}\|_{L^{\frac{\alpha+\beta}{2\alpha\beta}}}
 \|\widetilde{\theta}\|_{L^{\frac{2(\alpha+\beta)}{\alpha+\beta-2\alpha\beta}}}^{2} \nonumber\\
 &\leq C_{3}\|\Lambda^{2-\frac{4\alpha\beta}{\alpha+\beta}} \theta^{(2)}\|_{L^{2}}
 \|\Lambda_{x_{1}}^{\alpha}\widetilde{\theta}\|_{L^{2}}^{\frac{2\beta}{\alpha+\beta}}
 \|\Lambda_{x_{2}}^{\beta}\widetilde{\theta}\|_{L^{2}}^{\frac{2\alpha}{\alpha+\beta}}
 \nonumber\\
 &\leq C_{3}\epsilon
 (\|\Lambda_{x_{1}}^{\alpha}\widetilde{\theta}\|_{L^{2}}^{2}+
 \|\Lambda_{x_{2}}^{\beta}\widetilde{\theta}\|_{L^{2}}^{2}),\nonumber
 \end{align}
where in the last line we have used \eqref{sptadd405} again.
Taking $\epsilon$ small again such that $\epsilon\leq\frac{1}{C_{3}}$, we have
$$\frac{d}{dt}\|\widetilde{\theta}(t)\|_{L^{2}}^{2}\leq0,$$
which along with $\widetilde{\theta}(x, 0)=0$ yields $\widetilde{\theta}(t)\equiv0$, thus the uniqueness follows.

Finally, we show the desired decay estimate \eqref{oksffpp11}. In fact, following the proof of Proposition \ref{dfdsfp37}, one gets from \eqref{sptadd407} that
\begin{align}
  \|\Lambda^{s} \theta(t)\|_{L^2}\leq C_{0}(1+t)^{-\frac{(\alpha+\beta)(2-p)+2\min\{\alpha,\,\beta\}sp}{4\alpha\beta p}},\nonumber
\end{align}
which is nothing but \eqref{oksffpp11}.
As a result, it suffices to take
$$\epsilon=\min\left\{\frac{1}{2C_{1}},\ \frac{1}{C_{2}},\ \frac{1}{C_{3}}\right\}.$$
Therefore, this ends the proof of Theorem \ref{okspcrca11}.

\vskip .3in
\section{The proof of Theorem \ref{Th3}}\setcounter{equation}{0}
\label{sec4}
This section is devoted to the proof of Theorem \ref{Th3}, which depends on an appropriately modified Fourier Splitting method.
Letting $W=\theta-\widetilde{\theta}$, we deduce from \eqref{SQG} and \eqref{QGssd} that
\begin{equation}\label{diffSQG}
\left\{\aligned
&\partial_{t}W+\Lambda_{x_{1}}^{2\alpha}W+\Lambda_{x_{2}}^{2\beta}W=-(u \cdot \nabla)\theta, \\
&W(x, 0)=0.
\endaligned\right.
\end{equation}
A straightforward energy estimate of \eqref{diffSQG} yields
\begin{align}\label{diff02}
\frac{1}{2}\frac{d}{dt}\|W(t)\|_{L^{2}}^{2}+\|\Lambda_{x_{1}}^{\alpha}W\|_{L^{2}}^{2}+
 \|\Lambda_{x_{2}}^{\beta}W\|_{L^{2}}^{2}&=-\int_{\mathbb{R}^{2}}(u \cdot \nabla)\theta W\,dx\nonumber\\
 &=-\int_{\mathbb{R}^{2}}(u \cdot \nabla)\theta (\theta-\widetilde{\theta})\,dx\nonumber\\
 &=\int_{\mathbb{R}^{2}}(u \cdot \nabla)\theta \widetilde{\theta}\,dx\nonumber\\
 &=\int_{\mathbb{R}^{2}}(u \cdot \nabla)\widetilde{\theta} \theta\,dx
 \nonumber\\
 &\leq C\|\nabla\widetilde{\theta}\|_{L^{\infty}}
 \|u\|_{L^{2}}\|\theta\|_{L^{2}} \nonumber\\
 &\leq C\|\nabla\widetilde{\theta}\|_{L^{\infty}}
\|\theta\|_{L^{2}}^{2}.
\end{align}
To deal with $\nabla\widetilde{\theta}$, we infer from \eqref{QGssd} that
\begin{equation}
\partial_{t}\widehat{\widetilde{\theta}}+|\xi_{1}|^{2\alpha}\widehat{\widetilde{\theta}}
+|\xi_{2}|^{2\beta}\widehat{\widetilde{\theta}}=0,\nonumber
\end{equation}
which allows us to derive
$$\widehat{\widetilde{\theta}}(t,\xi)=e^{-(|\xi_{1}|^{2\alpha}+|\xi_{2}|^{2\beta})t}\widehat{\theta_{0}}(\xi).$$
We thus have
\begin{align}\label{diff03}
\|\nabla\widetilde{\theta}\|_{L^{\infty}}&\leq \int_{\mathbb{R}^{2}}|\xi||\widehat{\widetilde{\theta}}(t,\xi)|\,d\xi\nonumber\\
&= \int_{\mathbb{R}^{2}}|\xi|e^{-(|\xi_{1}|^{2\alpha}+|\xi_{2}|^{2\beta})t}|\widehat{\theta_{0}}(\xi)|\,d\xi
\nonumber\\
&\leq \left(\int_{\mathbb{R}^{2}}|\xi|^{p}e^{-p(|\xi_{1}|^{2\alpha}+|\xi_{2}|^{2\beta})t}
\,d\xi\right)^{\frac{1}{p}}
\left(\int_{\mathbb{R}^{2}}
|\widehat{\theta_{0}}(\xi)|^{\frac{p}{p-1}}\,d\xi\right)^{1-\frac{1}{p}}
\nonumber\\
&\leq \left(\int_{\mathbb{R}^{2}}(|\xi_{1}|^{p}e^{-p(|\xi_{1}|^{2\alpha}+|\xi_{2}|^{2\beta})t}
+|\xi_{2}|^{p}e^{-p(|\xi_{1}|^{2\alpha}+|\xi_{2}|^{2\beta})t})
\,d\xi\right)^{\frac{1}{p}}\|\theta_{0}\|_{L^p},
\end{align}
where we have used the fact $|\xi|^{\varrho}\leq |\xi_{1}|^{\varrho}+|\xi_{2}|^{\varrho}$ for $\varrho\in [1,2]$, and the Hausdorff-Young inequality
$$\|\widehat{f}\|_{L^{\frac{\sigma}{\sigma-1}}}\leq \|{f}\|_{L^{\sigma}},\quad \sigma\in [1,2].$$
Direct computations show
\begin{align}\label{diff04}
\int_{\mathbb{R}^{2}}|\xi_{1}|^{p}e^{-p(|\xi_{1}|^{2\alpha}+|\xi_{2}|^{2\beta})t}\,d\xi&=
\int_{\mathbb{R}} e^{-p|\xi_{2}|^{2\beta}t} \,d\xi_{2}\int_{\mathbb{R}}|\xi_{1}|^{p}e^{-p|\xi_{1}|^{2\alpha}t}\,d\xi_{1}\nonumber\\
&=C
\int_{0}^{\infty} e^{-pr^{2\beta}t} \,dr\ \int_{0}^{\infty}\lambda^{p}e^{-p\lambda^{2\alpha}t}\,d\lambda
\nonumber\\
&=C t^{-\frac{1}{2\beta}}t^{-\frac{p+1}{2\alpha}}\nonumber\\
&\leq C (1+t)^{-\frac{\alpha+(p+1)\beta}{2\alpha\beta}},\quad \mbox{for}\  t\geq1.
\end{align}
Similarly, we have
\begin{align}\label{diff05}
\int_{\mathbb{R}^{2}}|\xi_{2}|^{p}e^{-p(|\xi_{1}|^{2\alpha}+|\xi_{2}|^{2\beta})t}\,d\xi \leq C (1+t)^{-\frac{(p+1)\alpha+\beta}{2\alpha\beta}},\quad \mbox{for}\  t\geq1.
\end{align}
Putting \eqref{diff04} and \eqref{diff05} into \eqref{diff03} gives
\begin{align}
\|\nabla\widetilde{\theta}\|_{L^{\infty}}\leq C (1+t)^{-\frac{\min\{\alpha+(p+1)\beta,\, (p+1)\alpha+\beta\}}{2\alpha\beta p}}.\nonumber
\end{align}
This along with \eqref{diff02} yields
\begin{align}
&\frac{1}{2}\frac{d}{dt}\|W(t)\|_{L^{2}}^{2}+\|\Lambda_{x_{1}}^{\alpha}W\|_{L^{2}}^{2}+
 \|\Lambda_{x_{2}}^{\beta}W\|_{L^{2}}^{2}
 \nonumber\\&\leq C(1+t)^{-\frac{\min\{\alpha+(p+1)\beta,\, (p+1)\alpha+\beta\}}{2\alpha\beta p}}
\|\theta\|_{L^{2}}^{2} \label{dfghas6}\\
  &\leq C(1+t)^{-\frac{\min\{\alpha+(p+1)\beta,\, (p+1)\alpha+\beta\}}{2\alpha\beta p}}
 (1+t)^{-\frac{(\alpha+\beta)(2-p)}{2\alpha\beta p}}
 \nonumber\\
  &\leq C(1+t)^{-\frac{\min\{\alpha+(p+1)\beta,\, (p+1)\alpha+\beta\}+(\alpha+\beta)(2-p)}{2\alpha\beta p}}.\label{diff06}
\end{align}
Denoting
$$E(t)=\{\xi\in \mathbb{R}^2: \  |\xi_{1}|^{2\alpha}+|\xi_{2}|^{2\beta}\leq f(t)\},$$
we deduce from \eqref{pre1y323} that
\begin{align}
\|\Lambda_{x_{1}}^{\alpha}W\|_{L^{2}}^{2}+
 \|\Lambda_{x_{2}}^{\beta}W\|_{L^{2}}^{2}
 \geq f(t)\int_{\mathbb{R}^{2}}|\widehat{W}(t,\xi)|^{2}\,d\xi
 -f(t)\int_{E(t)}|\widehat{W}(t,\xi)|^{2}\,d\xi,\nonumber
\end{align}
which along with \eqref{diff06} implies
\begin{align}\label{diff07}
\frac{d}{dt}\|W(t)\|_{L^{2}}^{2}+2f(t)\|W\|_{L^{2}}^{2}\leq& 2f(t)\int_{E(t)}|\widehat{W}(t,\xi)|^{2}\,d\xi\nonumber\\&+ C(1+t)^{-\frac{\min\{\alpha+(p+1)\beta,\, (p+1)\alpha+\beta\}+(\alpha+\beta)(2-p)}{2\alpha\beta p}}.
\end{align}
We conclude from \eqref{diff07} that
\begin{align}\label{diff08}
\frac{d}{dt}\left(e^{2\int_{0}^{t}f(\tau)\,d\tau}\|W(t)\|_{L^{2}}^{2}\right)\leq& 2e^{2\int_{0}^{t}f(\tau)\,d\tau}f(t)\int_{E(t)}|\widehat{W}(t,\xi)|^{2}\,d\xi\nonumber\\&+ Ce^{2\int_{0}^{t}f(\tau)\,d\tau}(1+t)^{-\frac{\min\{\alpha+(p+1)\beta,\, (p+1)\alpha+\beta\}+(\alpha+\beta)(2-p)}{2\alpha\beta p}}.
\end{align}
Keeping in mind \eqref{diffSQG}, we get
\begin{align}
\widehat{W}(t,\xi)&=\int_{0}^{t}e^{-(|\xi_{1}|^{2\alpha}+|\xi_{2}|^{2\beta})(t-\tau)}\xi\cdot \widehat{u\theta}(\tau,\xi)\,d\tau.\nonumber
\end{align}
This further shows
\begin{align}\label{diff09}
|\widehat{W}(t,\xi)|&\leq|\xi|\int_{0}^{t}|\widehat{u\theta}(\tau,\xi)|\,d\tau\nonumber\\
&\leq|\xi|\int_{0}^{t}\|u\theta(\tau)\|_{L^{1}}\,d\tau\nonumber\\
&\leq |\xi|\int_{0}^{t}\|u(\tau)\|_{L^{2}}\|\theta(\tau)\|_{L^{2}}\,d\tau\nonumber\\
&\leq |\xi|\int_{0}^{t}\|\theta(\tau)\|_{L^{2}}^{2}\,d\tau
\nonumber\\
&\leq C|\xi|\int_{0}^{t} (1+\tau)^{-\frac{(\alpha+\beta)(2-p)}{2\alpha\beta p}}\,d\tau.
\end{align}
 Therefore, we deduce from \eqref{diff09} that
\begin{equation}\label{diff10}
|\widehat{W}(t,\xi)|^{2}\leq\left\{\aligned
&C|\xi|^{2}\ln ^{2}(1+t), \qquad \ \ \qquad \qquad \mbox{if}\ \ p=\frac{2(\alpha+\beta)}{2\alpha\beta+\alpha+\beta},\\
&C|\xi|^{2},\qquad \qquad \qquad \ \qquad \ \ \qquad\mbox{if}\ \ p<\frac{2(\alpha+\beta)}{2\alpha\beta+\alpha+\beta},\\
&C|\xi|^{2}(1+t)^{2-\frac{(\alpha+\beta)(2-p)}{\alpha\beta p}}, \ \quad \qquad \mbox{if}\ \ p>\frac{2(\alpha+\beta)}{2\alpha\beta+\alpha+\beta}.
\endaligned\right.
\end{equation}
Notice
\begin{align}\label{diff11}
\int_{E(t)}|\xi|^{2}\,d\xi&=\int_{E(t)}|\xi_{1}|^{2}\,d\xi+\int_{E(t)}|\xi_{2}|^{2}\,d\xi.
\end{align}
We obtain by direct computations that
\begin{align}\label{diff12}
\int_{E(t)}|\xi_{1}|^{2}\,d\xi&=\int_{\{\xi\in \mathbb{R}^2: \  |\xi_{1}|^{2\alpha}+|\xi_{2}|^{2\beta}\leq f(t)\}}|\xi_{1}|^{2}\,d\xi\nonumber\\
&=\int_{\{\xi_{2}\in \mathbb{R}: \  |\xi_{2}|\leq f(t)^{\frac{1}{2\beta}} \}}\int_{\{\xi_{1}\in \mathbb{R}: \  |\xi_{1}|\leq (f(t)-|\xi_{2}|^{2\beta})^{\frac{1}{2\alpha}}\}}|\xi_{1}|^{2}\,d\xi_{1}d\xi_{2}
\nonumber\\
&=\frac{2}{3}\int_{\{\xi_{2}\in \mathbb{R}: \  |\xi_{2}|\leq f(t)^{\frac{1}{2\beta}} \}}(f(t)-|\xi_{2}|^{2\beta})^{\frac{3}{2\alpha}}d\xi_{2}
\nonumber\\
&=\frac{2}{3\beta}\int_{0}^{1}(f(t)-\eta f(t))^{\frac{3}{2\alpha}}\eta^{\frac{1}{2\beta}-1}f(t)^{\frac{1}{2\beta}}\,d\eta
\nonumber\\
&=\frac{2}{3\beta}f(t)^{\frac{\alpha+3\beta}{2\alpha\beta}}\int_{0}^{1}(1-\eta )^{\frac{3}{2\alpha}}\eta^{\frac{1}{2\beta}-1}\,d\eta
\nonumber\\
&\triangleq Cf(t)^{\frac{\alpha+3\beta}{2\alpha\beta}}.
\end{align}
Similarly, one derives
\begin{align}\label{diff13}
\int_{E(t)}|\xi_{2}|^{2}\,d\xi\leq Cf(t)^{\frac{3\alpha+\beta}{2\alpha\beta}}.
\end{align}
Inserting \eqref{diff12} and \eqref{diff13} into \eqref{diff11}, we arrive at
\begin{align}\label{diff14}
\int_{E(t)}|\xi|^{2}\,d\xi&\leq C\left(f(t)^{\frac{\alpha+3\beta}{2\alpha\beta}}+f(t)^{\frac{3\alpha+\beta}{2\alpha\beta}}\right).
\end{align}
Combining \eqref{diff10} and \eqref{diff14} implies
\begin{align}\label{difgh89p}
 &\int_{E(t)}|\widehat{W}(t,\xi)|^{2}\,d\xi
\nonumber \\ &\leq
  \left\{
  \begin{aligned}
   C\left(f(t)^{\frac{\alpha+3\beta}{2\alpha\beta}}+f(t)^{\frac{3\alpha+\beta}{2\alpha\beta}}\right)\ln ^{2}(1+t), \qquad \quad \quad \ \  \mbox{if}\ \ p=\frac{2(\alpha+\beta)}{2\alpha\beta+\alpha+\beta}, \\
   C\left(f(t)^{\frac{\alpha+3\beta}{2\alpha\beta}}+f(t)^{\frac{3\alpha+\beta}{2\alpha\beta}}\right),\qquad \qquad \qquad \quad \ \ \ \quad\mbox{if} \ \ p<\frac{2(\alpha+\beta)}{2\alpha\beta+\alpha+\beta},\\
   C\left(f(t)^{\frac{\alpha+3\beta}{2\alpha\beta}}
   +f(t)^{\frac{3\alpha+\beta}{2\alpha\beta}}\right)
(1+t)^{2-\frac{(\alpha+\beta)(2-p)]}{\alpha\beta p}}, \quad  \mbox{if}\  \ p>\frac{2(\alpha+\beta)}{2\alpha\beta+\alpha+\beta}.
  \end{aligned}
  \right.
\end{align}
Now taking $f(t)=\frac{\kappa}{2(1+t)}$ with $\kappa$ suitable large, \eqref{difgh89p} and \eqref{diff08} reduce to
\begin{align}\label{diff15}
 &\int_{E(t)}|\widehat{W}(t,\xi)|^{2}\,d\xi
\nonumber \\ &\leq
  \left\{
  \begin{aligned}
   C(1+t)^{-\frac{\min\{\alpha+3\beta,\, 3\alpha+\beta\}}{2\alpha\beta}}\ln ^{2}(1+t),  \quad \quad \mbox{if}\ \ p=\frac{2(\alpha+\beta)}{2\alpha\beta+\alpha+\beta}, \\
   C(1+t)^{-\frac{\min\{\alpha+3\beta,\, 3\alpha+\beta\}}{2\alpha\beta}},\qquad   \qquad \quad \  \quad\mbox{if}\ \ p<\frac{2(\alpha+\beta)}{2\alpha\beta+\alpha+\beta},\\
   C(1+t)^{2-\frac{(\alpha+\beta)(2-p)}{\alpha\beta p}-\frac{\min\{\alpha+3\beta,\, 3\alpha+\beta\}}{2\alpha\beta}},  \qquad  \mbox{if}\ \ p>\frac{2(\alpha+\beta)}{2\alpha\beta+\alpha+\beta},
  \end{aligned}
  \right.
\end{align}
\begin{align}\label{diff16}
\frac{d}{dt}\left((1+t)^{\kappa}\|W(t)\|_{L^{2}}^{2}\right)\leq& C(1+t)^{\kappa-1}\int_{E(t)}|\widehat{W}(t,\xi)|^{2}\,d\xi\nonumber\\&+ C(1+t)^{\kappa-\frac{\min\{\alpha+(p+1)\beta,\, (p+1)\alpha+\beta\}+(\alpha+\beta)(2-p)}{2\alpha\beta p}}.
\end{align}
Integrating \eqref{diff16} in time gives
\begin{align}
(1+t)^{\kappa}\|W(t)\|_{L^{2}}^{2}\leq& C(1+t)^{\kappa}\int_{E(t)}|\widehat{W}(t,\xi)|^{2}\,d\xi\nonumber\\&+ C(1+t)^{\kappa+1-\frac{\min\{\alpha+(p+1)\beta,\, (p+1)\alpha+\beta\}+(\alpha+\beta)(2-p)}{2\alpha\beta p}},\nonumber
\end{align}
which further implies
\begin{align}\label{diff17}
\|W(t)\|_{L^{2}}^{2}\leq&  C\int_{E(t)}|\widehat{W}(t,\xi)|^{2}\,d\xi  + C(1+t)^{-\frac{\mathcal{A}}{2\alpha\beta p}}.
\end{align}
where $\mathcal{A}$ is given by \eqref{sdfg68}.
Therefore, for the case $p=\frac{2(\alpha+\beta)}{2\alpha\beta+\alpha+\beta}$, we have from \eqref{diff15} and \eqref{diff17} that
\begin{align}\label{diff18}
\|W(t)\|_{L^{2}}^{2}&\leq  C(1+t)^{-\frac{\min\{\alpha+3\beta,\, 3\alpha+\beta\}}{2\alpha\beta}}\ln ^{2}(1+t) + C(1+t)^{-\frac{\mathcal{A}}{2\alpha\beta p}}\nonumber\\
&\leq  C(1+t)^{-\frac{\mathcal{A}}{2\alpha\beta p}}\nonumber\\
&\leq  C(1+t)^{-\frac{\min\{\alpha+(p+1)\beta,\, (p+1)\alpha+\beta\}}{2\alpha\beta p}}.
\end{align}
For the case $p<\frac{2(\alpha+\beta)}{2\alpha\beta+\alpha+\beta}$, one gets
\begin{align}\label{diff19}
\|W(t)\|_{L^{2}}^{2}&\leq  C(1+t)^{-\frac{\min\{\alpha+3\beta,\, 3\alpha+\beta\}}{2\alpha\beta}}  + C(1+t)^{-\frac{\mathcal{A}}{2\alpha\beta p}}\nonumber\\
&\leq   C(1+t)^{-\frac{\min\{\min\{\alpha+3\beta,\, 3\alpha+\beta\}p,\  \mathcal{A}\}}{2\alpha\beta p}}.
\end{align}
Finally, for the case $p>\frac{2(\alpha+\beta)}{2\alpha\beta+\alpha+\beta}$, we obtain
\begin{align}\label{diff20}
\|W(t)\|_{L^{2}}^{2}&\leq  C(1+t)^{2-\frac{(\alpha+\beta)(2-p)}{\alpha\beta p}-\frac{\min\{\alpha+3\beta,\, 3\alpha+\beta\}}{2\alpha\beta}} + C(1+t)^{-\frac{\mathcal{A}}{2\alpha\beta p}}\nonumber\\
&\leq    C(1+t)^{-\frac{\min\{\min\{\alpha+3\beta,\, 3\alpha+\beta\}p+2(\alpha+\beta)(2-p)-4\alpha\beta p,\ \mathcal{A}\}}{2\alpha\beta p}}.
\end{align}
Collecting \eqref{diff18}, \eqref{diff19} and \eqref{diff20} yields the desired estimate.
Therefore, the proof of Theorem \ref{Th3} is completed.

\vskip .3in
\section{The proof of Theorem \ref{sdfthe61}}\setcounter{equation}{0}
\label{yetee1}
The proof can be performed as that for the proof of Theorem \ref{Th3}. For the sake of completeness, we sketch it here.
It follows from \eqref{dfghas6} that
\begin{align}\label{dfvgh701}
 \frac{1}{2}\frac{d}{dt}\|W(t)\|_{L^{2}}^{2}+\|\Lambda_{x_{1}}^{\alpha}W\|_{L^{2}}^{2}+
 \|\Lambda_{x_{2}}^{\beta}W\|_{L^{2}}^{2}
 &\leq C(1+t)^{-\frac{\min\{\alpha+3\beta,\, 3\alpha+\beta\}}{4\alpha\beta }}
\|\theta\|_{L^{2}}^{2}  \nonumber\\&\leq  C
(1+t)^{-\frac{\min\{\alpha+3\beta,\, 3\alpha+\beta\}}{4\alpha\beta }}\|\theta_{0}\|_{L^{2}}^{2}\nonumber\\&\leq C(1+t)^{-\frac{\min\{\alpha+3\beta,\, 3\alpha+\beta\}}{4\alpha\beta }}.
\end{align}
Just as for \eqref{diff08}, we get from \eqref{dfvgh701} that
\begin{align} \label{dfvgh702}
\frac{d}{dt}\left(e^{2\int_{0}^{t}f(\tau)\,d\tau}\|W(t)\|_{L^{2}}^{2}\right)\leq& 2e^{2\int_{0}^{t}f(\tau)\,d\tau}f(t)\int_{E(t)}|\widehat{W}(t,\xi)|^{2}\,d\xi\nonumber\\&+ Ce^{2\int_{0}^{t}f(\tau)\,d\tau}(1+t)^{-\frac{\min\{\alpha+3\beta,\, 3\alpha+\beta\}}{4\alpha\beta }}.
\end{align}
Moreover, \eqref{diff09} ensures
\begin{align}
|\widehat{W}(t,\xi)|
\leq |\xi|\int_{0}^{t}\|\theta(\tau)\|_{L^{2}}^{2}\,d\tau
\leq Ct|\xi|\|\theta_{0}\|_{L^{2}}^{2}.\nonumber
\end{align}
This along with \eqref{difgh89p} implies
$$\int_{E(t)}|\widehat{W}(t,\xi)|^{2}\,d\xi\leq C\left(f(t)^{\frac{\alpha+3\beta}{2\alpha\beta}}
   +f(t)^{\frac{3\alpha+\beta}{2\alpha\beta}}\right)
(1+t)^{2}.$$
We thus get by inserting the above estimate into \eqref{dfvgh702}
\begin{align} \label{dfvgh703}
\frac{d}{dt}\left(e^{2\int_{0}^{t}f(\tau)\,d\tau}\|W(t)\|_{L^{2}}^{2}\right)\leq& 2e^{2\int_{0}^{t}f(\tau)\,d\tau}f(t)\left(f(t)^{\frac{\alpha+3\beta}{2\alpha\beta}}
   +f(t)^{\frac{3\alpha+\beta}{2\alpha\beta}}\right)
(1+t)^{2}\nonumber\\&+ Ce^{2\int_{0}^{t}f(\tau)\,d\tau}(1+t)^{-\frac{\min\{\alpha+3\beta,\, 3\alpha+\beta\}}{4\alpha\beta }}.
\end{align}
Via taking $f(t)=\frac{\kappa}{2(1+t)}$ with $\kappa$ suitable large, \eqref{dfvgh703} becomes
\begin{align}
\frac{d}{dt}\left((1+t)^{\kappa}\|W(t)\|_{L^{2}}^{2}\right)\leq& C (1+t)^{\kappa-\frac{\min\{\alpha+3\beta,\, 3\alpha+\beta\}-2\alpha\beta}{2\alpha\beta }} + C(1+t)^{\kappa-\frac{\min\{\alpha+3\beta,\, 3\alpha+\beta\}}{4\alpha\beta }}.\nonumber
\end{align}
Integrating it in time yields
\begin{align}
(1+t)^{\kappa}\|W(t)\|_{L^{2}}^{2}\leq& C (1+t)^{\kappa+1-\frac{\min\{\alpha+3\beta,\, 3\alpha+\beta\}-2\alpha\beta}{2\alpha\beta }} + C(1+t)^{\kappa+1-\frac{\min\{\alpha+3\beta,\, 3\alpha+\beta\}}{4\alpha\beta }}.\nonumber
\end{align}
This further shows
\begin{align}
\|W(t)\|_{L^{2}}^{2}\leq& C (1+t)^{-\frac{\min\{\alpha+3\beta,\, 3\alpha+\beta\}-4\alpha\beta}{2\alpha\beta }} + C(1+t)^{-\frac{\min\{\alpha+3\beta,\, 3\alpha+\beta\}-4\alpha\beta}{4\alpha\beta }}\nonumber\\
\leq& C(1+t)^{-\frac{\min\{\alpha+3\beta,\, 3\alpha+\beta\}-4\alpha\beta}{4\alpha\beta }},\nonumber
\end{align}
where we have used the restriction $\min\{\alpha+3\beta,\, 3\alpha+\beta\}>4\alpha\beta$. Thus, this gives \eqref{dfcnmza1} and and completes the proof of Theorem \ref{sdfthe61}.

\appendix

\vskip .3in
\section{The case \eqref{SQG} with $\alpha=\beta=\frac{1}{2}$}
\label{appdexA}

In this appendix, we consider \eqref{SQG} with $\Lambda_{x_{1}} \theta
+ \Lambda_{x_{2}}\theta$, namely,
\begin{equation}\label{crSQG}
\left\{\aligned
&\partial_{t}\theta+(u \cdot \nabla)\theta+\Lambda_{x_{1}}\theta+\Lambda_{x_{2}}\theta=0, \\
&{u}=\mathcal {R}^{\perp}\theta,\\
&\theta(x, 0)=\theta_{0}(x).
\endaligned\right.
\end{equation}
More precisely, we have the following small data global existence and the decay estimate results of (\ref{crSQG}).
\begin{thm}\label{apedth1}
Let $\theta_{0}\in H^{s}(\mathbb{R}^{2})$ with $s>1$. If there exists an absolute constant $\delta>0$ such that
\begin{align}\label{bncond11}
\|\theta_{0}\|_{L^{\infty}}\leq \delta,
\end{align}
then (\ref{crSQG}) has a unique global smooth solution $\theta$ such that for any given $t>0$,
\begin{align}\label{bncond12}
\|\Lambda^{s} \theta(t)\|_{L^2}^{2}+ \int_{0}^{t}\|\Lambda^{s+\frac{1}{2}} \theta(\tau)\|_{L^2}^{2}\,d\tau\leq \|\Lambda^{s} \theta_{0}\|_{L^2}^{2}.
\end{align}
Additionally, if $\theta_{0}\in L^{p}(\mathbb{R}^{2})$ with $p\in [1,\,2]$, then the solution as stated above admits the decay estimate
\begin{align}\label{uchaa02}
 \|\Lambda^{s} \theta(t)\|_{L^2}\leq C_{0}(1+t)^{-\frac{2+(s-1)p}{ p}},
\end{align}
where the constant $C_{0}>0$ depends only on $s,\,p$ and $\theta_{0}$.
\end{thm}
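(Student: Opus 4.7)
The plan follows the strategy of Constantin--C\'ordoba--Wu \cite{CCWiumj} for the isotropic critical SQG equation, adapted to the anisotropic dissipation. First, Proposition \ref{Spr1} applies verbatim here because its proof uses only the divergence-free nature of $u$ and the positivity of the fractional operators; this propagates the smallness \eqref{bncond11} to $\|\theta(t)\|_{L^\infty}\leq\delta$ for all $t\geq 0$. Next, because $|\xi_1|+|\xi_2|$ is equivalent to $|\xi|$ up to universal constants, the anisotropic dissipation enjoys the coercivity
\begin{equation*}
\|\Lambda_{x_1}^{1/2}\Lambda^{s}\theta\|_{L^{2}}^{2} + \|\Lambda_{x_2}^{1/2}\Lambda^{s}\theta\|_{L^{2}}^{2} \ \geq\ c_{0}\,\|\Lambda^{s+1/2}\theta\|_{L^{2}}^{2}
\end{equation*}
for some absolute constant $c_{0}>0$. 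Applying $\Lambda^{s}$ to \eqref{crSQG}, testing with $\Lambda^{s}\theta$, and using $\nabla\cdot u=0$ produces
\begin{equation*}
\tfrac{1}{2}\tfrac{d}{dt}\|\Lambda^{s}\theta\|_{L^{2}}^{2} + \|\Lambda_{x_1}^{1/2}\Lambda^{s}\theta\|_{L^{2}}^{2} + \|\Lambda_{x_2}^{1/2}\Lambda^{s}\theta\|_{L^{2}}^{2} \ =\ -\int_{\mathbb{R}^{2}}\bigl[\Lambda^{s},u\bigr]\!\cdot\!\nabla\theta\ \Lambda^{s}\theta\,dx.
\end{equation*}

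The heart of the matter, and the principal obstacle in the argument, is the critical commutator estimate
\begin{equation*}
\Bigl|\int_{\mathbb{R}^{2}}\bigl[\Lambda^{s},u\bigr]\!\cdot\!\nabla\theta\ \Lambda^{s}\theta\,dx\Bigr| \ \leq\ C_{\ast}\,\|\theta\|_{L^{\infty}}\,\|\Lambda^{s+1/2}\theta\|_{L^{2}}^{2}.
\end{equation*}
The naive application of Kato--Ponce \eqref{yzz1} requires $\|\nabla u\|_{L^{\infty}}$, which is \emph{not} controlled by $\|\theta\|_{L^{\infty}}$ since $\mathcal{R}^{\perp}$ fails to map $L^{\infty}\to L^{\infty}$. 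I would circumvent this using a refined commutator inequality that replaces $L^{\infty}$ by $\mathrm{BMO}$ (so that $\|u\|_{\mathrm{BMO}}\lesssim \|\theta\|_{L^{\infty}}$) and redistributes half a derivative symmetrically between the two factors via a sharp Gagliardo--Nirenberg interpolation between $\Lambda^{s-1/2}\theta$ and $\Lambda^{s+1/2}\theta$, in the spirit of the Calder\'on--Zygmund commutator theory. With this bound in hand, choosing $\delta\leq c_{0}/(2C_{\ast})$ absorbs the right-hand side into the dissipation and delivers
\begin{equation*}
\tfrac{d}{dt}\|\Lambda^{s}\theta\|_{L^{2}}^{2} + c_{0}\,\|\Lambda^{s+1/2}\theta\|_{L^{2}}^{2} \ \leq\ 0,
\end{equation*}
which integrated in time yields \eqref{bncond12}; a standard local existence scheme combined with this a priori bound produces the global smooth solution, while uniqueness follows from the same energy argument applied to the $L^{2}$-difference of two solutions.

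For the decay \eqref{uchaa02} I would bootstrap the preceding differential inequality through the Fourier splitting machinery developed in Section \ref{sec3}. Setting $E(t)=\{\xi:|\xi_{1}|+|\xi_{2}|\leq \rho/(1+t)\}$ with $\rho$ large and using
\begin{equation*}
\|\Lambda^{s+1/2}\theta\|_{L^{2}}^{2} \ \geq\ \tfrac{\rho}{1+t}\|\Lambda^{s}\theta\|_{L^{2}}^{2} - \tfrac{\rho}{1+t}\int_{E(t)}|\xi|^{2s}|\widehat{\theta}(t,\xi)|^{2}\,d\xi
\end{equation*}
reduces the problem to controlling the low-frequency Fourier integral. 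The Duhamel representation
\begin{equation*}
\widehat{\theta}(t,\xi)\ =\ e^{-(|\xi_{1}|+|\xi_{2}|)t}\widehat{\theta_{0}}(\xi) - \int_{0}^{t}e^{-(|\xi_{1}|+|\xi_{2}|)(t-\tau)}\widehat{u\!\cdot\!\nabla\theta}(\tau,\xi)\,d\tau,
\end{equation*}
together with the Hausdorff--Young bound $\|\widehat{\theta_{0}}\|_{L^{p'}}\leq\|\theta_{0}\|_{L^{p}}$ for the linear part and the pointwise bound $|\widehat{u\!\cdot\!\nabla\theta}(\tau,\xi)|\leq |\xi|\,\|\theta(\tau)\|_{L^{2}}^{2}$ for the nonlinear part, feeds into this estimate. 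Starting from the basic $L^{2}$-decay of Proposition \ref{Prosty666}, which holds for \emph{all} $\alpha,\beta\in(0,1]$ without invoking the restriction \eqref{sdf2334} and in particular applies to the present case $\alpha=\beta=1/2$, and then iterating in $s$ exactly as in the proofs of Propositions \ref{fggyvby7} and \ref{dfdsfp37}, one arrives at the claimed rate \eqref{uchaa02}, which is precisely the specialization of the general formula $[(\alpha+\beta)(2-p)+2\min\{\alpha,\beta\}sp]/[4\alpha\beta p]$ to $\alpha=\beta=1/2$.
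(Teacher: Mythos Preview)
Your overall architecture is sound, but the execution differs from the paper in two places and leaves one of them incomplete.

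\textbf{The commutator estimate.} You correctly flag that $\|\nabla u\|_{L^{\infty}}$ is unavailable and propose to invoke a BMO commutator inequality together with a symmetric redistribution of half a derivative. This is a reasonable direction, but as written it is only a sketch: you do not specify which commutator inequality, and Calder\'on--Zygmund theory does not immediately hand you $\bigl|\int[\Lambda^{s},u]\!\cdot\!\nabla\theta\,\Lambda^{s}\theta\,dx\bigr|\leq C\|u\|_{\mathrm{BMO}}\|\Lambda^{s+1/2}\theta\|_{L^{2}}^{2}$ without further work. The paper sidesteps the $L^{\infty}$ obstruction differently and more concretely: it applies Kato--Ponce \eqref{yzz1} with the finite exponents $L^{2s+1}$ and $L^{(2s+1)/s}$, where the Riesz transforms \emph{are} bounded, and then uses the sharp Besov interpolation
\[
\|f\|_{L^{p}}\leq C\|f\|_{\dot{B}_{\infty,\infty}^{-\gamma}}^{(p-q)/p}\|f\|_{\dot{B}_{q,q}^{\gamma(p/q-1)}}^{q/p}
\]
(with $\gamma=1$ on $\nabla\theta$ and $\gamma=s$ on $\Lambda^{s}\theta$) to extract a factor $\|\theta\|_{\dot{B}_{\infty,\infty}^{0}}\leq C\|\theta\|_{L^{\infty}}$ while producing exactly $\|\Lambda^{s+1/2}\theta\|_{L^{2}}^{2}$ on the other side. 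This achieves your stated goal without any BMO machinery.

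\textbf{The decay estimate.} Your Fourier-splitting plus Duhamel iteration would work, but the paper takes a much shorter route. Once $\frac{d}{dt}\|\Lambda^{s}\theta\|_{L^{2}}^{2}+\|\Lambda^{s+1/2}\theta\|_{L^{2}}^{2}\leq 0$ is in hand, one interpolates directly
\[
\|\Lambda^{s}\theta\|_{L^{2}}\leq C\|\theta\|_{L^{p}}^{\frac{p}{(2s-1)p+4}}\|\Lambda^{s+1/2}\theta\|_{L^{2}}^{\frac{2(s-1)p+4}{(2s-1)p+4}},
\]
uses the $L^{p}$ maximum principle $\|\theta(t)\|_{L^{p}}\leq\|\theta_{0}\|_{L^{p}}$, and obtains the closed ODE inequality $\frac{d}{dt}X+C^{-1}X^{1+\varepsilon}\leq 0$ with $X=\|\Lambda^{s}\theta\|_{L^{2}}^{2}$, to which Lemma~\ref{lem111} applies immediately. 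No low-frequency analysis or iteration is needed.
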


\begin{proof}
Applying $\Lambda^{s}$ to (\ref{crSQG}) and multiplying it by $\Lambda^{s}\theta$ imply
\begin{align}\label{uchaa03}
\frac{1}{2}\frac{d}{dt}\|\Lambda^{s}{\theta}(t)\|_{L^{2}}^{2}
+\|\Lambda^{s}\Lambda_{x_{1}}^{\frac{1}{2}}{\theta}\|_{L^{2}}^{2}
+\|\Lambda^{s}\Lambda_{x_{2}}^{\frac{1}{2}}{\theta}\|_{L^{2}}^{2}
&=-\int_{\mathbb{R}^{2}}\Lambda^{s}(u \cdot \nabla\theta) \Lambda^{s}\theta\,dx.
\end{align}
By \eqref{yzz1}, we obtain
\begin{align} \label{uchaa04}
\left|\int_{\mathbb{R}^{2}}\Lambda^{s}(u \cdot \nabla\theta) \Lambda^{s}\theta\,dx\right|
&=\left|\int_{\mathbb{R}^{2}}[\Lambda^{s}, u\cdot\nabla]\theta \Lambda^{s}\theta\,dx\right|
\nonumber\\ &\leq C(\|\nabla u\|_{L^{2s+1}}\|\Lambda^{s}\theta\|_{L^{\frac{2s+1}{s}}}+
\|\nabla
\theta\|_{L^{2s+1}}\|\Lambda^{s}u\|_{L^{\frac{2s+1}{s }}})
\|\Lambda^{s}\theta\|_{L^{\frac{2s+1}{s}}}
\nonumber\\ &\leq C\|\nabla\theta\|_{L^{2s+1}}\|\Lambda^{s}\theta\|_{L^{\frac{2s+1}{s}}}^{2}
\nonumber\\ &\leq C\|\nabla\theta\|_{\dot{B}_{\infty,\infty}^{-1}}^{\frac{2s-1}{2s+1}}
 \|\nabla\theta\|_{\dot{B}_{2,2}^{s-\frac{1}{2}}}^{\frac{2}{2s+1}}
 (\|\Lambda^{s}\theta\|_{\dot{B}_{\infty,\infty}^{-s}}^{\frac{1}{2s+1}}
 \|\Lambda^{s}\theta\|_{\dot{B}_{2,2}^{\frac{1}{2}}}^{\frac{2s}{2s+1}})^{2}
 \nonumber\\ &\leq C\|\theta\|_{\dot{B}_{\infty,\infty}^{0}}^{\frac{2s-1}{2s+1}}
 \|\Lambda^{s+\frac{1}{2}}\theta\|_{L^{2}}^{\frac{2}{2s+1}}
 (\|\theta\|_{\dot{B}_{\infty,\infty}^{0}}^{\frac{1}{2s+1}}
 \|\Lambda^{s+\frac{1}{2}}\theta\|_{L^{2}}^{\frac{2s}{2s+1}})^{2}
 \nonumber\\ &\leq C\|\theta\|_{L^{\infty}}\|\Lambda^{s+\frac{1}{2}}\theta\|_{L^{2}}^{2}\nonumber\\ &\leq C\|\theta_{0}\|_{L^{\infty}}\|\Lambda^{s+\frac{1}{2}}\theta\|_{L^{2}}^{2}
\end{align}
where we have used the sharp interpolation inequality (see \cite[Theorem 2.42]{BCD})
\begin{eqnarray}
\|f\|_{L^{p}}\leq C\|f\|_{\dot{B}_{\infty,\infty}^{-\gamma}}^{\frac{p-q}{p}}
 \|f\|_{\dot{B}_{q,q}^{\gamma(\frac{p}{q}-1)}}^{\frac{q}{p}}\nonumber
 \end{eqnarray}
with $\gamma>0$ and $1\leq q<p<\infty$. According to Plancherel's theorem, one gets
\begin{align} \label{uchaa05}
\|\Lambda^{s}\Lambda_{x_{1}}^{\frac{1}{2}}{\theta}\|_{L^{2}}^{2}
+\|\Lambda^{s}\Lambda_{x_{2}}^{\frac{1}{2}}{\theta}\|_{L^{2}}^{2}\approx \|\Lambda^{s+\frac{1}{2}}\theta\|_{L^{2}}^{2}.
\end{align}
Combining \eqref{uchaa03}, \eqref{uchaa04} and \eqref{uchaa05} yields
\begin{align}
\frac{1}{2}\frac{d}{dt}\|\Lambda^{s}{\theta}(t)\|_{L^{2}}^{2}
+\|\Lambda^{s+\frac{1}{2}}{\theta}\|_{L^{2}}^{2}
\leq C\|\theta_{0}\|_{L^{\infty}}\|\Lambda^{s+\frac{1}{2}}\theta\|_{L^{2}}^{2},\nonumber
\end{align}
which along with the assumption \eqref{bncond11} implies
\begin{align}\label{uchaa06}
 \frac{d}{dt}\|\Lambda^{s}{\theta}(t)\|_{L^{2}}^{2}
+\|\Lambda^{s+\frac{1}{2}}{\theta}\|_{L^{2}}^{2}
\leq 0.
\end{align}
Integrating \eqref{uchaa06} in time yields \eqref{bncond12}. It is quite easy to show the uniqueness as $s>1$. To show the decay estimate \eqref{uchaa02}, we still have the following estimate due to Proposition \ref{Spr1}
\begin{align}\label{uchaa09}
\|\theta(t)\|_{L^p}\leq \|\theta_0\|_{L^p}.
\end{align}
Thanks to $p\in [1,\,2]$, it follows from the direct interpolation that
$$\|\Lambda^{s}{\theta}\|_{L^{2}}\leq C\|{\theta}\|_{L^{p}}^{\frac{p}{(2s-1)p+4}}
\|\Lambda^{s+\frac{1}{2}}{\theta}\|_{L^{2}}^{\frac{2(s-1)p+4}{(2s-1)p+4}},$$
which together with \eqref{uchaa09} gives
$$\|\Lambda^{s}{\theta}\|_{L^{2}}\leq C\|{\theta_{0}}\|_{L^{p}}^{\frac{p}{(2s-1)p+4}}
\|\Lambda^{s+\frac{1}{2}}{\theta}\|_{L^{2}}^{\frac{2(s-1)p+4}{(2s-1)p+4}}\leq C
\|\Lambda^{s+\frac{1}{2}}{\theta}\|_{L^{2}}^{\frac{2(s-1)p+4}{(2s-1)p+4}}.$$
This further yields
$$C^{-1}\|\Lambda^{s}{\theta}\|_{L^{2}}^{\frac{(2s-1)p+4}{(s-1)p+2}}\leq
\|\Lambda^{s+\frac{1}{2}}{\theta}\|_{L^{2}}^{2},$$
which along with \eqref{uchaa06} shows
\begin{align}\label{uchaa10}
 \frac{d}{dt}\|\Lambda^{s}{\theta}(t)\|_{L^{2}}^{2}
+C^{-1}\|\Lambda^{s}{\theta}\|_{L^{2}}^{\frac{(2s-1)p+4}{(s-1)p+2}}
\leq 0.
\end{align}
Applying Lemma \ref{lem111} to \eqref{uchaa10}, we deduce
$$\|\Lambda^{s} \theta(t)\|_{L^2}\leq C_{0}(1+t)^{-\frac{2+(s-1)p}{ p}}.$$
Consequently, this ends the proof of Theorem \ref{apedth1}.
\end{proof}

\vskip .3in
\section{The decay estimates of linear part}
\label{appdexB}
It is not hard to check that the upper bounds of \eqref{linaddtq1} and \eqref{linaddtq3} follow from the proof of Proposition \ref{dfdsfp37}.
As a result, it suffices to show the lower bound of \eqref{linaddtq3} under the assumption \eqref{linaddtq2}. In fact, it follows from \eqref{linaddtq2} that there exists two constants $\delta>0$, $\widetilde{c}>0$ such that for any $0<\rho\leq\delta$
\begin{align}\label{linadaa1}
 \rho^{-\frac{(\alpha+\beta)(2-p)+2\min\{\alpha,\,\beta\}sp}{2\alpha\beta p}}\int_{E(\rho)} |\xi|^{2s}|\widehat{\theta_{0}}(\xi)|^{2}\;d\xi\geq \widetilde{c}.
\end{align}
Let
$$E(\rho(t))\triangleq\{\xi\in \mathbb{R}^2: \  |\xi_{1}|^{2\alpha}+|\xi_{2}|^{2\beta}\leq \rho(t)\},$$
where $\rho(t)$ is nonincreasing, continuous radius to be
determined later.
Now we write the solution $\widetilde{\theta}$ as
$$\widetilde{\theta}(x,t)=e^{-(\Lambda_{x_{1}}^{2\alpha}
+\Lambda_{x_{2}}^{2\beta})t}\theta_{0},$$
which allows us to derive
\begin{align} \label{linadaa2}
 \|\Lambda^{s}\widetilde{\theta}\|_{L^{2}}^{2}
&= \int_{\mathbb{R}^{2}}|\xi|^{2s}|\widetilde{\theta}(\xi)|^{2}\,d\xi
\nonumber\\ &=\int_{\mathbb{R}^{2}}|\xi|^{2s}e^{-2(|\xi_{1}|^{2\alpha}
+|\xi_{2}|^{2\beta})t}| \widehat{\theta_{0}}(\xi)|^{2}\,d\xi
\nonumber\\ &\geq \int_{E(\rho(t))}|\xi|^{2s}e^{-2(|\xi_{1}|^{2\alpha}
+|\xi_{2}|^{2\beta})t}|\widehat{\theta_{0}}(\xi)|^{2}\,d\xi
\nonumber\\ &\geq \int_{E(\rho(t))}|\xi|^{2s}e^{-2 t\rho(t)}|\widehat{\theta_{0}}(\xi)|^{2}\,d\xi
 \nonumber\\ &=e^{-2 t\rho(t)}\int_{E(\rho(t))}|\xi|^{2s}|\widehat{\theta_{0}}(\xi)|^{2}\,d\xi.
\end{align}
Now we take $\rho(t)=\delta(1+t)^{-1}$, then it follows from \eqref{linadaa2} that
\begin{align}
 \|\Lambda^{s}\widetilde{\theta}\|_{L^{2}}^{2}
&\geq e^{-2 t\rho(t)}\rho(t)^{\frac{(\alpha+\beta)(2-p)+2\min\{\alpha,\,\beta\}sp}{2\alpha\beta p}}\rho(t)^{-\frac{(\alpha+\beta)(2-p)+2\min\{\alpha,\,\beta\}sp}{2\alpha\beta p}}\int_{E(\rho(t))}|\xi|^{2s}|\widehat{\theta_{0}}(\xi)|^{2}\,d\xi\nonumber\\&\geq \widetilde{c}e^{-2 t\rho(t)}\rho(t)^{\frac{(\alpha+\beta)(2-p)+2\min\{\alpha,\,\beta\}sp}{2\alpha\beta p}}\nonumber\\&\geq C_{1}(1+t)^{-\frac{(\alpha+\beta)(2-p)+2\min\{\alpha,\,\beta\}sp}{2\alpha\beta p}},\nonumber
\end{align}
where we have used \eqref{linadaa1}. This immediately implies
the lower bound
$$\|\Lambda^{s}\widetilde{\theta}(t)\|_{L^2}\geq C_{1}(1+t)^{-\frac{(\alpha+\beta)(2-p)+2\min\{\alpha,\,\beta\}sp}{4\alpha\beta p}}.$$
\vskip .3in

\section*{Acknowledgements}
This work is supported by the National Natural Science Foundation of China (No. 11701232), the Natural Science Foundation of Jiangsu Province (No. BK20170224) and the Qing Lan Project of Jiangsu Province.

\vskip .3in


\begin{thebibliography}{00} \frenchspacing

\bibitem{AbidiH}
H. Abidi, T. Hmidi, \emph{On the global well-posedness of the critical quasi-geostrophic equation}, SIAM J.
Math. Anal. 40 (2008), 167--185.

\bibitem{BCD}
H. Bahouri, J.-Y. Chemin, R. Danchin, {\rm Fourier Analysis and Nonlinear
Partial Differential Equations}, Grundlehren der mathematischen
Wissenschaften, 343, Springer (2011).

\bibitem{Buccpam}
Buckmaster, S. Shkoller, V. Vicol, \emph{Nonuniqueness of weak solutions to the SQG equation}, Comm. Pure Appl. Math. 72 (2019), no. 9, 1809--1874.

\bibitem{CV1}
L. Caffarelli, A. Vasseur, \emph{Drift diffusion equations with fractional
diffusion and the quasi-geostrophic equation}, Ann. of Math. 171
(2010), 1903--1930.

\bibitem{Carrillo}
J. Carrillo, L. Lucas, \emph{The asymptotic behaviour of subcritical dissipative quasi-geostrophic equations}, Nonlinearity 21 (2008), 1001-1018.

\bibitem{Chaolicmp}
D. Chae, J. Lee,  \emph{Global well-posedness in the super-critical dissipative quasi-geostrophic equations},
Commun. Math. Phys. 233 (2003), 297--311.

\bibitem{CCCGW}
D. Chae, P. Constantin, D. C\'{o}rdoba, F. Gancedo, J. Wu,
\emph{Generalized surface quasi-geostrophic equations with singular
velocities}, Comm. Pure Appl. Math. 65 (2012), 1037--1066.


\bibitem{CCwind}
D. Chae, P. Constantin, J. Wu, \emph{Dissipative models generalizing the 2D
Navier-Stokes and surface quasi-geostrophic equations}, Indiana Univ. Math. J. 61 (2012),
1997--2018.


\bibitem{CMZcmp}
Q. Chen, C. Miao, Z. Zhang, \emph{A new Bernstein's inequality and the 2D
dissipative quasigeostrophic equation}, Comm. Math. Phys. 271
(2007), 821--838.

\bibitem{CCWiumj}
P. Constantin, D. C\'{o}rdoba, J. Wu, \emph{On the critical
dissipative quasi-geostrophic equation}, Indiana Univ. Math. J. 50
(2001), 97--107.


\bibitem{CIWind}
P. Constantin, G. Iyer, J. Wu, \emph{Global regularity for a modified critical
dissipative quasi-geostrophic equation}, Indiana Univ. Math. J. 57 (2008), 2681--2692.

\bibitem{CMT}
P. Constantin, A. Majda, E. Tabak, \emph{Formation of strong fronts in the 2-D quasi-geostrophic thermal active scalar}, Nonlinearity 7 (1994), 1495--1533.

\bibitem{CTVCMP}
P. Constantin, A. Tarfulea, V. Vicol, \emph{Long time dynamics of forced critical SQG}, Comm. Math. Phys. 335 (2015), 93--141.

\bibitem{CV}
P. Constantin, V. Vicol, \emph{Nonlinear maximum principles for dissipative linear nonlocal operators and applications}, Geom. Funct. Anal. 22 (2012), 1289--1321.

\bibitem{CW3111}
P. Constantin, J. Wu, \emph{Behavior of solutions of 2D quasi-geostrophic
equations}, SIAM J. Math. Anal. 30, (1999), 937--948.


\bibitem{ConstantiW08}
P. Constantin, J. Wu, \emph{Regularity of H$\rm\ddot{o}$lder continuous solutions of the supercritical quasigeostrophic
equation}, Ann. Inst. H. Poincare Anal. Non Lineaire, 25 (2008), 1103--1110.


\bibitem{CADcmp}
A. C\'{o}rdoba, D. C\'{o}rdoba, \emph{A maximum principle applied to quasi-geostrophic equations}, Commun. Math.
Phys. 249 (2004), 511--528.

\bibitem{CZelativ}
M. Coti Zelati, V. Vicol, \emph{On the global regularity for the supercritical SQG equation}, Indiana Univ. Math. J., 65 (2016), 535--552.

\bibitem{Dabgga}
M. Dabkowski, \emph{Eventual regularity of the solutions to the supercritical dissipative quasigeostrophic equation}, Geom. Funct. Anal. 21 (2011), 1--13.

\bibitem{Dongd}
H. Dong, D. Du, \emph{Global well-posedness and a decay estimate for the critical dissipative quasi-geostrophic equation in the whole space}, Discrete Contin. Dyn. Syst. 21 (4) (2008), 1095--1101.

\bibitem{DLarma}
H. Dong, D. Li, \emph{Spatial analyticity of the solutions to the subcritical dissipative quasigeostrophic equations}, Arch. Ration. Mech. Anal. 189 (2008), 131--158.


\bibitem{Dongl}
H. Dong, D. Li, \emph{On the 2D critical and supercritical dissipative quasi-geostrophic equation in Besov spaces}, J. Differential Equations 248 (2010), 2684--2702.


\bibitem{Ferreirann}
L. Ferreira, C. Niche, G. Planas, \emph{Decay of solutions to dissipative modified quasi-geostrophic equations}, Proc. Amer. Math. Soc. 145 (2017), 287--301.

\bibitem{Iwabuchi}
T. Iwabuchi, \emph{Analyticity and large time behavior for the Burgers equation and the quasi-geostrophic equation, the both with the critical dissipation}, Ann. Inst. H. Poincare Anal. Non Lineaire 37 (2020), no. 4, 855--876.

\bibitem{JUNING}
N. Ju, \emph{Existence and uniqueness of the solution to the dissipative 2D quasi-geostrophic equations in the Sobolev space}, Comm. Math. Phys., 251 (2004), 365--376.

\bibitem{KPV2}
C. Kenig, G. Ponce, L. Vega, \emph{Well-posedness and scattering results for the generalized Korteweg-de Vries equation via the contraction principle}, Comm. Pure Appl. Math. 46 (1993), 527--620.

\bibitem{Kis2011}
A. Kiselev, \emph{Nonlocal maximum principles for active scalars}, Adv. Math. 227 (2011), 1806--1826.


\bibitem{KN2}
A. Kiselev, F. Nazarov, \emph{A variation on a theme of Caffarelli and Vasseur},  Zap. Nauchn. Sem. POMI 370 (2010), 58--72.

\bibitem{KNV}
A. Kiselev, F. Nazarov, A. Volberg, \emph{Global well-posedness for the
critical 2D dissipative quasi-geostrophic equation}, Invent. Math.
167 (2007), 445--453.

\bibitem{Lazarxue}
O. Lazar, L. Xue, \emph{Regularity results for a class of generalized surface quasi-geostrophic equations}, J. Math. Pures Appl. (9) 130 (2019), 200--250.

\bibitem{MX1}
C. Miao, L. Xue, \emph{Global wellposedness for a modified critical
dissipative quasi-geostrophic equation}, J. Differential Equations
252 (2012), 792--818.

\bibitem{Miurah}
H. Miura, \emph{Dissipative quasi-geostrophic equation for large initial data in the critical Sobolev space},
Commun. Math. Phys 267 (2006), 141--157.

\bibitem{Nichesc11}
C. Niche, M. Schonbek, \emph{Decay of weak solutions to the 2D dissipative quasi-geostrophic equation}, Comm. Math. Phys. 276 (2007), 93--115.

\bibitem{Nichesc}
C. Niche, M. Schonbek, \emph{Decay characterization of solutions to dissipative equations}, J. Lond. Math. Soc. (2) 91 (2015), 573--595.

\bibitem{PG}
J. Pedlosky, {\rm Geophysical fluid dynamics}, New York, Springer-Verlag,
1987.

\bibitem{Resnick}
S. Resnick, {\rm Dynamical problems in non-linear advective partial differential equations}, Pro-Quest LLC, Ann Arbor, MI. Thesis (Ph.D.), The University of Chicago (1995).

\bibitem{Schonbek}
M. Schonbek, \emph{Decay of solutions to parabolic conservation laws}, Comm. Partial Differential Equations 5 (1980), 449--473.

\bibitem{Schonbek2siam}
M. Schonbek, T. Schonbek, \emph{Asymptotic behavior to dissipative quasi--geostrophic flows}, SIAM
J. Math. Anal. 35 (2003), 357--375 (electronic).


\bibitem{Sil}
L. Silvestre, Eventual regularization for the slightly supercritical quasi-geostrophic equation, Ann. Inst. H. Poincare Anal.
Non Lineaire 27 (2010), 693--704.

\bibitem{SilVicolaz}
L. Silvestre, V. Vicol, A. Zlato$\rm \breve{s}$, \emph{On the loss of continuity for the super-critical drift-diffusion equations}, Arch. Ration. Mech. Anal. 27 (2013), 845--877.

\bibitem{WangZ}
H. Wang, Z. Zhang, \emph{A frequency localized maximum principle applied to the 2D quasi-geostrophic equation}, Comm. Math. Phys. 301 (2011), 105--129.

\bibitem{Wuj001}
J. Wu, \emph{Global solutions of the 2D dissipative quasi-geostrophic equation in Besov spaces}, SIAM J.Math.
Anal. 36 (2004), 1014--1030.

\bibitem{Wuj002}
J. Wu, \emph{The two-dimensional quasi-geostrophic equation with critical or supercritical dissipation}, Nonlinearity 18 (2005), 139--154.

\bibitem{wxyjmfm}
J. Wu, X. Xu, Z. Ye, \emph{Global regularity for several incompressible fluid models with partial dissipation}, J. Math. Fluid Mech. 19 (2017), 423--444.


\bibitem{Yenonli}
Z. Ye, \emph{On the global regularity for the anisotropic dissipative surface quasi-geostrophic equation}, Nonlinearity, 33 (2020),  72--105.

\bibitem{YZhou}
Y. Zhou, \emph{Asymptotic behaviour of the solutions to the 2D dissipative quasi-geostrophic flows}, Nonlinearity, 21 (2008), 2061--2071.

\end{thebibliography}
\end{document}